\newtheorem{theorem}{Theorem}[section]
\newtheorem{corollary}[theorem]{Corollary}
\newtheorem{lemma}[theorem]{Lemma}
\newtheorem{proposition}[theorem]{Proposition}
\theoremstyle{definition}
\newtheorem{definition}[theorem]{Definition}
\newtheorem{remark}[theorem]{Remark}
\newcommand{\R}{\mathbb{R}}
\newcommand{\N}{\mathbb{N}}
\newcommand{\C}{\mathbb{C}}
\newcommand{\T}{\mathbb{T}}
\newcommand{\Z}{\mathbb{Z}}
\def\XXint#1#2#3{{\setbox0=\hbox{$#1{#2#3}{\int}$ }
\vcenter{\hbox{$#2#3$ }}\kern-.6\wd0}}
\begin{document}

\title[The Cauchy problem for the periodic KP-II equation below $L^2$]{The Cauchy problem for the periodic Kadomtsev--Petviashvili--II equation below $L^2$}

\author[S.~Herr]{Sebastian Herr}
\address[S.~Herr]{Fakultat f\"ur Mathematik, Universit\"at Bielefeld, Postfach 10 01 31, 33501 Bielefeld, Germany}
\email{herr@math.uni-bielefeld.de}

\author[R.~Schippa]{Robert Schippa}
\address[R.~Schippa]{UC Berkeley, Department of Mathematics, 847 Evans Hall
Berkeley, CA 94720-3840}
\email{rschippa@berkeley.edu}
\author[N.~Tzvetkov]{Nikolay Tzvetkov}
\address[N.~Tzvetkov]{Ecole Normale Sup\'erieure de Lyon and IUF, Unit\'e de Math\'ematiques Pures et Appliqu\'es,
UMR CNRS 5669, Lyon, France}
\email{nikolay.tzvetkov@ens-lyon.fr}
\keywords{KP-II equation, local well-posedness, decoupling, short-time Fourier restriction}
\subjclass[2020]{35Q53, 42B37.}

\maketitle

\begin{abstract}
We extend Bourgain's $L^2$-well-posedness result for the KP-II equation on $\T^2$ to initial data with negative Sobolev regularity. The key ingredient is a new linear $L^4$-Strichartz estimate which is effective on frequency-dependent time scales. The $L^4$-Strichartz estimates follow from combining an $\ell^2$-decoupling inequality recently proved by Guth--Maldague--Oh with semiclassical Strichartz estimates.
Moreover, we rely on a variant of Bourgain's bilinear Strichartz estimate on frequency-dependent times, which is proved via a biorthogonality argument akin to the proof of the Córdoba--Fefferman square function estimate.
\end{abstract}

\selectlanguage{french}

\medskip

\begin{center}
\textbf{
Titre : Le problème de Cauchy pour l’équation KP-II périodique dans des espaces de Sobolev d'indice négatif}
\end{center}

\begin{abstract}
On généralise le résultat de Bourgain démontrant le caractère bien posé dans $L^2$ de l'équation KP-II périodique à données initiales initiales distributions de Schwartz appartenant à des espaces de Sobolev d'indices négatifs. Un élément clé de la preuve est une nouvelle estimation de Strichartz sur des intervalles de temps dépendant de la localisation en fréquence de la condition initiale. Cette nouvelle estimation de Strichartz est le résultat de la combinaison d'une inégalité récente de presque orthogonalité obtenue par Guth--Maldague--Oh et des estimées de Strichartz semi-classiques dans l'esprit des travaux de Staffilani-Tatatru et Burq-Gérard-Tzvetkov. Nous utilisons également une variante d'une estimation de Strichartz bilinéaire due à Bourgain, sur des intervalles de temps dépendant de la fréquence spatiale. Cette estimation utilise des arguments de bi-orthogonalité dans l'esprit de la preuve de l'estimée de la fonction carré de  Córdoba--Fefferman.

\end{abstract}

\selectlanguage{english}

\section{Introduction}

In this article we are concerned with the local well-posedness of the KP-II equation on the torus $\T^2$, where $\T$ is the circle $\R / (2 \pi \Z)$:
\begin{equation}
\label{eq:KPII}
\left\{ \begin{array}{cl}
\partial_t u + \partial_x^3 u + \partial_{y}^2 \partial_{x}^{-1} u &= u \partial_x u, \quad (t,x,y) \in \R \times \T^2, \\
u(0) &= u_0 \in H^{s,0}(\T^2).
\end{array} \right.
\end{equation}
Above $u_0$ denotes a real-valued distribution on $\T^2$ with $\hat{u}_0(0,\eta) = 0$ in the anisotropic Sobolev space $H^{s,0}(\T^2)$ with norm
\begin{equation}
\label{eq:AnisotropicSobolevSpace}
\| f \|^2_{H^{s,0}(\T^2)} = \sum_{(\xi,\eta) \in \Z^2} \langle \xi \rangle^{2s} |\hat{f}(\xi,\eta)|^2.
\end{equation}
With the vanishing condition on the Fourier coefficients, we can explain $\partial_x^{-1}$ as a Fourier multiplier
\begin{equation*}
\widehat{\partial_x^{-1} f} (\xi,\eta) = (i \xi)^{-1} \hat{f}(\xi,\eta).
\end{equation*}
The vanishing mean condition is propagated by the nonlinear evolution.

\smallskip

Bourgain proved analytic local well-posedness of \eqref{eq:KPII} for $u_0 \in L^2(\T^2)$ in \cite{Bourgain1993}. Since the $L^2$-norm is conserved, global well-posedness in $L^2$ follows for real-valued initial data. Here we seek to improve the result on $\T^2$ in \cite{Bourgain1993} by showing local well-posedness with regularity \emph{below $L^2$}, i.e., $s<0$ in \eqref{eq:KPII}. 

\smallskip

By local well-posedness we refer to existence, uniqueness, and continuity of the data-to-solution mapping $S_T: H^{s,0}(\T^2) \to C([0,T],H^{s,0}(\T^2))$, $ S_T(u_0) = u $ with $T=T(s,\| u_0 \|_{H^{s,0}(\T^2))})$. We obtain the data-to-solution mapping as unique continuous extension of the data-to-solution mapping on $L^2(\T^2)$, which allows us to work with smooth solutions. The analytic local well-posedness in \cite{Bourgain1993} is a consequence of the contraction mapping principle. Presently, we combine the Fourier restriction analysis \cite{Bourgain1993} with the energy method on frequency-dependent time scales \cite{KochTzvetkov2003,IonescuKenigTataru2008}. We implement this argument via short-time Fourier restriction norms as introduced by Ionescu--Kenig--Tataru. Notably, Koch--Tzvetkov \cite{KochTzvetkov2003} previously combined energy arguments with Strichartz estimates on frequency-dependent time scales to show low regularity well-posedness of the Benjamin--Ono equation.

\smallskip

 We remark that on the Euclidean space the KP-II equation has the scaling symmetry
\begin{equation*}
u \to u_\lambda(t,x,y) = \lambda^2 u(\lambda^3 t, \lambda x, \lambda^2 y).
\end{equation*}
On $\R^2$ this distinguishes the scaling-critical Sobolev space $\dot{H}^{-\frac{1}{2},0}(\R^2)$. Hadac--Herr--Koch \cite{HadacHerrKoch2009} showed global well-posedness and scattering for initial data in $\dot{H}^{-\frac{1}{2},0}(\R^2)$ under suitable smallness assumption. 

A few more remarks on the model are in order:
The KP-II equation was proposed by Kadomtsev and Petviashvili \cite{Kadomtsev1970} as model for two-dimensional water waves (see also Ablowitz-Segur \cite{AblowitzSegur1979}). Solutions to \eqref{eq:KPII} which do not depend on the $y$-variable solve the KdV equation:
\begin{equation*}
\partial_t u + \partial_{xxx} u = u \partial_x u.
\end{equation*}
Understanding the stability of solitons to the KdV equation under transverse perturbations was a central question upon the proposal of KP-equations.

\medskip

Like the KdV equation, the KP-equations are known to be completely integrable. A Lax pair formulation was obtained by Dryuma \cite{Dryuma1974} and as a consequence an infinite number of conserved quantities. Two conserved quantities for \eqref{eq:KPII} read
\begin{equation*}
\begin{split}
M &= \frac{1}{2} \int_{\T^2} u^2 dx dy, \\
E &= \frac{1}{2} \int_{\T^2} \big((\partial_x u)^2 - \frac{1}{3} u^3 - (\partial_x^{-1} \partial_y u)^2\big) dx dy.
\end{split}
\end{equation*}

Due to a lack of coercivity, which is evident for $E$, the mass $M$ seems to be the only conservation law, which can be used to show global well-posedness. Recently, complete integrability techniques have been successfully employed to establish low regularity well-posedness in Sobolev spaces for various equations: Firstly, a priori estimates up to the scaling critical regularity \cite{KochTataru2018,KillipVisanZhang2018} were proved and later even global well-posedness \cite{KillipVisan2019,HarropGriffithsKillipVisan2024} via the method of commuting flows introduced by Killip-Vi\c{s}an. Whether the arguments extend to two-dimensional models remains open. Our approach does not rely on the complete integrability; the symmetries it depends on are the real-valuedness of solutions and a conservative derivative nonlinearity.

\smallskip

In \cite{Bourgain1993} Fourier restriction spaces were used (previously introduced in \cite{Bourgain1993A,Bourgain1993B}) to show analytic well-posedness in $L^2$. We use short-time Fourier restriction spaces introduced in \cite{IonescuKenigTataru2008}. A second key ingredient is an $L^4_{t,x,y}$-Strichartz estimate with sharp (up to endpoints) derivative loss. This is based on the very recent work by Guth--Maldague--Oh \cite{GuthMaldagueOh2024} on $\ell^2$-Fourier decoupling. It is combined with the dispersive inequality on a semiclassical time scale.
This novel approach to low-regularity well-posedness of quasilinear dispersive equations does apply to other models as well, see e.g. the recent paper of the second author \cite{Schippa2023Refinements}.
 
\smallskip
In the non-periodic setting the Strichartz estimate
\[
\|e^{-t(\partial_x^3+\partial_y^2\partial_x^{-1})}u_0\|_{L^4(\R\times\R^2)}\lesssim \|u_0\|_{L^2(\R^2)},
\]
which was obtained by Saut in \cite{Saut93}, plays a fundamental role \cite{HadacHerrKoch2009}. The example in Subsection \ref{subsec:sharp} shows that in the periodic setting the estimate
\[
\|e^{-t(\partial_x^3+\partial_y^2\partial_x^{-1})}u_0\|_{L^4([0,1]\times\T^2)}\lesssim \|u_0\|_{H^{s,0}(\T^2)}
\]
fails for $s<1/8$, while in Theorem \ref{thm:ImprovedL4Strichartz} we prove that this estimate holds true for $s>1/8$ if the transversal frequencies are not too large.
\smallskip

In the following we survey the key ingredients of our analysis. Denote the dispersion relation by
\begin{equation*}
\omega(\xi,\eta) = \xi^3 - \frac{\eta^2}{\xi}.
\end{equation*}
A key quantity is the resonance function:
\begin{equation*}
\begin{split}
\Omega(\xi_1,\eta_1,\xi_2,\eta_2) &= \omega(\xi_1 + \xi_2, \eta_1 + \eta_2) - \omega(\xi_1,\eta_1) - \omega(\xi_2,\eta_2) \\
&= 3 \xi_1 \xi_2 (\xi_1+\xi_2) + \frac{(\eta_1 \xi_2 - \eta_2 \xi_1)^2}{\xi_1 \xi_2 (\xi_1+\xi_2)}.
\end{split}
\end{equation*}
Since we impose a vanishing-mean condition on the initial data, which is preserved under the nonlinear evolution, the frequencies satisfy $\xi_i \neq 0$, $\xi_1 + \xi_2 \neq 0$.

\smallskip

We let $\Omega_{\mathrm{KdV}}(\xi_1,\xi_2)= 3 \xi_1 \xi_2 (\xi_1 + \xi_2)$. Suppose that $|\xi_i| \sim N_i$ and $|\xi_1+\xi_2| \sim N_3$ for dyadic numbers $N_i \in 2^{\N_0}$. We note that we always have
\begin{equation}
\label{eq:ResonanceLowerBound}
|\Omega| \gtrsim N_{\max}^2 N_{\min}, \quad N_{\max} = \max_{i=1,2,3} N_i, \; N_{\min} = \min_{i=1,2,3} N_i,
\end{equation}
which was one key aspect in \cite{Bourgain1993}. For comparison consider the KP-I equation
\begin{equation*}
    \partial_t u +\partial_x^3 u - \partial_x^{-1} \partial_y^2 u = u \partial_x u
\end{equation*}
with dispersion relation given by
\begin{equation*}
\bar{\omega}(\xi,\eta) = \xi^3 + \frac{\eta^2}{\xi},
\end{equation*}
in which case the resonance relation reads
\begin{equation*}
\bar{\Omega}(\xi,\eta)= 3 \xi_1 \xi_2 (\xi_1 + \xi_2) - \frac{(\eta_1 \xi_2 - \eta_2 \xi_1)^2}{\xi_1 \xi_2 (\xi_1 + \xi_2)}.
\end{equation*}
In conclusion, in the KP-I case the resonance function can become much smaller than the KdV resonance relation. The lower bound \eqref{eq:ResonanceLowerBound} manifests a defocusing effect in the nonlinear evolution of the KP-II equation, which makes the nonlinear evolution better behaved than the related KP-I equation. It is known that the latter cannot be solved via the contraction mapping principle on $\R^2$ (see \cite{MolinetSautTzvetkov2002Illposedness}). Indeed, the global well-posedness of the KP-I equation on $\R^2$ in the energy space was established by Ionescu--Kenig--Tataru \cite{IonescuKenigTataru2008} combining Fourier restriction analysis with the energy method on frequency-dependent time scales. In addition, it is noteworthy that  the characteristic surface given by the dispersion relation for the KP-II equation --contrary to the  KP-I case-- is non-elliptic, which makes the proof of the periodic Strichartz estimates more difficult.

\smallskip

For the following discussion let us focus on the resonant case $|\Omega| \sim N_{\max}^2 N_{\min}$ and suppose by symmetry that $N_1 \sim N_3$, $N_2 \lesssim N_1$. A simplified version of the bilinear Strichartz estimate due to Bourgain (see Proposition \ref{prop:SimplifiedBilinearStrichartz} for the precise statement) reads as a convolution estimate in Fourier variables as follows:
\begin{equation}
\label{eq:BilinearStrichartzIntro}
\| f_{1,N_1,L_1} * f_{2,N_2,L_2} \|_{L^2_{\tau,\xi,\eta}} \lesssim N_2^{\frac{1}{2}} L_{\min}^{\frac{1}{2}} \langle L_{\max} / D \rangle^{\frac{1}{2}} \prod_{i=1}^2 \| f_i \|_{L^2_{\tau,\xi,\eta}}.
\end{equation}
Above $f_{i,N_i,L_i} \in L^2(\R \times \Z^2)$ with
\begin{equation*}
\operatorname{supp} (f_{i,N_i,L_i}) \subseteq \{ (\tau,\xi,\eta) \in \R^3 : |\xi| \sim N_i, \; |\tau - \omega(\xi,\eta)| \lesssim L_i \}.
\end{equation*}
We denote $L_{\max} =\max( L_1 ,L_2)$, $L_{\min} = \min( L_1 ,L_2)$, and  $D$ denotes transversality in the $\eta$-frequencies within the supports of $f_{1,N_1,L_1}$ and $f_{2,N_2,L_2}$:
\begin{equation}\label{eq:trans}
\big| \frac{\eta_1}{\xi_1} - \frac{\eta - \eta_1}{\xi - \xi_1} \big| \gtrsim D.
\end{equation}
More precisely,
\begin{equation*}
\partial_{\eta_1} \Omega(\xi_1,\eta_1,\xi-\xi_1,\eta-\eta_1) = 2 \big( \frac{\eta_1}{\xi_1} - \frac{\eta - \eta_1}{\xi - \xi_1} \big),
\end{equation*}
which leads to estimates on the number of contributing $\eta$-frequencies for fixed $\xi$-frequencies. Using endpoint versions of the Fourier restriction norm spaces yields always a gain of $(N_1^2 N_2)^{-\frac{1}{2}}$ by the lower bound on the resonance function. This compensates for the derivative loss giving rise to a factor of $N_1$. Together with \eqref{eq:BilinearStrichartzIntro} this gives an (oversimplified) explanation to $L^2$-well-posedness.

\medskip

To break the $L^2$-barrier set in \cite{Bourgain1993}, we add the following ingredients:
\begin{itemize}
\item[(i)] Frequency-dependent time localization $T=T(N_1)=N_1^{-\alpha}$, which improves the bilinear Strichartz estimates in case of a High-Low interaction $N_2 \ll N_1$ with large transversality $D \gtrsim N_1$,
\item[(ii)] Bilinear Strichartz estimates, which improve \eqref{eq:BilinearStrichartzIntro} in case of High-Low-\-inter\-action $N_2 \ll N_1$ with small transversality $D \ll N_1$ by using a Córdoba--Fefferman \cite{Cordoba1979,Cordoba1982,Fefferman1973} square function estimate (in \cite[Section~4]{Bourgain1993} more direct counting arguments are used),
\item[(iii)] Novel linear $L^4_{t,x,y}$-Strichartz estimates, which we show using $\ell^2$-decoupling. These are most useful in case of resonant High-High-interaction $N_1 \sim N_2 \sim N_3$.
\end{itemize}

Following Ionescu--Kenig--Tataru \cite{IonescuKenigTataru2008}, we need to establish estimates for the solutions in short-time Fourier restriction spaces $F^s(T)$. The ``dual'' spaces to estimate the nonlinearity are denoted by $\mathcal{N}^s(T)$. Since frequency-dependent time localization erases the dependence of the solution on the initial data for high frequencies, we need to establish energy estimates in spaces $E^s(T)$, which estimate the $L^\infty$-norm in time of every dyadic frequency component. For solutions the estimates read as follows:
\begin{equation*}
\left\{ \begin{array}{cl}
\| u \|_{F^s(T)} &\lesssim \| u \|_{E^s(T)} + \| \partial_x (u^2) \|_{\mathcal{N}^s(T)}, \\
\| \partial_x(u^2) \|_{\mathcal{N}^s(T)} &\lesssim T^\delta \| u \|^2_{F^s(T)}, \\
\| u \|^2_{E^s(T)} &\lesssim \| u_0 \|_{H^{s,0}(\T^2)}^2 + T^\delta \| u \|^3_{F^s(T)}.
\end{array} \right.
\end{equation*}
For a time localization parameter $\alpha \in (0,\alpha^*)$, with threshold $\alpha^* >0$ to be determined below, it turns out that the above estimates hold true for $s \geq - \frac{\alpha}{2} + \varepsilon$.
This gives a priori estimates
\begin{equation*}
\sup_{t \in [0,T]} \| u(t) \|_{H^{s,0}(\T^2)} \lesssim \| u \|_{F^s(T)} \lesssim \| u_0 \|_{H^{s,0}(\T^2)}.
\end{equation*}

\smallskip

Secondly, we show estimates for differences of solutions $v = u_1 - u_2$, which solve the difference equation
\begin{equation*}
\partial_t v + \partial_x^3 v + \partial_{x}^{-1} \partial_{y}^2 v = \partial_x(v(u_1+u_2))
\end{equation*}
 at lower regularity $s' = 20s$:
\begin{equation*}
\left\{ \begin{array}{cl}
\| v \|_{F^{s'}(T)} &\lesssim \| v \|_{E^{s'}(T)} + \| \partial_x (v(u_1+u_2)) \|_{N^{s'}(T)}, \\
 \| \partial_x (v(u_1+u_2)) \|_{N^{s'}(T)} &\lesssim T^\delta \| v \|_{F^{s'}(T)} ( \| u_1 \|_{F^s(T)} + \| u_2 \|_{F^s(T)} ), \\
 \| v \|^2_{E^{s'}(T)} &\lesssim \| v(0) \|_{H^{s',0}}^2 + T^\delta \| v \|_{F^{s'}(T)}^2 ( \| u_1 \|_{F^s(T)} + \| u_2 \|_{F^s(T)}).
\end{array} \right.
\end{equation*}
With a priori estimates for $\| u_i \|_{F^s(T)} \lesssim \| u_i(0) \|_{H^{s,0}(\T^2)}$ at hand, we find for times $T=T(\| u_i(0) \|_{H^{s,0}(\T^2)})$:
\begin{equation*}
\| v \|_{F^{s'}(T)} \lesssim \| v(0) \|_{H^{s',0}(\T^2)}.
\end{equation*}
This shows Lipschitz continuity of the data-to-solution mapping in the $H^{s'}$-topology with more regular solutions $u_i(0) \in H^{s,0}(\T)$. To infer continuous dependence, but no uniform continuous dependence, we invoke the frequency envelope argument by Tao \cite{Tao2001}. For details we refer to the recent treatise by Ifrim--Tataru \cite{IfrimTataru2023}. 

\smallskip

A simplified version of our local well-posedness result reads as follows:
\begin{theorem}[Local well-posedness below $L^2$]
\label{thm:LWPKPII}
Let $s > - \frac{1}{90}$. Then \eqref{eq:KPII} is locally well-posed.
\end{theorem}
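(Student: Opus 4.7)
The plan is to execute the short-time Fourier restriction program of Ionescu--Kenig--Tataru reinforced by the three ingredients (i)--(iii) described above. I would first fix $\alpha \in (0,\alpha^*)$ so that $s > -\alpha/2 + \varepsilon$ is satisfied (since $s > -1/90$ this forces $\alpha$ close to $1/45$) and construct the spaces $F^s(T)$, $\mathcal{N}^s(T)$, $E^s(T)$ from dyadic pieces localized at the frequency-dependent time scale $T_N = N^{-\alpha}$. Standard embedding and duality considerations then reduce the theorem to the three schematic estimates displayed in the introduction: the linear $F^s$-bound, a trilinear nonlinear bound in $\mathcal{N}^s$, and an energy estimate in $E^s$.

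The crux is the trilinear estimate. After a Littlewood--Paley decomposition I would bound the contribution of frequencies $(N_1,N_2,N_3)$ to $\|\partial_x(u^2)\|_{\mathcal{N}^s}$ by $T^{\delta} N_{\max}^{-\varepsilon'}\|u_{N_1}\|_{F^s}\|u_{N_2}\|_{F^s}$, splitting into the usual High-High and High-Low cases. In the resonant High-High regime $N_1\sim N_2\sim N_3$, I would estimate $\|u_1 u_2\|_{L^2}\le \|u_1\|_{L^4}\|u_2\|_{L^4}$ and apply the new $L^4$-Strichartz inequality from Theorem~\ref{thm:ImprovedL4Strichartz}, paying the sharp $1/8+$ derivative loss against the strong resonance lower bound $|\Omega|\gtrsim N_{\max}^3$ coming from~\eqref{eq:ResonanceLowerBound}. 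In the High-Low regime $N_2\ll N_1\sim N_3$, I would split by the transversality parameter $D$ in~\eqref{eq:trans}: for $D\gtrsim N_1$, Bourgain's bilinear Strichartz estimate~\eqref{eq:BilinearStrichartzIntro} together with the time scale $T_{N_1}=N_1^{-\alpha}$ (ingredient~(i)) suffices, while for $D\ll N_1$ I would use the Córdoba--Fefferman square-function variant of~(ii), which supplies the extra gain in $D/N_1$ that the direct counting argument of~\cite{Bourgain1993} does not provide.

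For the energy estimate I would differentiate $\|P_N u\|_{L^2}^2$ in time using the equation, symmetrize the resulting trilinear form over the three dyadic inputs, integrate by parts in time to trade modulation for $|\Omega|^{-1}$ wherever the interaction is non-resonant, and bound the remaining resonant contribution by the same bilinear and $L^4$-Strichartz toolkit. Combined with the nonlinear estimate and the linear embedding, a standard bootstrap yields the a priori bound $\|u\|_{F^s(T)}\lesssim \|u_0\|_{H^{s,0}}$ on a time $T=T(\|u_0\|_{H^{s,0}})$. Repeating the entire case analysis for a difference $v=u_1-u_2$ at the lower regularity $s'=20s$ (for which the asymmetry in the difference nonlinearity, with one factor at the stronger norm $\|u_i\|_{F^s}$, still permits closure) gives Lipschitz dependence in $H^{s',0}$ for smoothly approximated data; existence and continuous dependence in $H^{s,0}$ are then recovered via Tao's frequency-envelope argument as in Ifrim--Tataru~\cite{IfrimTataru2023}.

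The main obstacle I anticipate is the small-transversality zone $D\ll N_1$ in the High-Low bilinear estimate: the threshold $\alpha^*$, and hence the final regularity $s>-1/90$, is dictated by the delicate balance among the gain of the Córdoba--Fefferman square function in $D/N_1$, the loss $T_{N_1}^{\delta}=N_1^{-\alpha\delta}$ from the frequency-dependent time localization, and the resonance denominator controlled by~\eqref{eq:ResonanceLowerBound}. Making these three factors compete correctly in every sub-case is where I expect the bulk of the technical work to sit.
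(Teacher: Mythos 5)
Your proposal follows essentially the same architecture as the paper: short-time Fourier restriction spaces with the frequency-dependent time localization $T(N)=N^{-\alpha}$, the new $\ell^2$-decoupling-based $L^4$-Strichartz estimate in the resonant High--High regime, Bourgain's bilinear estimate sharpened by the C\'ordoba--Fefferman square function in the small-transversality High--Low regime, a priori estimates at $s$, difference estimates at $s'=20s$, and Tao's frequency-envelope argument for continuous dependence. All the ingredients you name appear in the paper in roughly the configuration you describe.

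Two technical points diverge from the paper's actual proof. First, the paper's energy estimate does not integrate by parts in time to trade modulation for $|\Omega|^{-1}$; that is a normal-form device. Instead, the paper differentiates $\|P_N u\|_{L^2}^2$, uses a commutator argument to show that the spatial derivative can always be moved to the lowest frequency (exploiting real-valuedness and the conservative derivative nonlinearity), and then cuts $[0,T]$ into intervals of length $N^{-\alpha}$ and applies the same modulation-localized trilinear toolkit used in the $\mathcal{N}^s$-estimate. Second, your diagnosis that the threshold $s>-1/90$ is dictated by the small-transversality High--Low zone and the C\'ordoba--Fefferman gain is not where the paper's constraint actually binds. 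The most restrictive condition, $s \geq (3\alpha-1)/84 + \varepsilon$, arises in the Low$\times$High$\rightarrow$High interaction of the short-time bilinear estimate for \emph{differences} \eqref{eq:ShorttimeBilinearDifferences}, in the resonant subcase where the output low frequency carries the high modulation $L=L_{\max}\sim N_1^2N$. There the argument closes with two applications of the improved short-time $L^4$ estimate of Theorem~\ref{thm:ImprovedL4Strichartz}, not with the square-function bilinear estimate. Requiring this to be compatible with the blanket bound $s\geq -\alpha/2+\varepsilon$ forces $\alpha<1/45$, whence $s>-1/90$. This interaction has no counterpart in the a priori estimate, where the derivative can always be pushed to the low frequency by symmetry; the asymmetry of the difference nonlinearity at $s'=20s$ is precisely what makes this the bottleneck.
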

A more detailed version is provided in Section \ref{section:ImprovedLWP}, where the proof is concluded with the short-time estimates at hand. The argument does not yield analyticity of the flow map, and the precise regularity of the flow map in Sobolev spaces of negative order remains open (see also Remark \ref{rem:quasi}). The argument in \cite{Gruen09} shows that for
$s<-1/4$ one has $C^3$-illposedness in $H^{s,0}(\T^2)$, see Section \ref{sect:ex}.

\medskip

Finally, we remark that our result does not depend on the periods of the torus. Let $\gamma \in (\frac{1}{2},1]$, $\T_\gamma = \R / (2 \pi \gamma \Z)$, and $\T^2_\gamma = \T \times \T_\gamma$. We can as well consider \eqref{eq:KPII} on the possibly irrational torus:
\begin{equation}
\label{eq:KPIIIrr}
\left\{ \begin{array}{cl}
\partial_t u + \partial_x^3 u + \partial^2_{y} \partial_x^{-1} u &= u \partial_x u, \quad (t,x,y) \in \R \times \T^2_\gamma, \\
u(0) &= u_0 \in H^{s,0}(\T^2_\gamma).
\end{array} \right.
\end{equation}
Since the novel $L^4$-Strichartz estimates are proved  using $\ell^2$-decoupling, these hold as well on the irrational torus. It is easy to see that the bilinear Strichartz estimates from Section \ref{section:BilinearStrichartz} hold as well on $\T^2_\gamma$. Hence, we obtain the same regularity results for \eqref{eq:KPIIIrr} like for \eqref{eq:KPII}. In the follow-up work \cite{HerrSchippaTzvetkov2025} we have extended the arguments to cylinders $\R \times \T$. Here one additionally has to take into account the possibility of small frequencies $|\xi| \ll 1$ as already outlined by Bourgain \cite[Section~8]{Bourgain1993} when he extended his result to $\R^2$.

\medskip

\emph{Outline of the paper.} In Section \ref{section:Notations} we introduce the notation and define the short-time Fourier restriction function spaces. We recall basic properties of the short-time Fourier restriction norms. In Section \ref{section:StrichartzEstimates} we show new linear Strichartz estimates using $\ell^2$-decoupling. Bilinear Strichartz estimates are recalled in Section \ref{section:BilinearStrichartz}. In Section \ref{section:Outline} we explain how the novel $L^4$-Strichartz estimates is combined with bilinear estimates and frequency-dependent time localization to prove the new well-posedness result.
 In Section \ref{section:ShorttimeNonlinear} we show the short-time bilinear estimates for solutions and differences of solutions.
 In Section \ref{section:Trilinear} we show trilinear convolution estimates for functions localized in frequency and modulation, which prepares the proof of short-time energy estimates carried out in Section \ref{section:ShorttimeEnergy}.
 In Section \ref{section:ImprovedLWP} we give an expanded version of Theorem \ref{thm:LWPKPII} and finish the proof with the short-time estimates at hand. In Section \ref{sect:ex} we provide an example showing sharpness of the bilinear estimate and an ill-posedness result, which is based on \cite{Gruen09}.

\medskip

\textbf{Basic notation:}
\begin{itemize}
\item  For $A \in \R$ we denote $\langle A \rangle = 1 + |A|$; for $A,B \in \R$ we let $A \vee B = \max(A,B)$ and $A \wedge B = \min(A,B)$.
\item For $A,B \in \R_{>0}$ the notation $A \lesssim B$ means that there is $C \geq 1$, only depending on inessential parameters, such that $A \leq C B$. We write $A \sim B$ provided that $A \lesssim B$ and $B \lesssim A$. Similarly, $A \ll B$ means that there is $0< c < 1$ such that $A \leq c B$.
\item The variables in position space are denoted by $(t,x,y) \in \R \times \T \times \T$. The dual variables in frequency space are denoted by $(\tau,\xi,\eta) \in \R \times \Z \times \Z$.
\item By $B_d(x_0,R) = \{ x \in \R^d : |x-x_0| \leq R \}$ we denote closed balls with radius $R > 0$.
\item The circle is denoted by $\T = \R / (2\pi \Z)$ and for $\gamma \in (1/2,1]$ we let $\T_\gamma = \R / (2 \pi \gamma \Z)$.
\item The $L^p$-norm on $\T^k$, $k \geq 1$, $p \in (1,\infty)$ is given by 
\begin{equation*}
\| f \|_{L^p(\T^k)}^p = \int_{[0,2\pi]^k} |f(x)|^p dx
\end{equation*}
with the usual modification for $p=\infty$.
\item Anisotropic Sobolev spaces for $s \in \R$ are comprised of the distributions with finite norm:
\begin{equation*}
H^{s,0}(\T^2) = \{ f \in \mathcal{S}'(\T^2) : \| f \|^2_{H^{s,0}(\T^2)} = \sum_{(\xi,\eta) \in \Z^2} \langle \xi \rangle^{2s} |\hat{f}(\xi,\eta)|^2 < \infty \}.
\end{equation*}
\end{itemize}

\section{Notation and Function spaces}
\label{section:Notations}

\subsection{Fourier transform and Littlewood-Paley decomposition}

The space-time Fourier transform is given by
\begin{equation*}
(\mathcal{F}_{t,x,y} u)(\tau,\xi,\eta) = \hat{u}(\tau,\xi,\eta) = \int_{\R \times \T^2} e^{-i t \tau } e^{-i x \xi} e^{-i y \eta} u(t,x,y) \, dx \, dy \, dt.
\end{equation*}
Let $f: \R \times \Z^2 \to \C$, $f \in L^1_{\tau} \ell^1_{\xi,\eta}$. The inverse Fourier transform is defined by
\begin{equation*}
u(t,x,y) = \mathcal{F}_{t,x,y}^{-1} [f] = \frac{1}{(2\pi)^3} \int_{\R } \sum_{(\xi,\eta) \in \Z^2} e^{i t \tau} e^{i x \xi} e^{i y \eta} f(\tau,\xi,\eta).
\end{equation*}

Let $\eta_0 \in C^\infty_c(B(0,2))$ with $\eta_0 \equiv 1$ on $[0,1]$ and $\eta_0$ be radially decreasing.
We define 
\begin{equation*}
\eta_1(\tau) = \eta_0(\tau/2) - \eta_0(\tau).
\end{equation*}
For $L \in 2^{\mathbb{N}}$ we set
\begin{equation*}
\eta_{L}(\tau) = \eta(\tau/L).
\end{equation*}

Let $N \in 2^{\N_0}$ denote a dyadic number. We define frequency projections (with respect to the $\xi$-frequencies) $P_N:L^2(\R \times \T^2) \to L^2(\R \times \T^2)$ by
\begin{equation*}
 \widehat {P_N u} (\tau,\xi,\eta) = 1_{[N,2N)}(\xi) \hat{f}(\tau,\xi,\eta).
\end{equation*}

\subsection{Definition of function spaces}
In this subsection, we introduce all function spaces which are used in the subsequent analysis and some of their elementary and well-known properties, we also refer to \cite{Bourgain1993,IonescuKenigTataru2008}.

The dispersion relation is given by
\begin{equation*}
\omega(\xi,\eta) = \xi^3 - \frac{\eta^2}{\xi}.
\end{equation*}

Let $u_0 \in L^2(\T^2;\C)$ and suppose in the following that $\hat{u}_0(0,\eta) = 0$ for all $\eta \in \Z$. This mean-zero condition is clearly preserved by the nonlinear evolution.
The linear propagation on $\T^2$ is defined by:
\begin{equation*}
S_{\mathrm{KP}}(t) u_0 = \sum_{\substack{(\xi,\eta) \in \Z^2 : \\ \xi \neq 0 }} e^{i(x \xi + y \eta + t \omega(\xi,\eta))} \hat{u}_0(\xi,\eta).
\end{equation*}

We define modulation projections for $L \in 2^{\N_0} \cup \{ 0 \}$ on functions $L^2(\R \times \T^2)$ by
\begin{equation*}
\widehat{Q_L u} (\tau,\xi,\eta)= \eta_L(\tau - \omega(\xi,\eta)) \hat{u}(\tau,\xi,\eta) \text{ and } Q_{\leq L} u = \sum_{K \leq L} Q_K u.
\end{equation*}
The sets describing the Fourier support are denoted by 
\begin{equation*}
    D_{N,L} = \{ (\tau,\xi,\eta) \in \R^3 : \, |\xi| \sim N, \; |\tau- \omega(\xi,\eta)| \sim L \}.
\end{equation*}
The sets $D_{N,\leq L}$ are defined correspondingly.

We introduce a modulation weight.
For $g:\R \times \Z^2 \to \C$ with $g(\tau,\xi,\eta) = 0$ unless $|\xi| \in [N,2N)$ we define
\begin{equation*}
\| g \|_{X_N} = \sum_{L \geq 1} L^{\frac{1}{2}} (1+L/N^3)^{\frac{1}{4}} \| \eta_L(\tau - \omega(\xi,\eta)) g \|_{L^2}.
\end{equation*}
The weight $(1+L/N^3)^{\frac{1}{4}}$ is supposed to improve estimates in the High-Low-inter\-action when the low frequency carries the high modulation and has already been used in \cite{Bourgain1993}. We define the norm without this weight by
\begin{equation}
\label{eq:FourierRestrictionBesov}
\| g \|_{\bar{X}_N} = \sum_{L \geq 1} L^{\frac{1}{2}} \| \eta_L(\tau - \omega(\xi,\eta)) g \|_{L^2}.
\end{equation}

For $\alpha > 0$ the short-time Fourier restriction spaces at frequencies $N \in 2^{\N_0}$ are defined by
\begin{equation*}
    F_N = \{ u \in C(\R;L^2) : P_N u = u, \; \| u \|_{F_N} < \infty \}
\end{equation*}
with
\begin{equation*}
\| u \|_{F_N} = \sup_{t_k \in \R} \| \mathcal{F}_{t,x,y} [ \eta_0(N^\alpha (t-t_k)) P_N u] \|_{X_N}.
\end{equation*}
This can be localized in time $T \in (0,1]$ in the usual way
\begin{equation*}
    F_N(T) = \{ u \in C([0,T];L^2) : P_N u = u, \; \| u \|_{F_N(T)} < \infty \}
\end{equation*}
with
\begin{equation*}
\| u \|_{F_N(T)} = \inf_{\tilde{u}\vert_{[0,T]} = u\vert_{[0,T]}} \| \tilde{u} \|_{F_N}.
\end{equation*}
We take the infimum over all frequency-localized functions $\tilde{u} \in C(\R;L^2)$ which coincide with $u$ on the interval $[0,T]$. 

The dual norm dictated by the Duhamel formula is given by
\begin{equation*}
\| u \|_{\mathcal{N}_N} = \sup_{t_k \in \R} \| (\tau - \omega(\xi,\eta) + i N^\alpha )^{-1} \mathcal{F}_{t,x,y} [\eta_0(N^{\alpha}(t-t_k)) P_N u ] \|_{X_N}
\end{equation*}
The time-localized space $\mathcal{N}_N(T)$ is defined like above.

\smallskip

The following lemma allows us to localize to shorter time intervals than required by the short-time norms:
\begin{lemma}
\label{lem:KernelWeightedNorms}
For $M \in 2^{\N_0}$, and $f_N \in \bar{X}_N$, the following estimate holds:
\begin{equation*}
\begin{split}
&\sum_{J \geq M} J^{\frac{1}{2}} \| \eta_J(\tau - \omega(\xi,\eta)) \int_{\R} \big| f_N(\tau,\xi,\eta) \big| M^{-1} (1+M^{-1} |\tau - \tau'|)^{-4} d \tau' \|_{L^2_{\tau,\xi,\eta}} \\
&\quad + M^{\frac{1}{2}} \| \eta_{\leq M}(\tau-\omega(\xi,\eta)) \int_{\R} |f_N(\tau',\xi,\eta) | M^{-1} (1 + M^{-1} |\tau - \tau'|)^{-4} d\tau' \|_{L^2_{\tau,\xi,\eta}} \\
&\lesssim \| f_N \|_{\bar{X}_N}
\end{split}
\end{equation*}
with implicit constant independent of $K$ and $L$.
\end{lemma}

\smallskip

The following is immediate:
\begin{lemma}
\label{lem:UnweightedXNEstimate}
For $\gamma \in \mathcal{S}(\R)$ and $M \in 2^{\N_0}$, and $t_0 \in \R$, we have
\begin{equation*}
\| \mathcal{F}_{t,x,y}[\gamma(M(t-t_0)) \mathcal{F}^{-1}_{t,x,y}(f_N)] \|_{\bar{X}_N}  \lesssim \| f_N \|_{\bar{X}_N}.
\end{equation*}
The implicit constant is independent of $M$, $N$, and $t_0 \in \R$.
\end{lemma}

To interpret the above, suppose we are estimating an expression $\gamma(T'^{-1} (t-t_0)) u_N$ which is localized at frequencies $N$ and on time scales $T' \ll T(N)=N^{-\alpha}$. When we do not use the weight $(1+L/N^3)^{\frac{1}{4}}$, we can estimate the expressions 
\begin{equation*}
\sum_{L \geq (T')^{-1}} L^{\frac{1}{2}} \| f_{N,L} \|_{L^2_{\tau,\xi,\eta}} \lesssim \| u_N \|_{F_N},
\end{equation*}
where 
\begin{equation*}
f_{N,L} = \begin{cases} 1_{D_{N,\leq T'}} \mathcal{F}_{t,x,y} [\gamma(T'^{-1} (t-t_0) u_N], &\quad L = T', \\
1_{D_{N,L}} \mathcal{F}_{t,x,y} [\gamma(T'^{-1} (t-t_0) u_N], &\quad L > T'.
\end{cases}
\end{equation*}

Suppose we want to take advantage of the low modulation weight. In comparison with the above, we can only dispose the higher time localization if it does not exceed the size of the modulation weight. We have the following variant of Lemma \ref{lem:KernelWeightedNorms} with straightforward proof, see e.g. \cite{HerrSchippaTzvetkov2025}.
\begin{lemma}
\label{lem:WeightedXNEstimate}
Under the assumptions of Lemma \ref{lem:KernelWeightedNorms}, for $M \lesssim N^3 \in 2^{\N_0}$, and $f_N \in X_N$, the following estimate holds:
\begin{equation*}
\begin{split}
&\sum_{J \geq M} J^{\frac{1}{2}} (1+J / N^3)^{\frac{1}{4}} \| \eta_J(\tau - \omega(\xi,\eta)) \int_{\R} \big| f_N(\tau,\xi,\eta) \big| M^{-1} (1+M^{-1} |\tau - \tau'|)^{-4} d \tau' \|_{L^2_{\tau,\xi,\eta}} \\
&\quad + M^{\frac{1}{2}} \| \eta_{\leq M}(\tau-\omega(\xi,\eta)) \int_{\R} |f_N(\tau',\xi,\eta) | M^{-1} (1 + M^{-1} |\tau - \tau'|)^{-4} d\tau' \|_{L^2_{\tau,\xi,\eta}} \\
&\lesssim \| f_N \|_{X_N}
\end{split}
\end{equation*}
with implicit constant independent of $K$ and $L$.
\end{lemma}

Correspondingly,
\begin{lemma}
For $\gamma \in \mathcal{S}(\R)$ and $M \lesssim N^3 \in 2^{\N_0}$, and $t_0 \in \R$, we have
\begin{equation*}
\| \mathcal{F}_{t,x,y}[\gamma(M(t-t_0)) \mathcal{F}^{-1}_{t,x,y}(f_N)] \|_{X_N}  \lesssim \| f_N \|_{X_N}.
\end{equation*}
The implicit constant is independent of $M$, $N$, and $t_0 \in \R$.
\end{lemma}
Suppose we are estimating an expression $\gamma(T'^{-1} (t-t_0) u_N )$ which is localized at frequencies $N$ and on times $T' \ll T(N)=N^{-\alpha}$. When we use the weight $(1+L/N^3)^{\frac{1}{4}}$, we can estimate the expressions 
\begin{equation*}
\sum_{L \geq (T')^{-1}} L^{\frac{1}{2}} (1+L/N^3)^{\frac{1}{4}} \| f_{N,L} \|_{L^2_{\tau,\xi,\eta}} \lesssim \| u_N \|_{F_N},
\end{equation*}
provided that $T'\lesssim N^3$.

\smallskip

We recall the embedding $F^s(T) \hookrightarrow C([0,T],H^{s,0}(\T^2))$.
\begin{lemma}[{\cite[Lemma~3.1]{IonescuKenigTataru2008}}]
\label{lem:ShorttimeEmbedding}
Let $s \in \R$, and $T \in (0,1]$. Then we have
\begin{equation*}
\sup_{t \in [0,T]} \| u(t) \|_{H^{s,0}(\T^2)} \lesssim \| u \|_{F^s(T)}.
\end{equation*}
\end{lemma}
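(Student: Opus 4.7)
The plan is to reduce everything to a single-frequency estimate and then exploit the definition of the $X_N$-norm to bound pointwise-in-time values of a time-localized function. Assuming the standard definition $\|u\|_{F^s(T)}^2 = \sum_{N} \langle N\rangle^{2s}\|P_N u\|_{F_N(T)}^2$, Littlewood--Paley and Minkowski reduce the claim to proving, for each dyadic $N\in 2^{\mathbb{N}_0}$ and every $t_0\in[0,T]$,
\begin{equation*}
\|P_N u(t_0)\|_{L^2(\T^2)} \lesssim \|P_N u\|_{F_N(T)},
\end{equation*}
with an implicit constant independent of $N$ and $t_0$. Because the bound on the right is dyadically frequency-localized, one can then sum in $N$ with the $\langle N\rangle^{s}$ weights.

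To show the single-frequency estimate, fix $t_0\in[0,T]$ and let $\tilde u$ be any extension of $P_N u$ from $[0,T]$ to $\mathbb{R}\times\T^2$ with $P_N\tilde u=\tilde u$. Set $v(t,x,y)=\eta_0(N^\alpha(t-t_0))\,\tilde u(t,x,y)$, so that $v(t_0)=\eta_0(0)\tilde u(t_0)=P_N u(t_0)$. The definition of $F_N$ gives
\begin{equation*}
\|\mathcal{F}_{t,x,y} v\|_{X_N} \;\le\; \|\tilde u\|_{F_N},
\end{equation*}
and infimizing over admissible extensions yields $\|\mathcal{F}_{t,x,y} v\|_{X_N}\lesssim \|P_N u\|_{F_N(T)}$. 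It therefore suffices to show the abstract pointwise bound
\begin{equation*}
\|v(t_0)\|_{L^2(\T^2)} \lesssim \|\mathcal{F}_{t,x,y} v\|_{X_N}.
\end{equation*}

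For this, write $\widehat v$ via the $\tau$-inversion and decompose in modulation: with $\widehat{v}_L(\tau,\xi,\eta)=\eta_L(\tau-\omega(\xi,\eta))\,\widehat v(\tau,\xi,\eta)$ for $L\in 2^{\mathbb{N}_0}\cup\{0\}$, one has for each $(\xi,\eta)$
\begin{equation*}
\mathcal{F}_{x,y} v(t_0,\xi,\eta) \;=\; \frac{1}{2\pi}\sum_{L} \int_{\mathbb{R}} \widehat v_L(\tau,\xi,\eta)\,e^{it_0\tau}\,d\tau.
\end{equation*}
Since $\widehat v_L$ is supported where $|\tau-\omega(\xi,\eta)|\lesssim L$, Cauchy--Schwarz in $\tau$ gives
\begin{equation*}
\Bigl|\int_{\mathbb{R}} \widehat v_L(\tau,\xi,\eta)\,e^{it_0\tau}\,d\tau\Bigr| \;\lesssim\; L^{1/2}\,\|\widehat v_L(\cdot,\xi,\eta)\|_{L^2_\tau}.
\end{equation*}
Taking the $\ell^2_{\xi,\eta}$-norm, applying Plancherel in $(x,y)$, and summing in $L$ yields
\begin{equation*}
\|v(t_0)\|_{L^2(\T^2)} \;\lesssim\; \sum_{L} L^{1/2}\,\|\widehat v_L\|_{L^2_{\tau,\xi,\eta}} \;\le\; \|\mathcal{F}_{t,x,y}v\|_{X_N},
\end{equation*}
where the last inequality uses that the $X_N$-weight $L^{1/2}(1+L/N^3)^{1/4}$ dominates $L^{1/2}$.

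The only mildly delicate point is the extension/restriction step: one must be sure that the inf over extensions $\tilde u$ is actually realized in $F_N$ (as opposed to losing a factor by taking a different $t_k$), which is why one chooses $t_k=t_0$ in the $F_N$-norm so that $\eta_0(N^\alpha(t_0-t_k))=1$ and $v(t_0)=P_N u(t_0)$ exactly. Everything else is routine $\tau$-integration, and the argument is independent of $\alpha>0$ and of $T\in(0,1]$, as required.
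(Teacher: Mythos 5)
Your argument is correct and is essentially the standard proof of this embedding (the paper does not prove the lemma itself but defers to Ionescu--Kenig--Tataru, whose argument is the same): one fixes $t_0$, chooses $t_k=t_0$ in the sup defining $F_N$ so that $\eta_0(N^\alpha(t_0-t_k))=1$, and then uses a modulation decomposition with Cauchy--Schwarz in $\tau$ to bound a pointwise-in-time slice of a function by its $X_N$-norm, observing that the weight $L^{1/2}(1+L/N^3)^{1/4}$ dominates $L^{1/2}$. The extension/restriction bookkeeping and the final dyadic summation in $N$ with weights $\langle N\rangle^{s}$ are handled correctly, so the proposal matches the intended proof.
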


\medskip

We assemble the time-localized function spaces by Littlewood-Paley theory:
\begin{equation*}
\begin{split}
\| u \|^2_{F^s(T)} &= \sum_{N \geq 1} N^{2s} \| P_N u \|_{F_N(T)}^2, \\
\| u \|^2_{\mathcal{N}^s(T)} &= \sum_{N \geq 1} N^{2s} \| P_N u \|_{\mathcal{N}_N(T)}^2.
\end{split}
\end{equation*}
We shall see (Section \ref{section:Outline} for details) that $\alpha = 0+$ suffices for a short-time bilinear estimate at negative regularity. For the energy estimate large $\alpha$ is less favorable because we have to divide the time-interval $[0,T]$ into more and more subintervals of length $N^{-\alpha}$ to estimate frequency-localized components of solutions $P_N u$.

\medskip

The ``energy space" is given by
\begin{equation*}
\begin{split}
E^s(T) &= \{ u \in C([0,T] ; L^2(\T^2)) : \|u \|_{E^s(T)} < \infty \} \text{ with } \\
\| u \|^2_{E^s(T)} &= \| P_{\leq 8} u(0) \|_{L^2}^2 + \sum_{N \geq 8} N^{2s} \sup_{t \in [0,T]} \| P_N u (t) \|^2_{L^2}.
\end{split}
\end{equation*}

The energy estimate in short-time Fourier restriction spaces takes the following form: 
\begin{lemma}[Linear~energy~estimate]
\label{lem:LinearEnergyEstimate}
Let $s \in \R$ and $u \in E^s(T) \cap F^s(T)$, $v \in \mathcal{N}^s(T)$ such that $\partial_t u + \partial_x^3 u + \partial_x^{-1} \partial_y^2 u = v$. Then the following estimate holds:
\begin{equation*}
\| u \|_{F^s(T) } \lesssim \| u \|_{E^s(T)} + \| v \|_{\mathcal{N}^s(T)}.
\end{equation*}
\end{lemma}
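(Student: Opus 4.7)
The plan is to establish this via a Littlewood--Paley square-sum of frequency-localized estimates of the form
\[
\|P_N u\|_{F_N(T)} \lesssim \sup_{t\in[0,T]}\|P_N u(t)\|_{L^2} + \|P_N v\|_{\mathcal{N}_N(T)}, \qquad N\in 2^{\N_0},
\]
after which multiplying by $N^{2s}$, square-summing in $N\geq 16$, and absorbing the contribution of $N\leq 8$ into $\|P_{\leq 8}u(0)\|_{L^2}$ by a trivial $L^2$ bound on the linear flow over a unit time step yields the lemma. Fix $N$; since the $F_N(T)$-norm is an infimum over extensions $\tilde u$ of $P_N u|_{[0,T]}$ of a supremum over cutoff times $t_k\in\R$, it suffices to produce a \emph{single} extension whose localised $X_N$-norm is controlled uniformly in $t_k$.

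First I would pick an extension $\tilde v_N$ of $P_N v|_{[0,T]}$ realising the infimum defining $\|P_Nv\|_{\mathcal{N}_N(T)}$ up to a factor two, and then construct $\tilde u_N$ on $\R\times\T^2$ by the Duhamel formula anchored at $t_k\in[0,T]$:
\[
\tilde u_N(t) := S_{KP}(t-t_k)\,P_N u(t_k) + \int_{t_k}^{t} S_{KP}(t-s)\,\tilde v_N(s)\,ds.
\]
On the slab $|t-t_k|\lesssim N^{-\alpha}$ this agrees with $P_N u$ (by uniqueness of the ODE solved by each Fourier mode), which is all that survives the subsequent multiplication by $\eta_0(N^\alpha(t-t_k))$; for $t_k$ outside $[0,T]$ one first extends $P_N u$ by the free flow past the endpoints. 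Taking the full space-time Fourier transform splits $\eta_0(N^\alpha(t-t_k))\tilde u_N$ into a \emph{homogeneous} piece, essentially
\[
N^{-\alpha}\,\widehat{\eta_0}\bigl((\tau-\omega(\xi,\eta))/N^{\alpha}\bigr)\,\widehat{P_N u(t_k)}(\xi,\eta)\,e^{-it_k\tau},
\]
together with an \emph{inhomogeneous} piece corresponding to the Fourier multiplier $i(\tau-\omega(\xi,\eta))^{-1}$ applied to $\mathcal{F}_{t,x,y}\tilde v_N$, time-localised by the same cutoff.

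The homogeneous piece is supported in modulations $L\lesssim N^\alpha$ with Schwartz tails, so dyadic summation of its $X_N$-norm in $L$ yields $\lesssim \|P_N u(t_k)\|_{L^2}$ directly from Plancherel. For the inhomogeneous piece I would use the elementary identity
\[
\frac{1}{i(\tau-\omega)}=\frac{1}{i(\tau-\omega+iN^\alpha)}+\frac{N^\alpha}{(\tau-\omega)(\tau-\omega+iN^\alpha)}
\]
to trade the bare inverse for the regularised one $(\tau-\omega+iN^\alpha)^{-1}$ appearing in the very definition of $\|\cdot\|_{\mathcal{N}_N}$, and then invoke the convolution-type bound displayed just before Lemma~\ref{lem:ShorttimeEmbedding} to control the result in $X_N$ by $\|P_N v\|_{\mathcal{N}_N(T)}$. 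Assembling the two pieces, taking $\sup_{t_k}$, then $\inf_{\tilde u}$, gives the frequency-localised inequality.

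The main technical obstacle is the interplay between the modulation scale $N^\alpha$ forced by the short-time cutoff and the weight $L^{1/2}(1+L/N^3)^{1/4}$ in the $X_N$-norm: one needs $\alpha\leq 3$ so that the bulk modulation $L\sim N^\alpha$ keeps $(1+L/N^3)^{1/4}=O(1)$, while the Schwartz decay of $\widehat{\eta_0}$ absorbs the weight in the tail $L\gg N^\alpha$. This restriction on $\alpha$ is well within the range $\alpha<\alpha^{*}$ used throughout the paper, so no extra hypothesis is needed.
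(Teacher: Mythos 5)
The paper states Lemma~\ref{lem:LinearEnergyEstimate} without proof; it is a standard structural fact in the Ionescu--Kenig--Tataru short-time Fourier restriction framework, cf.~\cite{IonescuKenigTataru2008}. Your sketch reconstructs that standard argument: frequency-localize, pick a near-optimal extension of $P_N v$, represent $P_N u$ by Duhamel, split the time-cutoff into a homogeneous piece (modulation $\lesssim N^\alpha$, controlled via Plancherel by $\sup_t\|P_N u(t)\|_{L^2}$ and hence by $E^s(T)$) and an inhomogeneous piece (controlled via the convolution bound in Section~\ref{section:Notations} by $\|P_N v\|_{\mathcal{N}_N(T)}$), then square-sum. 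This is essentially the proof the paper implicitly relies on.

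Two technical points you should tighten in a full write-up. First, the $F_N(T)$-norm is $\inf_{\tilde u}\sup_{t_k}$, so the extension must be fixed once, independently of $t_k$; your formula is anchored at $t_k$, which looks $t_k$-dependent. This is harmless: for $t_k\in[0,T]$ every such anchoring produces the same solution of the forced linear ODE given $\tilde v_N$, so one can fix the extension anchored at $t=0$ (and extend past the endpoints by free flow with $\tilde v_N$ cut off) and then re-anchor at $t_k$ only as an algebraic rewriting for the estimate. Second, the resolvent identity you invoke leaves a correction term $\frac{N^\alpha}{(\tau-\omega)(\tau-\omega+iN^\alpha)}$ carrying a bare $(\tau-\omega)^{-1}$. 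As a multiplier this is singular; it is rendered harmless only because the time-Fourier transform of the Duhamel integral contains the compensating difference $(e^{it(\tau'-\omega)}-e^{it_k(\tau'-\omega)})/(i(\tau'-\omega))$, whose numerator vanishes at $\tau'=\omega$. A blind application of the algebraic identity does not see this cancellation, so the argument needs to be phrased so that the singular factor only ever hits this difference quotient (or, equivalently, one splits at modulation $\sim N^\alpha$ and uses the bounded-modulation structure of the low-modulation contribution, which is what the second half of the displayed convolution estimate is for).
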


In the following we see how to trade modulation regularity for powers of the time scale. Define the following variant of $X_N$-spaces for $b \in \R$:
\begin{equation*}
\| f_N \|_{X_N^b} = \sum_{L \geq 1} L^b \| \eta_L(\tau-\omega(\xi,\eta)) f_N(\tau,\xi,\eta) \|_{L^2_{\tau,\xi,\eta}}.
\end{equation*}
The short-time spaces $F_N^b$, $\mathcal{N}_N^b$ are defined like above, but based on $X_N^b$ instead $X_N$. We have the following:
\begin{lemma}[{\cite[Lemma~3.4]{GuoOh2018}}]
\label{lem:ModulationSlack}
Let $T \in (0,1]$, and $b < \frac{1}{2}$. Then it holds:
\begin{equation*}
\| P_N u \|_{F_N^b} \lesssim T^{\frac{1}{2}-b-} \| P_N u \|_{F_N}
\end{equation*}
for any function $u:[-T,T] \times \T^2 \to \C$.
\end{lemma}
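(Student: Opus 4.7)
The plan is to take an almost-optimal extension $\tilde u$ of $u$ realizing the right-hand side, multiply it by a smooth cutoff adapted to $[-T,T]$, and cash the $T$-thin time support into the missing power of $T$ on the Fourier side. Concretely, pick an extension $\tilde u:\R\times\T^2\to\C$ of $u$ with $\tilde u=P_N\tilde u$ and $\|\tilde u\|_{F_N}\leq 2\|P_N u\|_{F_N(T)}$, fix $\rho\in C_c^\infty(\R)$ with $\rho\equiv 1$ on $[-1,1]$ and $\operatorname{supp}\rho\subset[-2,2]$, and set $v:=\rho(t/T)\tilde u$. Since $v|_{[-T,T]}=u|_{[-T,T]}$, the function $v$ is admissible for the infimum defining $\|P_N u\|_{F_N^b(T)}$, so it suffices to establish
\[
\|v\|_{F_N^b}\lesssim T^{1/2-b-}\|\tilde u\|_{F_N}.
\]

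Fix $t_k\in\R$, let $f=\mathcal{F}(\eta_0(N^\alpha(t-t_k))\tilde u)$ so that $\|f\|_{X_N}\leq\|\tilde u\|_{F_N}$, and perform the modulation decomposition $f=\sum_L f_L$ with $f_L=\eta_L(\tau-\omega)f$. Multiplication by $\rho(t/T)$ in $t$ corresponds on the Fourier side to convolution in $\tau$ with $T\hat\rho(T\cdot)$, so writing $g_L:=T\hat\rho(T\cdot)*_\tau f_L$ one has $\mathcal{F}(\eta_0(N^\alpha(t-t_k))v)=\sum_L g_L$. The proof rests on two pointwise-in-$(J,L)$ bounds: a Schwartz-decay bound
\[
\|\eta_J(\tau-\omega)g_L\|_{L^2}\lesssim_M (TJ)^{-M}\|f_L\|_{L^2}\quad\text{for } J\gg L\vee T^{-1},
\]
coming via Young from $\|T\hat\rho(T\cdot)\mathbf{1}_{|\sigma|\gtrsim J}\|_{L^1_\sigma}\lesssim (TJ)^{-M}$; and a Bernstein-type bound
\[
\|g_L\|_{L^2}\lesssim (TL)^{1/2}\|f_L\|_{L^2},
\]
obtained from Plancherel together with $\|\mathcal{F}^{-1}(f_L)\|_{L^\infty_t L^2_{x,y}}\lesssim L^{1/2}\|f_L\|_{L^2}$ (by Cauchy--Schwarz in $\tau$, since $f_L$ has $\tau$-support of measure $\sim L$) and the fact that $\rho(t/T)$ has $L^2_t$-support of measure $\sim T$.

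Summing in $J$ for each $L$, split at $L\sim T^{-1}$: for $L\geq T^{-1}$, the Schwartz-decay bound together with the trivial $\|\eta_J g_L\|_{L^2}\leq\|f_L\|_{L^2}$ for $J\lesssim L$ yields $\sum_J J^b\|\eta_J g_L\|_{L^2}\lesssim L^b\|f_L\|_{L^2}\leq T^{1/2-b}L^{1/2}\|f_L\|_{L^2}$, using $b<1/2$; for $L<T^{-1}$, the Bernstein-type bound controls the range $J\lesssim T^{-1}$ by $(TL)^{1/2}\sum_{J\lesssim T^{-1}}J^b\lesssim T^{1/2-b-}L^{1/2}\|f_L\|_{L^2}$, while the tail $J\gg T^{-1}$ is controlled by the same quantity by taking the minimum of the Schwartz and Bernstein bounds at the crossover $J\sim T^{-1}(TL)^{-1/(2M)}\approx T^{-1}$. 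Summing over $L$ by triangle inequality,
\[
\|\mathcal{F}(\eta_0(N^\alpha(t-t_k))v)\|_{X_N^b}\leq\sum_L\sum_J J^b\|\eta_J g_L\|_{L^2}\lesssim T^{1/2-b-}\sum_L L^{1/2}\|f_L\|_{L^2}\leq T^{1/2-b-}\|\tilde u\|_{F_N},
\]
and taking the supremum over $t_k$ completes the argument.

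The main obstacle is the low-modulation regime $L<T^{-1}$ of $f$: there the weight $L^{1/2}$ in the $X_N$ norm provides no gain, and the Schwartz-decay estimate by itself only yields $T^{-b}\|f_L\|_{L^2}$, a factor $T^{1/2}$ short of what is needed. The Bernstein-type bound, which converts the $T$-thin time support of $\rho(t/T)$ into the missing $T^{1/2}$ factor, is essential precisely in this range; the logarithm that appears at $b=0$ (and the constraint needed when $b<0$) is absorbed by the $-$ in the exponent.
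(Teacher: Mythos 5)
The paper does not prove this lemma itself; it cites \cite[Lemma~3.4]{GuoOh2018}, so there is no in-paper argument to compare against. Your reconstruction is the standard proof and, as far as I can tell, the one used in the cited reference: pass to a near-optimal extension $\tilde u$, truncate by $\rho(t/T)$, observe that on the Fourier side this is convolution by the $L^1$-normalized kernel $T\hat\rho(T\cdot)$, and estimate the modulation-decomposed pieces via rapid decay at high modulation and a duration-duality bound at low modulation. The reduction to the windowed Fourier transforms at fixed $t_k$, the two pointwise-in-$(J,L)$ bounds, and the observation that you may replace $\|f\|_{X_N}$ by the weaker $\|f\|_{X_N^{1/2}}$ (dropping the harmless weight $(1+L/N^3)^{1/4}$) are all correct.

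There is one soft spot, which you flag but underplay. For $L<T^{-1}$ and $b<0$ the estimate for $J\lesssim T^{-1}$ via $\|\eta_J g_L\|_{L^2}\le\|g_L\|_{L^2}\lesssim(TL)^{1/2}\|f_L\|_{L^2}$ gives $\sum_{J\lesssim T^{-1}}J^b\|\eta_J g_L\|_{L^2}\lesssim T^{1/2}L^{1/2}\|f_L\|_{L^2}$, which is \emph{not} $\lesssim T^{1/2-b-\varepsilon}L^{1/2}\|f_L\|_{L^2}$ for small $\varepsilon$ when $b<0$, since that would need $\varepsilon\ge|b|$; the ``$-$'' does not absorb a fixed power $T^{|b|}$. (At $b=0$ the loss is only logarithmic, so there the ``$-$'' does absorb it, as you say.) The fix is to use the sharper pointwise bound $\|\eta_J g_L\|_{L^2}\lesssim T\,J^{1/2}L^{1/2}\|f_L\|_{L^2}$ for $J\lesssim T^{-1}$, obtained from $|g_L(\tau,\xi,\eta)|\le T\|\hat\rho\|_\infty\int_{|\tau'-\omega|\lesssim L}|f_L(\tau',\xi,\eta)|\,d\tau'\lesssim TL^{1/2}\|f_L(\cdot,\xi,\eta)\|_{L^2_\tau}$ together with the $\tau$-support of $\eta_J$ having measure $\sim J$; then $\sum_{J\lesssim T^{-1}}J^{b+1/2}\lesssim T^{-(b+1/2)}$ for $b>-1/2$, giving the target $T^{1/2-b}L^{1/2}\|f_L\|_{L^2}$ with no loss. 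In the range $b\in(0,\tfrac12)$, which is all the paper actually uses (one takes $b=\tfrac12-\delta$ to extract $T^\delta$), your argument is correct as written.
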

The lemma is used to obtain mitigating factors $T^\delta$ for the short-time bilinear and energy estimates to handle large initial data.

\section{Linear Strichartz estimates}
\label{section:StrichartzEstimates}

\subsection{$L^4$-Strichartz estimate for KP-II}

In the following we consider functions $f: \T^2 \to \C$ with
\begin{equation*}
\operatorname{supp}(\hat{f}) \subseteq \{ |\xi| \sim N, \, |\eta| \lesssim N^2 \}.
\end{equation*}
This region matches unit frequencies after carrying out anisotropic scaling $\xi \to \xi/N$, $\eta \to \eta/N^2$, which is a symmetry of the KP-II dispersion relation.

\subsection{Notation and general decoupling result}
We introduce notation to formulate the result from \cite{GuthMaldagueOh2024}.

Let $\phi: \R^2 \to \R$ be a smooth function. We define the manifold
\begin{equation*}
\mathcal{M}_{\phi} = \{ (\xi_1,\xi_2, \phi(\xi_1,\xi_2)) : (\xi_1,\xi_2) \in [0,1]^2 \} \subseteq \R^3.
\end{equation*}
$\mathcal{N}_\delta(\mathcal{M}_\phi)$ denotes the $\delta$-neighborhood of the manifold $\mathcal{M}_{\phi}$. We define $\delta$-flat sets, which trivialize the Fourier extension operator associated to $\mathcal{M}_{\phi}$. For $S \subseteq \R^2$ and $f \in \mathcal{S}(\R^3)$ we let $f_S$ denote the Fourier projection to $S \times \R$. Notably, for technical reaonss a smooth cutoff is preferable; see \cite{GuthMaldagueOh2024}.
\begin{definition}[$\delta$-flat set]
Let $\phi: \R^2 \to \R$ be smooth. We say that $S \subseteq [0,1]^2$ is $(\phi,\delta)$-flat if
\begin{equation*}
\sup_{u,v \in S} \big| \phi(u) - \phi(v) - \nabla \phi(u) \cdot (u-v) \big| \leq \delta.
\end{equation*}
\end{definition}

The following $\ell^2$-decoupling result was recently proved by Guth--Maldague--Oh:
\begin{theorem}[{\cite[Theorem~1.2]{GuthMaldagueOh2024}}]
\label{thm:GeneralL2Decoupling}
Let $\phi: \R^2 \to \R$ be a smooth function, and $0 < \delta < 1 $, and $\varepsilon > 0$. Then there exists a sufficiently large number $A$ depending on $\varepsilon$ and $\phi$ satisfying the following: For any $\delta > 0$ there exists a collection $\mathcal{S}_\delta$ of finitely overlapping sets $S$ such that
\begin{itemize}
\item[(i)] the overlapping is $\mathcal{O}(\log (\delta^{-1}))$ such that for the indicator functions $\chi_S$ the following estimate holds:
\begin{equation*}
\sum_{S \in \mathcal{S}_\delta} \chi_S \leq C_{\varepsilon,\phi} \log(\delta^{-1}),
\end{equation*}
\item[(ii)] any $S \in \mathcal{S}_\delta$ is $(\phi,A \delta)$-flat,
\item[(iii)] the following estimate holds:
\begin{equation*}
\| f \|_{L^4} \leq C_{\varepsilon, \phi} \delta^{-\varepsilon} \big( \sum_{S \in \mathcal{S}_\delta} \| f_S \|_{L^4}^2 \big)^{\frac{1}{2}}
\end{equation*}
for all functions with Fourier support contained in $\mathcal{N}_\delta(\mathcal{M}_\phi)$.
\end{itemize}
\end{theorem}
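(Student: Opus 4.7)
The plan is to build the partition $\mathcal{S}_\delta$ by an induction-on-scales procedure driven by the local behaviour of $D^2\phi$, at each step reducing to a case where a known decoupling inequality applies (elliptic paraboloid, hyperbolic paraboloid, or cylinder/cone). First I would Taylor expand $\phi$ around the centre $u_0$ of any candidate rectangle $R \subseteq [0,1]^2$,
\begin{equation*}
\phi(u) - \phi(u_0) - \nabla \phi(u_0) \cdot (u - u_0) = \tfrac{1}{2}(u-u_0)^{T} D^{2}\phi(u_0)(u-u_0) + O(|u-u_0|^{3}),
\end{equation*}
so that, modulo the cubic remainder, $(\phi,A\delta)$-flatness of $R$ is dictated by the eigenvalues $\lambda_1,\lambda_2$ of $D^2\phi(u_0)$ and the sidelengths of $R$.

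Next I would iterate the following step. Given a rectangle $R$ that is not yet $(\phi,A\delta)$-flat, pass to coordinates aligned with the eigenvectors of $D^2\phi(u_0)$. If both $|\lambda_i|$ are large enough at the current scale, $R$ carries a non-degenerate piece of an elliptic or hyperbolic paraboloid after anisotropic affine rescaling, and Bourgain--Demeter $\ell^2$-decoupling in $L^4$ partitions it into sub-rectangles of sizes $\sim (\delta/|\lambda_i|)^{1/2}$ which are flat up to the cubic error. If only one eigenvalue is large, $R$ is cylinder-like and a Pramanik--Seeger type decoupling acts only in the non-flat direction. When neither eigenvalue is large enough to matter, $R$ is already $(\phi,A\delta)$-flat once $A$ absorbs the Taylor remainder, and this leaf of the recursion terminates. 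Collecting only the leaves yields $\mathcal{S}_\delta$; bounded overlap at each scale together with recursion depth $O(\log \delta^{-1})$ gives the overlap bound in (i), while (ii) is built in at termination.

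The decoupling inequality (iii) would be obtained by telescoping the base estimates along the recursion tree: each elementary application of Bourgain--Demeter or Pramanik--Seeger contributes a multiplicative factor $C_\varepsilon\,(\mathrm{scale})^{-\varepsilon/10}$, and these combine over $O(\log\delta^{-1})$ layers to a total loss that is absorbed into $\delta^{-\varepsilon}$ after relabelling $\varepsilon$. The step I expect to be the main obstacle is the elliptic/hyperbolic sub-step, where one needs Bourgain--Demeter in a form uniform in the affine change of variables and in the ratio $|\lambda_1|/|\lambda_2|$, which can degenerate arbitrarily during the iteration; in parallel one must verify that the cubic and higher-order Taylor remainders remain negligible compared to $\delta$ throughout. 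Controlling these remainders forces the initial partition to be taken fine in terms of $\|\phi\|_{C^3}$, and this is what ultimately pins down the dependence of $A$ on $\varepsilon$ and $\phi$ in the statement.
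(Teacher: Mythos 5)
This statement is not proved in the paper at all: it is cited verbatim as \cite[Theorem~1.2]{GuthMaldagueOh2024}, and the paper only \emph{uses} it as a black box (in the proofs of Theorem \ref{thm:L4Strichartz}, Theorem \ref{thm:ImprovedL4Strichartz}, and Proposition \ref{prop:L4StrichartzShifted}) together with the companion bound on the size of $\delta$-flat sets, which is what the authors actually prove (Propositions \ref{prop:FlatSets} and \ref{prop:FlatSetsShifted}). So there is no in-paper argument to compare yours against; the relevant comparison would be with the Guth--Maldague--Oh paper itself.

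As a sketch of the GMO strategy, your outline captures the broad architecture correctly: an induction on scales driven by the local Hessian of $\phi$, classifying pieces as elliptic / hyperbolic / cylinder-like / already flat, applying Bourgain--Demeter or one-dimensional decoupling on the non-degenerate pieces, terminating on $(\phi, A\delta)$-flat leaves, and trading the $O(\log\delta^{-1})$-deep recursion tree for the overlap bound and the $\delta^{-\varepsilon}$ loss. You also put your finger on the genuinely delicate points: making Bourgain--Demeter uniform under the affine rescalings needed (where the eigenvalue ratio $|\lambda_1|/|\lambda_2|$ can degenerate along the iteration) and controlling the cubic and higher Taylor remainders so they stay below $\delta$ throughout. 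Those are the right places to worry. What a complete proof would additionally have to spell out, and what the sketch currently glosses over, is the stratification near the vanishing locus of $\det D^2\phi$ (where the elliptic/hyperbolic/cylinder trichotomy is not clean and a broad-narrow or bilinear reduction is typically needed), the fact that the $(\phi,A\delta)$-flat sets produced are not rectangles in a fixed frame but live in coordinates that rotate with the recursion (hence the bounded-overlap bookkeeping is nontrivial), and the precise dependence of $A$ on $\|\phi\|_{C^3}$ so that the flatness criterion and the remainder control are consistent. As written this is a plausible roadmap rather than a proof, and since the paper cites the result rather than proving it, the honest move for a writeup would be to do the same and refer to GMO rather than attempt to reconstruct the argument.
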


\subsection{Linear Strichartz estimates for KP-II}

As an application of Theorem \ref{thm:GeneralL2Decoupling}, we prove the following essentially sharp $L^4$-Strichartz estimate for the KP-II equation:
\begin{theorem}
\label{thm:L4Strichartz}
Let $f \in L^2(\T^2)$ with 
\begin{equation*}
\operatorname{supp}(\hat{f}) \subseteq A_{N,N^2} := \{ (\xi,\eta) \in \R^2 : |\xi| \sim N, \quad |\eta| \lesssim N^2 \}.
\end{equation*}
Then the following Strichartz estimate holds:
\begin{equation*}
\| S_{\mathrm{KP}}(t) f \|_{L^4_{t,x,y}([0,1] \times \T^2) } \lesssim_\varepsilon N^{\frac{1}{8}+\varepsilon} \| f \|_{L^2(\T^2)}.
\end{equation*}
\end{theorem}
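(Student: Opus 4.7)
The plan is to apply the Guth--Maldague--Oh $\ell^2$-decoupling theorem (Theorem \ref{thm:GeneralL2Decoupling}) at a single scale determined by the space-time box and to estimate each flat piece by a trivial Bernstein/interpolation argument playing the role of a semiclassical $L^4$-Strichartz bound.

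First, I would invoke the anisotropic KP-II scaling $\tilde\xi = \xi/N$, $\tilde\eta = \eta/N^2$, which carries $A_{N,N^2}$ onto the unit region $\{|\tilde\xi|\sim 1,\,|\tilde\eta|\lesssim 1\}$ and reduces the dispersion to $\omega(\xi,\eta) = N^3 \Phi(\tilde\xi,\tilde\eta)$ with $\Phi(\tilde\xi,\tilde\eta) = \tilde\xi^3 - \tilde\eta^2/\tilde\xi$. A direct computation shows
\[
\Phi_{\tilde\xi\tilde\xi}\Phi_{\tilde\eta\tilde\eta}-\Phi_{\tilde\xi\tilde\eta}^2 \equiv -12,
\]
so the graph $\mathcal{M}_\Phi$ is a non-degenerately curved saddle surface and Theorem \ref{thm:GeneralL2Decoupling} applies. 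I would then transfer the $L^4$-norm on $[0,1]\times\T^2$ to an extension-type norm on $\R^3$ via the standard Bourgain periodization, multiplying by smooth cutoffs in $(t,x,y)$ and representing each integer Fourier atom by a unit-scale bump; in the rescaled coordinates the largest dimension of the spacetime box (in $\tilde t = N^3 t$) is $N^3$, which dictates the decoupling scale $\delta = N^{-3}$.

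Applying Theorem \ref{thm:GeneralL2Decoupling} then yields
\[
\|S_{KP}(t)f\|_{L^4([0,1]\times\T^2)}\lesssim_\varepsilon N^\varepsilon\Bigl(\sum_{S\in\mathcal{S}_\delta}\|S_{KP}(t)f_S\|_{L^4([0,1]\times\T^2)}^2\Bigr)^{1/2},
\]
where each $(\Phi,A\delta)$-flat piece $S$ has rescaled dimensions $\sim N^{-3/2}\times N^{-3/2}$. Since the rescaled $\tilde\xi$-lattice spacing is $N^{-1}$ and the $\tilde\eta$-spacing is $N^{-2}$, each $S$ contains at most $\lesssim N^{1/2}$ original integer lattice points. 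On each such piece Bernstein gives $\|S_{KP}(t)f_S\|_{L^\infty_{x,y}} \leq (\#(S\cap\Z^2))^{1/2}\|f_S\|_{L^2}\lesssim N^{1/4}\|f_S\|_{L^2}$ uniformly in $t$ (this plays the role of the semiclassical $L^\infty$-Strichartz bound on a flat piece), and interpolating with $\|S_{KP}f_S\|_{L^2([0,1]\times\T^2)}\lesssim\|f_S\|_{L^2}$ produces $\|S_{KP}f_S\|_{L^4([0,1]\times\T^2)}\lesssim N^{1/8}\|f_S\|_{L^2}$. Combining this with the display above and using $\sum_S\|f_S\|_{L^2}^2\lesssim\log(\delta^{-1})\|f\|_{L^2}^2\lesssim N^\varepsilon\|f\|_{L^2}^2$ (finite overlap) yields the claim.

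The main technical obstacle I anticipate is the transfer/periodization step: because the decoupling scale $\delta = N^{-3}$ is finer than the $\tilde\xi$-lattice spacing $N^{-1}$, one has to justify carefully the passage from the continuous $\R^3$-decoupling inequality to the discrete Fourier series on $\T^2$ without inflating the constant. This may require first working at a coarser intermediate scale adapted to the lattice and applying a secondary parabolic rescaling inside each piece, or reformulating the whole decoupling step with respect to an anisotropic ball matching the actual rescaled spacetime box $[0,N^3]\times[0,2\pi N]\times[0,2\pi N^2]$.
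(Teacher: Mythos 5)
Your claim that the $(\Phi,A\delta)$-flat pieces have rescaled dimensions $\sim\delta^{1/2}\times\delta^{1/2}$ (i.e., $N^{-3/2}\times N^{-3/2}$) is incorrect, and the justification you give for it---the constant nonvanishing Hessian determinant $\Phi_{\xi\xi}\Phi_{\eta\eta}-\Phi_{\xi\eta}^2\equiv -12$---does not support it. Nonvanishing Gaussian curvature implies $\delta^{1/2}$-flat pieces only for surfaces of \emph{definite} second fundamental form (paraboloid-type). Here the curvature is \emph{negative}, so the surface is a saddle: it has two asymptotic directions at every point along which the second-order Taylor coefficient $a_2$ in $\omega(\gamma(t))=a_0+a_1t+a_2t^2+a_3t^3+\dots$ vanishes. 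Along these directions the graph is flat to second order, and the flat set can extend until the \emph{cubic} term kicks in, i.e.\ to length $\delta^{1/3}=N^{-1}$, not $\delta^{1/2}$. (If one dropped the $\xi^3$ summand from $\Phi$, the Hessian determinant would be identically zero; the $\xi^3$ term is simultaneously what makes the curvature nonzero \emph{and} what cuts off the asymptotic flat directions at $\delta^{1/3}$.) Establishing this requires the third-order analysis carried out in Proposition~\ref{prop:FlatSets}: the $\delta$-flat sets are parallelograms of size $\delta^{1/3}$ (long, in a direction with $|\xi'|\gtrsim1$) by $\delta^{1/2}$ (short, in $\eta$), as recorded in Remark~\ref{rem:gal}. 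Your proposal as written uses the GMO cover but assigns it the wrong geometry, so the argument has a genuine gap.

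Interestingly, your Bernstein-plus-interpolation estimate per piece would still close once the correct parallelogram geometry is supplied: after the anisotropic rescaling back to $\Z^2$, the $\delta^{1/3}\times\delta^{1/2}$ parallelogram has $\xi$-extent $\sim 1$ and, for each fixed integer $\xi$, $\eta$-extent $N^2\delta^{1/2}\sim N^{1/2}$, giving $O(N^{1/2})$ lattice points---coincidentally the same count your $\delta^{1/2}\times\delta^{1/2}$ guess produces. With that count, Cauchy--Schwarz/Bernstein gives the uniform-in-$t$ $L^\infty_{x,y}$ bound $N^{1/4}\|f_S\|_{L^2}$ and interpolation with the trivial $L^2$ identity gives $N^{1/8}\|f_S\|_{L^2}$, exactly as you compute. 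This is genuinely a more elementary mechanism per piece than the paper's: the paper bounds $\theta_{N,N^2}$ by the cruder $1\times N$ rectangle and compensates with the lossless semiclassical $L^8_tL^4_y$ Schr\"odinger Strichartz estimate on time $[0,N^{-1}]$ (Staffilani--Tataru, Burq--G\'erard--Tzvetkov). Your Bernstein route requires the sharper parallelogram shape and would not tolerate the $1\times N$ bound; the Strichartz route is more robust and is re-used in Theorem~\ref{thm:ImprovedL4Strichartz} and Proposition~\ref{prop:L4StrichartzShifted}, where the frequency counts per piece change. The periodization/mollification issue you flag at the end is real and is dealt with in the paper by a continuous-approximation scheme (mollify the Dirac comb, decouple, pass to the limit), not by re-choosing the decoupling scale.
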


Having Theorem \ref{thm:GeneralL2Decoupling} at disposal, the key point to obtain favorable Strichartz estimates is to find an upper bound on the size of $(\omega,\delta)$-flat sets. This is the content of the following proposition:

\begin{proposition}[Bound~on~flat~sets]
\label{prop:FlatSets}
Consider the manifold
\begin{equation*}
\mathcal{M}_{\mathrm{KP}} = \{ (\xi,\eta,\tau) \in \R^3 : |\xi| \sim 1, \; |\eta| \lesssim 1, \; \tau = \omega(\xi,\eta) \}.
\end{equation*}
Then $\delta$-flat sets $S \subseteq \{ (\xi,\eta) \in \R^2 : |\xi| \sim 1, \; |\eta| \lesssim 1 \}$ are contained in balls of size comparable to $\delta^{\frac{1}{3}}$.
\end{proposition}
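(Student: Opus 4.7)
The plan is to prove a pointwise lower bound
\[
|R_u(h)| + |R_{u+h}(-h)| \gtrsim |h|^3
\]
for the Taylor remainder $R_u(h) := \omega(u+h)-\omega(u)-\nabla\omega(u)\cdot h$, uniformly in $u = (\xi,\eta)$ with $|\xi|\sim 1$ and $|\eta|\lesssim 1$, valid for all $|h|$ below an absolute constant $c_0$. Because the $(\omega,\delta)$-flatness of $S$ controls both summands by $A\delta$ whenever $u$ and $u+h$ lie in $S$, this forces $|h| \lesssim \delta^{1/3}$ and yields the diameter bound. The complementary range $|h|\geq c_0$ is trivial when $\delta^{1/3}\gtrsim c_0$, and is otherwise eliminated by a compactness lower bound on the positive continuous function $|R_u(h)|+|R_{u+h}(-h)|$ over the compact set of pairs at distance $\geq c_0$.

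Writing $\omega = \xi^3 - \eta^2/\xi$ and placing the rational part over the common denominator $\xi^2(\xi+a)$, a direct algebraic computation yields, with $h = (a,b)$,
\[
R_u(a,b) = 3\xi a^2 + a^3 - \frac{(\xi b - a\eta)^2}{\xi^2(\xi+a)},
\]
from which one reads off $\det H(u) \equiv -12$ on the entire admissible region. Thus $H(u)$ is uniformly indefinite with null directions $b = \lambda_\pm a$, $\lambda_\pm = (\eta\pm\sqrt{3}\xi^2)/\xi$. It is this indefiniteness that forces the exponent $1/3$ instead of the elliptic exponent $1/2$: the quadratic approximation to $R_u$ vanishes on the null directions, so the cubic term of $\omega$ must enter the analysis to obtain a nontrivial lower bound there.

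Taylor expansion around $u$ gives
\[
R_u(h) = \tfrac12 h^\top H(u) h + \tfrac16 C_u(h) + O(|h|^4), \quad R_{u+h}(-h) = \tfrac12 h^\top H(u) h + \tfrac13 C_u(h) + O(|h|^4),
\]
with $C_u(h) := \sum_{ijk}\omega_{ijk}(u)h_i h_j h_k$, so the antisymmetric combination reads $R_u(h)-R_{u+h}(-h) = -\tfrac16 C_u(h) + O(|h|^4)$. A short computation with the third derivatives of $\omega$ produces the factored form
\[
C_u(a,b) = 6a\Bigl(a^2 + \tfrac{1}{\xi^2}\bigl(b - \tfrac{\eta a}{\xi}\bigr)^2\Bigr),
\]
which satisfies $|C_u(h)| \geq 6|a|^3$ and specializes to $24\,a^3$ along either null direction. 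The elementary bound $|R_u|+|R_{u+h}| \geq \tfrac12(|R_u+R_{u+h}|+|R_u-R_{u+h}|)$ combined with a dichotomy on $|a|$ then yields the claim: in the regime $|a|\geq|h|/C_0$ one has $|R_u-R_{u+h}|\gtrsim|C_u(h)|\gtrsim|h|^3$; in the regime $|a|\leq|h|/C_0$ one has $|b|\sim|h|$ and the $-2b^2/\xi$ contribution dominates $h^\top H(u) h$, giving $|R_u+R_{u+h}|\gtrsim|h|^2\geq|h|^3$ for $|h|\leq 1$. Choosing $c_0$ small enough absorbs the $O(|h|^4)$ error. The main obstacle is precisely this cubic identification: since the zero set of the single function $R_u$ is a smooth curve through the origin, any one-base-point argument is vacuous along that curve, and it is the antisymmetric combination $R_u-R_{u+h}$, governed by $C_u$ (which does not vanish away from $a=0$), that allows the bound to close.
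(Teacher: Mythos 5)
Your proof takes a genuinely different and more self-contained route than the paper. The paper cites Remark~4.1 of Guth--Maldague--Oh, which reduces the diameter bound to a check that the restriction of $\omega$ to any line has quadratic-or-cubic coefficient bounded below, and then carries out that check; you instead prove a quantitative two-point lower bound directly from closed formulas. The algebra is correct: $R_u(a,b) = 3\xi a^2 + a^3 - (\xi b - a\eta)^2/(\xi^2(\xi+a))$, the identity $\det H(u) \equiv -12$, and the factored cubic $C_u(a,b)=6a\bigl(a^2 + \xi^{-2}(b-\eta a/\xi)^2\bigr)$ all check out, and the dichotomy on $|a|$ for $|h|\le c_0$ closes cleanly. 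Your remark that the one-base-point bound $|R_u(h)|\gtrsim|h|^3$ actually \emph{fails} pointwise (the zero set of $R_u$ is a curve through the origin, precisely because $\det H<0$) is a sharper account of the geometry than the shorthand in the paper's write-up, and the passage to the antisymmetric combination $R_u(h)-R_{u+h}(-h)\approx-\tfrac16 C_u(h)$ is exactly the right fix.

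The gap is in the regime $|h|\ge c_0$. You invoke compactness together with the assertion that $|R_u(h)|+|R_{u+h}(-h)|$ is a \emph{positive} continuous function on pairs at distance at least $c_0$. For a uniformly indefinite Hessian this positivity is not automatic: the simultaneous conditions $R_u(v-u)=R_v(u-v)=0$ form a codimension-two system that generically admits isolated nontrivial solutions (for the model saddle $\phi(x,y)=x^2-y^2$, the pair $u=(1,0),v=(0,1)$ kills both remainders). So an appeal to compactness without further work does not give a nonzero infimum. For the KP surface with $\xi$ confined to a single sign component one can in fact verify there are no nontrivial solutions — normalize $\eta_1=0$ by Galilean invariance and observe that the two equations reduce to $(\xi_1-\xi_2)^2(\xi_1+\xi_2)=0$ together with $\eta_2^2=\xi_2(\xi_2-\xi_1)^2(\xi_2+2\xi_1)$, which is inconsistent unless $\xi_2=\xi_1$ — but you do not carry this out. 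The simpler and more robust repair, matching the use case in the decoupling argument, is to recall that the flat sets delivered by GMO are convex: then for $|u-v|\ge c_0$ take the point $w$ on the segment at distance $c_0/2$ from $u$, apply your small-$|h|$ bound to the pair $(u,w)$, and conclude $\delta\gtrsim c_0^3$, so the large-distance regime is vacuous once $\delta$ is small.
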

Taking Proposition \ref{prop:FlatSets} for granted, we finish the proof of Theorem \ref{thm:L4Strichartz}:
\begin{proof}[{Proposition~\ref{prop:FlatSets}~implies~Theorem~\ref{thm:L4Strichartz}}]
By $L^2$-boundedness of the propagator it suffices to consider times $t \in [0,1/10]$.
In the first step we normalize to unit frequencies by the anisotropic scaling $x \to N x$, $y \to N^2 y$, $t \to N^3 t$ which corresponds to $\xi \to \xi /N$, $\eta \to \eta/N^2$ and leaves the characteristic surface invariant:
\small
\begin{equation*}
\begin{split}
&\quad \Big\| \sum_{(\xi,\eta) \in A_{N,N^2} \cap \Z^2} e^{i(x \xi + y \eta + t (\xi^3 - \frac{\eta^2}{\xi})} \hat{f}(\xi,\eta) \Big\|_{L^4_{t,x,y}([0,1/10] \times \T^2)} \\
&= N^{-\frac{3}{4}} N^{-\frac{1}{4}} N^{-\frac{2}{4}} \\
&\quad \times \Big\| \sum_{(\xi',\eta') \in A_{1,1} \cap (\Z/N \times \Z/N^2)} e^{i(x' \xi' + y' \eta' + t'( (\xi')^3 - (\eta')^2 / \xi')} \underbrace{\hat{f}(N \xi', N^2 \eta')}_{=: b_{\xi',\eta'}} \Big\|_{L^4_{t',x',y'}([0,N^3/10] \times N \T \times N^2 \T)}.
\end{split}
\end{equation*}
\normalsize
We use spatial periodicity in $x$ and $y$ to obtain an isotropic domain of integration:
\begin{equation*}
\begin{split}
&\quad \Big\| \sum_{(\xi',\eta') \in A_{1,1} \cap (\Z/N \times \Z/N^2)} e^{i(x' \xi' + y' \eta' + t'((\xi')^3 - (\eta')^2 / \xi')} b_{\xi',\eta'} \Big\|_{L^4_{t',x',y'}([0,N^3/10] \times N \T \times N^2 \T)} \\
&\lesssim N^{-\frac{2}{4}} N^{-\frac{1}{4}} \Big\| \sum_{(\xi',\eta') \in A_{1,1} \cap (\Z/N \times \Z/N^2)} e^{i(x' \xi' + y' \eta' + t'((\xi')^3 - (\eta')^2 / \xi'))} b_{\xi',\eta'} \Big\|_{L^4_{t',x',y'}(B_{N^3})}.
\end{split}
\end{equation*}
Above and in the following $B_{M} = \{ x \in \R^3 : |x| \leq M \}$ denotes a ball of radius $M > 0$ in three dimensions. Now we use continuous approximation and approximate the Dirac comb
\begin{equation*}
f_b = \sum_{(\xi',\eta') \in A_{1,1} \cap (\Z /N \times \Z/N^2)} b_{\xi',\eta'} \delta_{\xi',\eta'} \in \mathcal{S}'(\R^2)
\end{equation*}
with a mollified version:
\begin{equation*}
f_{b,\lambda}(\xi,\eta) = \sum_{(\xi',\eta') \in A_{1,1} \cap (\Z/N \times \Z/N^2)} b_{\xi',\eta'} \psi_{\xi',\eta',\lambda}(\xi,\eta)
\end{equation*}
with $\psi_{\xi',\eta',\lambda} \in C^\infty_c(\R^2)$ with $\operatorname{supp}(\psi_{\xi',\eta',\lambda}) \subseteq B_2((\xi',\eta'),1/\lambda)$ such that $\psi_{\xi',\eta',\lambda} \to \delta_{\xi',\eta'} \text{ in } \mathcal{S}'(\R^2)$ as $\lambda \to \infty$.

\smallskip

Let $\mathcal{E}_{\mathrm{KP}}$ denote the Fourier extension operator associated to the KP-II characteristic surface:
\begin{equation*}
\mathcal{E}_{\mathrm{KP}} f = \int_{ \{ |\xi| \sim 1, \; |\eta| \lesssim 1 \} } e^{i(x \xi + y \eta + t \omega(\xi,\eta))} f(\xi,\eta) d\xi d\eta
\end{equation*}
Let $\delta := N^{-3}$. We can choose $\lambda$ large enough such that
\begin{equation*}
\big\| \mathcal{E}_{\mathrm{KP}} f_b \big\|_{L^4_{t',x',y'}(B_{\delta^{-1}})} \lesssim \big\| \mathcal{E}_{\mathrm{KP}} f_{b,\lambda} \big\|_{L^4_{t',x',y'}(B_{\delta^{-1}})}.
\end{equation*}
Let $w_{B_{\delta^{-1}}}$ be a weight which is compactly supported in Fourier space on a ball of size $\delta$, comparable to $1$ on $B_{\delta^{-1}}$, and rapidly decaying off $B_{\delta^{-1}}$. We can dominate
\begin{equation*}
\big\| \mathcal{E}_{\mathrm{KP}} f_b \big\|_{L^4_{t',x',y'}(B_{\delta^{-1}})} \lesssim \big\| \mathcal{E}_{\mathrm{KP}} f_{b,\lambda} \cdot w_{B_{\delta^{-1}}} \big\|_{L^4_{t',x',y'}(\R^3)}.
\end{equation*}
The function $F = \mathcal{E}_{\mathrm{KP}} f_{b,\lambda} \cdot w_{B_{\delta^{-1}}}$ has Fourier support in a $\delta$-neighbourhood of $\mathcal{M}_{\mathrm{KP}}$. We can apply Theorem \ref{thm:GeneralL2Decoupling} to find
\begin{equation*}
\| F \|_{L^4_{t',x',y'}(\R^3)} \lesssim_\varepsilon \delta^{-\varepsilon} \big( \sum_{\theta \in \mathcal{S}_\delta} \| F_\theta \|_{L^4_{t',x',y'}(\R^3)}^2 \big)^{\frac{1}{2}}
\end{equation*}
with $\theta$ denoting rectangles at most of size $\delta^{\frac{1}{3}} \times \delta^{\frac{1}{3}}$, which follows from Proposition \ref{prop:FlatSets}, and $\mathcal{S}_\delta$ being a $\log(\delta^{-1})$-overlapping cover. We reverse the continuous approximation (cf. \cite[Section~2]{Schippa2023Refinements}) by letting $\lambda \to \infty$:
\begin{equation*}
\begin{split}
\| F_\theta \|_{L^4(\R^3)} &= \| \mathcal{E}_{\mathrm{KP}} f_{b,\theta,\lambda} \cdot w_{B_{\delta^{-1}}} \|_{L^4_{t',x',y'}(\R^3)} \\
&\to \big\| \sum_{(\xi',\eta') \in (\Z / N \times \Z / N^2) \cap \theta} e^{i(x' \xi' + y' \eta' + t' \omega(\xi',\eta'))} b_{\xi',\eta'} \big\|_{L^4_{t',x',y'}(w_{B_{N^3}})}.
\end{split}
\end{equation*}
Observe that $\theta$ is a rectangle of size at most $1/N \times 1/N$, which means that  $\# \{ \xi' \in \Z/N : \xi' \in \pi_{\xi'} \theta \}$ is $\mathcal{O}(1)$. By rapid decay and periodicity it suffices to estimate
\begin{equation*}
\big\| \sum_{(\xi',\eta') \in (\Z / N \times \Z/N^2) \cap \theta \cap A_{1,1} } e^{i(x' \xi' + \eta' y' + t' \omega(\xi',\eta'))} b_{\xi',\eta'} \big\|_{L^4_{t',x',y'}(B_{N^3})}.
\end{equation*}
We use periodicity of $x'$ with period $N$ and of $y'$ with period $N^2$ to reverse the enlargement of the domain of integration:
\begin{equation*}
\begin{split}
&\quad \big\| \sum_{(\xi',\eta') \in (\Z /N \times \Z /N^2) \cap \theta } e^{i(x' \xi' + y' \eta' + t' \omega(\xi',\eta'))} b_{\xi',\eta'} \big\|_{L^4_{t',x',y'}(B_{N^3})} \\
&\lesssim N^{\frac{2}{4}} N^{\frac{1}{4}}
\big\| \sum_{(\xi',\eta') \in (\Z /N \times \Z /N^2) \cap \theta } e^{i(x' \xi' + y' \eta' + t' \omega(\xi',\eta'))} b_{\xi',\eta'} \big\|_{L^4_{t',x',y'}([0,N^3] \times N \T \times N^2 \T)}.
\end{split}
\end{equation*}
We can moreover reverse the scaling to find:
\begin{equation*}
\begin{split}
&\quad \big\| \sum_{(\xi',\eta') \in (\Z /N \times \Z /N^2) \cap \theta } e^{i(x' \xi' + y' \eta' + t' \omega(\xi',\eta'))} b_{\xi',\eta'} \big\|_{L^4_{t',x',y'}([0,N^3] \times N \T \times N^2 \T)} \\
&= N^{\frac{3}{4}} N^{\frac{1}{4}} N^{\frac{2}{4}} \big\| \sum_{(\xi,\eta) \in \Z^2 \cap \theta_{N,N^2}} e^{i(x \xi + y \eta + t \omega(\xi,\eta))} \hat{f}(\xi,\eta) \big\|_{L^4_{t,x,y}([0,1] \times \T^2)}.
\end{split}
\end{equation*}
In the above display $\theta_{N,N^2}$ denotes the anisotropic dilation of $\theta$ with dilation factor $N$ into $\xi$ and $N^2$ into $\eta$-direction.
We remark that finally every scaling to unit frequencies has been reversed and so has been the enlargement of the domain of integration by periodicity. All scaling factors cancel and we obtain
\begin{equation}
\label{eq:DecouplingConsequence}
\| S_{\mathrm{KP}}(t) f_{\theta} \|_{L^4_{t,x,y}([0,1] \times \T^2)} \lesssim_\varepsilon N^\varepsilon \big( \sum_{ \theta \in S_\delta} \| S_{\mathrm{KP}}(t) f_{\theta_{N,N^2}} \|_{L_{t,x,y}^4([0,1] \times \T^2)}^2 \big)^{\frac{1}{2}}.
\end{equation}

By Proposition \ref{prop:FlatSets} $\theta_{N,N^2}$ is at most of size $\sim 1 \times N$. So we can find intervals $I_{\theta,\xi}$ and $I_{\theta,\eta}$ of length $\sim 1$ and $N$, respectively, such that
\begin{equation*}
\theta_{N,N^2} \subseteq I_{\theta,\xi} \times I_{\theta,\eta} \subseteq \tilde{\theta}_{N,N^2},
\end{equation*}
where $\tilde{\theta}_{N,N^2}$ denotes a slight enlargement of $\theta_{N,N^2}$.

 By an application of Minkowski's inequality we have
\begin{equation}
\label{eq:EstimateThetaI}
\begin{split}
&\quad \big\| \sum_{(\xi,\eta) \in A_{N,N^2} \cap \theta_{N,N^2}} e^{i( x \xi + y \eta + t (\xi^3 - \eta^2 / \xi))} b_{\xi,\eta} \big\|_{L^4_{t,x,y}([0,1] \times \T^2)} \\
&\lesssim \sum_{\xi \in I_{\theta,\xi}} \big\| \sum_{\eta \in I_{\theta,\eta}} e^{i(y \eta - t \eta^2/ \xi)} b_{\xi,\eta} \big\|_{L^4_{t,y}([0,1] \times \T)}.
\end{split}
\end{equation}
A change of variables, keeping in mind that $|\xi| \sim N$, implies
\begin{equation}
\label{eq:EstimateThetaII}
\begin{split}
\big\| \sum_{\eta \in I_{\theta,\eta}} e^{i(y \eta - t \eta^2 / \xi)} b_{\xi,\eta} \big\|_{L^4_{t,y}([0,1] \times \T)} &\lesssim N^{\frac{1}{4}} \big\| \sum_{\eta \in I_{\theta,\eta}} e^{i(y \eta - t' \eta^2)} b_{\xi,\eta} \big\|_{L^4_{t,y}([0,N^{-1}] \times \T)} \\
&\lesssim N^{\frac{1}{8}} \big\| \sum_{\eta \in I_{\theta,\eta}} e^{i(y \eta - t' \eta^2)} b_{\xi,\eta} \big\|_{L^8_{t}([0,N^{-1}],L^4_y (\T))}.
\end{split}
\end{equation}
This expression can be estimated by short-time (lossless) Strichartz estimates due to Staffilani--Tataru \cite{StaffilaniTataru2002} and Burq--Gérard--Tzvetkov \cite{BurqGerardTzvetkov2004} after applying Galilean invariance to shift the $\eta$-frequencies to an interval of length $N$ centered at the origin:
\begin{equation}
\label{eq:EstimateThetaIII}
\big\| \sum_{\eta \in I_{\theta,\eta}} e^{i(y \eta - t' \eta^2)} b_{\xi,\eta} \big\|_{L_t^8([0,N^{-1}],L^4(\T))} \lesssim \big( \sum_{\eta \in I_{\theta,\eta}} |b_{\xi,\eta}|^2 \big)^{\frac{1}{2}}.
\end{equation}
Taking \eqref{eq:EstimateThetaI} and \eqref{eq:EstimateThetaIII} together gives
\begin{equation*}
\begin{split}
&\quad \big\| \sum_{(\xi,\eta) \in A_{N,N^2} \cap \theta_{N,N^2}} e^{i( x \xi + y \eta + t (\xi^3 - \eta^2 / \xi))} b_{\xi,\eta} \big\|_{L^4_{t,y}([0,1] \times \T)} \\
&\lesssim N^{\frac{1}{8}} \big( \sum_{(\xi,\eta) \in \theta_{N,N^2}} |b_{\xi,\eta} |^2 \big)^{\frac{1}{2}}.
\end{split}
\end{equation*}
Plugging this estimate into \eqref{eq:DecouplingConsequence} and using the overlap being bounded with $\log(N)$ we find
\begin{equation*}
\| S_{\mathrm{KP}}(t) f \|_{L^4_{t,x,y}([0,1] \times \T^2)} \lesssim_\varepsilon N^{\frac{1}{8}+2 \varepsilon} \| f \|_{L^2(\T^2)}.
\end{equation*}
The proof is complete.
\end{proof}

It remains to show Proposition \ref{prop:FlatSets}:
\begin{proof}[Proof~of~Proposition~\ref{prop:FlatSets}]
We follow the argument of \cite[Remark~4.1]{GuthMaldagueOh2024}. It suffices to check that for $\gamma(t)$ parametrizing a line with unit speed we have
\begin{equation}
\label{eq:LineExpansion}
\omega(\gamma(t)) = a_0 + a_1 t + a_2 t^2 + a_3 t^3 + E(t)
\end{equation}
with $|a_2| + |a_3| \gtrsim 1$ and $|E(t)| \leq c t^4$. Then we can infer that for $u,v \in R$, $|u-v| \ll 1$, $R$ being a rectangle:
\begin{equation*}
|\omega(u) - \omega(v) - \nabla \omega(u) \cdot (u-v) | \gtrsim |u-v|^3.
\end{equation*}
Letting $\gamma(t) = u + \frac{t (v-u)}{|v-u|}$ a Taylor expansion of $\omega(\gamma(t))$ yields the claim. Write $\gamma(t) = (\xi_0,\eta_0) + t (\xi',\eta')$, and recall that
\begin{equation*}
|\xi_0| \sim 1, \quad |\eta_0| \lesssim 1, \quad |(\xi',\eta')| = 1.
\end{equation*}

To simplify the evaluation of
\begin{equation*}
\omega((\xi_0,\eta_0) + t (\xi',\eta')) = (\xi_0 + t \xi')^3 - \frac{(\eta_0 + t \eta')^2}{\xi_0 + t \xi'},
\end{equation*}
we use the Galilean invariance:
\begin{equation}
\label{eq:GalileanInvarianceKPII}
\omega(\xi,\eta + A \xi) = \omega(\xi, \eta) - 2 A \eta + A^2 \xi.
\end{equation}

First, we deal with the simpler case $|\xi'| \ll |\eta'|$. We find
\begin{equation*}
\begin{split}
(\xi_0 + t \xi')^3 - \frac{(\eta_0 + t \eta')^2}{\xi_0 + t \xi'} &= \xi_0^3 + \mathcal{O}(\xi') - \frac{\eta_0^2 + 2 t \eta_0 \eta' + t^2 (\eta')^2}{\xi_0} (1 + \mathcal{O}(\xi')) \\
&= \xi_0^3 - \frac{\eta_0^2}{\xi_0} + \frac{2t \eta_0}{\xi_0} - \frac{t^2 (\eta')^2}{\xi_0} + \mathcal{O}(\xi').
\end{split}
\end{equation*}
Consequently, the coefficient $a_2$ in \eqref{eq:LineExpansion} is given by $a_2 = - \frac{(\eta')^2}{\xi_0} + \mathcal{O}(\xi')$ and hence $|a_2| \gtrsim 1$. 

\smallskip

Next, we consider the case $|\xi'| \gtrsim 1$. In this case we apply \eqref{eq:GalileanInvarianceKPII} with $\xi = \xi_0 + t \xi'$ and $\eta = \eta_0 + t \eta'$, $A = - \eta' / \xi'$:
Hence,
\begin{equation*}
\omega(\xi,\eta) = \omega(\xi,\eta + A \xi) + 2 A \eta - A^2 \xi = \omega(\xi,\eta - \frac{\eta'}{\xi'} \xi) + \text{Lin}(t).
\end{equation*}
The linear term $\text{Lin}(t) = 2 A \eta - A^2 \xi$ can be disregarded for the computation, it remains to analyze
\begin{equation*}
\begin{split}
\omega(\xi,\eta - \frac{\eta'}{\xi'} \xi) &= \omega(\xi, \eta_0 - \frac{\eta'}{\xi'} \xi_0) \\
&= (\xi_0 + t \xi')^3 - \frac{\bar{\eta}^2}{\xi_0 + t \xi'} \\
&= (\xi_0^3 + 3t \xi_0^2 \xi' + 3 t^2 \xi_0 (\xi')^2 + t^3 (\xi')^3) \\
&\quad - \frac{\bar{\eta}^2}{\xi_0}(1 - t \frac{\xi'}{\xi_0} + t^2 \frac{\xi'^2}{\xi_0^2} - t^3 \frac{(\xi')^3}{\xi_0^3} + \mathcal{O}(t^4)).
\end{split}
\end{equation*}
Above we have set $\bar{\eta} = \eta_0 - \frac{\eta'}{\xi'} \xi_0$. Note that uniform bounds on the remainder term $\mathcal{O}(t^4)$ follow from $|\xi_0| \sim 1$ and boundedness of the remaining parameters.
We obtain
\begin{equation*}
\omega(\xi,\eta - \frac{\eta'}{\xi'} \xi) = \text{Lin}(\xi',\eta') + t^2 (\xi')^2 \xi_0 \big( 3 - \frac{\bar{\eta}^2}{\xi_0^4} \big) + t^3 (\xi')^3 (1 + \frac{\bar{\eta}^2}{\xi_0^4} ) + \mathcal{O}(t^4).
\end{equation*}
$a_2$ can possibly vanish, but we have the lower bound $|a_3| \gtrsim 1$. This finishes the proof.
\end{proof}

\begin{remark}[Shape~of~$\delta$-flat~sets]\label{rem:gal}
The proof gives a more precise description of the shape of the $\delta$-flat rectangles $R$: Fixing $(\xi_0,\eta_0) \in R$, we have that $|\eta-\eta_0| \lesssim \delta^{\frac{1}{2}}$ for any $(\xi_0,\eta) \in R$. This was pointed out in the first part of the proof.
Secondly, there is possibly a direction $(\xi',\eta')$ with $|\xi'| \gtrsim 1$ in which $R$ is extended to length $\delta^{\frac{1}{3}}$ as argued in the second part of the proof.
\begin{figure}[h]
        \begin{tikzpicture}
            \draw (-3,0)--(0,2);
            \draw (-3,0)--(-3,1);
            \draw (-3,1)--(0,3);
            \draw (0,2)--(0,3);
            \node[left] at(-3,0.5){$\delta^{\frac12}$};
            \node[below] at(-1.2,1.2){$\delta^{\frac13}$};
             \draw (3,0)--(6.6,0);
            \draw (3,0)--(3,1);
            \draw (3,1)--(6.6,1);
            \draw (6.6,0)--(6.6,1);
            \node[left] at(3,0.5){$\delta^{\frac12}$};
            \node[below] at(4.8,0){$\delta^{\frac13}$};
           \draw[->](0.2,1.3)to[bend left](2.8,1);\node at (1.7,1.9) {$\eta\mapsto \eta-A \xi $};
        \end{tikzpicture} 
        \caption{Galilean transformation to shift the long direction into the $\xi$-direction.}
    \end{figure}
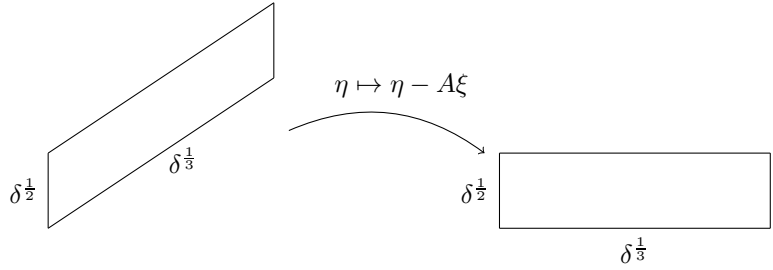
\end{remark}

\subsection{Sharpness of the estimate}\label{subsec:sharp}
We conclude the section with an example showing sharpness up to endpoints:

Define $f: \T^2 \to \C$ by
\begin{equation*}
\hat{f}(\xi,\eta) = \begin{cases}
 1 ,\quad \xi = N, \; \eta = 0,\ldots,N^{\frac{1}{2}}, \\
 0, \quad \text{else}.
 \end{cases}
\end{equation*}
We have
\begin{equation*}
\begin{split}
\| S_{\mathrm{KP}}(t) f \|_{L^4_{t,x,y}([0,1] \times \T^2)} &=  \big\| \sum_{\xi = N, \eta = 0, \ldots, N^{\frac{1}{2}}} e^{i(x \xi + y \eta + t \omega(\xi,\eta))} \big\|_{L^4_{t,x,y}([0,1] \times \T^2)} \\
&= \big\| \sum_{\eta=0}^{N^{\frac{1}{2}}} e^{i (y \eta - t \frac{\eta^2}{N} )} \big\|_{L^4_{t,y}([0,1] \times \T)} \\
&= N^{\frac{1}{4}} \big\| \sum_{\eta = 0}^{N^{\frac{1}{2}}} e^{i(y \eta - t \eta^2)} \big\|_{L^4_{t,y}([0,N^{-1}] \times \T)}.
\end{split}
\end{equation*}
On the time-scale $N^{-1}$ are no oscillations of $e^{it \Delta_y} \bar{f}$. Therefore, carrying out the integration over $t \in [0,N^{-1}]$ we find
\begin{equation*}
N^{\frac{1}{4}} \big\| \sum_{\eta = 0}^{N^{\frac{1}{2}}} e^{i(y \eta - t \eta^2)} \big\|_{L^4_{t,y}([0,N^{-1}] \times \T)} \sim \big\| \sum_{\eta = 0}^{N^{\frac{1}{2}}} e^{i y \eta } \big\|_{L^4_y(\T)} \sim N^{\frac{3}{8}} \sim N^{\frac{1}{8}} \| \sum_{\eta=0}^{N^{\frac{1}{2}}} e^{iy \eta} \|_{L^2}.
\end{equation*}

\subsection{Improved estimate on frequency-dependent time intervals}

We record the following refinement on frequency-dependent times:
\begin{theorem}
\label{thm:ImprovedL4Strichartz}
Let $\alpha \in (0,1]$, and $f \in L^2(\T^2)$ with 
\begin{equation*}
\operatorname{supp}(\hat{f}) \subseteq A_{N,N^2} := \{ (\xi,\eta) \in \R^2 : |\xi| \sim N, \quad |\eta| \lesssim N^2 \}.
\end{equation*}
\begin{itemize}
\item[(i)] The following Strichartz estimate holds:
\begin{equation}
\label{eq:ShorttimeStrichartzI}
\| S_{\mathrm{KP}}(t) f \|_{L^4_{t,x,y}([0,N^{-\alpha}] \times \T^2) } \lesssim_\varepsilon N^{\frac{1}{8}+\frac{\alpha}{12}-\frac{\alpha}{8}+\varepsilon} \| f \|_{L^2(\T^2)}.
\end{equation}
\item[(ii)]
If it holds $|\eta| \ll N^2$ for any $(\xi,\eta) \in \operatorname{supp}(\hat{f})$, then we have
\begin{equation}
\label{eq:ShorttimeStrichartzII}
\| S_{\mathrm{KP}}(t) f \|_{L^4_{t,x,y}([0,N^{-\alpha}] \times \T^2) } \lesssim_\varepsilon N^{\frac{1}{8}-\frac{\alpha}{8}+\varepsilon} \| f \|_{L^2(\T^2)}.
\end{equation}
\end{itemize}
\end{theorem}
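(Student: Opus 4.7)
The plan is to adapt the proof of Theorem \ref{thm:L4Strichartz} to the shorter time interval $[0,N^{-\alpha}]$. After the same anisotropic scaling $x\mapsto Nx$, $y\mapsto N^2 y$, $t\mapsto N^3 t$, the rescaled time interval becomes $[0,N^{3-\alpha}]$, so I would apply Theorem \ref{thm:GeneralL2Decoupling} at the refined scale $\delta = N^{\alpha-3}$ in place of $\delta=N^{-3}$; the natural physical box $B_{\delta^{-1}}=B_{N^{3-\alpha}}$ still encloses the rescaled time interval. By Proposition \ref{prop:FlatSets}, each $\delta$-flat set is contained in a ball of radius $\delta^{1/3}=N^{(\alpha-3)/3}$, so after reversing the scaling each rectangle $\theta_{N,N^2}$ has dimensions at most $N^{\alpha/3}\times N^{1+\alpha/3}$ in $(\xi,\eta)$.

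The main new difficulty compared with the proof of Theorem \ref{thm:L4Strichartz} is that the number of integer $\xi$-values in $\pi_\xi\theta_{N,N^2}$ is $\sim N^{\alpha/3}$ rather than $O(1)$, so the naive Minkowski step from \eqref{eq:EstimateThetaI} would cost a factor $N^{\alpha/3}$. To avoid this, I would apply a Plancherel--Bourgain bound in $x$: writing
\[
S_{KP}(t)f_\theta=\sum_\xi e^{i(x\xi+t\xi^3)}U_\xi(t,y),\qquad U_\xi(t,y)=\sum_\eta b_{\xi,\eta}e^{i(y\eta-t\eta^2/\xi)},
\]
the inequality $\|\sum_\xi c_\xi e^{ix\xi}\|_{L^4(\T)}\lesssim K^{1/4}(\sum_\xi|c_\xi|^2)^{1/2}$ for a sum of $K$ frequencies (obtained by interpolating Plancherel with the trivial $L^\infty$ bound), combined with Minkowski in $(t,y)$, produces the factor $N^{\alpha/12}$ instead of $N^{\alpha/3}$. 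For each fixed $\xi$, repeating the chain \eqref{eq:EstimateThetaII}--\eqref{eq:EstimateThetaIII} (change of variables $t=\xi t'$, giving $N^{1/4}$; Hölder in $t'$ on $[0,N^{-\alpha-1}]$, giving $N^{-(\alpha+1)/8}$; and the lossless semiclassical $L^8_{t'}L^4_y$-Strichartz of Staffilani--Tataru / Burq--Gérard--Tzvetkov, applicable because $N^{-\alpha-1}$ is below the semiclassical time for the $\eta$-frequencies $\lesssim N^{1+\alpha/3}$ present in $\theta$) yields $\|U_\xi\|_{L^4_{t,y}([0,N^{-\alpha}]\times\T)}\lesssim N^{(1-\alpha)/8}(\sum_\eta|b_{\xi,\eta}|^2)^{1/2}$. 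Each rectangle therefore contributes at most $N^{\frac18+\frac\alpha{12}-\frac\alpha8}\|f_\theta\|_{L^2}$, and decoupling reassembly (absorbing $\delta^{-\varepsilon}$ and the $O(\log\delta^{-1})$ overlap into an $N^\varepsilon$ loss) produces \eqref{eq:ShorttimeStrichartzI}.

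For part (ii), the additional hypothesis $|\eta|\ll N^2$ (equivalently $|\eta'|\ll 1$ after the anisotropic scaling) must be used to eliminate the $N^{\alpha/12}$ loss. My strategy would be to revisit the Taylor argument in the proof of Proposition \ref{prop:FlatSets} in the restricted regime $|\eta'|\ll 1$ and show that the directions $(\xi',\eta')$ along which the Galilean-adjusted rectangle extends to length $\delta^{1/3}$ are further constrained by the $\eta$-support, so that the effective $\xi$-extent of each $\theta_{N,N^2}$ shrinks to $O(1)$ and $\#\xi=O(1)$; the remainder of the argument then proceeds exactly as in part (i) but without the Plancherel--Bourgain loss, yielding \eqref{eq:ShorttimeStrichartzII}. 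I expect the main obstacle to be carrying out this refined flat-set analysis cleanly, particularly tracking how the Galilean shift $A=-\eta'/\xi'$ from \eqref{eq:GalileanInvarianceKPII} interacts with the $\eta$-support constraint along every direction with $|\xi'|\gtrsim 1$, since the shift can in principle tilt the rectangle significantly even when $|\eta'|\ll 1$.
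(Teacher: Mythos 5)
Your proof of part (i) is essentially correct and, while reorganized, uses the same ingredients as the paper: decoupling at scale $\delta^{-1}=N^{3-\alpha}$, the flat-set bound, and a combination of Bernstein-type and semiclassical Strichartz estimates on the individual flat sets. Your ``Plancherel--Bourgain'' step in $\xi$ is just Bernstein plus Plancherel, and your subsequent change of variables and semiclassical $L^8_{t'}L^4_y$ Strichartz estimate in $\eta$ produce the same $N^{\alpha/12}\cdot N^{(1-\alpha)/8}$. The paper achieves the identical bound in the worst case $|\eta_0|\sim N^2$ by applying Bernstein in the short $\eta$-direction first, then H\"older and Bernstein in $\xi$.

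There is, however, a genuine gap in your plan for part (ii). You propose to eliminate the $N^{\alpha/12}$ loss by showing that when $|\eta|\ll N^2$ the $\delta$-flat sets shrink in $\xi$ to $O(1)$-width. This is false. The proof of Proposition \ref{prop:FlatSets} shows that in any direction $(\xi',\eta')$ with $|\xi'|\gtrsim 1$ the cubic coefficient satisfies $|a_3|\gtrsim 1$ and the quadratic coefficient $a_2$ is proportional to $(\xi')^2\xi_0\bigl(3-\bar\eta^2/\xi_0^4\bigr)$ with $\bar\eta=\eta_0-(\eta'/\xi')\xi_0$. Even with $\eta_0=0$ one can choose a unit direction with $\eta'/\xi'\sim 1$ so that $\bar\eta^2/\xi_0^4=3$ and $a_2=0$; the flat set then extends to length $\delta^{1/3}$ in this direction, which has $|\xi'|\gtrsim 1$. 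The Galilean shear $\eta\mapsto\eta-A\xi$ leaves the $\xi$-extent unchanged, so after rescaling you still have $\#\xi\sim N^{\alpha/3}$ regardless of the $\eta$-support. Your Plancherel--Bourgain bound is sharp (e.g.\ for constant coefficients), so the $N^{\alpha/12}$ loss cannot be removed by this route.

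The paper's mechanism for (ii) is different and uses the dispersion in $\xi$ that your reorganization discards. After applying Bernstein in $\eta$ and performing the rescaling, the $\xi$-sum is governed by the pruned phase $\bar\omega(\xi',\eta')$, and a further Galilean shear removes the cross term $\eta'\xi'$, leaving a one-dimensional Schr\"odinger evolution in $\xi'$ with dispersion coefficient proportional to $3-\eta_0^2/\xi_0^4$. When $|\eta_0|\ll N^2$ this coefficient is $\gtrsim 1$, so the periodic $L^4_{t,x}$ Strichartz estimate for the 1D Schr\"odinger equation applies on the long time interval $[0,N^{1-\alpha}]$ with only the standard $N^{(1-\alpha)/4}$ loss, yielding \eqref{eq:ShorttimeStrichartzII}. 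When $|\eta_0|\sim N^2$ the coefficient may vanish and one falls back to H\"older and Bernstein, which is exactly the source of the $N^{\alpha/12}$ in \eqref{eq:ShorttimeStrichartzI}. To repair your proposal you would need to keep the $\xi$-dynamics intact (i.e.\ not sum $\xi$ first by Plancherel--Bourgain) and exploit the non-degeneracy of the effective $\xi$-dispersion under the hypothesis $|\eta|\ll N^2$.
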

\begin{proof}
We follow the arguments from the proof of Theorem \ref{thm:L4Strichartz}.
The key ingredient remains $\ell^2$-decoupling, which after using the anisotropic scaling $t \to N^3 t$, $x \to N x$, $y \to N^2 y$ yields the following inequality:
\begin{equation*}
\| S_{\mathrm{KP}}(t) f \|_{L^4_{t,x,y}([0,N^{-\alpha}] \times \T^2)} \lesssim_\varepsilon N^\varepsilon \big( \sum_{\theta \in \mathcal{S}_\delta} \| S_{\mathrm{KP}}(t) f_{\tilde{\theta}} \|_{L^4_{t,x,y}([0,N^{-\alpha}] \times \T^2)}^2 \big)^{\frac{1}{2}}.
\end{equation*}
Presently, we have $\delta^{-1} = N^{3-\alpha}$, and after Galilean transform (see Remark \ref{rem:gal}) we obtain anisotropically inflated $\tilde{\theta}$ of size at most $N \delta^{\frac{1}{3}} \times N^2 \delta^{\frac{1}{2}} = N^{\frac{\alpha}{3}} \times N^{\frac{1}{2}+\frac{\alpha}{2}}$. More precisely, we firstly rescale to a parallelogram of size $N \delta^{\frac{1}{3}} \times N^2 \delta^{\frac{1}{2}} \sim N^{\frac{\alpha}{3}} \times N^{\frac{1}{2}+\frac{\alpha}{2}}$. Now we pick the integer centers $\eta_1$, $\eta_2$ of the (long) $N^{\frac{1}{2}+\frac{\alpha}{2}}$-intervals at the $\xi$-endpoints $\xi_1$, $\xi_2$. We have $|\xi_2 - \xi_1| \sim N^{\frac{\alpha}{3}}$. We choose $A \in \R$ such that
\begin{equation*}
\eta_2 - A \xi_2 = \eta_2' = \eta_1' = \eta_1 - A \xi_1.
\end{equation*}
This Galilean transform maps the $N^{\frac{\alpha}{3}} \times N^{\frac{1+\alpha}{2}}$-parallelogram to a rectangle of the same size. But this transform does not necessarily leave the frequency lattice $\Z^2$ invariant. Still, we can choose an integer approximation $A' = [A]$ and see that after transform $\eta \to \eta - A' \xi$ the parallelogram can still be contained in a rectangle of size $N^{\frac{\alpha}{3}} \times N^{\frac{1+\alpha}{2}}$. Indeed, we have for $\eta_i'' = \eta_i - A' \xi_i$ the estimate $|\eta_2'' - \eta_1'' | \lesssim N^{\frac{\alpha}{3}} \lesssim N^{\frac{1+\alpha}{2}}$. In the following we suppose that $\tilde{\theta}$ is a rectangle of size $N^{\frac{\alpha}{3}} \times N^{\frac{1+\alpha}{2}}$.

\smallskip

 Let $(\xi_0,\eta_0)$ denote the center of the rectangle and write for $\tilde{\theta} \ni (\xi,\eta) = (\xi_0,\eta_0) + (\xi',\eta')$. We can expand
\begin{equation*}
\begin{split}
\xi^3 &= (\xi_0+\xi')^3 = \xi_0^3 + 3 \xi_0^2 \xi' + 3 \xi_0 (\xi')^2 + (\xi')^3 = \text{Lin}(\xi') + 3 \xi_0 (\xi')^2 + \mathcal{O}(N^\alpha), \\
\frac{(\eta_0 + \eta')^2}{\xi_0 + \xi'} &= \frac{\eta_0^2 + 2 \eta_0 \eta' + (\eta')^2}{\xi_0} \big( 1 - \frac{\xi'}{\xi_0} + \big( \frac{\xi'}{\xi_0} \big)^2 - \big( \frac{\xi'}{\xi_0} \big)^3 + \mathcal{O}( \big( \frac{\xi'}{\xi_0} \big)^4 ) \\
&= \text{Lin}(\xi',\eta') + \frac{\eta_0^2}{\xi_0} \frac{(\xi')^2}{\xi_0^2} - \frac{\eta_0^2}{\xi_0^4} (\xi')^3 + \mathcal{O}(N^\alpha) \\
&\quad - \frac{2 \eta_0 \eta' \xi'}{\xi_0^2} + \frac{2 \eta_0 \eta'}{\xi_0} \big( \frac{\xi'}{\xi_0} \big)^2 + \mathcal{O}(N^\alpha).
\end{split}
\end{equation*}
The linear terms and $\mathcal{O}(N^\alpha)$ terms can be discarded as they do not contribute to the oscillations by frequency-dependent time localization.
We remain with the pruned phase function
\begin{equation*}
\bar{\omega}(\xi',\eta')= 3 \xi_0 (\xi')^2 - \frac{\eta_0^2}{\xi_0^3} (\xi')^2 + \frac{2 \eta_0 \eta' \xi'}{\xi_0^2}
\end{equation*}
with $(\xi',\eta') \in [-N^{\alpha/3},N^{\alpha/3}] \times [-N^{\frac{1}{2}+\frac{\alpha}{2}},N^{\frac{1}{2}+\frac{\alpha}{2}}] = R'$.
We summarize
\begin{equation*}
\| S_{\mathrm{KP}}(t) f_{\tilde{\theta}} \|_{L^4_{t,x,y}([0,N^{-\alpha}] \times \T^2)} \lesssim \| \sum_{(\xi',\eta') \in R'} e^{i(x \xi' + y \eta' + t \bar{\omega}(\xi',\eta'))} a_{\xi',\eta'} \|_{L^4_{t,x,y}([0,N^{-\alpha}] \times \T^2)}
\end{equation*}
with $\| a_{\xi',\eta'} \|_{\ell^2} = \| f_{\tilde{\theta}} \|_{L^2}$.

Let $I_\eta = [-N^{\frac{1}{2}+\frac{\alpha}{2}},N^{\frac{1}{2}+\frac{\alpha}{2}}]$, $I_\xi = [-N^{\alpha/3},N^{\alpha/3}]$. By applying Bernstein's inequality in the $\eta$-frequency estimating the $L^4_y$-norm in terms of the $L^2_y$-norm, which incurs a factor $N^{\frac{1}{8}+\frac{\alpha}{8}}$, and Minkowski's inequality we obtain
\begin{equation}
\label{eq:AuxStrichartz0}
\begin{split}
&\quad \| \sum_{(\xi',\eta') \in R'} e^{i(x \xi' + y \eta' + t \bar{\omega}(\xi',\eta'))} a_{\xi',\eta'} \|_{L^4_{t,x,y}([0,N^{-\alpha}] \times \T^2)} \\
&\lesssim N^{\frac{1}{8}+\frac{\alpha}{8}} \big( \sum_{\eta' \in I_\eta}  \big\| \sum_{\xi' \in I_\xi} e^{i (x \xi' + y \eta' + \xi_0 t ( ( 3 - \frac{\eta_0^2}{\xi_0^4} ) (\xi')^2 + \frac{2 \eta_0 \eta' \xi'}{\xi_0^3} ) )} a_{\xi',\eta'} \big\|^2_{L_{t,x}^4([0,N^{-\alpha}] \times \T)} \big)^{\frac{1}{2}}.
\end{split}
\end{equation}
By a change of variables $t' = t \xi_0$, keeping in mind $|\xi_0| \sim N$, and Galilean invariance, we find
\begin{equation}
\label{eq:AuxStrichartzI}
\begin{split}
&\quad \big\| \sum_{\xi' \in I_\xi} e^{i (x \xi' + y \eta' + \xi_0 t ( ( 3 - \frac{\eta_0^2}{\xi_0^4} ) (\xi')^2 + \frac{2 \eta_0 \eta' \xi'}{\xi_0^3} ) )} a_{\xi',\eta'} \big\|_{L_{t,x}^4([0,N^{-\alpha}] \times \T)} \\
 &\lesssim N^{-\frac{1}{4}} \big\| \sum_{\xi' \in I_\xi} e^{i (x \xi'  + t' ( 3 - \frac{\eta_0^2}{\xi_0^4} ) (\xi')^2 )} a_{\xi',\eta'} \big\|_{L_{t,x}^4([0,N^{1-\alpha}] \times \T)}.
 \end{split}
\end{equation}
If $|\eta_0| \ll N^2$, we can use the $L^4_{t,x,y}$-Strichartz estimate for the one-dimensional Schr\"odinger equation on the unit time scale. The long-time $N^{1-\alpha}$ incurs a factor $N^{\frac{1-\alpha}{4}}$:
\begin{equation}
\label{eq:AuxStrichartzII}
\big\| \sum_{\xi' \in I_{\xi}} e^{i (x \xi' + t' (3-\eta_0^2 / \xi_0^4)(\xi')^2)} a_{\xi',\eta'} \big\|_{L^4_{t,x}([0,N^{1-\alpha}] \times \T)} \lesssim N^{\frac{1-\alpha}{4}} \| a_{\xi',\eta'} \|_{\ell^2_{\xi'}}.
\end{equation}
By \eqref{eq:AuxStrichartzI} and \eqref{eq:AuxStrichartzII} we can evaluate  \eqref{eq:AuxStrichartz0} as
\begin{equation*}
\begin{split}
&\quad \sum_{\eta' \in I_{\eta}} \big\| \sum_{\xi' \in I_\xi} e^{i (x \xi' + y \eta' + \xi_0 t ( ( 3 - \frac{\eta_0^2}{\xi_0^4} ) (\xi')^2 + \frac{2 \eta_0 \eta' \xi'}{\xi_0^3} ) )} \big\|_{L_{t,x,y}^4([0,N^{-\alpha}] \times \T^2)} \\
&\lesssim N^{\frac{1}{8}- \frac{\alpha}{8}+\varepsilon} \| a_{\xi',\eta'} \|_{\ell^2_{\xi',\eta'}}.
\end{split}
\end{equation*}

In case $|\eta_0| \sim N^2$, we simply apply H\"older's and Bernstein's inequalities to estimate \eqref{eq:AuxStrichartz0}:
\begin{equation*}
\begin{split}
&\quad \| \sum_{(\xi',\eta') \in R'} e^{i(x \xi' + y \eta' + t \bar{\omega}(\xi',\eta'))} a_{\xi',\eta'} \|_{L^4_{t,x,y}([0,N^{-\alpha}] \times \T^2)} \\
&\lesssim N^{-\frac{\alpha}{4}} N^{\frac{1}{8} + \frac{\alpha}{8} + \frac{\alpha}{12} + \varepsilon} \| a_{\xi',\eta'} \|_{\ell^2_{\xi',\eta'}}.
\end{split}
\end{equation*}
The additional power $N^{\frac{\alpha}{12}}$ comes from the possible lack of dispersion in the $\xi$-direction.
%
\end{proof}

We remark that the example for $\alpha=0$ can be easily modified to show sharpness of the second estimate \eqref{eq:ShorttimeStrichartzII}. Let $N \in 2^{\N_0}$, $|\xi_0| \sim N$, and $f: \T^2 \to \C$ with
\begin{equation*}
\hat{f}(\xi,\eta) = 
\begin{cases}
1, \quad 0 \leq \eta \leq N^{\frac{1}{2}+\frac{\alpha}{2}}, \quad \xi = \xi_0, \\
0, \quad \text{else}.
\end{cases}
\end{equation*}
The lack of oscillations and the sharpness of Bernstein's inequality $\| f \|_{L^4(\T^2)} \lesssim N^{\frac{1}{8}+\frac{\alpha}{8}} \| f \|_{L^2(\T)}$ implies the estimate
\begin{equation*}
\big\| \sum_{0 \leq \eta \leq N^{\frac{1}{2}+\frac{\alpha}{2}}} e^{i(x \xi_0 + y \eta + t \omega(\xi_0,\eta))} \big\|_{L^4_{t,x,y}([0,N^{-\alpha}] \times \T^2)} \gtrsim N^{-\frac{\alpha}{4}} N^{\frac{1}{8}+\frac{\alpha}{8}} \| f \|_{L^2(\T^2)}.
\end{equation*}

\subsection{Strichartz estimates for large $\eta$-frequencies}

We shall moreover need Strichartz estimates for
\begin{equation*}
\operatorname{supp}(\hat{f}) \subseteq \{ (\xi,\eta,\tau) : |\xi| \sim N, \quad |\eta| \in [k N^2, (k+1)N^2 ] \}
\end{equation*}
with $k \in \N$, $k \gg 1$. To normalize $\eta$-frequencies we consider the shifted phase function
\begin{equation*}
\omega_0(\xi,\eta ) = \xi^3 - \frac{(\eta + \bar{\eta_0})^2}{\xi} \text{ such that } |\eta| \lesssim N^2.
\end{equation*}
By anisotropic scaling $\xi \to \xi/N$, $\eta \to \eta / N^2$ we find a phase function
\begin{equation*}
\omega_1(\xi,\eta) = \xi^3 - \frac{(\eta + \eta_0)^2}{\xi} \text{ with } |\xi| \sim 1, \quad |\eta| \lesssim 1, \quad \eta_0 = \frac{\bar{\eta_0}}{N^2} \in [k-2,k+2].
\end{equation*}
However, the derivatives in $\xi$ are not uniformly bounded, for which reason we decompose the $\xi$-support into intervals of length $\sim \eta_0^{-1}$. We aim to apply decoupling to the phase function
\begin{equation}
\label{eq:ShiftedPhaseFunction}
\bar{\omega}(\xi,\eta) = (\xi_0 + \xi / \eta_0)^3 - \frac{(\eta + \eta_0)^2}{(\xi_0 + \xi / \eta_0)}.
\end{equation}
Above 
\begin{equation}
\label{eq:ParameterRanges}
|\xi_0| \sim 1, \; |\eta_0| \gg 1, \; |\xi| \lesssim 1 \text{ and } |\eta| \lesssim 1.
\end{equation}

\begin{proposition}
\label{prop:FlatSetsShifted}
Let $\xi_0,\eta_0, \xi, \eta$ be like in \eqref{eq:ParameterRanges}. Then the $\delta$-flat sets of $\bar{\omega}$ defined in \eqref{eq:ShiftedPhaseFunction} are contained in rectangles of size at most $\delta^{\frac{1}{2}}$ in $\eta$-direction and $\eta_0 \delta^{\frac{1}{3}}$ in a direction $(\xi',\eta')$ with $|\xi'| \gtrsim 1$.
\end{proposition}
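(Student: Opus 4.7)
The plan mirrors Proposition \ref{prop:FlatSets}. Parametrize a unit-speed line $\gamma(t) = (\xi_a, \eta_a) + t(\xi', \eta')$ through a point of the alleged flat set $R$, Taylor expand
\begin{equation*}
\bar{\omega}(\gamma(t)) = a_0 + a_1 t + a_2 t^2 + a_3 t^3 + E(t),
\end{equation*}
and translate flatness into the pointwise bound $|a_2 t^2 + a_3 t^3 + E(t)| \leq \delta$ along each direction. With the abbreviations $A = \xi_0 + \xi_a/\eta_0 \sim 1$, $B = \eta_a + \eta_0 \sim \eta_0$ and $u = B/\eta_0 \sim 1$, the computation is to multiply the Taylor series of $(B + t\eta')^2$ by that of $(A + t\xi'/\eta_0)^{-1}$ and subtract from $(A + t\xi'/\eta_0)^3$; after the expected regrouping the identities
\begin{equation*}
a_2 = \frac{3A(\xi')^2}{\eta_0^2} - \frac{(A\eta' - u\xi')^2}{A^3}, \qquad a_3 = \frac{(\xi')^3}{\eta_0^3} + \frac{\xi'\,(A\eta' - u\xi')^2}{A^4 \eta_0}
\end{equation*}
should emerge.

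The common appearance of the perfect square $(A\eta' - u\xi')^2$ in both formulas is the algebraic heart of the argument: on directions making $a_2$ small, the $\eta_0^{-1}$-sized contribution to $a_3$ also drops out, leaving only the smaller $\eta_0^{-3}$ term. From these two formulas the proposition follows directly. In the $\eta$-direction $(\xi',\eta') = (0,1)$ one reads $|a_2| = 1/A \sim 1$, and the quadratic constraint gives $t \lesssim \delta^{1/2}$, the short side. Since $u/A \sim 1$, the unit-speed directions with $A\eta' \approx u\xi'$ necessarily have $|\xi'| \sim 1$; on such a direction $a_3 \sim \eta_0^{-3}$, and the cubic constraint $|a_3| t^3 \lesssim \delta$ yields the long-side bound $t \lesssim \eta_0 \delta^{1/3}$. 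A direct case-by-case comparison of the quadratic and cubic constraints across all directions confirms that $\eta_0 \delta^{1/3}$ is the true maximal extent, so these two bounds enclose $R$ in a rectangle of the claimed dimensions.

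The main obstacle will be verifying the algebraic identity producing the perfect-square structure: without it, $a_3$ along the critical direction would only be of order $\eta_0^{-1}$ (the naive size of $\partial_\xi^3 \bar{\omega}$) and the long side would be $\eta_0^{1/3}\delta^{1/3}$, insufficient for the intended applications. The remainder bound $|E(t)| \lesssim t^4 \eta_0^{-2}$, which follows from $\partial_\eta^{k} \bar{\omega} = 0$ for $k \geq 3$ together with the chain-rule scaling $|\partial_\xi^k \bar{\omega}| \lesssim \eta_0^{2-k}$ for $k \geq 2$, is then negligible in the regime $t \lesssim \eta_0 \delta^{1/3}$ for sufficiently small $\delta$.
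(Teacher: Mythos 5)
Your proof is correct and, after unwinding notation, is the same argument as the paper's. The paper chooses the Galilean parameter $A = \eta'\eta_0/\xi'$ so that $\bar\eta := \eta - A\xi$ is constant along the line, reducing to a one-variable Taylor expansion; your perfect square $(A\eta' - u\xi')^2$ equals $(\xi'/\eta_0)^2\bar\eta^2$, and your positivity argument $|a_3| \geq |\xi'|^3/\eta_0^3$ (both contributions to $a_3$ have the sign of $\xi'$) is precisely the paper's lower bound $|6\bar\xi^4 + 6\bar\eta^2| \geq 6\bar\xi^4 \gtrsim 1$. Your identities for $a_2$ and $a_3$ check out against a direct expansion. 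Two minor points. First, the direction realizing the long side is $a_2 = 0$, not $A\eta' = u\xi'$: along the latter one has $a_2 = 3A(\xi')^2/\eta_0^2 \sim \eta_0^{-2}$ and the quadratic constraint already gives the stronger bound $t \lesssim \eta_0\delta^{1/2}$; the two directions differ by $O(\eta_0^{-1})$, so this is harmless. Second, the closing remark about the perfect square being needed for the applications has the logic reversed: if $|a_3|$ were bounded below by $\eta_0^{-1}$ uniformly, the flat sets would be \emph{shorter} ($\eta_0^{1/3}\delta^{1/3}$) and the Bernstein step in Proposition \ref{prop:L4StrichartzShifted} would lose \emph{less}, so that scenario would be favorable, not insufficient. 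What the perfect-square structure actually rules out is cancellation between the two contributions to $a_3$, which would otherwise allow $a_3 = 0$ and destroy the $\delta^{1/3}$ bound entirely.
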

\begin{proof}
To this end, we parametrize a line segment with unit speed by
\begin{equation*}
\gamma(t) = (\xi_1,\eta_1) + t (\xi',\eta') \text{ with } \| (\xi',\eta') \| = 1.
\end{equation*}
In the simple case $|\eta'| \gg |\xi'|$ we see like in the proof of Proposition \ref{prop:FlatSets} that
\begin{equation*}
\begin{split}
\bar{\omega}(\gamma(t)) &= \text{Lin}(t) + t^2 \mathcal{O}(\frac{1}{\eta_0^2}) + t^3 \mathcal{O}(1/\eta_0^3) - \frac{(\eta_1 + t \eta' + \eta_0)^2}{\xi_0 ( 1 + (\xi_1 + t \xi')/ \eta_0)} \\
&= \text{Lin}(t) + c t^2 + \mathcal{O}(t^3) \text{ with } |c| \gtrsim 1. 
\end{split}
\end{equation*}
This points out that for $|\xi'| \ll |\eta'|$ the $\delta$-flat sets are of length $\mathcal{O}(\delta^{\frac{1}{2}})$ into the direction $(\xi',\eta')$.

Next, suppose that $|\xi'| \gtrsim 1$. To simplify the expression $\eta^2 / \xi$, we use the Galilean invariance $\eta \to \eta - A \xi$ with $A = \frac{\eta' \eta_0}{\xi'}$ and 
\begin{equation*}
\eta = \eta_1 + t \eta' + \eta_0, \quad \xi = \xi_0 \big( 1 + \frac{\xi_1 + t\xi'}{\xi_0 \eta_0} \big)
\end{equation*}
such that
\begin{equation*}
\eta - A \xi = \eta_1 + \eta_0 - \frac{\eta'}{\xi'} \eta_0 (\xi_0 + \frac{\xi_1}{\xi_0 \eta_0} ) = \eta_1 + \eta_0 - \frac{\eta'}{\xi'} \eta_0 \xi_0 - \frac{\eta' \xi_1}{\xi' \xi_0} = \bar{\eta}.
\end{equation*}
We obtain
\begin{equation*}
\bar{\omega}(\xi_1 + t\xi', \eta_1 + t \eta') = (\xi_0 + (\xi_1 + t \xi')/\eta_0)^3 - \frac{\bar{\eta}^2}{(\xi_0 + (\xi_1 + t \xi')/\eta_0)} + \text{Lin}(t).
\end{equation*}
We shall prove that
\begin{equation*}
| \bar{\omega}(\gamma(0))''| \gtrsim \eta_0^{-2} \text{ or } |\bar{\omega}(\gamma(0))'''| \gtrsim \eta_0^{-3}.
\end{equation*}
This can be recast as
\begin{equation}
\label{eq:LowerBoundDerivatives}
\big( \frac{\eta_0}{\xi'} \big)^2 | \bar{\omega}(\gamma(0))''| + \big( \frac{\eta_0}{\xi'} \big)^3 |\bar{\omega}(\gamma(0))'''| \gtrsim 1.
\end{equation}
We find
\begin{equation*}
\begin{split}
\bar{\omega}(\gamma(t))'' &= 6 \big( \frac{\xi'}{\xi_0} \big)^2 (\xi_0 + (\xi_1 + t \xi')/\eta_0) - \frac{2 \bar{\eta}^2 (\xi'/\eta_0)^2}{(\xi_0 + (\xi_1 + t \xi')/\eta_0)^3}, \\
\bar{\omega}(\gamma(t))''' &= 6 \big( \frac{\xi'}{\eta_0} \big)^3 + \frac{6 \bar{\eta}^2 (\xi'/\eta_0)^3}{(\xi_0 + (\xi_1 + t \xi')/\eta_0)^4}.
\end{split}
\end{equation*}
Secondly,
\begin{equation*}
\begin{split}
&\quad \big( \frac{\eta_0}{\xi'} \big)^2 |\bar{\omega}(\gamma(0))''| + \big( \frac{\eta_0}{\xi'} \big)^3 |\bar{\omega}(\gamma(0))'''| \\
 &= |6 (\xi_0 + \xi_1 / \eta_0) - \frac{2 \bar{\eta}^2}{(\xi_0 + \xi_1 / \eta_0)^3} \big| + \big| 6 + \frac{6 \bar{\eta}^2}{(\xi_0 + \xi_1 / \eta_0)^4} \big|.
\end{split}
\end{equation*}
Let $\bar{\xi} = \xi_0 + \xi_1 / \eta_0$ and observe that $|\bar{\xi}| \sim 1$. Equivalent to \eqref{eq:LowerBoundDerivatives} is
\begin{equation*}
|6 \bar{\xi}^4 - 2 \bar{\eta}^2 | + | 6 \bar{\xi}^4 + 6 \bar{\eta}^2 | \gtrsim 1,
\end{equation*}
which follows from $|6 \bar{\xi}^4 + 6 \bar{\eta}^2 | \geq 6 \bar{\xi}^4 \gtrsim 1$.
Finally, bounds for higher derivatives $|\bar{\omega}(\gamma(t))^{(k)}| \lesssim 1$ follow from uniform boundedness of higher derivatives of $\bar{\omega}$. The proof is complete.
\end{proof}

\begin{proposition}
\label{prop:L4StrichartzShifted}
Let $f: \T^2 \to \C$ with 
\begin{equation*}
\operatorname{supp}(\hat{f}) \subseteq \{ (\xi,\eta) \in \R^2 : |\xi| \sim N, \; \xi \in I, \; |I| \sim N / k, \; |\eta| \in [k N^2, (k+1) N^2] \}
\end{equation*}
with $1 \ll k \lesssim N$. Then the following estimate holds:
\begin{equation*}
\| S_{\mathrm{KP}}(t) f \|_{L^4_{t,x,y}([0,1] \times \T^2)} \lesssim_\varepsilon N^{\frac{1}{8}+\varepsilon} k^{\frac{1}{12}} \| f \|_{L^2(\T^2)}.
\end{equation*}
\end{proposition}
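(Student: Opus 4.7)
My plan is to adapt the proof of Theorem \ref{thm:L4Strichartz}, substituting Proposition \ref{prop:FlatSetsShifted} for Proposition \ref{prop:FlatSets} to handle the shifted phase function $\bar{\omega}$, and to use the Galilean invariance \eqref{eq:GalileanInvarianceKPII} to reduce to the axis-aligned setting treated in Theorem \ref{thm:L4Strichartz}.

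First, I would rescale anisotropically by $t \to N^3 t$, $x \to Nx$, $y \to N^2 y$ to normalize $|\xi| \sim 1$ and $|\eta| \sim k$, and then modulate by $e^{-iky}$ to shift the $\eta$-support to the origin, yielding the phase $\omega_1(\xi,\eta) = \xi^3 - (\eta+\eta_0)^2/\xi$ with $\eta_0 = k$. Since the $\xi$-support now lies in an interval of length $1/k$, I would change variable $\xi = \xi_0 + \xi'/k$ with $|\xi_0| \sim 1$ to arrive at the $\bar{\omega}$-coordinates of \eqref{eq:ShiftedPhaseFunction} with $|\xi'|, |\eta| \lesssim 1$.

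Next, I would run the continuous-approximation and enlargement-by-periodicity procedure of the proof of Theorem \ref{thm:L4Strichartz}, and apply Theorem \ref{thm:GeneralL2Decoupling} to $\bar{\omega}$ at scale $\delta = N^{-3}$. By Proposition \ref{prop:FlatSetsShifted}, the $\delta$-flat pieces are contained in rectangles of side $\delta^{1/2} = N^{-3/2}$ along the $\eta$-axis and $\eta_0 \delta^{1/3} = k/N$ along a direction whose $\xi'$-component is $\gtrsim 1$. For each such piece I would apply the KP-II Galilean invariance \eqref{eq:GalileanInvarianceKPII} with an integer shear $A \in \Z$ approximating $k/\xi_0$ to axis-align the slanted side with the $\xi$-axis; this produces, in $\omega_1$-coordinates, rectangles of size $\delta^{1/3} \times \delta^{1/2}$, i.e., $1 \times N^{1/2}$ in the original coordinates (cf.\ Remark \ref{rem:gal}).

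From this point the per-piece analysis is identical to \eqref{eq:EstimateThetaI}--\eqref{eq:EstimateThetaIII}: Minkowski in the $O(1)$ admissible $\xi$-values, extraction of the quadratic-in-$\bar{\eta}$ phase $-t\bar{\eta}^2/\xi$ (the $\bar{\eta}_0^2/\xi$-constant and the linear-in-$\bar{\eta}$ piece $-2\bar{\eta}_0 \bar{\eta}/\xi$ being absorbed as a time-dependent phase and a time-dependent $y$-translation, respectively), rescaling $t \to t/\xi$ down to the short interval of length $N^{-1}$, and Hölder-in-$t$ combined with the one-dimensional $L^8_t L^4_y$ Schrödinger--Strichartz estimate of Staffilani--Tataru and Burq--Gérard--Tzvetkov. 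This contributes the $N^{1/8+\varepsilon}$-factor per piece, and $\ell^2$-decoupling assembles the pieces. Reversing all forward scalings and carefully tracking the Jacobian factors coming from the additional $\xi$-rescaling by $k$ against the anisotropic period $2\pi N/k$ in the $\tilde{x}$-variable (dual to $\xi'$) produces the extra $k^{1/12}$-loss, beyond the $N^{1/8+\varepsilon}$ of Theorem \ref{thm:L4Strichartz}.

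The main obstacle I expect is the Galilean reduction step: the required shear has slope of order $k$, and its integer approximation leaves an $O(1)$-residual slope that a priori could inflate the per-piece $\eta$-extent well beyond $N^{1/2}$; one must check that this residual is absorbed into the $k^{1/12}$-factor and not worse. A secondary technicality is verifying that the decoupling constant $C_{\varepsilon,\bar{\omega}}$ from Theorem \ref{thm:GeneralL2Decoupling} depends only polynomially on $N$ (and not, say, on $k$), so that the $k$-dependence of $\bar{\omega}$ does not creep into the implicit constant; this is where the uniform quantitative version of Proposition \ref{prop:FlatSetsShifted} is essential.
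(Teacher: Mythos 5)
The paper's mechanism for the $k^{1/12}$-loss is not the one you describe, and the one you describe would not produce it. In the paper's proof, after the normalization $\xi = \xi_0 + \xi''/\eta_0$, the available space-time domain in the $x''$-variable shrinks to size $N^3/\eta_0$; consequently the decoupling is performed at scale $\delta = \eta_0/N^3$ (not $\delta = N^{-3}$). With this $\delta$, the $\delta$-flat pieces of $\bar\omega$ contain $\sim \eta_0^{1/3} = k^{1/3}$ lattice values of $\xi$ after undoing all scalings, and the paper then applies \emph{Bernstein in} $\xi$ — not Galilean alignment followed by Minkowski over $O(1)$ values — which is exactly what costs the $\eta_0^{1/12} = k^{1/12}$. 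The paper does not perform a per-piece Galilean shear in this proposition at all.

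Your proposal instead takes $\delta = N^{-3}$ (the value appropriate to the ball $B_{N^3}$ appearing in Theorem~\ref{thm:L4Strichartz}, before the additional $\xi$-compression by $k$), Galilean-aligns the slanted direction, and lands on flat pieces with $O(1)$ $\xi$-values and $\eta$-intervals of length $N^{1/2}$. But then the per-piece estimate is just $N^{1/8}$, and the ``Jacobian factors'' you invoke appear identically on both sides of the decoupling inequality (the rescaling $x'' = x'/\eta_0$ and the $\eta_0$-fold periodicity inflation of the $x''$-domain produce the same multiplicative constant for $\|F\|_{L^4}$ and for each $\|F_\theta\|_{L^4}$, since $F_\theta$ is also $(2\pi N/\eta_0)$-periodic in $x''$). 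So, as written, your argument does not produce any $k^{1/12}$ at all: tracking the scalings carefully gives $N^{1/8+\varepsilon}$ with no $k$-dependence. Either this is a genuine error (the $\delta = N^{-3}$ choice silently assumes an $x''$-domain of size $N^3$, whereas it is only $N^3/\eta_0$; this is exactly why the paper enlarges $\delta$), or it is a strictly stronger estimate than the one claimed — in which case the missing $k^{1/12}$ must be justified, not asserted to appear from a scaling bookkeeping you have not carried out. The one concrete gap is therefore the derivation of the $k^{1/12}$-factor: it should come from Bernstein over $k^{1/3}$-many $\xi$-values inside each $(\eta_0/N^3)$-flat piece, and your proposal has no step that produces it.

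A secondary point: your Galilean shear is correctly $\sim k$ when expressed in the $\omega_1$-coordinates, but in the original integer lattice $(\xi,\eta) \in \Z^2$ this corresponds to $A \sim kN$, not $A \sim k$, so the phrase ``an integer shear $A \in \Z$ approximating $k/\xi_0$'' is a coordinate slip (though, as you observe, the integer rounding leaves only an $O(1)$ residual and would be harmless). The worry you flag about uniformity of the decoupling constant in $\bar\omega$ is real and is handled in the paper by the uniform derivative bounds in Proposition~\ref{prop:FlatSetsShifted}; this part of your outline is fine.
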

\begin{proof}
We carry out the same reductions like in the proof of Theorem \ref{thm:ImprovedL4Strichartz}. We use the scaling $t \to N^3 t$, $x \to N x$, $y \to N^2 y$ and periodicity in $x$ and $y$ to find
\small
\begin{equation*}
\begin{split}
&\quad \| S_{\mathrm{KP}}(t) f \|^4_{L^4_{t,x,y}([0,1] \times \T^2)} \\
 &= \big\| \sum_{(\xi,\eta) \in A_{N,N^2} \cap \Z^2} e^{i(x \xi + y \eta + t \omega_0(\xi,\eta))} \hat{f}(\xi,\eta) \big\|^4_{L^4_{t,x,y}([0,1] \times \T^2)} \\
&= N^{-6} \big\| \sum_{(\xi',\eta') \in A^{I'}_{1,1} \cap (\Z /N \times \Z /N^2)} e^{i(x' \xi'+ y' \eta' + t((\xi')^3 - \frac{(\eta' + \eta_0)^2}{\xi'})} a_{\xi',\eta'} \big\|^4_{L^4_{t',x',y'}([0,N^3] \times N \T \times N^2 \T)} \\
&= N^{-6} N^{-2} N^{-\frac{3}{2}} \big\| \sum_{(\xi',\eta') \in A^{I'}_{1,1} \cap (\Z / N \times \Z / N^2)} e^{i(x' \xi' + y' \eta' + t' ((\xi')^3 - \frac{(\eta' + \eta_0)^2}{\xi'})} a_{\xi',\eta'} \big\|^4_{L^4_{t',x',y'}(B_{N^3})}.
\end{split}
\end{equation*}
\normalsize
Above $\eta_0 \in [k-2,k+2]$ and by the Fourier support condition on $f$ we have $\xi' \in I'$ with $|I'| \sim 1/k \sim 1/\eta_0$.

Now we change to continuous approximation
\begin{equation*}
\begin{split}
&\quad \big\| \sum_{(\xi',\eta') \in A^{I'}_{1,1} \cap (\Z / N \times \Z / N^2)} e^{i(x' \xi' + y' \eta' + t' ((\xi')^3 - \frac{(\eta' + \eta_0)^2}{\xi'})} a_{\xi',\eta'} \big\|^4_{L^4_{t',x',y'}(B_{N^3})} \\
 &\lesssim \big\| \int_{I' \times [-1,1]} e^{i(x' \xi' + y' \eta' + t'((\xi')^3 - \frac{(\eta'+\eta_0)^2}{\xi'} ))} f(\xi',\eta') d\xi' d\eta' \big\|_{L^4_{t',x',y'}(B_{N^3})}^4.
 \end{split}
\end{equation*}
We normalize the $\xi'$-support by $\xi' = \xi_0 + \xi''/\eta_0$ and correspondingly let $x'= \eta_0 x''$. We obtain
\begin{equation*}
\begin{split}
&\big\| \int_{I' \times [-1,1]} e^{i(x' \xi' + y' \eta' + t'((\xi')^3 - \frac{(\eta'+\eta_0)^2}{\xi'} ))} f(\xi',\eta') d\xi' d\eta' \big\|_{L^4_{t',x',y'}(B_{N^3})}^4 \\
&\lesssim \eta_0 \big\| \int_{(\xi'',\eta') \in [-1,1]^2 } e^{i(x'' \xi'' + y' \eta' + t' \bar{\omega}(\xi'',\eta'))} f''(\xi'',\eta') d\xi'' d\eta' \big\|^4_{L^4_{t',x'',y'}(\mathcal{E}_{N^3 \times N^3 / \eta_0 \times N^3})}.
\end{split}
\end{equation*}
We cover the space-time ellipsoid 
\begin{equation*}
\mathcal{E}_{N^3 \times N^3 / \eta_0 \times N^3} = \{ (t,x,y) \in \R^3 : \, |t| \leq N^3, \; |x| \leq N^3 / \eta_0, \; |y| \leq N^3 \}
\end{equation*}
 with balls $(B_{\delta^{-1}})$ of size $\delta^{-1} = N^3 / \eta_0$ and seek to apply decoupling to the oscillatory integral
\begin{equation*}
\begin{split}
&\quad \big\| \int_{(\xi'',\eta') \in (-1,1)^2} e^{i(x'' \xi'' + y' \eta' + t' \bar{\omega}(\xi'',\eta'))} f''(\xi'',\eta') d\xi'' d\eta' \big\|_{L^4_{t',x'',y'}(B_{\delta^{-1}})} \\
&\lesssim \big\| w_{B_{\delta^{-1}}} \int_{(\xi'',\eta') \in (-1,1)^2} e^{i(x'' \xi'' + y' \eta' + t' \bar{\omega}(\xi'',\eta'))} f''(\xi'',\eta') d\xi'' d\eta' \big\|_{L^4_{t',x'',y'}(\R^3)}.
\end{split}
\end{equation*}

Invoking Theorem \ref{thm:GeneralL2Decoupling} we find
\begin{equation*}
\begin{split}
&\quad \big\| w_{B_{\delta^{-1}}} \int_{(\xi'',\eta') \in (-1,1)^2} e^{i( x'' \xi'' + y' \eta' + t' \bar{\omega}(\xi'',\eta'))} f''(\xi'',\eta') d\xi'' d\eta' \big\|_{L^4_{t',x'',y'}(\R^3)} \\
&\lesssim_\varepsilon \delta^{-\varepsilon} \big( \sum_{\substack{\theta: \delta-\text{flat} \\
\text{set of } \bar{\omega}}} \big\| w_{B_{\delta^{-1}}} \int_\theta e^{i(x'' \xi'' + y' \eta' + t' \bar{\omega}(\xi'',\eta'))} f''(\xi'',\eta') d\xi'' d\eta' \big\|^2_{L^4(\R^3)} \big)^{\frac{1}{2}}.
\end{split}
\end{equation*}

As a consequence of Minkowski's inequality we obtain decoupling on $\mathcal{E}_{N^3 \times N^3/ \eta_0 \times N^3}$ into $\delta$-flat sets (recall $\delta = \eta_0 / N^3$):
\small
\begin{equation*}
\begin{split}
&\quad \big\| \int_{(\xi'',\eta') \in (-1,1)^2} e^{i(x'' \xi'' + y' \eta' + t' \bar{\omega}(\xi'',\eta'))} f''(\xi'',\eta') d\xi'' d\eta' \big\|^4_{L^4_{t',x'',y'}(\mathcal{E}_{N^3 \times N^3 / \eta_0 \times N^3})} \\
&\lesssim_\varepsilon N^\varepsilon \big( \sum_{\theta: \delta\text{-flat set}} \big\| \int_{(\xi'',\eta') \in \theta} e^{i(x'' \xi'' + y' \eta' + t' \bar{\omega}(\xi'',\eta'))} f''(\xi'',\eta') d\xi'' d\eta' \big\|^2_{L^4_{t',x'',y'}(w_{\mathcal{E}_{N^3 \times N^3 / \eta_0 \times N^3}})} \big)^2.
\end{split}
\end{equation*}
\normalsize
Now we reverse all the scalings, the inflation by periodicity and the continuous approximation to find the $\ell^2$-decoupling inequality
\begin{equation}
\label{eq:DecouplingShiftedPhase}
\begin{split}
&\quad \big\| \sum_{(\xi,\eta) \in \Z^2} e^{i(x \xi + y \eta + t \omega(\xi,\eta))} \hat{f}(\xi,\eta) \big\|_{L^4_{t,x,y}([0,1] \times \T^2)} \\
&\lesssim_\varepsilon N^\varepsilon \big( \sum_{\substack{\theta: \delta\text{-flat} \\
\text{set for } \bar{\omega}}} \big\| \sum_{(\xi,\eta) \in \tilde{\theta}} e^{i(x \xi + y \eta + t \omega(\xi,\eta))} \hat{f}(\xi,\eta) \big\|^2_{L^4_{t,x,y}([0,1] \times \T^2)} \big)^{\frac{1}{2}}.
\end{split}
\end{equation}

By Proposition \ref{prop:FlatSetsShifted} the $\delta$-flat sets for $\bar{\omega}$ can be contained into rectangles with long side of length at most $\eta_0 \cdot \eta_0^{\frac{1}{3}} N^{-1}$ into direction $\mathfrak{n}_1 = (\xi_1',\eta_1')$ with $|\xi'| \gtrsim 1$ and for $(\tilde{\xi},\tilde{\eta}) \in R'$ into $\eta'$-direction of length at most $\eta_0^{\frac{1}{2}} N^{-\frac{3}{2}}$. When reversing the scaling and the normalization of the Fourier support $\xi' = \xi_0 + \xi''/\eta_0$ we find length of $\eta_0^{\frac{1}{3}}$ into $\mathfrak{n}_1$-direction and $\eta_0^{\frac{1}{2}} N^{\frac{1}{2}}$ into $\eta'$-direction. After applying Bernstein's inequality in $\xi$, which incurs a factor of $\eta_0^{\frac{1}{12}}$, we find
\begin{equation}
\label{eq:BernsteinShiftedPhase}
\begin{split}
&\quad \big\| \sum_{(\xi,\eta) \in \tilde{\theta}} e^{i(x \xi + y \eta + t \omega(\xi,\eta))} \hat{f}(\xi,\eta) \big\|_{L^4_{t,x,y}([0,1] \times \T^2)} \\
&\lesssim \eta_0^{\frac{1}{12}}  \big( \sum_{\xi \in \pi_{\xi}(\tilde{\theta})}  \big\| \sum_{\eta \in I_{\xi}} e^{i(y \eta + t \frac{\eta^2}{\xi})} a_{\xi,\eta} \big\|^2_{L^4_{t,y}([0,1] \times \T)} \big)^{\frac{1}{2}}.
\end{split}
\end{equation}

By the above we have $|I_\xi| \sim \eta_0^{\frac{1}{2}} N^{\frac{1}{2}}$. Recall that $|\xi| \sim N$. We carry out a change of variables $t/\xi = t'$:
\begin{equation*}
\big\| \sum_{\eta \in I_{\xi}} e^{i(y \eta + t \frac{\eta^2}{\xi})} a_{\xi,\eta} \big\|_{L^4_{t,y}([0,1] \times \T)} \lesssim N^{\frac{1}{4}} \big\| \sum_{\eta \in I_{\xi}} e^{i(y \eta + t \eta^2)} a_{\xi,\eta} \big\|_{L^4_{t,y}([0,N^{-1}] \times \T)}. 
\end{equation*}
Now we apply H\"older's inequality in time and a lossless Strichartz estimate on frequency dependent times due to \cite{StaffilaniTataru2002} and \cite{BurqGerardTzvetkov2004} noting that $|I_{\xi}| \lesssim N$ by assumption $|\eta_0| \lesssim N$:
\begin{equation}
\label{eq:ShorttimeStrichartzShifted}
\begin{split}
\big\| \sum_{\eta \in I_{\xi}} e^{i(y \eta + t \eta^2)} a_{\xi,\eta} \big\|_{L^4_{t,y}([0,N^{-1}] \times \T)} &\lesssim N^{\frac{1}{4}} \big\| \sum_{\eta \in I_{\xi}} e^{i(y \eta + t \eta^2)} a_{\xi,\eta} \big\|_{L^8_{t}([0,N^{-1}],L^4_y(\T))} \\
&\lesssim N^{\frac{1}{8}} \| a_{\xi,\eta} \|_{\ell^2_{\eta}}.
\end{split}
\end{equation}
Taking \eqref{eq:BernsteinShiftedPhase} and \eqref{eq:ShorttimeStrichartzShifted} together we obtain
\begin{equation}
\label{eq:StrichartzFlatSetShifted}
\begin{split}
&\quad \big\| \sum_{(\xi,\eta) \in \tilde{\theta}} e^{i(x \xi + y \eta + t \omega(\xi,\eta))} \hat{f}(\xi,\eta) \big\|_{L^4_{t,x,y}([0,1] \times \T^2)} \\
&\lesssim \eta_0^{\frac{1}{12}} N^{\frac{1}{8}} \| a_{\xi,\eta} \|_{\ell^2_{\xi,\eta \in \tilde{\theta}}}.
\end{split}
\end{equation}
Plugging \eqref{eq:StrichartzFlatSetShifted} into \eqref{eq:DecouplingShiftedPhase} we complete the proof by $\log(N)$-overlap of $\tilde{\theta}$.
\end{proof}

For future reference we record the following consequence of the well-known transfer principle - for a function localized in modulation the estimates for free solutions hold with a modulation weight:
\begin{proposition}[Transfer~principle~for~$L^4$-Strichartz~estimates]
\label{prop:TransferPrinciple}
    Let $A,B \subseteq \R$, and $u \in L^2(\R \times \Z^2)$ be a function with $\operatorname{supp}(u) \subseteq \{ (\tau, \xi, \eta) \in \R^3 : \xi \in A, \, \eta \in B, \, |\tau - \omega(\xi,\eta)| \leq L \}$.  Assume that the estimate
    \begin{equation*}
        \| S_{\mathrm{KP}}(t) f \|_{L^4_{t,x,y}([0,1] \times \T^2)} \lesssim C(A,B) \| f \|_{L^2(\T^2)}
    \end{equation*}
    holds for functions with $\operatorname{supp}(\hat{f}) \subseteq \{ (\xi,\eta) \in \Z^2 : \xi \in A, \, \eta \in B \}$. Then the following estimate holds:
    \begin{equation*}
        \| \mathcal{F}^{-1} [u] \|_{L^4_{t,x,y}(\R \times \T^2)} \lesssim C(A,B) L^{\frac{1}{2}} \| u \|_{L^2_{t,x,y}(\R \times \T^2)}.
    \end{equation*}
\end{proposition}
\begin{proof}
This follows from the representation
\[
\mathcal{F}^{-1} [u](t)=\frac{1}{2\pi}\int_{-L}^L e^{it\tau } S_{\mathrm{KP}}(t)f(\tau) d\tau, \; f(\tau):=\mathcal{F}_t (S_{\mathrm{KP}}(-\cdot) \mathcal{F}^{-1}[u])(\tau),
\]
the assumed estimate, the Cauchy-Schwarz inequality, and Plancherel.
\end{proof}

\section{Bilinear Strichartz estimates}
\label{section:BilinearStrichartz}

The following Strichartz estimate was proved in \cite[Section~4]{Bourgain1993}:
\begin{proposition}[Bourgain]
\label{prop:BilinearStrichartzBourgain}
Let $f_j : \R \times \Z \times \Z \to \C$ such that for $(\tau_j,\xi_j,\eta_j) \in \operatorname{supp}(f_j)$ the following holds:
\begin{equation*}
\xi_j \in I_j, \; |\xi_j| \sim N_j, \; |\tau_j - \omega(\xi_j,\eta_j)| \leq L_j,
\end{equation*}
where $I_j \subseteq \Z$ is an interval and $N_j$, $L_j$ are dyadic numbers. Let $L_{\max} = \max(L_1,L_2)$ and $N_{\max} = \max(N_1,N_2)$. Then the estimate holds:
\begin{equation*}
\| f_1 * f_2 \|_{L^2(\R \times \Z^2)} \lesssim B(N_1,N_2,L_1,L_2,|I_1|,|I_2|) \| f_1 \|_{L^2} \| f_2 \|_{L^2},
\end{equation*}
where
\begin{equation}
\label{eq:BConstant}
B \lesssim L_{\min}^{\frac{1}{2}} L_{\max}^{\frac{1}{4}} N_{\min}^{\frac{1}{4}} + L_{\min}^{\frac{1}{2}} I_{\min}^{\frac{1}{2}} + \min \{ L_{\min}^{\frac{1}{2}} L_{\max}^{\frac{1}{2}+\varepsilon} \big( \frac{N_{\min}}{N_{\max}} \big)^{\frac{1}{4}} , L_{\min}^{\frac{1}{2}} L_{\max}^{\frac{1}{4}} N_{\min}^{\frac{1}{4}} I_{\min}^{\frac{1}{2}} \}.
\end{equation}
Here $X_{\min} = \min \{ X_1,X_2 \}$ and $X_{\max} = \max \{ X_1, X_2 \}$ for $X \in \{ L, N \}$. Moreover, $I_{\min} = \min (|I_1|,|I_2|)$ and $I_{\max} = \max (|I_1|,|I_2|)$.
\end{proposition}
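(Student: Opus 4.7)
The plan is to follow Bourgain \cite{Bourgain1993}: reduce the bilinear convolution estimate to a lattice-counting problem via Plancherel and Cauchy--Schwarz, and then bound the counting function in three complementary ways matching the three terms in \eqref{eq:BConstant}.

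First, by applying Cauchy--Schwarz to the $(\tau_1,\xi_1,\eta_1)$-integration in the convolution representation of $f_1 * f_2$, one obtains
\[
\|f_1 * f_2\|_{L^2}^2 \leq \sup_{\tau,\xi,\eta} |E(\tau,\xi,\eta)| \cdot \|f_1\|_{L^2}^2 \|f_2\|_{L^2}^2,
\]
where $E(\tau,\xi,\eta)$ collects the $(\tau_1,\xi_1,\eta_1) \in \R \times \Z^2$ compatible with the support conditions of $f_1$ at $(\tau_1,\xi_1,\eta_1)$ and of $f_2$ at $(\tau-\tau_1,\xi-\xi_1,\eta-\eta_1)$. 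The $\tau_1$-marginal of $E$ lies in an interval of length $\sim L_{\min}$, leaving $B^2 \lesssim L_{\min} \sup |A|$ for the lattice set
\[
A(\tau,\xi,\eta) = \{(\xi_1,\eta_1) \in \Z^2 : \xi_1 \in I_1 \cap (\xi-I_2),\ |\xi_1|\sim N_1,\ |\xi-\xi_1|\sim N_2,\ |G(\xi_1,\eta_1)-\tau|\lesssim L_{\max}\}
\]
with $G(\xi_1,\eta_1) = \omega(\xi_1,\eta_1) + \omega(\xi-\xi_1,\eta-\eta_1)$.

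Each of the three terms of $B$ arises from a distinct counting estimate on $|A|$. \emph{(i) Quadratic $\eta_1$-count.} For fixed $\xi_1$, $\eta_1 \mapsto G(\xi_1,\eta_1)$ is a real-quadratic polynomial with leading coefficient of size $\xi/(\xi_1(\xi-\xi_1)) \sim N_{\max}/(N_1 N_2)$, giving $\lesssim 1 + L_{\max}^{1/2}(N_1 N_2/N_{\max})^{1/2}$ admissible $\eta_1$; combining this with a suitable $\xi_1$-sum (the resonance lower bound \eqref{eq:ResonanceLowerBound} limits the effective $\xi_1$-range), one obtains $|A| \lesssim L_{\max}^{1/2} N_{\min}^{1/2}$ and hence the first term $L_{\min}^{1/2} L_{\max}^{1/4} N_{\min}^{1/4}$. \emph{(ii) Trivial $I_{\min}$-truncation.} The constraint $\xi_1 \in I_1 \cap (\xi-I_2)$ restricts $\xi_1$ to an interval of length $\leq I_{\min}$; applying Cauchy--Schwarz in $\xi_1$ alone and Young's inequality in the remaining variables yields the second term $L_{\min}^{1/2} I_{\min}^{1/2}$. \emph{(iii) Transversality refinement.} The derivative
\[
\partial_{\eta_1} G(\xi_1,\eta_1) = 2\Bigl(\frac{\eta-\eta_1}{\xi-\xi_1} - \frac{\eta_1}{\xi_1}\Bigr),
\]
combined with the resonance identity $G - \omega(\xi,\eta) = -\Omega$ and \eqref{eq:ResonanceLowerBound}, forces $|\partial_{\eta_1} G| \gtrsim N_{\max}$ outside a small exceptional set, refining the $\eta_1$-count per $\xi_1$ to $\lesssim 1 + L_{\max}/N_{\max}$. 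A dyadic decomposition in the transversality parameter (responsible for the $L_{\max}^\varepsilon$ loss), followed by the $N_{\min}$- or the $I_{\min}$-truncation for $\xi_1$, produces the two alternatives of the mixed third term.

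The main obstacle is the third counting strategy: extracting the sharp $(N_{\min}/N_{\max})^{1/4}$ improvement in the resonant high-high-low regime requires careful bookkeeping between the transversality bound, the quadratic $\eta_1$-count, the resonance lower bound, and the dyadic decomposition in the transversality parameter, while only paying the unavoidable logarithmic $L_{\max}^\varepsilon$-loss coming from summing the dyadic pieces.
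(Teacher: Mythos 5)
The paper does not actually reprove this proposition: it is stated with the label ``[Bourgain]'' and the accompanying sentence cites \cite[Section~4]{Bourgain1993} for the proof. The paper's own bilinear estimates are the simpler Propositions~\ref{prop:SimplifiedBilinearStrichartz}, \ref{prop:SecondOrderBilinearEstimate} and \ref{prop:bilinear-sp}, which the authors use instead of Proposition~\ref{prop:BilinearStrichartzBourgain} in most places. So there is no ``paper proof'' to match against; you have attempted a reconstruction of Bourgain's original argument. Your top-level framework --- Cauchy--Schwarz in $\tau_1$ producing a factor $L_{\min}^{1/2}$, followed by several complementary lattice-counting estimates for the set $A(\tau,\xi,\eta)$ --- is the correct one and matches the method used in Propositions~\ref{prop:SimplifiedBilinearStrichartz} and \ref{prop:SecondOrderBilinearEstimate}. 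However, the counting bounds you state do not actually yield the three terms of \eqref{eq:BConstant}, and two of your intermediate claims are incorrect.

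For term (i), you assert $|A|\lesssim L_{\max}^{1/2}N_{\min}^{1/2}$ by ``combining'' the per-$\xi_1$ quadratic count $\lesssim(L_{\max}N_{\min})^{1/2}$ with a vague appeal to the resonance lower bound. But the resonance lower bound \eqref{eq:ResonanceLowerBound} only tells you $L_{\max}\gtrsim N_{\max}^2 N_{\min}$; it does not shrink the $\xi_1$-range. Multiplying the per-$\xi_1$ bound by the trivial $\xi_1$-count $\lesssim I_{\min}$ gives $|A|\lesssim I_{\min}(L_{\max}N_{\min})^{1/2}$, which is the \emph{second} option inside the $\min$, not the interval-free first term. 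Getting the first term without a factor $I_{\min}^{1/2}$ requires an almost-orthogonality argument in the $\xi_1$-variable (decomposing $f_1$ into unit $\xi_1$-intervals and using Pythagoras rather than $\ell^1$-summation), which you have not performed and which does not follow from the counting alone.

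For term (iii), your assertion that the resonance identity ``forces $|\partial_{\eta_1}G|\gtrsim N_{\max}$ outside a small exceptional set'' is wrong in direction. Writing $\Omega=3\xi_1\xi_2(\xi_1+\xi_2)+D^2\xi_1\xi_2/(\xi_1+\xi_2)$ with $D=|\eta_1/\xi_1-\eta_2/\xi_2|$, the constraint $|\Omega|\lesssim L_{\max}$ together with the positivity of both summands (after the standard reduction to $\xi_1,\xi_2$ of the same sign) gives the \emph{upper} bound $D\lesssim (L_{\max}/N_{\min})^{1/2}$, not a lower bound $D\gtrsim N_{\max}$; the transversality can be arbitrarily small. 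Finally, the source of the $L_{\max}^{\varepsilon}$-loss in the first alternative of the $\min$ is Bourgain's divisor-function count for the number of $\xi_1$ with $\xi_1(\xi-\xi_1)\xi$ in a prescribed interval, and this ingredient is absent from your write-up. Your closing remark already concedes that the third counting strategy is not established; as it stands, none of the three terms is fully derived, so the argument has a genuine gap rather than being a different but complete route.
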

We could apply this in the proof of Theorem \ref{thm:LWPKPII} and with Theorem \ref{thm:L4Strichartz} would still get a result below $L^2$. Instead, we use Proposition \ref{prop:bilinear-sp} below, which we hope makes the argument easier to follow.
Before stating Proposition \ref{prop:bilinear-sp}, we state the well-known bilinear estimate based on transversality, recall \eqref{eq:trans}, which is frequently used. The proof is implicit in  \cite[Section~4]{Bourgain1993}, we give the details for the sake of completeness.
\begin{proposition}
\label{prop:SimplifiedBilinearStrichartz}
Let $f_i \in L^2(\R \times \Z^2)$ satisfy the assumptions of Proposition \ref{prop:BilinearStrichartzBourgain} and suppose in addition 
\begin{equation*}
\big| \frac{\eta_1}{\xi_1} - \frac{\eta_2}{\xi_2} \big| \sim D
\end{equation*}
for $(\xi_i,\eta_i) \in \operatorname{supp}_{\xi,\eta} f_i$. Then the following estimate holds:
\begin{equation*}
\| f_1 * f_2 \|_{L^2(\R \times \Z^2)} \lesssim I_{\min}^{\frac{1}{2}} L_{\min}^{\frac{1}{2}} \langle L_{\max} / D \rangle^{\frac{1}{2}} \prod_{i=1}^2 \| f_i \|_{L^2(\R \times \Z^2)}.
\end{equation*}
\end{proposition}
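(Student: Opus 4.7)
The plan is to reduce the convolution estimate to a counting problem for the support set and then exploit the transversality to bound the relevant level sets of the resonance phase. Concretely, denote by $E(\tau,\xi,\eta)\subseteq \R\times\Z^2$ the set of $(\tau_1,\xi_1,\eta_1)$ such that simultaneously $(\tau_1,\xi_1,\eta_1)\in\mathrm{supp}(f_1)$ and $(\tau-\tau_1,\xi-\xi_1,\eta-\eta_1)\in\mathrm{supp}(f_2)$. Applying Cauchy--Schwarz to the convolution integral/sum in the variable $(\tau_1,\xi_1,\eta_1)$, one obtains the pointwise bound
\begin{equation*}
|(f_1*f_2)(\tau,\xi,\eta)|^2 \leq |E(\tau,\xi,\eta)|\cdot\bigl(|f_1|^2*|f_2|^2\bigr)(\tau,\xi,\eta),
\end{equation*}
and then Fubini yields
\begin{equation*}
\|f_1*f_2\|_{L^2(\R\times\Z^2)}^2 \leq \sup_{(\tau,\xi,\eta)}|E(\tau,\xi,\eta)|\cdot\|f_1\|_{L^2}^2\|f_2\|_{L^2}^2.
\end{equation*}
Thus everything reduces to proving $|E(\tau,\xi,\eta)| \lesssim I_{\min}L_{\min}\langle L_{\max}/D\rangle$ uniformly in $(\tau,\xi,\eta)$.

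First I would dispose of the $\tau_1$-integration: for each fixed $(\xi_1,\eta_1)$, the $\tau_1$-slice of $E(\tau,\xi,\eta)$ is the intersection of two intervals of lengths $2L_1$ and $2L_2$, so has measure at most $\lesssim L_{\min}$; moreover, non-emptiness of this slice forces
\begin{equation*}
|\tau - \omega(\xi_1,\eta_1) - \omega(\xi-\xi_1,\eta-\eta_1)|\lesssim L_{\max}.
\end{equation*}
This reduces matters to counting pairs $(\xi_1,\eta_1)\in\Z^2$ in the support constraints that satisfy the above inequality. The count in $\xi_1$ is trivially bounded by $|I_1\cap(\xi - I_2)|\leq I_{\min}$, so I only need, for fixed $\xi_1$, to count $\eta_1\in\Z$ for which $\Phi(\eta_1):=\omega(\xi_1,\eta_1)+\omega(\xi-\xi_1,\eta-\eta_1)$ lies in an interval of length $\lesssim L_{\max}$.

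The main point is the transversality input. Direct differentiation gives
\begin{equation*}
\Phi'(\eta_1) = -\frac{2\eta_1}{\xi_1} + \frac{2(\eta-\eta_1)}{\xi-\xi_1} = -2\Bigl(\frac{\eta_1}{\xi_1}-\frac{\eta-\eta_1}{\xi-\xi_1}\Bigr),
\end{equation*}
so the hypothesis \eqref{eq:trans} yields $|\Phi'(\eta_1)|\sim D$ throughout the supports. After possibly splitting the supports into finitely many pieces on which $\eta_1/\xi_1-(\eta-\eta_1)/(\xi-\xi_1)$ has constant sign (which is harmless since $|\cdot|\sim D$ keeps it away from zero), $\Phi$ is monotone with derivative of size $\sim D$. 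Hence the preimage under $\Phi$ of an interval of length $\lesssim L_{\max}$ is an interval of length $\lesssim L_{\max}/D$, and the number of integers it contains is $\lesssim 1 + L_{\max}/D = \langle L_{\max}/D\rangle$.

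Combining these three estimates gives $|E(\tau,\xi,\eta)|\lesssim L_{\min}\,I_{\min}\,\langle L_{\max}/D\rangle$, and taking square roots in the Cauchy--Schwarz bound produces the claimed inequality. The only subtlety is the sign/monotonicity argument for $\Phi$, which is a routine but non-trivial decomposition; the transversality-driven level-set count is the main ingredient and the inequality's content.
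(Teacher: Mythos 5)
Your proof is correct and follows essentially the same route as the paper: Cauchy--Schwarz reduces to a uniform measure bound on the overlap set, which is estimated by (i) counting $\xi_1 \lesssim I_{\min}$, (ii) the mean-value/level-set argument giving $\lesssim \langle L_{\max}/D\rangle$ values of $\eta_1$ from the transversality $|\Phi'|\sim D$, and (iii) the $\tau_1$-slice of length $\lesssim L_{\min}$. The only small addition is your remark on monotonicity of $\Phi$, which is sound but unnecessary since $\Phi'$ is affine in $\eta_1$ and kept away from zero, so at most one sign change can occur; the paper applies the mean-value bound directly without comment.
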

\begin{proof}
By convolution constraint and almost orthogonality we can suppose that the $\xi$-support of $f_i$, $i=1,2$ is contained in intervals of length $I_{\min}$. Suppose by symmetry that $L_1 = L_{\min}$. We apply the Cauchy-Schwarz inequality to find
\begin{equation*}
\begin{split}
&\quad \int \big| \int f_1(\tau_1,\xi_1,\eta_1) f_2(\tau-\tau_1,\xi-\xi_1,\eta-\eta_1) d\tau_1 d\xi_1 d\eta_1 \big|^2 d\tau d\xi d\eta \\
&\lesssim \sup_{\tau,\xi,\eta} A(\tau,\xi,\eta) \iint |f_1(\tau_1,\xi_1,\eta_1)|^2 |f_2(\tau-\tau_1,\xi-\xi_1,\eta-\eta_1)|^2 d\tau_1 d\xi_1 d\eta_1 d\tau d\xi d\eta \\
&\lesssim \text{meas}_{\R \times \Z^2} A(\tau,\xi,\eta) \| f_1 \|_{L^2_{\tau,\xi,\eta}}^2 \| f_2 \|_{L^2_{\tau \xi \eta}}^2
\end{split}
\end{equation*}
with
\begin{equation*}
\begin{split}
A(\tau,\xi,\eta) &= \{ (\tau_1,\xi_1,\eta_1) \in \R \times \Z^2 : (\tau_1,\xi_1,\eta_1) \in \operatorname{supp}(f_1), \\
&\quad (\tau-\tau_1,\xi-\xi_1,\eta-\eta_1) \in \operatorname{supp}(f_2) \}.
\end{split}
\end{equation*}
We estimate the measure as follows:
By assumption we have
\begin{equation}
\label{eq:xi1EstimateBilSimplified}
\# \{ \xi_1 \in \pi_{\xi_1}(A(\tau,\xi,\eta)) \} \lesssim I_{\min}.
\end{equation}
For fixed $\xi_1,\xi,\eta$ and $\tau$ we count the number of $\eta_1$ by writing
\begin{equation*}
\begin{split}
\tau_1 - \omega(\xi_1,\eta) + ((\tau-\tau_1)-\omega(\xi-\xi_1,\eta-\eta_1)) &= \tau- \omega(\xi_1,\eta_1) - \omega(\xi-\xi_1,\eta-\eta_1) \\ &=: g_{\tau,\xi,\xi_1,\eta}(\eta_1).
\end{split}
\end{equation*}
This shows that $g_{\tau,\xi,\xi_1,\eta_1}$ ranges in an interval of at most $L_{\max}$ and since
\begin{equation*}
\partial_{\eta_1} g = 2 \big( \frac{\eta-\eta_1}{\xi - \xi_1} - \frac{\eta_1}{\xi_1} \big), \quad |\partial_{\eta_1} g | \sim D,
\end{equation*}
we can estimate the number of $\eta_1$ for fixed $\xi_1$ by the mean-value theorem:
\begin{equation}
\label{eq:eta1EstimateBilSimplified}
\# \{ \eta_1: (\xi_1,\eta_1) \in A_{\xi_1,\eta_1}(\tau,\xi,\eta) \} \lesssim \langle L_{\max} / D \rangle.
\end{equation}
For fixed $\xi_1$, $\eta_1$ we have
\begin{equation}
\label{eq:tau1EstimateBilSimplified}
| \{ \tau_1 \in \R : (\tau_1,\xi_1,\eta_1) \in A(\tau,\xi,\eta) \} | \lesssim L_{\min}.
\end{equation}
Taking \eqref{eq:xi1EstimateBilSimplified}, \eqref{eq:eta1EstimateBilSimplified}, and \eqref{eq:tau1EstimateBilSimplified} together we complete the proof.
\end{proof}

We record the following bilinear estimate hinging on second-order transversality, which is again implicit in \cite[Section~4]{Bourgain1993}:
\begin{proposition}
\label{prop:SecondOrderBilinearEstimate}
Let $N_i,L_i \in 2^{\N_0}$, $N_2 \lesssim N_1$, and $f_i \in L^2(\R \times \Z^2)$ satisfy the assumptions of Proposition \ref{prop:BilinearStrichartzBourgain}. Suppose that the intervals $I_i$ satisfy $|I_i| \leq A$ for $i=1,2$. Assume for $(\tau_i,\xi_i,\eta_i) \in \operatorname{supp}(f_i)$ that
\begin{equation*}
|\xi_i| \sim N_i, \quad \xi_i \in I_i, \quad |\tau_i - \omega(\xi,\eta)| \leq L_i.
\end{equation*}
Then the following estimate holds:
\begin{equation*}
\| f_1 * f_2 \|_{L^2_{\tau,\xi,\eta}} \lesssim \langle A \rangle^{\frac{1}{2}} L_{\min}^{\frac{1}{2}} \langle L_{\max} N_2 \rangle^{\frac{1}{4}} \prod_{i=1}^2 \| f_i \|_{L^2}.
\end{equation*}
\end{proposition}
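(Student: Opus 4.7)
The plan is to mirror the proof of Proposition \ref{prop:SimplifiedBilinearStrichartz}, replacing the first-order transversality argument (which controlled $\#\{\eta_1\}$ via a lower bound on $\partial_{\eta_1}\Omega$) by a \emph{second-order} transversality argument. As in that proof, I apply Cauchy--Schwarz in the convolution and reduce to estimating the measure of the slice
\begin{equation*}
A(\tau,\xi,\eta)=\{(\tau_1,\xi_1,\eta_1)\in\R\times\Z^2:(\tau_1,\xi_1,\eta_1)\in\mathrm{supp}(f_1),\ (\tau-\tau_1,\xi-\xi_1,\eta-\eta_1)\in\mathrm{supp}(f_2)\}.
\end{equation*}
Since $\xi_1\in I_1$ with $|I_1|\le A$, one has $\#\{\xi_1\}\lesssim\langle A\rangle$, and for fixed $(\xi_1,\eta_1)$ the $\tau_1$ variable ranges in an interval of length $\lesssim L_{\min}$.

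The main point is to count $\eta_1\in\Z$ for fixed $(\tau,\xi,\eta,\xi_1)$. The two modulation constraints combine to give
\begin{equation*}
\bigl|\tau-\omega(\xi_1,\eta_1)-\omega(\xi-\xi_1,\eta-\eta_1)\bigr|\lesssim L_{\max}.
\end{equation*}
Set $F(\eta_1)=\tau-\omega(\xi_1,\eta_1)-\omega(\xi-\xi_1,\eta-\eta_1)$. Using $\partial_\eta^2\omega(\xi,\eta)=-2/\xi$, I compute
\begin{equation*}
F''(\eta_1)=\frac{2}{\xi_1}+\frac{2}{\xi-\xi_1}=\frac{2\xi}{\xi_1(\xi-\xi_1)}.
\end{equation*}
Because $N_2\ll N_1$, convolution forces $|\xi_1|\sim N_1$ and $|\xi-\xi_1|\sim N_2$ (or the symmetric case), while $|\xi|\sim N_1$. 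Hence $|F''(\eta_1)|\sim N_1/(N_1 N_2)=1/N_2$ \emph{uniformly}, with a definite sign, so $F$ is convex on the interval of admissible $\eta_1$.

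A standard convexity argument then shows that the set $\{\eta_1\in\R:|F(\eta_1)|\lesssim L_{\max}\}$ is an interval of length at most $\sqrt{L_{\max}/|F''|}\lesssim\sqrt{L_{\max}N_2}$, so the number of integer $\eta_1$ is $\lesssim\langle L_{\max}N_2\rangle^{1/2}$. Combining the three counts yields
\begin{equation*}
|A(\tau,\xi,\eta)|\lesssim\langle A\rangle\cdot\langle L_{\max}N_2\rangle^{1/2}\cdot L_{\min},
\end{equation*}
and taking the square root after the Cauchy--Schwarz step produces the desired bound $\langle A\rangle^{1/2}L_{\min}^{1/2}\langle L_{\max}N_2\rangle^{1/4}\|f_1\|_{L^2}\|f_2\|_{L^2}$.

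The only real obstacle is verifying that $F''$ has a definite lower bound with no cancellation: this is where the miraculous identity $1/\xi_1+1/(\xi-\xi_1)=\xi/(\xi_1(\xi-\xi_1))$ enters, and the $\mathrm{High}\text{-}\mathrm{Low}$ hypothesis $N_2\ll N_1$ ensures $|\xi|\sim N_1$ is not lost to cancellation with the low frequency. Everything else is a routine adaptation of the counting argument in Proposition \ref{prop:SimplifiedBilinearStrichartz}.
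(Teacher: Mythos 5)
Your proof is correct and follows essentially the same approach as the paper: Cauchy--Schwarz reduces matters to bounding the measure of the convolution slice, and the key count of $\eta_1$ for fixed $\xi_1$ comes from a second-order sublevel-set estimate using $|\partial_{\eta_1}^2 F|\gtrsim 1/N_2$. The only (minor) deviation is in how you obtain that lower bound: the paper first reduces to the case where $\xi_1$ and $\xi-\xi_1$ have the same sign (invoking the oddness of $\omega$ and complex conjugation), whereas you use the identity $\tfrac{1}{\xi_1}+\tfrac{1}{\xi-\xi_1}=\tfrac{\xi}{\xi_1(\xi-\xi_1)}$ together with $|\xi|\sim N_1$ (which needs $N_2\ll N_1$) to get the bound directly; both are valid under the stated hypothesis, but the paper's sign reduction also covers the $N_1\sim N_2$ case in which your route would fail if $|\xi|\ll N_1$ due to cancellation. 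One small imprecision: $\{|F|\lesssim L_{\max}\}$ is a union of at most two intervals (not one), but the measure bound $\lesssim\sqrt{L_{\max}N_2}$ and hence the integer count still hold.
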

\begin{proof}
The argument follows along the lines of the previous proof. The difference is that we are considering the second derivative in $\eta_1$ of $f_{\tau,\xi,\xi_1}$:
\begin{equation*}
\partial_{\eta_1}^2 f =2 \big( \frac{1}{\xi_1} + \frac{1}{\xi - \xi_1} \big).
\end{equation*}
We can suppose that $\xi_1$ and $\xi-\xi_1$ are of the same sign, as a consequence of the symmetry of the dispersion relation and complex conjugation. We find the bound
\begin{equation*}
|\partial_{\eta_1}^2 f| \sim \frac{1}{N_{\min}}.
\end{equation*}
By the second order mean value inequality we obtain the alternative estimate
\begin{equation*}
\label{eq:eta1EstimateBilSimplifiedII}
\# \{\eta_1:  (\xi_1,\eta_1) \in A_{\xi_1,\eta_1}(\tau,\xi,\eta) \} \lesssim (L_{\max} N_{\min})^{\frac{1}{2}}.
\end{equation*}
Taking \eqref{eq:xi1EstimateBilSimplified}, \eqref{eq:eta1EstimateBilSimplifiedII}, and \eqref{eq:tau1EstimateBilSimplified} together yields the claim.
\end{proof}

The following bilinear estimate relates to a special case of Proposition \ref{prop:BilinearStrichartzBourgain} and can be proved with the arguments in \cite{Bourgain1993}. Here we give a different proof based on the biorthogonality, which underpins the proof of the C\'ordoba-Fefferman square function estimate \cite{Cordoba1982} (see also \cite{Fefferman1973,Cordoba1979}). We refer to \cite[Section~3.2]{Demeter2020} for a modern treatise.

\begin{proposition}
\label{prop:bilinear-sp}
Let $D^* \geq 2$, $f_i : \R \times \Z \times \Z \to \C$ such that for $(\tau_i,\xi_i,\eta_i) \in \operatorname{supp}(f_i)$ the following holds:
\begin{equation*}
|\xi_i| \sim N_i ,\; |\tau_i - \omega(\xi_i,\eta_i)| \lesssim L_i, \; \big| \frac{\eta_1}{\xi_1} - \frac{\eta_2}{\xi_2} \big| \lesssim D^*
\end{equation*}
for dyadic numbers satisfying $N_2\ll N_1$ and $N_1^\alpha \leq L_{\max}\lesssim N_1^2N_2$.
Then the following estimate holds:
\begin{equation}\label{eq:bilinear-sp}
\| f_1 * f_2 \|_{L^2(\R \times \Z^2)} \lesssim \log(D^*) ( N_2^{\frac{1}{2}} N_1^{-\frac{\alpha}{2}} + N_2^{\frac{1}{4}} N_1^{-\frac{\alpha}{4}}) \prod_{i=1}^2 L_i^{\frac{1}{2}} \| f_i \|_{L^2}.
\end{equation}
\end{proposition}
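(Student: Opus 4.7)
The plan is to combine a dyadic decomposition of the transversality with the Córdoba--Fefferman square function estimate applied to the KP-II characteristic parabola, together with Proposition~\ref{prop:SimplifiedBilinearStrichartz}.

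First, dyadically partition the angular separation $|\eta_1/\xi_1-\eta_2/\xi_2|\sim D$ over $O(\log D^*)$ scales $D$ in the relevant range up to $D^*$; this produces the $\log(D^*)$ loss. Fix one such $D$ and decompose $f_1=\sum_\ell f_{1,\ell}$ into angular slabs $\{\eta_1/\xi_1\in[\ell D,(\ell+1)D]\}$, and analogously $f_2=\sum_m f_{2,m}$. Since $N_2\ll N_1$, the output angle $\eta/\xi$ is essentially equal to $\eta_1/\xi_1$, so for each output frequency only $O(1)$ indices $\ell$ contribute, and the transversality $\sim D$ further restricts $m$ to an $O(1)$-set for each $\ell$. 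Consequently
\[
\|f_1*f_2\|_{L^2}^2\lesssim \sum_{\ell,m}\|f_{1,\ell}*f_{2,m}\|_{L^2}^2.
\]

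Next, for each pair $(\ell,m)$ I would refine the contribution count used in the proof of Proposition~\ref{prop:SimplifiedBilinearStrichartz}. Fix output $(\tau,\xi,\eta)$ and $\xi_1$; the admissible $\eta_1$ lie in an $L_{\max}$-neighborhood of a parabola of curvature $\sim 1/\xi_1\sim 1/N_1$ in the $(\eta_1,\tau_1)$ plane, cut by the transversality constraint. Rather than the crude length count $\langle L_{\max}/D\rangle$, invoke the Córdoba--Fefferman square function estimate at the natural parabolic scale (caps of $\eta$-width $\sqrt{L_{\max}N_1}$ and tangential slope-spread $\sqrt{L_{\max}/N_1}$), applied slice-by-slice in $\xi_1$, and use orthogonality of the caps to sharpen the count. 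Two effective regimes appear according to whether the transversal slot $D$ or the parabolic cap slot dominates.

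In the first regime ($D\gtrsim\sqrt{L_{\max}/N_1}$) the transversality dominates the cap scale and one recovers
\[
\|f_{1,\ell}*f_{2,m}\|_{L^2}\lesssim N_2^{1/2}L_{\min}^{1/2}\|f_{1,\ell}\|_{L^2}\|f_{2,m}\|_{L^2}.
\]
Writing $L_{\min}^{1/2}=(L_1L_2)^{1/2}L_{\max}^{-1/2}\leq (L_1L_2)^{1/2}N_1^{-\alpha/2}$ via $L_{\max}\geq N_1^\alpha$ yields the first term of \eqref{eq:bilinear-sp}. In the opposite regime the Córdoba--Fefferman square function gains a square-root relative to the naive count, producing
\[
\|f_{1,\ell}*f_{2,m}\|_{L^2}\lesssim N_2^{1/4}L_{\min}^{1/2}L_{\max}^{1/4}\|f_{1,\ell}\|_{L^2}\|f_{2,m}\|_{L^2},
\]
and $L_{\max}^{1/4}=L_{\max}^{1/2}L_{\max}^{-1/4}\leq L_{\max}^{1/2}N_1^{-\alpha/4}$ gives the second term. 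Summing over $(\ell,m)$ via the orthogonality above and over the $O(\log D^*)$ dyadic scales of $D$ yields \eqref{eq:bilinear-sp}.

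The main obstacle is the precise invocation of the Córdoba--Fefferman square function on the KP-II characteristic: the fiber parabolas have curvature varying with $\xi_1$ but comparable to $1/N_1$ on the $\xi_1$-support, so C--F applies slice-by-slice and integrates; one must carefully track how the transversality interacts with the parabolic cap decomposition and verify that the upper bound $L_{\max}\lesssim N_1^2 N_2$ ensures the cap decomposition is compatible with the frequency supports. Correctly balancing these two regimes is what produces the two distinct terms in \eqref{eq:bilinear-sp}.
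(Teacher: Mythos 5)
Your high-level strategy (dyadic decomposition in the transversality $D$ up to $D^*$, producing the $\log(D^*)$ loss, and then an appeal to a C\'ordoba--Fefferman square function estimate combined with the bilinear estimates of Section~\ref{section:BilinearStrichartz}) is the right one, and the algebra at the end ($L_{\min}^{1/2}L_{\max}^{1/2}L_{\max}^{-1/4}\le (L_1L_2)^{1/2}N_1^{-\alpha/4}$, etc.) matches how the two terms in \eqref{eq:bilinear-sp} are produced. However, the way you apply C\'ordoba--Fefferman is not what the paper does, and as formulated it has gaps that would not close.

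First, you apply C--F slice-by-slice in the $\eta_1$ variable for fixed $\xi_1$, on what you call the fiber parabola in the $(\eta_1,\tau_1)$-plane, and you assign it curvature $\sim 1/\xi_1\sim 1/N_1$. The relevant curvature there is $\partial_{\eta_1}^2\big(\omega(\xi_1,\eta_1)+\omega(\xi-\xi_1,\eta-\eta_1)\big)= 2(1/\xi_1+1/(\xi-\xi_1))\sim 1/N_2$, since $|\xi-\xi_1|=|\xi_2|\sim N_2\ll N_1$; your stated cap widths $\sqrt{L_{\max}N_1}$ have the wrong scale. Moreover, counting $\eta_1$ for fixed $\xi_1$ is exactly what the mean value theorem (Proposition~\ref{prop:SimplifiedBilinearStrichartz}) and the second-order mean value theorem (Proposition~\ref{prop:SecondOrderBilinearEstimate}) already exhaust; a square function in that direction gives no further orthogonality gain.

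Second, and more fundamentally, the crucial ingredient missing from your argument is the almost-orthogonal decomposition of the $\xi$-support. In the paper's proof, after fixing $D$ the transversality bound $|\eta_1/\xi_1-\eta_2/\xi_2|\sim D$ shows that the $\eta$-part of the resonance is $O(D^2 N_2)$, so after subtracting $\omega(\xi_1+\xi_2,\eta_1+\eta_2)$ the constraint becomes $\xi_1'\xi_2'=\xi_3'\xi_4'+O(A)$ with $A=D^2N_2/N_1^3+L_{\max}/N_1^3$, i.e.\ C--F is applied to the output $\xi$-variable. This yields an almost orthogonal decomposition of $\xi_1,\xi_2$ into intervals of length $\langle AN_1\rangle$, which is what shrinks $I_{\min}$ from the trivial $N_2$ and makes Propositions~\ref{prop:SimplifiedBilinearStrichartz} and~\ref{prop:SecondOrderBilinearEstimate} strong enough. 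Your argument never reduces the $\xi$-support and therefore Proposition~\ref{prop:SecondOrderBilinearEstimate} with $A=N_2$ only gives $N_2^{3/4}L_{\min}^{1/2}L_{\max}^{1/4}$, which is off by $N_2^{1/2}$ from the second term in \eqref{eq:bilinear-sp}.

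Third, your regime split is incorrect. In the regime $D\gtrsim\sqrt{L_{\max}/N_1}$ you claim to recover $N_2^{1/2}L_{\min}^{1/2}\prod\|f_i\|_{L^2}$, but Proposition~\ref{prop:SimplifiedBilinearStrichartz} gives $N_2^{1/2}L_{\min}^{1/2}\langle L_{\max}/D\rangle^{1/2}$, and the extra $\langle L_{\max}/D\rangle^{1/2}$ factor is harmless only when $D\gtrsim L_{\max}$, a far stronger condition than $D\gtrsim\sqrt{L_{\max}/N_1}$. The actual dichotomy in the paper is governed by whether $D^2N_2\lesssim N_1^2$ or $D^2N_2\gtrsim N_1^2$ (and separately by whether $L_{\max}\lesssim N_1^2$), which reflects whether the C--F interval length $\langle D^2N_2/N_1^2 + L_{\max}/N_1^2\rangle$ is trivial or not. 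You should rework the argument so that C--F is applied in the $\xi$-variables (after passing to $\Omega_{KdV}$ modulo the $O(D^2N_2)$ error controlled by the transversality), and then insert the resulting short $\xi$-intervals into Propositions~\ref{prop:SimplifiedBilinearStrichartz} and~\ref{prop:SecondOrderBilinearEstimate}.
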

\begin{proof}
We carry out a dyadic decomposition in $D$ such that we can suppose in the following that
\begin{equation*}
\big| \frac{\eta_1}{\xi_1} - \frac{\eta_2}{\xi_2} \big| \sim D.
\end{equation*}
Summation over $(\frac{L_{\max}}{N_2})^{\frac{1}{2}} \leq D \leq D^*$ incurs the logarithmic loss in $D^*$.

Applying Proposition \ref{prop:SimplifiedBilinearStrichartz} directly yields
\begin{equation*}
\| f_{1,N_1,L_1} * f_{2,N_2,L_2} \|_{L^2_{\tau,\xi,\eta}} \lesssim N_2^{\frac{1}{2}} L_{\min}^{\frac{1}{2}} \langle L_{\max}/D \rangle^{\frac{1}{2}} \prod_{i=1}^2 \|f_{i,N_i,L_i} \|_{L^2_{\tau,\xi,\eta}},
\end{equation*}
which suffices for $D \gtrsim N_1$, but does not in case $D \ll N_1$.

\medskip

In the latter case we shall see that we can significantly decrease the effective $\xi$-support using a variant of the \emph{Córdoba-Fefferman square function estimate.} By symmetry of the dispersion relation, we suppose in the following that $\xi_1, \xi_2 > 0$. 

\medskip

We rewrite the convolution constraint as
\begin{equation*}
\begin{cases}
\xi_1 + \xi_2 &= \xi_3 + \xi_4, \\
\eta_1 + \eta_2 &= \eta_3 + \eta_4, \\
\tau_1 + \tau_2 &= \tau_3 + \tau_4.
\end{cases}
\Leftrightarrow
\begin{cases}
\xi_1 + \xi_2 &= \xi_3 + \xi_4, \\
\eta_1 + \eta_2 &= \eta_3 + \eta_4, \\
\xi_1^3 - \frac{\eta_1^2}{\xi_1} + \xi_2^3 - \frac{\eta_2^2}{\xi_2} &= \xi_3^3 - \frac{\eta_3^2}{\xi_3} + \xi_4^3 - \frac{\eta_4^2}{\xi_4} + \mathcal{O}(L_{\max}).
\end{cases}
\end{equation*}
From the third line we subtract $\omega(\xi_1+\xi_2,\eta_1+\eta_2) = \omega(\xi_3+\xi_4,\eta_3+\eta_4)$ to find
\begin{equation*}
\begin{cases}
\xi_1 + \xi_2 &= \xi_3 + \xi_4, \\
\eta_1 + \eta_2 &= \eta_3 + \eta_4, \\
3 \xi_1 \xi_2 (\xi_1 + \xi_2) - \frac{(\eta_1 \xi_2 - \eta_2 \xi_1)^2}{(\xi_1+\xi_2) \xi_1 \xi_2} &= 3 \xi_3 \xi_4 (\xi_3 + \xi_4) - \frac{(\eta_3 \xi_4 - \eta_4 \xi_3)^2}{(\xi_3 + \xi_4) \xi_3 \xi_4} + \mathcal{O}(L_{\max}).
\end{cases}
\end{equation*}

Note that
\begin{equation*}
\begin{split}
\frac{(\eta_1 \xi_2 - \eta_2 \xi_1)^2}{(\xi_1+\xi_2) \xi_1 \xi_2} &= \frac{(\eta_1 / \xi_1 - \eta_2 / \xi_2)^2 \xi_1 \xi_2}{\xi_1 + \xi_2} = \mathcal{O}(D^2 \cdot N_2), \\
\frac{(\eta_3 \xi_4 - \eta_4 \xi_3)^2}{(\xi_3+\xi_4) \xi_3 \xi_4} &= \mathcal{O}(D^2 \cdot N_2).
\end{split}
\end{equation*}
We rescale to unit frequencies using the anisotropic scaling $\xi_i \to \xi_i / N_1$, $\eta_i \to \eta_i / N_1^2$:
\begin{equation*}
\begin{cases}
\xi'_1 + \xi'_2 &= \xi'_3 + \xi'_4, \\
\eta'_1 + \eta'_2 &= \eta'_3 + \eta'_4, \\
\xi'_1 \xi'_2 (\xi'_1 + \xi'_2) &= \xi'_3 \xi'_4 (\xi'_3 + \xi'_4) + \mathcal{O}(\frac{D^2 N_2}{N_1^3} + \frac{L_{\max}}{N_1^3}).
\end{cases}
\end{equation*}

Let $A = \frac{D^2 N_2}{N_1^3} + \frac{L_{\max}}{N_1^3}$ and note that $A \ll 1$ under the above assumptions. The above implies by $\xi_1'+\xi_2' = \xi_3' + \xi_4' \sim 1$
\begin{equation*}
\begin{cases}
\xi_1' + \xi_2' &= \xi_3' + \xi_4', \\
\xi_1' \xi_2' &= \xi_3' \xi_4' + \mathcal{O}(A).
\end{cases}
\end{equation*}
Furthermore, by squaring the first line and subtracting the second line twice,
\begin{equation*}
\begin{cases}
\xi_1' + \xi_2' &= \xi_3' + \xi_4', \\
(\xi_1')^2 + (\xi_2')^2 &= (\xi_3')^2 + (\xi_4')^2 + \mathcal{O}(A).
\end{cases}
\end{equation*}

By the bilinear C\'ordoba--Fefferman square function estimate, this yields an almost orthogonal decomposition of $\xi'_i$ into intervals of length $A$. For self-contained\-ness we include the argument. The second line of the above display implies by convolution constraint and the first line
\begin{equation*}
(\xi_1')^2 + (\xi_2')^2 = (\xi_3')^2 + (\xi_4')^2 + \mathcal{O}(A) \Rightarrow 2 (\xi_1' - \xi_2')(\xi_2'-\xi_4') = \mathcal{O}(A).
\end{equation*}
Since $|\xi_1'-\xi_2'| \sim 1$ by $N_2 \ll N_1$ and rescaling, we find $|\xi_2'-\xi_4'| \lesssim A$ and correspondingly, $|\xi_1'-\xi_3'| \lesssim A$. For the $\xi_i$-frequencies after scaling back this translates into an almost orthogonal decomposition into intervals in $\Z$ of length
\begin{equation*}
\ell =  \langle A N_1 \rangle \sim \frac{D^2 N_2}{N_1^2} + \frac{L_{\max}}{N_1^2} + 1.
\end{equation*}
We denote the additional decomposition in $\xi$ into intervals of length $\ell$ by
\begin{equation*}
f_{i,N_i,L_i} = \sum_{I_i} f^{I_i}_{i,N_i,L_i}
\end{equation*}
to obtain the square function estimate
\begin{equation}
\label{eq:AlmostOrthogonalDec}
\int |f_{1,N_1,L_1} * f_{2,N_2,L_2}|^2 \lesssim \sum_{I_1,I_2} \int |f^{I_1}_{1,N_1,L_1} * f^{I_2}_{2,N_2,L_2} |^2.
\end{equation}

In case $D \sim \big( \frac{L_{\max}}{N_2} \big)^{\frac{1}{2}}$ we apply the bilinear Strichartz estimate from Proposition \ref{prop:SecondOrderBilinearEstimate} to find
\begin{equation*}
\| f^{I_1}_{1,N_1,L_1} * f^{I_2}_{2,N_2,L_2} \|_{L^2} \lesssim \langle L_{\max}^{\frac{1}{2}} / N_1 \rangle L_{\min}^{\frac{1}{2}} (L_{\max} N_2 )^{\frac{1}{4}} \prod_{i=1}^2 \| f^{I_i}_{i,N_i,L_i} \|_2.
\end{equation*}
Then the claim follows from the almost orthogonal summation over $I_1$ and $I_2$. This argument is omitted in the following for brevity. If $L_{\max} \lesssim N_1^2$, then we can use the lower bound on $L_{\max}$ to find after plugging in the almost orthogonal decomposition \eqref{eq:AlmostOrthogonalDec}
\begin{equation*}
\begin{split}
\| f_{1,N_1,L_1} * f_{2,N_2,L_2} \|_{L^2} &\lesssim L_{\min}^{\frac{1}{2}} L_{\max}^{\frac{1}{2}} (N_1^{-\alpha} N_2 )^{\frac{1}{4}} \prod_{i=1}^2 \| f_{i,N_i,L_i} \|_2 \\
&\lesssim \langle N_1^{-\alpha} N_2 \rangle^{\frac{1}{2}} \prod_{i=1}^2 L_i^{\frac{1}{2}}\| f_{i,N_i,L_i} \|_2.
\end{split}
\end{equation*}
If $L_{\max} \gtrsim N_1^2$, we use the upper bound $L_{\max} \lesssim N_1^2 N_2$ to obtain
\begin{equation*}
\| f_{1,N_1,L_1} * f_{2,N_2,L_2} \|_{L^2_{\tau,\xi,\eta}} \lesssim N_2^{\frac{1}{2}} / N_1^{\frac{1}{2}} \prod_{i=1}^2 L_i^{\frac{1}{2}} \| f_{i,N_i,L_i} \|_2. 
\end{equation*}

Next, suppose that $ \big( \frac{L_{\max}}{N_2} \big)^{\frac{1}{2}} \ll D \lesssim N_1$: In this case we obtain an almost orthogonal decomposition of $\xi$-frequencies \eqref{eq:AlmostOrthogonalDec} into intervals of length
\begin{equation*}
\langle AN_1 \rangle \sim \frac{D^2 N_2}{N_1^2} + 1.
\end{equation*}
In case $D^2 N_2 \lesssim N_1^2$, we apply Proposition \ref{prop:SecondOrderBilinearEstimate} after plugging in \eqref{eq:AlmostOrthogonalDec} to find
\begin{equation*}
\begin{split}
\| f_{1,N_1,L_1} * f_{2,N_2,L_2} \|_{L^2_{\tau,\xi,\eta}} &\lesssim L_{\min}^{\frac{1}{2}} (L_{\max} N_2)^{\frac{1}{4}} \prod_{i=1}^2 \| f_{i,N_i,L_i} \|_2 \\
&\lesssim (N_2 N_1^{-\alpha})^{\frac{1}{4}} \prod_{i=1}^2 L_i^{\frac{1}{2}} \| f_{i,N_i,L_i} \|_2.
\end{split}
\end{equation*}
This yields an acceptable contribution.
In the ultimate estimate we used the minimum size of $L_{\max}$.

\medskip

We turn to $D^2 N_2 \gtrsim N_1^2$. In this case we apply Proposition \ref{prop:SimplifiedBilinearStrichartz} after plugging in \eqref{eq:AlmostOrthogonalDec} to find
\begin{equation*}
\| f_{1,N_1,L_1} * f_{2,N_2,L_2} \|_{L^2_{\tau,\xi,\eta}} \lesssim \frac{D N_2^{\frac{1}{2}}}{N_1} L_{\min}^{\frac{1}{2}} \langle L_{\max} / D \rangle^{\frac{1}{2}} \prod_{i=1}^2 \| f_{i,N_i,L_i} \|_2.
\end{equation*}
Summing in $D \lesssim N_1$ we obtain by the minimum size $L_{\max} \gtrsim N_1^\alpha$ 
\begin{equation*}
\sum_{\frac{N_1}{N_2^{\frac{1}{2}}} \lesssim D \lesssim N_1} \frac{D N_2^{\frac{1}{2}}}{N_1} L_{\min}^{\frac{1}{2}} \langle L_{\max} / D \rangle^{\frac{1}{2}} \prod_{i=1}^2 \| f_{i,N_i,L_i} \|_2 \lesssim N_2^{\frac{1}{2}} N_1^{-\frac{\alpha}{2}} \prod_{i=1}^2 L_i^{\frac{1}{2}} \| f_{i,N_i,L_i} \|_2.
\end{equation*}
This completes the proof.
\end{proof}
\section{Outline of the proof of the nonlinear estimates}
\label{section:Outline}
In this section we explain our argument to show the short-time Fourier restriction estimates for the new well-posedness results in broad strokes. Technical details are provided in the forthcoming sections.

\subsection{Estimating solutions at negative Sobolev regularity}

We begin with a description of Fourier restriction norms without frequency-dependent time localization. Fourier restriction norms for $s,b \in \R$ with modulation weight are defined as
\begin{equation*}
\begin{split}
X^{s,b} &= \{ u : \R \times \T^2 \to \C \; | \| u \|_{X^{s,b}} < \infty \}, \\
\| u \|^2_{X^{s,b}} &= \int_{\R} \sum_{\substack{ (\xi,\eta) \in \Z^2, \\ \xi \neq 0 }} \langle \xi \rangle^{2s} \langle \tau - \omega(\xi,\eta) \rangle^{2b} \langle (\tau-\omega(\xi,\eta))/|\xi|^3 \rangle^{\frac{1}{2}} |\hat{u}(\tau,\xi,\eta)|^2 d\tau.
\end{split}
\end{equation*}
In the limiting case $b=\frac{1}{2}$ a modification becomes necessary for the transfer principle to remain valid. We denote the modified space by $\bar{X}^{s,\frac{1}{2}}$. To avoid technicalities, we do not elaborate on Besov refinements like in \eqref{eq:FourierRestrictionBesov}, which salvage the transfer principle in the limiting case. We refer to \cite[p.~656]{MolinetSautTzvetkov2011} for details on a possible refinement in the limiting case $b=1/2$. Indeed, the Besov refinements coincide with the $X^{s,b}$-norm for functions localized in frequency and modulation. For this reason, we pretend to carry out the estimates in the $X^{s,\frac{1}{2}}$-norm.

Bourgain showed analytic well-posedness via the estimates for solutions
\begin{equation*}
\left\{ \begin{array}{cl}
\| u \|_{\bar{X}^{0,\frac{1}{2}}} &\lesssim \| u_0 \|_{L^2} + \| \partial_x (u^2) \|_{\bar{X}^{0,-\frac{1}{2}}}, \\
\| \partial_x(u^2) \|_{\bar{X}^{0,-\frac{1}{2}}} &\lesssim \| u \|^2_{\bar{X}^{0,\frac{1}{2}}},
\end{array} \right.
\end{equation*}
and a corresponding estimate for differences of solutions (though the arguments to carry out the crucial bilinear  estimate are identical).

\medskip

After Littlewood-Paley decomposition we can assume the functions to be dyadically localized, which up to summation question reduces the crucial bilinear estimate to
\begin{equation*}
N^s \| P_N \partial_x (P_{N_1} u_1 P_{N_2} u_2 ) \|_{\bar{X}^{0,-\frac{1}{2}}} \lesssim N_1^s \| P_{N_1} u \|_{\bar{X}^{0,\frac{1}{2}}} N_2^s \| P_{N_2} u_2 \|_{\bar{X}^{0,\frac{1}{2}}}.
\end{equation*}
We have by duality
\begin{equation*}
    \| P_N \partial_x (P_{N_1} u_1 P_{N_2} u_2) \|_{\bar{X}^{0,-\frac{1}{2}}} = \sup_{\| u \|_{\bar{X}^{0,\frac{1}{2}}} = 1} \iint P_N \overline{u} \partial_x (P_{N_1} u_1 P_{N_2} u_2) dx dy dt.
\end{equation*}
We introduce modulation projections to write
\begin{equation*}
    \iint P_N \overline{u} \partial_x (P_{N_1} u_1 P_{N_2} u_2) = \sum_{L, L_1, L_2} \iint (P_N Q_L \partial_x \overline{u}) \, ( P_{N_1} Q_{L_1} u_1 ) \, ( P_{N_2} Q_{L_2} u_2).
\end{equation*}
For $i=1,2$ we let
\begin{equation*}
    f_{i,N_i,L_i} = |\mathcal{F}_{t,x,y}[P_{N_i} Q_{L_i} u_i]|
\end{equation*}
and $f_{3,N_3,L_3} = |\mathcal{F}_{t,x,y}[P_N Q_L u]|$. The bound
\begin{equation*}
    \iint P_N Q_L \overline{u} \partial_x (P_{N_1} Q_{L_1} u_1 P_{N_2} Q_{L_2} u_2) \lesssim  N C(N,N_1,N_2) \| P_N Q_L u \|_{\bar{X}^{0,\frac{1}{2}}} \prod_{i=1}^2 \| P_{N_i} Q_{L_i} u \|_{\bar{X}^{0,\frac{1}{2}}}
\end{equation*}
is for $L_i \lesssim N_i^3$ implied by 
\begin{equation*}
\int f_{3,N_3,L_3} (f_{1,N_1,L_1} * f_{2,N_2,L_2} ) d\xi d\eta d\tau \lesssim C(N,N_1,N_2) \prod_{i=1}^3 L_i^{\frac{1}{2}} \| f_{i,N_i,L_i} \|_{L^2_{\tau,\xi,\eta}}.
\end{equation*}
For modulations $L_i \gg N_i^3$ the weight $(1+L_i / N_i^3)^{\frac{1}{4}}$ must be taken into account. 

\smallskip

Suppose in the following that the frequencies are comparable $N \sim N_1 \sim N_2$.
Recall the defocusing character of the resonance function
\begin{equation*}
    \Omega(\xi,\xi_1,\xi_2) = 3 \xi_1 \xi_2(\xi_1+\xi_2) + \frac{(\eta_1 \xi_2 - \eta_2 \xi_1)^2}{\xi_1 \xi_2 (\xi_1+ \xi_2)}.
\end{equation*} 
By convolution constraint, we have 
\begin{equation*}
    -\Omega = \tau - \omega(\xi_1+\xi_2,\eta_1+\eta_2) - (\tau_1 - \omega(\xi_1,\eta_1)) - (\tau_2 - \omega(\xi_2,\eta_2)) \Rightarrow L_{\max} \gtrsim N_1 N_2 N. 
\end{equation*}
Problematic interactions arise in the \emph{resonant case} $N_1^3 \sim L_{\max} \gg L_{\text{med}}$. 
Suppose by symmetry $L_{\max} \sim L_3$ and note that this is the minimal value $L_{\max}$ can attain. By an almost orthogonality argument and Galilean invariance we can suppose the localization $|\eta_i| \lesssim N_1^2$.

By an application of H\"older's inequality together with the $L^4_{t,x,y}$-Strichartz estimates provided by Theorem \ref{thm:L4Strichartz} yields
\begin{equation*}
\begin{split}
\big| \int f_{3,N_3,L_3} (f_{1,N_1,L_1} * f_{2,N_2,L_2}) d\tau d\xi d\eta \big| &\leq \| f_{3,N_3,L_3} \|_{L^2_{\tau,\xi,\eta}} \prod_{i=1}^2 \| \mathcal{F}^{-1}_{t,x,y}[f_{i,N_i,L_i}] \|_{L^4_{t,x,y}} \\
&\lesssim_\varepsilon \| f_{3,N_3,L_3} \|_{L^2_{\tau,\xi,\eta}} N_1^{\frac{1}{4}+\varepsilon} \prod_{i=1}^2 L_i^{\frac{1}{2}} \| f_{i,N_i,L_i} \|_{L^2_{\tau,\xi,\eta}} \\
&\lesssim_\varepsilon N_1^{-\frac{5}{4}+\varepsilon} \prod_{i=1}^3 L_i^{\frac{1}{2}} \| f_{i,N_i,L_i} \|_{L^2_{\tau,\xi,\eta}}.
\end{split}
\end{equation*}

This shows that the resonant High$\times$High $\rightarrow$ High-interaction can be estimated without frequency-dependent time localization at regularities $s>-\frac{1}{4}$.

Let $M \geq N_1$. If $L_{\max} \sim M^2 N_1 \gg N_1^3$, but $L_{\text{med}} \ll L_{\max}$, which is referred to as \emph{non-resonant case}, the corresponding localization $|\eta_i| \lesssim M^2 N_1$ leads to lossier $L^4$-Strichartz estimates provided by Proposition \ref{prop:L4StrichartzShifted}. Still an estimate via $L^4$-Strichartz estimates remains possible for negative Sobolev regularity provided that $M \lesssim N_1^{1+a_1}$ for some $a_1 > 0$ to be determined. On the other hand, for $M \gtrsim N_1^{1+a_1}$, where we only use that $a_1 > 0$, since the transversality constant satisfies
\begin{equation*}
D \sim \big| \frac{\eta-\eta_1}{\xi - \xi_1} - \frac{\eta_1}{\xi_1} \big| \gtrsim N_1^{1+a_1},
\end{equation*}
 we can use a bilinear Strichartz estimate to find
\begin{equation*}
\begin{split}
\big| \int f_{3,N_3,L_3} (f_{1,N_1,L_1} * f_{2,N_2,L_2}) d\tau d\eta d\xi \big| &\leq \| f_{3,N_3,L_3} \|_{L^2_{\tau,\xi,\eta}} \| f_{1,N_1,L_1} * f_{2,N_2,L_2} \|_{L^2_{\tau,\xi,\eta}} \\
&\lesssim (N_1^{3+2a_1})^{-\frac{1}{2}} N_1^{\frac{1}{2}} \prod_{i=1}^3 L_i^{\frac{1}{2}} \| f_{i,N_i,L_i} \|_{L^2_{\tau,\xi,\eta}}.
\end{split}
\end{equation*}

Interpolation between the new $L^4$-Strichartz estimates and the bilinear $L^2$-Stri\-chartz estimate already due to Bourgain is the key to show the new well-posedness results.

\smallskip

We remark that there is also the \emph{strongly non-resonant case}, $L_{\max} \sim L_{\text{med}}$. In this case, since $L_{\max} \gtrsim N_1^3$, the bilinear Strichartz estimate provided by Proposition \ref{prop:BilinearStrichartzBourgain} (more precisely, using the ultimate constant in \eqref{eq:BConstant}) allows us to cover the full subcritical range.

Suppose $L_{\max} = L_3$, and $L_{\text{med}} = L_1$ by symmetry. We have
\begin{equation*}
\begin{split}
L^{-\frac{1}{4}} N^{-\frac{3}{4}} \| 1_{D_{N,L}} (f_{1,N_1,L_1} * f_{2,N_2,L_2}) \|_{L^2_{\tau,\xi,\eta}} &\lesssim L^{-\frac{1}{4}} N^{-\frac{3}{4}} N^{\frac{3}{4}} L_2^{\frac{1}{2}} L_1^{\frac{1}{4}} \prod_{i=1}^2 \| f_{i,N_i,L_i} \|_{L^2_{\tau,\xi,\eta}} \\
&\lesssim L^{-\frac{1}{4}} N^{-\frac{3}{4}} \prod_{i=1}^2 L_i^{\frac{1}{2}} \| f_{i,N_i,L_i} \|_{L^2_{\tau,\xi,\eta}}.
\end{split}
\end{equation*}
The strongly non-resonant case never poses a problem.

\medskip

Next, consider the High$\times$Low $\rightarrow$ High-interaction $N_1 \sim N_3 \gg N_2$ and suppose that $L_{\max} = N_1^2 N_2 \gg L_{\text{med}}$. Moreover, suppose that $L_3 = L_{\max}$. If $N_2 \lesssim N_1^{1-a_2}$, the linear $L^4$-Strichartz estimates are not effective anymore. In case of small transversality $D \lesssim N_2$ it is a consequence of the biorthogonality used in the proof of the Córdoba--Fefferman square function estimate to effectively reduce the $\xi$-frequency support:
\begin{equation}
\label{eq:AlmostOrthogonal}
\int f_{3,N_3,L_3} (f_{1,N_1,L_1} * f_{2,N_2,L_2}) d\tau d\xi d\eta \lesssim \| f_{3,N_3,L_3} \|_{L^2_{\tau,\xi,\eta}} \sum_{I_1 \sim I_2} \| f_{1,N_1,L_1}^{I_1} * f_{2,N_2,L_2}^{I_2} \|_{L^2_{\tau,\xi,\eta}}.
\end{equation}

The argument states in summary that there is an almost orthogonal decomposition of the $\xi_i$-frequencies of $f$ into intervals $I_i$ of length $N_2 (N_2 / N_1)$; the resulting functions with additionally restricted frequency support are denoted by $f_{I_i}$. With $I_1 \sim I_2$ we mean that for any interval $I_1$ there are only finitely many $I_2$ such that 
\begin{equation*}
    \int f_{3,N_3,L_3} ( f^{I_1}_{1,N_1,L_1} * f^{I_2}_{2,N_2,L_2} ) \neq 0.
\end{equation*}

With \eqref{eq:AlmostOrthogonal} at hand, the bilinear Strichartz estimates (Proposition \ref{prop:BilinearStrichartzBourgain}, \ref{prop:SimplifiedBilinearStrichartz}) become more effective.

\medskip

The interaction we cannot overcome without frequency-dependent time localization is the High$\times$Low $\rightarrow$ High-interaction with $N_2 \lesssim N_1^{1-a_2}$ and large transversality $D \sim N_1$. In this case the the bilinear Strichartz estimate only yields
\begin{equation}
\label{eq:SketchBilinear}
\begin{split}
&\quad \int f_{3,N_3,L_3} (f_{1,N_1,L_1} * f_{2,N_2,L_2}) d\tau d\xi d\eta \\
 &\lesssim \| f_{3,N_3,L_3} \|_{L^2_{\tau,\xi,\eta}} \| f_{1,N_1,L_1} * f_{2,N_2,L_2} \|_{L^2_{\tau,\xi,\eta}} \\
&\lesssim (N_1^2 N_2)^{-\frac{1}{2}} L_3^{\frac{1}{2}} N_2^{\frac{1}{2}} (L_1 \wedge L_2)^{\frac{1}{2}} \langle (L_1 \vee L_2 )/N_1 \rangle^{\frac{1}{2}} \prod_{i=1}^3 \| f_{i,N_i,L_i} \|_{L^2_{\tau,\xi,\eta}}.
\end{split}
\end{equation}

But in case $L_1 \sim L_2 \sim 1$ there is no effective smoothing. However, frequency-dependent time localization $T=T(N)=N^{-\alpha}$ increases the minimum size of modulations $L_{\min} \gtrsim N_1^\alpha$, and short-time bilinear estimates can be carried out at negative Sobolev regularity.
Moreover note, if $L_2 = L_{\max} \gg L_{\text{med}}$, i.e., the low frequency carries the high modulation the transversality is weaker, but the modulation weight $(1+L_i/N_i^3)^{\frac{1}{4}}$ comes to rescue. We can estimate the derivative nonlinearity by the above argument for $s \geq - \frac{\alpha}{2} + \varepsilon$ as a consequence of the bilinear Strichartz estimate and frequency-dependent time localization.

\medskip

Introducing frequency-dependent time localization to mitigate the derivative nonlinearity necessitates also to prove energy estimates with frequency-dependent time localization.
Regarding short-time energy estimates, we use essentially the same modulation localized estimates like for the short-time bilinear estimates. When carrying out the estimates for solutions, we take advantage of the symmetries of the equation, i.e., real-valuedness and conservative derivative nonlinearity, to assign the derivative always to the low frequency. We obtain several conditions on the regularity when estimating solutions as outlined above. Choosing $0< \alpha< \frac{1}{14}$, we can fit all constraints under the umbrella
\begin{equation*}
    s \geq - \frac{\alpha}{2} + \varepsilon.
\end{equation*}

\subsection{Estimating differences of solutions}

When estimating the energy for differences of solutions, we are lacking the symmetry which allows us to integrate by parts. Let $v=u_1-u_2$ denote the difference of two solutions. A particular case we have to estimate is the High$\times$Low-interaction, $N_2 \lesssim N_1^{1-\kappa} \lesssim N_1 \sim N_3$ given by
\begin{equation}
  \iint P_{N_1} v \partial_x ( P_{N_2} v P_{N_3} u_i ) dx dt.
\end{equation}
Relying again on the bilinear estimate, this interaction can only be controlled estimating the difference of the solution at lower regularity than the solution itself. Using the estimate from \eqref{eq:SketchBilinear} we are led to the condition, taking into account frequency-dependent time localization and the derivative loss,
\begin{equation*}
    N_1^{2s'} N_1^{1+\alpha} (N_1^2 N_2)^{-\frac{1}{2}} N_2^{\frac{1}{2}} N_1^{-\frac{\alpha}{2}} \lesssim N_1^{s'} N_2^{s'} N_1^s.
\end{equation*}
This gives the constraint
\begin{equation*}
    s' \leq \frac{s- \alpha/2}{\kappa}.
\end{equation*}
This suggests to choose $\kappa$ as large as possible to take advantage of the frequency difference as much as possible.
But when considering the case $N_2 \gtrsim N_1^{1-\kappa}$ applying two $L^4$-Strichartz estimates leads us to the condition
\begin{equation*}
    N_1^{2s'} N_1^{1+\alpha} (N_1^2 N_2)^{-\frac{1}{2}} N_2^{\frac{1}{8}+\varepsilon} N_1^{\frac{1}{8}+ \varepsilon} \lesssim N_1^{s'} N_2^{s'} N_1^s.
\end{equation*}
This condition shows that $\kappa < 2/3$ is admissible for an estimate at negative regularities. But the regularity increases choosing $\kappa$ large, so for this estimate a small $\kappa$ is more desirable. We need to compromise these two estimates and choose to simplify the expressions $\kappa = \frac{1}{8}$ and $s' = 20s$. Taking all cases into considerations, we find the short-time energy estimate to hold true for $0<\alpha<\frac{1}{36}$ with $s'=20s$ and 
\begin{equation*}
    -\frac{\alpha}{2} + \varepsilon \leq s \leq - \frac{5 \alpha}{12}.
\end{equation*}

When considering the short-time estimate of the nonlinearity for differences of solutions
\begin{equation*}
    \| \partial_x (u_i v) \|_{\mathcal{N}^{s'}(T)} \lesssim T^\delta \| u_i \|_{F^s(T)} \| v \|_{F^{s'}(T)},
\end{equation*}
the High$\times$High $\rightarrow$ Low-case requires particular attention. Here we need to further reduce the time-localization $0<\alpha<\frac{1}{45}$ to be able to prove the short-time nonlinear estimate for $s=-\frac{\alpha}{2}+\varepsilon$ and $s'=20s$.

The fact that the thresholds for time-localizations do not match exactly 
indicates that the interpolation argument is non-optimal. Here it is the principle which matters: we can prove local well-posedness below $L^2$; the optimization of the non-linear expressions is presently not pursued.

\smallskip

\begin{remark}\label{rem:quasi}
    It remains open whether it is possible to solve the KP-II equation on $\T^2$ below $L^2$ via Picard iteration. This seems to be tied to a possible improvement of Bourgain's bilinear Strichartz estimate from Proposition \ref{prop:BilinearStrichartzBourgain} also for large transversality $D \sim N_1$. It is conceivable that bilinear decoupling estimates play a role for further improvement.
It might also make a difference whether the Cauchy problem is posed on a rational or irrational torus.
\end{remark}

\section{Short-time nonlinear estimates}
\label{section:ShorttimeNonlinear}

The purpose of this section is to propagate nonlinear interactions of solutions and differences of solutions in short-time function spaces. We show the following:
\begin{proposition}
\label{prop:ShorttimeBilinearEstimates}
Let $\alpha > 0,$ and $\varepsilon > 0$.
\begin{itemize}
\item[(i)] Let $0 < \alpha < \frac{1}{4}$, and define
\begin{equation*}
s_1(\alpha,\varepsilon) = \max( - \frac{\alpha}{2} + \varepsilon , - \frac{1}{8} + \frac{3 \alpha}{8} + \varepsilon ).
\end{equation*}
Then there is $\delta(\varepsilon) > 0$ such that the following estimate holds for all $T \in (0,1]$ and $0>s \geq s_1(\alpha,\varepsilon)$:
\begin{equation}
\label{eq:ShorttimeBilinearSolutions}
\| \partial_x (u_1 u_2) \|_{\mathcal{N}^s(T)} \lesssim T^\delta \| u_1 \|_{F^s(T)} \| u_2 \|_{F^s(T)}.
\end{equation}
\item[(ii)] Let $0<\alpha<\frac{1}{7}$, and define
\begin{equation*}
s_2(\alpha,\varepsilon) = \max( - \frac{\alpha}{2} + \varepsilon , \frac{3 \alpha - 1}{84} + \varepsilon ).
\end{equation*}
Let $0 > s \geq s_2(\alpha,\varepsilon)$. Then it holds for $s'=20s$:
\begin{equation}
\label{eq:ShorttimeBilinearDifferences}
\| \partial_x (u v) \|_{\mathcal{N}^{s'}(T)} \lesssim T^\delta \| u \|_{F^s(T)} \| v \|_{F^{s'}(T)}.
\end{equation}
\end{itemize}
\end{proposition}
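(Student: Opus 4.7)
The plan is to follow the case analysis sketched in Section~\ref{section:Outline}. After Littlewood--Paley decomposition and extending functions to the full time axis via the time-localized norms, it suffices to establish dyadic bilinear bounds for $P_{N_3}\partial_x(P_{N_1}u_1\cdot P_{N_2}u_2)$ in $\mathcal{N}_{N_3}(T)$. Unfolding the definitions of $\mathcal{N}_{N_3}$, $F_{N_i}$, inserting the sharp time cutoff, and using Lemma~\ref{lem:ModulationSlack} to trade a small amount of modulation regularity for a factor $T^\delta$, each dyadic estimate reduces by duality to a convolution inequality
\begin{equation*}
\Big| \int f_{3,N_3,L_3} \cdot (f_{1,N_1,L_1} * f_{2,N_2,L_2})\, d\tau d\xi d\eta \Big| \lesssim C(N_i,L_i) \prod_{i=1}^3 L_i^{1/2} \| f_{i,N_i,L_i} \|_{L^2},
\end{equation*}
with $f_i$ supported in modulation strips of thickness $L_i \gtrsim 1$ around the characteristic surface at $|\xi|\sim N_i$, and with the lower bound $L_i \gtrsim N_3^{\alpha}$ forced by the short-time cutoff. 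The $\partial_x$ derivative must be absorbed by appropriate powers of $N_i$, and the constants summed dyadically with weight $N_3^s$ (resp. $N_3^{s'}$).

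For part (i), the interactions are treated exactly as outlined in Section~\ref{section:Outline}. In the resonant \emph{High-High-High} case $N_1\sim N_2\sim N_3$ with $L_{\max}\sim N_3^3$, after an almost-orthogonal decomposition in the $\eta$-direction into pieces of size $N_i^2$ (and Proposition~\ref{prop:L4StrichartzShifted} for shifted pieces with $|\eta|\gg N^2$), Cauchy--Schwarz combined with the linear $L^4$-Strichartz bound (Theorem~\ref{thm:L4Strichartz}, or its short-time refinement Theorem~\ref{thm:ImprovedL4Strichartz}) yields the derivative gain responsible for the threshold $s>-\tfrac{1}{8}+\tfrac{3\alpha}{8}$; the weight $(1+L/N^3)^{1/4}$ in the $X_N$ norm absorbs the residual modulation. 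In the \emph{High-Low-High} case $N_1\sim N_3\gg N_2$, we exploit the real-valuedness and conservative form of the nonlinearity to assign $\partial_x$ to the low-frequency factor, and then dyadically partition the transversality $D\sim |\eta_1/\xi_1-\eta_2/\xi_2|$. For $D\gtrsim N_1$ we apply Proposition~\ref{prop:SimplifiedBilinearStrichartz}; for $D\ll N_1$ the C\'ordoba--Fefferman square function estimate from Proposition~\ref{prop:bilinear-sp} provides the necessary gain. The genuinely critical subcase is $D\gtrsim N_1$ with $L_1,L_2$ minimal, where only the short-time lower bound $L_{\min}\gtrsim N_1^\alpha$ saves the estimate, producing the other component $s>-\alpha/2$ of the threshold $s_1(\alpha,\varepsilon)$. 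The strongly non-resonant case $L_{\max}\sim L_{\mathrm{med}}\gtrsim N^3$ is handled via Proposition~\ref{prop:BilinearStrichartzBourgain} and is harmless.

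Part (ii) follows the same architecture but we no longer have the symmetry that lets us reassign the derivative: when $\partial_x$ falls on a high-frequency factor measured in the $F^{s'}$ norm, we pay $N^{1+s-s'}$ in place of $N^{s}$. Since $s<0$, the choice $s'=20s$ creates a surplus $s-s'=-19s>0$ that is tuned to absorb this loss in every subcase of the decomposition, and also accommodates the slightly lossier versions of the bilinear Strichartz estimates we are forced to use when placing the derivative unfavorably; this is why $s_2(\alpha,\varepsilon)$ comes out weaker than $s_1(\alpha,\varepsilon)$. The factor $T^\delta$ is again extracted uniformly from Lemma~\ref{lem:ModulationSlack}. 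The main obstacle is purely the numerology: one must verify that in \emph{every} subcase of the High-High and High-Low decompositions (for both solutions and differences), the combination of the $L^4$-Strichartz gain, the bilinear Strichartz gain, the modulation weight $(1+L/N^3)^{1/4}$, and the frequency-dependent time lower bound $L_i\gtrsim N^\alpha$ beats the derivative loss. The tension between the resonant HHH case (which favors small $\alpha$) and the HLH case at minimal modulation and large transversality (which forces $\alpha>0$) is what produces the two-sided constraint encoded in $s_1$ and $s_2$, and the bulk of the proof consists of bookkeeping these exponents across all subcases.
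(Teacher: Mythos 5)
Your proposal captures the overall architecture correctly (reduction to dyadic convolution estimates, modulation localization with $L_i\gtrsim N_{\max}^\alpha$, trading modulation for $T^\delta$ via Lemma~\ref{lem:ModulationSlack}, and a case analysis combining the new $L^4$-Strichartz bounds with the bilinear estimates). However, it has two genuine gaps.

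First, you omit the \emph{Low$\times$High$\rightarrow$High} interaction $N=N_3\ll N_1\sim N_2$ entirely, yet that is precisely the interaction that produces the constraint $s_1(\alpha,\varepsilon)\geq -\tfrac{1}{8}+\tfrac{3\alpha}{8}$. In the paper, this arises from the subcase where the output (low) frequency carries the high modulation $L=L_{\max}\sim N_1^2 N$: after an additional time re-localization from scale $N^{-\alpha}$ down to scale $N_1^{-\alpha}$ (which incurs a factor $(N_1/N)^\alpha$ you never account for), two short-time $L^4$-Strichartz estimates from Theorem~\ref{thm:ImprovedL4Strichartz} applied to the high-frequency factors give the condition \eqref{eq:ShorttimeBilinearRegularityXI}. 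You misattribute this threshold to the resonant High$\times$High$\rightarrow$High case, which in fact only yields the more favorable $s\geq -\tfrac{1}{4}-\tfrac{\alpha}{12}+\varepsilon$ (see \eqref{eq:ShorttimeBilinearRegularityI}). Similarly, for part~(ii) the binding constraint $s_2(\alpha,\varepsilon)\geq\tfrac{3\alpha-1}{84}$ again comes from the Low$\times$High$\rightarrow$High resonant case (see \eqref{eq:s2ConditionIII}); without that case your numerology cannot reproduce either threshold.

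Second, you claim that in the High$\times$Low$\rightarrow$High case one uses ``real-valuedness and the conservative nonlinearity to assign $\partial_x$ to the low-frequency factor.'' That integration-by-parts / commutator trick is used in the short-time \emph{energy} estimates (Section~\ref{section:ShorttimeEnergy}), not in the nonlinear estimate \eqref{eq:ShorttimeBilinearSolutions}. Here $\partial_x$ is the Fourier multiplier $\xi=\xi_1+\xi_2$, which is $\sim N\sim N_1$ in the High$\times$Low case and cannot be moved; the paper compensates for this full derivative loss through the resonance lower bound $|\Omega|\gtrsim N_1^2 N_2$, the modulation weight $(1+L/N^3)^{1/4}$, and the bilinear Strichartz estimates from Propositions~\ref{prop:SimplifiedBilinearStrichartz} and~\ref{prop:bilinear-sp}. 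As written, your strategy for this case would not produce the stated constants.
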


\begin{remark}
We note that choosing $\alpha$ small enough, we can fit the conditions under one umbrella.
Indeed, choosing $0 < \alpha < \frac{1}{7}$, we find that
\begin{equation*}
s_1(\alpha,\varepsilon) = - \frac{\alpha}{2} + \varepsilon.
\end{equation*}
Moreover, for $0<\alpha<\frac{1}{45}$, we obtain
\begin{equation*}
s_2(\alpha,\varepsilon) = - \frac{\alpha}{2} + \varepsilon.
\end{equation*}
\end{remark}

\begin{proof}
First, we turn to the proof of (i).
The claim follows from dyadic summation of frequency localized estimates:
\begin{equation}
\label{eq:ShorttimeFrequencyLocalizedEstimate}
\| P_N \partial_x (P_{N_1} u_1 P_{N_2} u_2) \|_{\mathcal{N}_N(T)} \lesssim C(N,N_1,N_2) T^{\delta} \prod_{i=1}^2 \| P_{N_i} u_i \|_{F_{N_i}(T)}.
\end{equation}
The constant $C(N,N_1,N_2)$ has to take into account the derivative loss, which gives a factor of $N$, and possibly increasing the time localization in the High$\times$High $\rightarrow$ Low-interaction $N_1 \sim N_2 \gg N$. The backbone of \eqref{eq:ShorttimeBilinearSolutions} is formed by estimates
\begin{equation*}
    L^{-\frac{1}{2}} \| 1_{D_{N,L}} (f_{1,N_1,L_1} * f_{2,N_2,L_2}) \|_{L^2_{\tau,\xi,\eta}} \lesssim C'(N,N_1,N_2) L_{\max}^{0-} \prod_{i=1}^2 L^{\frac{1}{2}} \| f_{i,N_i,L_i} \|_{L^2_{\tau,\xi,\eta}},
\end{equation*}
or a variant with modulation weights $(1+L/N^3)^{\frac{1}{4}}$, or $(1+L_i/N_i^3)^{\frac{1}{4}}$. Above $\operatorname{supp}(f_{i,N_i,L_i}) \subseteq D_{N_i,L_i}$. The room in the modulation variable comes in handy for the dyadic summation and to apply Lemma \ref{lem:ModulationSlack}, which yields the additional gain of $T^{\delta}$.

\smallskip

We carry out standard reductions to reduce \eqref{eq:ShorttimeFrequencyLocalizedEstimate} to estimates localized in modulation. In the following we suppose that $N_1 \geq N_2$ by symmetry.\\
\textbf{High-Low- / High-High-interaction}: $N \sim N_1 \gtrsim N_2$. Consider extensions $P_{N_i} \tilde{u}_i$ of $P_{N_i} u_i$ such that
\begin{equation*}
\| P_{N_i} \tilde{u}_i \|_{F_{N_i}} \leq 2 \| P_{N_i} u_i \|_{F_{N_i}(T)}.
\end{equation*}
To lighten the notation, we redenote $P_{N_i} \tilde{u}_i $ as $u_{i,N_i}$. By the definition of the $\mathcal{N}_N$-norm we bound the left hand side of \eqref{eq:ShorttimeFrequencyLocalizedEstimate} by
\begin{equation*}
\sup_{t_N \in \R} \| (\tau - \omega(\xi,\eta) + i N^\alpha)^{-1} \xi \, 1_{A_N}(\xi) \, (f_{1,N_1} * f_{2,N_2} ) \|_{X_N}
\end{equation*}
with
\begin{equation*}
f_{i,N_i} = \mathcal{F}_{t,x,y}[u_{i,N_i} \cdot \eta_0(N^\alpha(t-t_N))].
\end{equation*}
For $L_i \geq N^\alpha$ we define
\begin{equation*}
f_{i,N_i,L_i} = 
\begin{cases}
1_{D_{N_i,\leq L_i}} f_{i,N_i}, &\quad L_i = N^\alpha, \\
1_{D_{N_i,L_i}} f_{i,N_i}, &\quad L_i > N^\alpha.
\end{cases}
\end{equation*}
By the function space properties it suffices to obtain an estimate
\begin{equation*}
\begin{split}
&\quad \sum_{L \geq N^\alpha} L^{-\frac{1}{2}} (1+L/N^3)^{\frac{1}{4}} N \| 1_{D_{N,L}} (f_{1,N_1,L_1} * f_{2,N_2,L_2}) \|_{L^2_{\tau,\xi,\eta}} \\
&\lesssim C(N,N_1,N_2) \prod_{i=1}^2 L_i^{\frac{1}{2}} (1+L_i / N_i^3)^{\frac{1}{4}} \| f_{i,N_i,L_i} \|_{L^2_{\tau,\xi,\eta}}.
\end{split}
\end{equation*}
We suppose in the following without loss of generality that $f_{i,N_i,L_i}: \R \times \Z^2 \to \R_{\geq 0}$.
Denote $L_{\max} = \max(L,L_1,L_2)$, $L_{\text{med}} = \max(\{L,L_1,L_2\} \backslash L_{\max} )$, and $L_{\min} = \min(L,L_1,L_2)$.
We remark that we always have the following estimate as a consequence of Proposition \ref{prop:SecondOrderBilinearEstimate}:
\begin{equation*}
\| 1_{D_{N,L}} (f_{1,N_1,L_1} * f_{2,N_2,L_2}) \|_{L^2_{\tau,\xi,\eta}} \leq (L_1 \wedge L_2)^{\frac{1}{2}} N_2^{\frac{3}{4}} (L_1 \vee L_2)^{\frac{1}{4}} \prod_{i=1}^2 \| f_{i,N_i,L_i} \|_2.
\end{equation*}
Interpolation of this estimate with trilinear estimates
\begin{equation}
\label{eq:InterpolationEstimate}
\begin{split}
&\quad L^{-\frac{1}{2}} (1+L / N^3)^{\frac{1}{4}} \| 1_{D_{N,L}} (f_{1,N_1,L_1} * f_{2,N_2,L_2}) \|_{L^2_{\tau,\xi,\eta}} \\
&\lesssim C(N,N_1,N_2)  \prod_{i=1}^2 L_i^{\frac{1}{2}} (1+L_i/N_i^3)^{\frac{1}{4}} \| f_{i,N_i,L_i} \|_2.
\end{split}
\end{equation}
allows us to gain a factor $L_{\max}^{0-}$ in the above display at the cost of $N_1^{0+}$, which is always admissible. This allows us to apply Lemma \ref{lem:ModulationSlack} to obtain the factors $T^\delta$ in \eqref{eq:ShorttimeBilinearSolutions} and \eqref{eq:ShorttimeBilinearDifferences}. In the following we focus on establishing \eqref{eq:InterpolationEstimate}.


\medskip

\textbf{High-High-interaction:} $N_1 \sim N_2 \sim N$. 

\smallskip

$\bullet$ \emph{Resonant subcase:} Suppose that $L_{\max} \sim N^3 \gg L_{\text{med}}$, which necessitates a bound for the transversality
\begin{equation*}
D = \big| \frac{\eta_1}{\xi_1} - \frac{\eta_2}{\xi_2} \big| \lesssim N.
\end{equation*}
Moreover, in this case the modulation weights $1+L_i/N_i^3$ are comparable to $1$. We carry out a decomposition of $\eta_i/\xi_i$ into intervals $I_i$ of length $N$:
\begin{equation}
\label{eq:AlmostOrthogonalDetailsI}
    f_{1,N_1,L_1} * f_{2,N_2,L_2} = \sum_{I_1, I_2} f^{I_1}_{1,N_1,L_1} * f^{I_2}_{2,N_2,L_2}
\end{equation}
with 
\begin{equation*}
f^{I_i}_{i,N_i,L_i}(\tau,\xi,\eta) = \begin{cases}
f_{i,N_i,L_i}(\tau,\xi,\eta), \quad &\eta_i/\xi_i \in I_i, \\
0, \quad &\text{else}.
\end{cases}
\end{equation*}
By the bound on $D$ we can restrict the sum over neighboring intervals
\begin{equation}
\label{eq:AlmostOrthogonalDetailsII}    \sum_{I_1, I_2} f^{I_1}_{1,N_1,L_1} * f^{I_2}_{2,N_2,L_2} = \sum_{I_1 \sim I_2} f^{I_1}_{1,N_1,L_1} * f^{I_2}_{2,N_2,L_2}.
\end{equation}

When we consider the convolution
\begin{equation*}
f^{I_1}_{1,N_1,L_1} * f^{I_2}_{2,N_2,L_2},
\end{equation*}
we can suppose that $\big| \frac{\eta_i}{\xi_i} \big| \lesssim N$ by the Galilean invariance $\eta \to \eta - A \xi$. This yields the localization $|\eta_i| \lesssim N^2$. 

We suppose in the following by symmetry that $L = L_{\max}$. The symmetry in $L_i$ is not perfect, but another choice only causes a logarithmic factor by summing over $L \lesssim L_{\max}$. Choosing $\varepsilon>0$ in $s_1(\alpha,\varepsilon)$ takes care of logarithmic factors and further allows for estimates with modulation $L_i^{\frac{1}{2}-}$.

\medskip

 After plugging in the almost orthogonal decomposition we can apply H\"older's inequality together with the $L^4$-Strichartz estimate provided by Theorem \ref{thm:ImprovedL4Strichartz} and the transfer principle Proposition \ref{prop:TransferPrinciple} to find:
\begin{equation}
\label{eq:HighHighL4Strichartz}
\begin{split}
L^{-\frac{1}{2}} N \| 1_{D_{N,L}} (f^{I_1}_{1,N_1,L_1} * f^{I_2}_{2,N_2,L_2} ) \|_{L^2_{\tau,\xi,\eta}} &\lesssim N^{-\frac{1}{2}} \prod_{i=1}^2 \| \mathcal{F}^{-1}_{t,x,y} [ f^{I_i}_{i,N_i,L_i}] \|_{L^4_{t,x,y}} \\
&\lesssim_\varepsilon N^{-\frac{1}{2}} (N^{\frac{1}{8}-\frac{\alpha}{8}+\frac{\alpha}{12}+\varepsilon} )^2 \prod_{i=1}^2 L_i^{\frac{1}{2}} \| f^{I_i}_{i,N_i,L_i} \|_{L^2} \\
&\lesssim_\varepsilon N^{-\frac{1}{4}-\frac{\alpha}{12}+\varepsilon} \prod_{i=1}^2 L_i^{\frac{1}{2}} \| f^{I_i}_{i,N_i,L_i} \|_{L^2}.
\end{split}
\end{equation}
By carrying out the sum over $I_i$, which incurs no loss by \eqref{eq:AlmostOrthogonalDetailsI} and \eqref{eq:AlmostOrthogonalDetailsII}, we find the estimate
\begin{equation*}
    L^{-\frac{1}{2}} N \| 1_{D_{N,L}} (f_{1,N_1,L_1} * f_{2,N_2,L_2} ) \|_{L^2_{\tau,\xi,\eta}} \lesssim_\varepsilon N^{-\frac{1}{4}-\frac{\alpha}{12}+\varepsilon} \prod_{i=1}^2 L_i^{\frac{1}{2}} \| f_{i,N_i,L_i} \|_{L^2}. 
\end{equation*}

This estimate gives 
\begin{equation}
\label{eq:ShorttimeBilinearRegularityI}
s_1(\alpha,\varepsilon) \geq -\frac{1}{4}-\frac{\alpha}{12}+\varepsilon.
\end{equation}

We remark that there is an alternative estimate using bilinear Strichartz estimates with large transversality for
\begin{equation*}
\big| \frac{\eta_1}{\xi_1} - \frac{\eta_2}{\xi_2} \big| \sim N.
\end{equation*}
We find supposing that $L_2 \geq L_1$ to ease notation:
\begin{equation*}
N L^{-\frac{1}{2}} \| 1_{D_{N,L}} (f_{1,N_1,L_1} * f_{2,N_2,L_2}) \|_{L^2_{\tau,\xi,\eta}} \lesssim N^{\frac{1}{2}} L_1^{\frac{1}{2}} \langle L_2 / N \rangle^{\frac{1}{2}} \prod_{i=1}^2 \| f_{i,N_i,L_i} \|_{L^2_{\tau,\xi,\eta}}.
\end{equation*}
With $L_2 \gtrsim N^\alpha$ we obtain
\begin{equation*}
N L^{-\frac{1}{2}} \| 1_{D_{N,L}} (f_{1,N_1,L_1} * f_{2,N_2,L_2}) \|_{L^2_{\tau,\xi,\eta}} \lesssim N^{-\frac{\alpha}{2}} \prod_{i=1}^2 L_i^{\frac{1}{2}} \| f_{i,N_i,L_i} \|_{L^2}.
\end{equation*}
This gives 
\begin{equation*}
s_1(\alpha,\varepsilon) \geq -\frac{\alpha}{2}+\varepsilon.
\end{equation*}
We shall see that this condition comes up in the majority of the estimates as consequence of bilinear Strichartz estimates with minimum modulation. However, in this specific case it is inferior to \eqref{eq:ShorttimeBilinearRegularityI}.


Note moreover that the estimate \eqref{eq:HighHighL4Strichartz} improves for $|\eta_i| \ll N^2$ by invoking Theorem \ref{thm:ImprovedL4Strichartz}, which corresponds to $D \ll N$ after invoking Galilean invariance and an almost orthogonal decomposition.

\medskip

$\bullet$ \emph{Non-resonant subcase:} $L_{\max} \sim M^2 N_1$ for $M \gg N_1$ and $L_{\text{med}} \ll L_{\max}$. Suppose by symmetry that $L = L_{\max}$. Clearly, we have the transversality localization
\begin{equation*}
D \sim \big| \frac{\eta_1}{\xi_1} - \frac{\eta - \eta_1}{\xi - \xi_1} \big| \sim M.
\end{equation*}
By the arguments from above, namely almost orthogonal decomposition in $\eta_i/ \xi_i$ and Galilean invariance, we can localize $|\eta_i| \lesssim MN$. Indeed, we first carry out an almost orthogonal decomposition of $\eta_i / \xi_i$ as follows:
\begin{equation*}
\| 1_{D_{N,L}} (f_{1,N_1,L_1} * f_{2,N_2,L_2}) \|_{L^2_{\tau,\xi,\eta}} \lesssim \sum_{M_1 \sim M_2} \| 1_{D_{N,L}} (f^{M_1}_{1,N_1,L_1} * f^{M_2}_{2,N_2,L_2}) \|_{L^2_{\tau,\xi,\eta}},
\end{equation*}
where $\operatorname{supp}_{\xi,\eta}(f^{M_i}_{i,N_i,L_i}) \subseteq \{ (\xi,\eta) \in \R^2 : \eta_i / \xi_i \in I_{M_i} \}$ with $|I_{M_i}| \sim M$. Secondly, by Galilean invariance, we can suppose that $I_{M_i}$ is centered at the origin. Then we find $|\eta_i| \lesssim MN$ in the support of $f_i$.

Next, we divide the support $\eta_i \in [-3 MN, 3 MN]$ for $(\tau_i, \xi_i,\eta_i) \in \operatorname{supp}(f_i)$ into intervals $J_i= [k_i N^2, (k_i+1) N^2]$ of length $N^2$, $|k_i| \lesssim M/N$. This incurs a factor of $(M/N)^{\frac{1}{2}}$. Moreover, to apply Proposition \ref{prop:L4StrichartzShifted}, we further decompose the $\xi$-support into intervals of length $N^2/M$, which incurs another factor of $(M/N)^{\frac{1}{2}}$. The resulting decomposition reads
\begin{equation*}
\sum_{M_1 \sim M_2} \| 1_{D_{N,L}} (f^{M_1}_{1,N_1,L_1} * f^{M_2}_{2,N_2,L_2}) \|_{L^2_{\tau,\xi,\eta}} \lesssim \sum_{M_1 \sim M_2} \sum_{I_1,I_2} \| 1_{D_{N,L}} (f^{M_1}_{1,N_1,L_1} * f^{M_2}_{2,N_2,L_2}) \|_{L^2_{\tau,\xi,\eta}}.
\end{equation*}

We estimate by Proposition \ref{prop:L4StrichartzShifted}
\begin{equation*}
\begin{split}
&\quad \| 1_{D_{N,L}} (f_{1,N_1,L_1}^{I_1} * f_{2,N_2,L_2}^{I_2} ) \|_{L^2_{\tau,\xi,\eta}} \\
&\leq \| \mathcal{F}^{-1} [f^{I_1}_{1,N_1,L_1} ] \|_{L^4_{t,x,y}(\R^3)} \| \mathcal{F}^{-1} [f^{I_2}_{2,N_2,L_2}] \|_{L^4_{t,x,y}(\R^3)} \\
&\lesssim_\varepsilon \big( N^{\frac{1}{8}+\varepsilon} \big)^2 \big( \frac{M}{N} \big)^{\frac{1}{6}} \prod_{i=1}^2 L_i^{\frac{1}{2}} \| f_{i,N_i,L_i}^{I_i} \|_{L^2}.
\end{split}
\end{equation*}
Collecting the factors we find
\begin{equation*}
\begin{split}
&\quad L^{-\frac{1}{2}} (1+L/N^3)^{\frac{1}{4}} \| 1_{D_{N,L}} (f_{1,N_1,L_1} * f_{2,N_2,L_2} ) \|_{L^2_{\tau,\xi,\eta}} \\
&\lesssim_\varepsilon \big( \frac{1}{M^2 N} \big)^{\frac{1}{4}} N^{-\frac{3}{4}} \big( \frac{M}{N} \big) \big( \frac{M}{N} \big)^{\frac{1}{6}} N^{\frac{1}{4}+\varepsilon} \prod_{i=1}^2 L_i^{\frac{1}{2}} \| f_{i,N_i,L_i} \|_2.
\end{split}
\end{equation*}
We impose a bound on $M$: $N \ll M \lesssim N^{1+a_1}$. Carrying out the sum over $M$, we obtain
\begin{equation*}
L^{-\frac{1}{2}} (1+L/N^3)^{\frac{1}{4}} \| 1_{D_{N,L}} (f_{1,N_1,L_1} * f_{2,N_2,L_2}) \|_{L^2_{\tau,\xi,\eta}} \lesssim_\varepsilon N^{-\frac{3}{2}} N^{\frac{a_1}{2}+ \frac{a_1}{6}} N^{\frac{1}{4}+\varepsilon} \prod_{i=1}^2 L_i^{\frac{1}{2}} \| f_i \|_{L^2}.
\end{equation*}
This gives a regularity constraint taking into account the derivative loss
\begin{equation}
\label{eq:ShorttimeBilinearRegularityII}
s_1(\alpha,\varepsilon) \geq -\frac{1}{4}+\frac{2 a_1}{3} + \varepsilon.
\end{equation}

For $M \gtrsim N^{1+a_1}$ we can use a bilinear Strichartz estimate from Proposition \ref{prop:SimplifiedBilinearStrichartz} with $D \sim M$ and $L_i \gtrsim N^\alpha$ to find
\begin{equation*}
\begin{split}
&\quad L^{-\frac{1}{2}} (1+L/N^3)^{\frac{1}{4}} \| 1_{D_{N,L}} (f_{1,N_1,L_1} * f_{2,N_2,L_2}) \|_{L^2_{\tau,\xi,\eta}} \\ &\lesssim (N^{2(1+a_1)} N)^{-\frac{1}{4}} N^{-\frac{3}{4}} N^{\frac{1}{2}-\frac{\alpha}{2}} \prod_{i=1}^2 L_i^{\frac{1}{2}} \| f_{i,N_i,L_i} \|_{L^2_{\tau,\xi,\eta}}.
\end{split}
\end{equation*}
This gives a regularity threshold
\begin{equation}
\label{eq:ShorttimeBilinearRegularityIII}
s_1(\alpha,\varepsilon) \geq -\frac{a_1}{2} - \frac{\alpha}{2}+\varepsilon.
\end{equation}

$\bullet$ \emph{Strongly non-resonant case:} $L_{\max} \sim L_{\text{med}} \gtrsim N^3$. Suppose that $L \ll L_{\max}$. In this case we apply duality and Proposition \ref{prop:SecondOrderBilinearEstimate} to find
\begin{equation*}
\| 1_{D_{N,L}} (f_{1,N_1,L_1} * f_{2,N_2,L_2}) \|_{L^2_{\tau,\xi,\eta}} \lesssim L^{\frac{1}{2}} N^{\frac{3}{4}} L_1^{\frac{1}{4}} \prod_{i=1}^2 \| f_{i,N_i,L_i} \|_2.
\end{equation*}
This gives
\begin{equation*}
\begin{split}
&\quad \sum_{L \ll L_1} L^{-\frac{1}{2}} (1+L/N^3)^{\frac{1}{4}} \| 1_{D_{N,L}}(f_{1,N_1,L_1} * f_{2,N_2,L_2}) \|_{L^2_{\tau,\xi,\eta}} \\
&\lesssim \sum_{L \ll L_1} (1+L/N^3)^{\frac{1}{4}} N^{\frac{3}{4}} L_1^{\frac{1}{4}} N^{-\frac{3}{2}} L_2^{\frac{1}{2}} \prod_{i=1}^2 \| f_{i,N_i,L_i} \|_2 \\
&\lesssim \log(N) \big( \frac{L_1}{N^3} \big)^{\frac{1}{4}} L_1^{\frac{1}{2}} N^{-\frac{3}{2}} L_2^{\frac{1}{2}} \prod_{i=1}^2 \| f_{i,N_i,L_i} \|_2.
\end{split}
\end{equation*}
This covers the full subcritical range
\begin{equation*}
s_1(\alpha,\varepsilon) \geq - \frac{1}{2} + \varepsilon.
\end{equation*}
In case $L \sim L_{\max}$ we obtain again from Proposition \ref{prop:SecondOrderBilinearEstimate} the estimate
\begin{equation*}
\begin{split}
&\quad \sum_{L \geq N^3} (1+L/N^3)^{\frac{1}{4}} L^{-\frac{1}{2}} \| 1_{D_{N,L}} (f_{1,N_1,L_1}* f_{2,N_2,L_2}) \|_{L^2_{\tau,\xi,\eta}} \\
&\lesssim N^{-\frac{3}{2}} N^{\frac{3}{4}} L_1^{\frac{1}{2}} L_2^{\frac{1}{4}} \prod_{i=1}^2 \| f_{i,N_i,L_i} \|_2 \\
&\lesssim N^{-\frac{3}{2}} \prod_{i=1}^2 L_i^{\frac{1}{2}} \| f_{i,N_i,L_i} \|_2.
\end{split}
\end{equation*}
This covers again the full subcritical range.

\medskip

\textbf{High-Low-High-interaction:} Suppose $N_2 \ll N_1 \sim N$. By the above reductions it suffices to show estimates:
\begin{equation*}
\begin{split}
&\quad L^{-\frac{1}{2}} (1+L/N^3)^{\frac{1}{4}} \| 1_{D_{N,L}} (f_{1,N_1,L_1} * f_{2,N_2,L_2}) \|_{L^2_{\tau,\xi,\eta}}  \\ &\lesssim C(N,N_1,N_2) \prod_{i=1}^2 L_i^{\frac{1}{2}} (1+L_i/N_i^3)^{\frac{1}{4}} \| f_{i,N_i,L_i} \|_{L^2_{\tau,\xi,\eta}}
\end{split}
\end{equation*}
for $L,L_1 \gtrsim N^\alpha$ and $L_2 \geq \min(N_2^3,N_1^{\alpha})$. In case $N_2 \ll N_1$ we use the weight $(1+L_2/N_2^3)^{\frac{1}{4}}$ for the function $f_{2,N_2,L_2}$. For applicabiliy of Lemma \ref{lem:WeightedXNEstimate} we suppose that $L_2 \geq \min(N_2^3,N_1^{\alpha})$. The essence of Lemma \ref{lem:WeightedXNEstimate} is that we must not use the modulation weight for time localization $N_1^{\alpha} \ll N_2^{-3}$, and since the argument depends on using the modulation weight, we localize time at most to intervals $N_2^{-3}$ for $u_2$.

\smallskip

$\bullet$ \emph{Resonant case:} $L_{\max} \sim N_1^2 N_2 \gg L_{\text{med}}$. Suppose that $L=L_{\max}$ (this case is up to a logarithmic factor symmetric to $L_1 = L_{\max}$). In this case we can omit the weight $(1+L/N^3)$. We carry out a dyadic decomposition (Whitney decomposition) in the transversality: $1 \leq D \leq N_1$. This causes another logarithmic factor in $N$. For $D \in 2^{\N_0} \cap [1,N_1]$ we divide $(\eta_i/\xi_i)$ into intervals of length $D$. For $D=1$ we suppose that
\begin{equation*}
\big| \frac{\eta_1}{\xi_1} - \frac{\eta_2}{\xi_2} \big| \lesssim 1,
\end{equation*}
whereas for $D > 1$ we suppose
\begin{equation*}
\big| \frac{\eta_1}{\xi_1} - \frac{\eta_2}{\xi_2} \big| \sim D.
\end{equation*}
For fixed $D$ we can decompose $\eta_i/\xi_i$ by almost orthogonality into intervals of length $D$, which we can suppose to be centered at the origin by Galilean invariance. Then we find
\begin{equation*}
\big| \frac{\eta_i}{\xi_i} \big| \lesssim D \Rightarrow |\eta_i| \lesssim D N_i.
\end{equation*}
Since $|\eta_2| \lesssim D N_2$, we can carry out another almost orthogonality to decompose $\eta_i$ into intervals of length $D N_2$. Secondly, we can trivially decompose the $\xi$-frequency support into intervals of length $N_2$ by almost orthogonality and convolution constraint.

After the frequency localizations we have several possibilities to estimate the bilinear expression 
\begin{equation*}
\| 1_{D_{N,L}} (f_{1,N_1,L_1} * f_{2,N_2,L_2} ) \|_{L^2_{\tau,\xi,\eta}}
\end{equation*}
depending on $1 \leq D \leq N_1$ and the ratio $N_2 / N_1 \ll 1$. The estimate via two $L^4$-Strichartz estimates is carried out in the next section. Presently, the bilinear Strichartz estimate from Proposition \ref{prop:bilinear-sp} suffices.

Since $\big| \frac{\eta_1}{\xi_1} - \frac{\eta_2}{\xi_2} \big| \lesssim N_1$, applying Proposition \ref{prop:bilinear-sp} gives
\begin{equation*}
\begin{split}
&\quad L^{-\frac{1}{2}} (1+L/N^3)^{\frac{1}{4}} \| 1_{D_{N,L}} (f_{1,N_1,L_1} * f_{2,N_2,L_2})\|_{L^2_{\tau,\xi,\eta}} \\
&\lesssim (N_1^2 N_2)^{-\frac{1}{2}} \log(N_1) (N_2^{\frac{1}{4}} N_1^{-\frac{\alpha}{4}} + N_2^{\frac{1}{2}} N_1^{-\frac{\alpha}{2}}) \prod_{i=1}^2 L_i^{\frac{1}{2}} \| f_{i,N_i,L_i} \|_2.
\end{split}
\end{equation*}
This gives the regularity condition
\begin{equation}
\label{eq:ShorttimeBilinearRegularityVI}
s_1(\alpha,\varepsilon) \geq \max( -\frac{1}{4} - \frac{\alpha}{4} + \varepsilon , - \frac{\alpha}{2} + \varepsilon).
\end{equation}
Indeed, the estimate
\begin{equation*}
\begin{split}
&\quad L^{-\frac{1}{2}} (1+L/N^3)^{\frac{1}{4}} \| 1_{D_{N,L}} (f_{1,N_1,L_1} * f_{2,N_2,L_2})\|_{L^2_{\tau,\xi,\eta}} \\
&\lesssim (N_1^2 N_2)^{-\frac{1}{2}} \log(N_1) N_2^{\frac{1}{4}} N_1^{-\frac{\alpha}{4}} \prod_{i=1}^2 L_i^{\frac{1}{2}} \| f_{i,N_i,L_i} \|_2
\end{split}
\end{equation*}
is summable for regularities $s_1(\alpha,\varepsilon) \geq - \frac{1}{4} - \frac{\alpha}{4} + \varepsilon$. The second summand gives correspondingly rise to the second constraint $s_1(\alpha,\varepsilon) \geq - \frac{\alpha}{2} + \varepsilon$.
Since $0<\alpha<\frac{1}{4}$, the latter value is always dominant.

\medskip

In case $L_2 = L_{\max}$, i.e., the case when the low frequency carries the high modulation, the modulation weight $(1+L_2/N_2^3)^{\frac{1}{4}}$ comes to rescue. Firstly, we use the usual almost orthogonality and Galilean invariance argument to localize $|\eta_i| \lesssim N_1^2$. Then we use duality to 
infer
\begin{equation*}
\| 1_{D_{N,L}}( f_{1,N_1,L_1} * f_{2,N_2,L_2} ) \|_{L^2_{\tau,\xi,\eta}} = \sup_{\| g_{N,L} \|_{L^2} = 1} \big| \iint g_{N,L} (f_{1,N_1,L_1} * f_{2,N_2,L_2} ) d\xi d\eta  d\tau \big|.
\end{equation*}
Now we use H\"older's inequality, Plancherel's theorem and the $L^4_{t,x,y}$-Strichartz estimates \eqref{eq:ShorttimeStrichartzI} due to Theorem \ref{thm:ImprovedL4Strichartz} to find
\begin{equation*}
\begin{split}
&\quad \big| \iint g_{N,L} (f_{1,N_1,L_1} * f_{2,N_2,L_2} ) d\xi d\eta  d\tau \big| \\
 &\lesssim \| f_{2,N_2,L_2} \|_{L^2_{\tau,\xi,\eta}} \| g_{N,L} * f_{1,N_1,L_1} \|_{L^2} \\
&\lesssim \| f_{2,N_2,L_2} \|_{L^2_{\tau,\xi,\eta}} \| \mathcal{F}_{t,x,y}^{-1} [ g_{N,L} ] \|_{L^4_{t,x,y}} \| \mathcal{F}_{t,x,y}^{-1} [ f_{1,N_1,L_1} ] \|_{L^4_{t,x,y}} \\
&\lesssim \| f_{2,N_2,L_2} \|_{L^2_{\tau,\xi,\eta}} (L L_1)^{\frac{1}{2}} N^{2(\frac{1}{8}-\frac{\alpha}{8}+\frac{\alpha}{12}+\varepsilon)} \| g_{N,L} \|_{L^2_{\tau,\xi,\eta}} \| f_{2,N_2,L_2} \|_{L^2_{\tau,\xi,\eta}}.
\end{split}
\end{equation*}
In conclusion we have
\begin{equation*}
\begin{split}
&\quad L^{-\frac{1}{2}} \| 1_{D_{N,L}} (f_{1,N_1,L_1} * f_{2,N_2,L_2}) \|_{L^2_{\tau,\xi,\eta}} \\
 &\lesssim N_1^{2 \big( \frac{1}{8} - \frac{\alpha}{24} + \varepsilon \big)} N_1^{-\frac{3}{2}} \prod_{i=1}^2 L_i^{\frac{1}{2}} (1+L_i / N_i^3)^{\frac{1}{4}} \| f_{i,N_i,L_i} \|_{L^2_{\tau,\xi,\eta}}.
 \end{split}
\end{equation*}
This gives a regularity threshold from taking into account the derivative loss:
\begin{equation}
\label{eq:ShorttimeBilinearRegularityVII}
s_1(\alpha,\varepsilon) \geq - \frac{1}{4} - \frac{\alpha}{12} + \varepsilon.
\end{equation}

$\bullet$ \emph{Non-resonant case:} $\cdot L = L_{\max} \gg N_1^2 N_2$. Firstly, we suppose that $L_i \leq N_i^3$ for $i=1,2$. In this case we can apply a bilinear Strichartz estimate provided by Proposition \ref{prop:SimplifiedBilinearStrichartz} using large transversality $D \gtrsim N_1$ to find
\begin{equation*}
\begin{split}
&\quad \sum_{L \geq N_1^2 N_2} L^{-\frac{1}{2}} (1+L / N_1^3)^{\frac{1}{4}} \| f_{1,N_1,L_1} * f_{2,N_2,L_2} \|_{L^2_{\tau,\xi,\eta}} \\
 &\lesssim (N_1^2 N_2)^{-\frac{1}{2}} N_2^{\frac{1}{2}} N_1^{-\frac{\alpha}{2}} \prod_{i=1}^2 L_i^{\frac{1}{2}} \| f_{i,N_i,L_i} \|_{L^2},
 \end{split}
\end{equation*}
which gives again $s_1(\alpha,\varepsilon) \geq - \frac{\alpha}{2} + \varepsilon$.

Next, suppose that $L_2 \geq N_2^3$ and $L_1 \leq N_1^3$. In this case we can apply a different bilinear Strichartz estimate to find
\begin{equation*}
\begin{split}
L^{-\frac{1}{2}} (1+L/N_1^3)^{\frac{1}{4}} \| f_{1,N_1,L_1} * f_{2,N_2,L_2} \|_{L^2_{\tau,\xi,\eta}} &\lesssim (N_1^2 N_2)^{-\frac{1}{2}} L_1^{\frac{1}{2}} N_2^{\frac{3}{4}} L_2^{\frac{1}{4}} \prod_{i=1}^2 \| f_{i,N_i,L_i} \|_{L^2} \\
&\lesssim N_1^{-1} N_2^{-\frac{1}{2}} \prod_{i=1}^2 L_i^{\frac{1}{2}} \| f_{i,N_i,L_i} \|_{L^2}.
\end{split}
\end{equation*}
This gives a regularity threshold of
\begin{equation}
\label{eq:ShorttimeBilinearRegularityVIII}
s_1(\alpha,\varepsilon) \geq - \frac{1}{2} +\varepsilon.
\end{equation}
The cases $L_1 \geq N_1^3$ and $L_2 \leq N_2^3$ can be handled in the same way and lead to a more favorable estimate.

\smallskip

$\bullet$ \emph{Strongly non-resonant case:} $L_{\max} \sim L_{\text{med}} \gtrsim N_1^2 N_2$. In this case the estimate can be carried out by the bilinear Strichartz estimate provided by Proposition \ref{prop:SecondOrderBilinearEstimate} like in the High-High-High-interaction. Details are omitted to avoid repetition. This gives the regularity threshold
\begin{equation*}
s_1(\alpha,\varepsilon) \geq - \frac{1}{2} + \varepsilon.
\end{equation*}

\medskip

\textbf{Low-High-High-interaction:} In the following we estimate
\begin{equation}
\label{eq:HighHighLowShorttimeEstimate}
\| P_N \partial_x (P_{N_1} u_1 P_{N_2} u_2) \|_{\mathcal{N}_N} \lesssim C(N,N_1,N_2) \| P_{N_1} u_1 \|_{F_{N_1}} \| P_{N_2} u_2 \|_{F_{N_2}}
\end{equation}
for $N \ll N_1 \sim N_2$. 

By the definition of the $\mathcal{N}_N$-norm the left hand-side is bounded by
\begin{equation*}
\sup_{t_N \in \R} \| (\tau - \omega(\xi,\eta) + i N^\alpha)^{-1} \xi 1_{A_N}(\xi) \mathcal{F}_{t,x,y}[ u_{1,N_1} u_{2,N_2} \eta_0(N^\alpha(t-t_N)) ] \|_{X_N}.
\end{equation*}
Since $N \ll N_1$, we cannot bound
\begin{equation*}
\| \mathcal{F}_{t,x,y}[u_{1,N_1} \eta_0(N^\alpha(t-t_N))] \|_{X_{N_1}} \lesssim \| u \|_{F_{N_1}}.
\end{equation*}
For this reason we introduce additional time localization
\begin{equation*}
\eta_0(N^\alpha(t-t_N)) = \sum_{t_{N_1}} \eta_0(N^\alpha(t-t_N)) \gamma^2(N_1^\alpha(t-t_{N_1}))
\end{equation*}
with a suitable bump function $\gamma \in C^\infty_c(-1,1)$.

\smallskip

Clearly, $\# \{ t_{N_1} \} \sim (N_1/N)^\alpha$. So the additional time localization incurs a factor of $(N_1/N)^\alpha$ and we are left with proving estimates
\begin{equation*}
\begin{split}
&\quad \sup_{t_N \in \R} \| (\tau-\omega(\xi,\eta)+i N^\alpha)^{-1} \xi 1_{A_N}(\xi) \mathcal{F}_{t,x,y}[ u_{1,N_1} \gamma(N_1^\alpha(t-t_{N_1})) ] \\
&\quad * \mathcal{F}_{t,x,y}[ u_{2,N_2} \gamma(N_1^\alpha(t-t_{N_1})) \eta_0(N^\alpha(t-t_N)) ] \|_{X_N} \\
&\lesssim C(N,N_1,N_2) \prod_{i=1}^2 \| \mathcal{F}_{t,x,y}[ u_{i,N_i} \gamma(N_1^\alpha(t-t_{N_1})) \eta_0(N^\alpha(t-t_N)) ] \|_{X_{N_i}} 
\end{split}
\end{equation*}
because
\begin{equation*}
\| \mathcal{F}_{t,x,y}[u_{i,N_i} \gamma(N_1^\alpha(t-t_{N_1})) \eta_0(N^\alpha(t-t_N)) ] \|_{X_{N_i}} \lesssim \| u_{i,N_i} \|_{F_{N_i}}.
\end{equation*}

We introduce the notation
\begin{equation*}
f_{1,N_1} = \mathcal{F}_{t,x,y}[u_{1,N_1} \gamma(N_1^\alpha(t-t_{N_1})) ], \; f_{2,N_2} = \mathcal{F}_{t,x,y}[u_{2,N_2} \gamma(N_1^\alpha(t-t_{N_1})) \eta_0(N^{\alpha}(t-t_N)) ]
\end{equation*}
for the space-time Fourier transforms and break its support according to modulation localization:
\begin{equation*}
f_{i,N_i,L_i} = 
\begin{cases}
1_{D_{N_i,L_i}} f_{i,N_i}, \quad &L_i > N_1^\alpha, \\
1_{D_{N_i,\leq L_i}} f_{i,N_i}, \quad &L_i = N_1^\alpha.
\end{cases}
\end{equation*}
Again we suppose by the invariance of the norms that $f_i \geq 0$.
The minimum modulation localization $N_1^\alpha$ reflects the frequency-dependent time localization $T=T(N_1)=N_1^{-\alpha}$. With the above notation we reduce to
\begin{equation*}
\begin{split}
&\quad L^{-\frac{1}{2}} (1+L/N^3)^{\frac{1}{4}} \| f_{1,N_1,L_1} * f_{2,N_2,L_2} \|_{L^2_{\tau,\xi,\eta}} \\
 &\lesssim C(N,N_1,N_2) \prod_{i=1}^2 L_i^{\frac{1}{2}} (1+L_i/N_i^3)^{\frac{1}{4}} \| f_{i,N_i,L_i} \|_{L^2_{\tau,\xi,\eta}}
\end{split}
\end{equation*}
with $L \geq N^\alpha$, from which \eqref{eq:HighHighLowShorttimeEstimate} follows from dyadic summation taking into account the derivative loss $N$ and additional time localization $(N_1/N)^\alpha$.

\medskip

$\bullet$ \emph{Resonant case: }$L_{\max} = N_1^2 N$.\\
 First, we suppose that the high frequency carries the high modulation $L_1 = L_{\max}$. Moreover, we suppose that $L \lesssim N^3$. In this case we use duality and a bilinear Strichartz estimate from Proposition \ref{prop:bilinear-sp}:
 \begin{equation}
 \begin{split}
	\| 1_{D_{N,L}} (f_{1,N_1,L_1} * f_{2,N_2,L_2}) \|_{L^2_{\tau,\xi,\eta}} &= \sup_{\| g_{N,L} \|_{L^2} = 1} \iint g_{N,L} (f_{1,N_1,L_1} * f_{2,N_2,L_2} ) \\
	&\leq \| f_{1,N_1,L_1} \|_{L^2_{\tau,\xi,\eta}} \sup_{\| g_{N,L} \|_2 = 1} \| g_{N,L} * f_{2,N_2,L_2} \|_{L^2_{\tau,\xi,\eta}}.
	\end{split}
 \end{equation}
 Noting that we have the transversality bound
 \begin{equation*}
 \big| \frac{\eta}{\xi} - \frac{\eta_2}{\xi_2} \big| \lesssim N_1,
 \end{equation*}
applying Proposition \ref{prop:bilinear-sp} yields
\begin{equation*}
\| g_{N,L} * f_{2,N_2,L_2} \|_{L^2_{\tau,\xi,\eta}} \lesssim \log(N_1) (N^{\frac{1}{4}} N_1^{-\frac{\alpha}{4}} + N^{\frac{1}{2}} N_1^{-\frac{\alpha}{2}} ) (L L_2)^{\frac{1}{2}} \| g_{N,L} \|_{L^2} \| f_{2,N_2,L_2} \|_{L^2_{\tau,\xi,\eta}}.
\end{equation*}

Consequently,
\begin{equation*}
\begin{split}
&\quad L^{-\frac{1}{2}} \| 1_{D_{N,L}} (f_{1,N_1,L_1} * f_{2,N_2,L_2} ) \|_{L^2_{\tau,\xi,\eta}} \\
&\lesssim \log(N_1) (N^{\frac{1}{4}} N_1^{-\frac{\alpha}{4}} + N^{\frac{1}{2}} N_1^{-\frac{\alpha}{2}} ) (N_1^2 N)^{-\frac{1}{2}} \prod_{i=1}^2 L_i^{\frac{1}{2}} \| f_{i,N_i,L_i} \|_{L^2}.
\end{split}
\end{equation*}
Taking into account the additional time localization, the derivative loss, and the Sobolev regularity of the functions, we find the condition
\begin{equation*}
\big( \frac{N_1}{N} \big)^\alpha N \cdot N^s \log(N_1) (N^{\frac{1}{4}} N_1^{-\frac{\alpha}{4}} + N^{\frac{1}{2}} N_1^{-\frac{\alpha}{2}} ) (N_1^2 N)^{-\frac{1}{2}} \lesssim N_1^{2s}.
\end{equation*}
For $0 < \alpha < \frac{1}{4}$ and $s>-\frac{1}{4}$, this gives the regularity threshold:
\begin{equation}
\label{eq:ShorttimeBilinearRegularityIX}
s_1(\alpha,\varepsilon) \geq - \frac{\alpha}{2} + \varepsilon.
\end{equation}

Next, suppose that $L \gtrsim N^3$. In this case we use duality and the bilinear Strichartz estimate provided by Proposition \ref{prop:SecondOrderBilinearEstimate}:
\begin{equation*}
\begin{split}
&\quad L^{-\frac{1}{4}} N^{-\frac{3}{4}} \| 1_{D_{N,L}} (f_{1,N_1,L_1} * f_{2,N_2,L_2} ) \|_{L^2_{\tau,\xi,\eta}} \\
&\leq L^{-\frac{1}{4}} N^{-\frac{3}{4}} \sup_{\| g_{N,L} \|_2 = 1} \iint g_{N,L} (f_{1,N_1,L_1} * f_{2,N_2,L_2} ) d\xi d\eta d\tau \\
&\lesssim L^{-\frac{1}{4}} N^{-\frac{3}{4}} \| f_{1,N_1,L_1} \|_2 \| g_{N,L} * f_{2,N_2,L_2} \|_{L^2_{\tau,\eta,\eta}} \\
&\lesssim L^{-\frac{1}{4}} N^{-\frac{3}{4}} \| f_{1,N_1,L_1} \|_2 N^{\frac{1}{2}} L_2^{\frac{1}{2}} L^{\frac{1}{4}} N^{\frac{1}{4}} \| g_{N,L} \|_2 \| f_{2,N_2,L_2} \|_2 \\
&\lesssim (N_1^2 N)^{-\frac{1}{2}} \prod_{i=1}^2 L_i^{\frac{1}{2}} \| f_{i,N_i,L_i} \|_2.
\end{split}
\end{equation*}
Taking into account the time localization, the derivative loss, and the Sobolev regularity we find the condition
\begin{equation*}
\big( \frac{N_1}{N} \big)^\alpha N (N_1^2 N)^{-\frac{1}{2}} N^s \lesssim N_1^{2s}.
\end{equation*}
For $0<\alpha< \frac{1}{4}$ and $s>-\frac{1}{4}$ this yields the condition
\begin{equation}
\label{eq:ShorttimeBilinearRegularityX}
s_1(\alpha,\varepsilon) \geq - \frac{1}{2} + \varepsilon.
\end{equation}

\medskip

Next, we turn to the resonant case with the low frequency carrying the high modulation: $ L = L_{\max} =N_1^2 N$. 

\smallskip

In this case we use two $L^4_{t,x,y}$-Strichartz estimates provided by Theorem \ref{thm:ImprovedL4Strichartz}:
\begin{equation*}
L^{-\frac{1}{4}} N^{-\frac{3}{4}} \| f_{1,N_1,L_1} * f_{2,N_2,L_2} \|_{L^2_{\tau,\xi,\eta}} \lesssim N_1^{-\frac{1}{2}} N^{-1} (N_1^{\frac{1}{8}-\frac{\alpha}{8}+\varepsilon})^{2} \prod_{i=1}^2 L_i^{\frac{1}{2}} \| f_{i,N_i,L_i} \|_{L^2_{\tau,\xi,\eta}}.
\end{equation*}
Taking into account time localization, derivative loss, and Sobolev regularity we find the condition
\begin{equation*}
\big( \frac{N_1}{N} \big)^\alpha N N_1^{-\frac{1}{2}} N^{-1} N_1^{\frac{1}{4}-\frac{\alpha}{4}+\varepsilon} N^s \lesssim N_1^{2s}.
\end{equation*}
Since $s<0$ and $\alpha > 0$, this yields
\begin{equation}
\label{eq:ShorttimeBilinearRegularityXI}
N_1^{-\frac{1}{4}+\frac{3 \alpha}{4}} N^{s-\alpha} \lesssim N_1^{2s} \Rightarrow s_1(\alpha,\varepsilon) \geq - \frac{1}{8} + \frac{3 \alpha}{8} + \varepsilon.
\end{equation}
Note that here we can use the improved $L^4_{t,x,y}$-Strichartz estimate because due to the transversality estimate
\begin{equation*}
\big| \frac{\eta_1}{\xi_1} - \frac{\eta_2}{\xi_2} \big| \lesssim N.
\end{equation*}
This implies by the usual almost orthogonality and Galilean invariance argument
\begin{equation*}
|\eta_i| \lesssim N N_1 \ll N_1^2.
\end{equation*}
On the other hand, in this case bilinear Strichartz estimates are less effective due to the lack of frequency separation in case of small transversality.
This case leads to an additional constraint in addition to $s_1(\alpha,\varepsilon) \geq - \frac{\alpha}{2}+ \varepsilon$.

\medskip

$\bullet$ \emph{Non-resonant case:} $L_1 = L_{\max} \gg N_1^2 N$. Firstly, suppose that $L \leq N^3$. Recall that $L_2 \gtrsim N_1^{\alpha}$. We use duality and the bilinear Strichartz estimate provided by Proposition \ref{prop:SimplifiedBilinearStrichartz} with transversality $D \gtrsim N_1$ to find
\begin{equation*}
\begin{split}
&\quad L^{-\frac{1}{2}} \| 1_{D_{N,L}} (f_{1,N_1,L_1} * f_{2,N_2,L_2}) \|_{L^2_{\tau,\xi,\eta}} \\
 &\lesssim L^{-\frac{1}{2}} \iint g_{N,L} (f_{1,N_1,L_1} * f_{2,N_2,L_2}) d \xi d\eta d\tau \\
&\lesssim L^{-\frac{1}{2}} \| f_{1,N_1,L_1} \|_{L^2} \| g_{N,L} * f_{2,N_2,L_2} \|_{L^2_{\tau,\xi,\eta}} \\
&\lesssim L^{-\frac{1}{2}} (N_1^2 N)^{-\frac{1}{2}} L_1^{\frac{1}{2}} N^{\frac{1}{2}} N_1^{-\frac{\alpha}{2}} (L L_2)^{\frac{1}{2}} \| g_{N,L} \|_{L^2} \| f_{2,N_2,L_2} \|_{L^2} \\
&\lesssim N_1^{-1-\frac{\alpha}{2}} \prod_{i=1}^2 L_i^{\frac{1}{2}} \| f_{i,N_i,L_i} \|_{L^2}.
\end{split}
\end{equation*}
Taking into account time localization, derivative loss, and Sobolev regularity we find
\begin{equation*}
\big( \frac{N_1}{N} \big)^\alpha N N_1^{-1-\frac{\alpha}{2}} N^s \lesssim N_1^{2s}.
\end{equation*}
This gives the condition
\begin{equation}
\label{eq:ShorttimeBilinearRegularityXII}
s_1(\alpha,\varepsilon) \geq - \frac{\alpha}{2} + \varepsilon.
\end{equation}

If $L \geq N_1^3$, we use the bilinear Strichartz estimate from Proposition \ref{prop:SimplifiedBilinearStrichartz} on $f_{1,N_1,L_1}$ and $f_{2,N_2,L_2}$ like above:
\begin{equation*}
L^{-\frac{1}{2}} (1+ L/N^3)^{\frac{1}{4}} \| f_{1,N_1,L_1} * f_{2,N_2,L_2} \|_{L^2} \lesssim N_1^{-\frac{3}{4}} N^{-\frac{3}{4}} N^{\frac{1}{2}} N_1^{-\frac{\alpha}{2}} \prod_{i=1}^2 L_i^{\frac{1}{2}} \| f_{i,N_i,L_i} \|_{L^2}.
\end{equation*}
Taking into account time localization, Sobolev regularity, and derivative loss we find the condition
\begin{equation*}
\big( \frac{N_1}{N} \big)^\alpha N^s N^{\frac{3}{4}} N_1^{-\frac{3}{4}} N_1^{-\frac{\alpha}{2}} \lesssim N_1^{2s}.
\end{equation*}
This yields again
\begin{equation}
\label{eq:ShorttimeBilinearRegularityXIII}
N^{s+\frac{3}{4}-\alpha} N_1^{-\frac{3}{4}+\frac{\alpha}{2}} \lesssim N_1^{2s} \Rightarrow s_1(\alpha,\varepsilon) \geq - \frac{\alpha}{2} + \varepsilon.
\end{equation}

$\cdot L = L_{\max} \gg N_1^2 N$: The case $L \geq N_1^3$ can be estimated like in the preceding paragraph. We consider $N_1^2 N \leq L \leq N_1^3$ and estimate the expression
\begin{equation*}
L^{-\frac{1}{4}} N^{-\frac{3}{4}} \| 1_{D_{N,L}} (f_{1,N_1,L_1} * f_{2,N_2,L_2}) \|_{L^2_{\tau,\xi,\eta}}.
\end{equation*}
We apply a bilinear Strichartz estimate due to Proposition \ref{prop:SimplifiedBilinearStrichartz} on $f_{i,N_i,L_i}$ to find
\begin{equation*}
L^{-\frac{1}{4}} N^{-\frac{3}{4}} \| 1_{D_{N,L}} (f_{1,N_1,L_1} * f_{2,N_2,L_2}) \|_{L^2_{\tau,\xi,\eta}} \lesssim N_1^{-\frac{1}{2}} N^{-1} N^{\frac{1}{2}} N_1^{-\frac{\alpha}{2}} \prod_{i=1}^2 L_i^{\frac{1}{2}} \| f_{i,N_i,L_i} \|_{L^2}.
\end{equation*}
Taking into account time localization, derivative loss, and Sobolev regularity we find
\begin{equation*}
\big( \frac{N_1}{N} \big)^{\alpha} N N^s N_1^{-\frac{1}{2}} N^{-1} N^{\frac{1}{2}} N_1^{-\frac{\alpha}{2}} \lesssim N_1^{2s}.
\end{equation*}
This gives again the condition
\begin{equation}
\label{eq:ShorttimeBilinearRegularityXIV}
N^{\frac{1}{2}-\alpha+s} N_1^{\frac{\alpha}{2}-\frac{1}{2}} \lesssim N_1^{2s} \Rightarrow s_1(\alpha,\varepsilon) \geq - \frac{\alpha}{2} + \varepsilon.
\end{equation}

\medskip

\emph{Conclusion of the proof of \eqref{eq:ShorttimeBilinearSolutions}.} We collect the regularity thresholds proved above in the separate cases and recall that we suppose that $0 < \alpha < \frac{1}{4}$.

\smallskip

In the \textbf{High-High-High-interaction} we have obtained the following conditions on the regularity \eqref{eq:ShorttimeBilinearRegularityI}-\eqref{eq:ShorttimeBilinearRegularityIII}:
\begin{equation*}
s_1(\alpha,\varepsilon) \geq - \frac{1}{4} - \frac{\alpha}{12} + \varepsilon, \quad s_1(\alpha,\varepsilon) \geq \max( - \frac{1}{4} + \frac{2 a_1}{3} + \varepsilon , - \frac{a_1}{2} - \frac{\alpha}{2} + \varepsilon).
\end{equation*}

\smallskip

In the \textbf{High-Low-High-interaction} we have obtained the following conditions by  \eqref{eq:ShorttimeBilinearRegularityVI} - \eqref{eq:ShorttimeBilinearRegularityVIII}:
\begin{equation*}
\begin{split}
s_1(\alpha,\varepsilon) &\geq \max( -\frac{\alpha}{2} + \varepsilon , - \frac{1}{4} - \frac{\alpha}{12} + \varepsilon , -\frac{1}{2} + \varepsilon ) = - \frac{\alpha}{2}+ \varepsilon.
\end{split}
\end{equation*}

\smallskip

In the \textbf{Low-High-High-interaction} we have obtained in \eqref{eq:ShorttimeBilinearRegularityIX}-\eqref{eq:ShorttimeBilinearRegularityX}, \eqref{eq:ShorttimeBilinearRegularityXI}, \eqref{eq:ShorttimeBilinearRegularityXII}-\eqref{eq:ShorttimeBilinearRegularityXIV}:
\begin{equation*}
s_1(\alpha,\varepsilon) \geq - \frac{\alpha}{2} + \varepsilon, \quad s_1(\alpha,\varepsilon) \geq - \frac{1}{8} + \frac{3 \alpha}{8} + \varepsilon, \quad s_1(\alpha,\varepsilon) \geq - \frac{\alpha}{2} + \varepsilon.
\end{equation*}

\medskip
In summary we obtain for the choice $a_1 = 0$
\begin{equation*}
s_1(\alpha,\varepsilon) = \max( - \frac{\alpha}{2}+ \varepsilon , - \frac{1}{8} + \frac{3 \alpha}{8} + \varepsilon),
\end{equation*}
which finishes the proof of \eqref{eq:ShorttimeBilinearSolutions}.

\medskip

This is balanced for $\alpha = \frac{1}{7}$, which gives
\begin{equation*}
s_1(\alpha,\varepsilon) \geq - \frac{1}{14} + \varepsilon.
\end{equation*}

\bigskip

We turn to the proof of \eqref{eq:ShorttimeBilinearDifferences}. Note that in the following we assume that $0<\alpha< \frac{1}{7}$. Then we can use the analysis from \eqref{eq:ShorttimeBilinearSolutions} supposing that 
\begin{equation}
\label{eq:s2Start}
s_2(\alpha, \varepsilon) \geq - \frac{\alpha}{2} + \varepsilon.
\end{equation}

\smallskip

\eqref{eq:ShorttimeBilinearDifferences} follows again from dyadic summation of frequency localized estimates:
\begin{equation*}
\| P_N \partial_x (P_{N_1} u P_{N_2} v ) \|_{\mathcal{N}_N} \lesssim C(N,N_1,N_2) \| P_{N_1} u \|_{F_{N_1}(T)} \| P_{N_2} v \|_{F_{N_2}(T)}.
\end{equation*}
However, the summation is different, and we need to check the \textbf{Low-High-High-interaction} for further constraints due to different regularities. Clearly, the \textbf{High-High-High-interaction} does not create an additional condition on the regularity. We briefly comment on the \textbf{High-Low-High-interaction}.

\medskip
\textbf{High-Low-High-interaction:} $N_1 \sim N \gg N_2$: We look into estimates
\begin{equation}
\label{eq:HighLowHighAsymmetricI}
\| P_N \partial_x (P_{N_1} u P_{N_2} v ) \|_{\mathcal{N}_N} \lesssim C(N,N_1,N_2) \| P_{N_1} u \|_{F_{N_1}(T)} \| P_{N_2} v \|_{F_{N_2}(T)},
\end{equation}
and
\begin{equation}
\label{eq:HighLowHighAsymmetricII}
\| P_N \partial_x (P_{N_1} v P_{N_2} u ) \|_{\mathcal{N}_N} \lesssim C(N,N_1,N_2) \| P_{N_1} v \|_{F_{N_1}(T)} \| P_{N_2} u \|_{F_{N_2}(T)}.
\end{equation}
Note that for \eqref{eq:ShorttimeBilinearDifferences} these estimates are not symmetric. But we can argue that the previously obtained constants $C(N,N_1,N_2)$ work likewise for summation with shifted regularities. Indeed, the summation condition from above reads
\begin{equation*}
N^s C(N,N_1,N_2) \lesssim N_1^s N_2^s.
\end{equation*}
Since $N \sim N_1$, this yields
\begin{equation}
\label{eq:SummationConditionHighLowHigh}
C(N,N_1,N_2) \lesssim N_2^s.
\end{equation}
For \eqref{eq:HighLowHighAsymmetricII} the summation condition reads
\begin{equation*}
N^{20s} C(N,N_1,N_2) \lesssim N_1^{20s} N_2^{s}.
\end{equation*}
This is clearly implied by $N_1 \sim N$ and \eqref{eq:SummationConditionHighLowHigh}.

Turning to \eqref{eq:HighLowHighAsymmetricI} the summation condition reads
\begin{equation*}
N^{20s} C(N,N_1,N_2) \lesssim N_1^s N_2^{20s},
\end{equation*} 
which is verified by
\begin{equation*}
N^s C(N,N_1,N_2) N^{19s} \lesssim N_1^s N_2^s N^{19s} \lesssim N_1^s N_2^{20s}.
\end{equation*}
The first inequality is \eqref{eq:SummationConditionHighLowHigh} and $N_1 \sim N$, the second inequality follows from $N \gg N_2$ and $s<0$.

\medskip

\textbf{Low-High-High-interaction:} Here we consider the estimate for $N \ll N_1 \sim N_2$:
\begin{equation*}
\| P_N \partial_x (P_{N_1} u P_{N_2} v) \|_{\mathcal{N}_N} \lesssim C(N,N_1,N_2) \| P_{N_1} u \|_{F_{N_1}} \| P_{N_2} v \|_{F_{N_2}}.
\end{equation*}
The constant $C(N,N_1,N_2)$ has been found above. Since we aim to prove \eqref{eq:ShorttimeBilinearDifferences}, the unbalanced regularity of the input functions changes the outcome.

\smallskip

\emph{Resonant case (i):} $L_{\max} = N_1^2 N$. $L_1 = L_{\max}$,  $L \lesssim N^3$: We obtain the condition taking into account time localization, regularity of the functions, and derivative loss:
\begin{equation*}
\big( \frac{N_1}{N} \big)^\alpha N N^{20s} \log(N_1) ( N^{\frac{1}{2}} N_1^{-\frac{\alpha}{2}} + N^{\frac{1}{4}} N_1^{-\frac{\alpha}{4}}) (N_1^2 N)^{-\frac{1}{2}} \lesssim N_1^{21 s}.
\end{equation*}
Up to logarithmic factors, this is implied by
\begin{equation*}
N^{20s-\alpha +1} \lesssim N_1^{21s+1-\frac{\alpha}{2}} \text{ and } N^{20s+\frac{3}{4}-\alpha} \lesssim N_1^{1+21s-\frac{3\alpha}{4}}.
\end{equation*}
The first condition holds true for $20 s_2(\alpha,\varepsilon) - \alpha + 1 \geq 0$ together with \eqref{eq:s2Start}. This gives the additional condition:
\begin{equation*}
s_2(\alpha,\varepsilon) \geq \frac{\alpha-1}{20} + \varepsilon.
\end{equation*}
The second condition holds true for $20 s_2(\alpha,\varepsilon) + \frac{3}{4} - \alpha \geq 0$ together with $s_2(\alpha,\varepsilon) \geq - \frac{1}{4} - \frac{\alpha}{4}$, which is already covered.
We record the additional assumption:
\begin{equation}
\label{eq:s2ConditionI}
s_2(\alpha,\varepsilon) \geq \frac{\alpha - \frac{3}{4}}{20} +\varepsilon.
\end{equation}

\smallskip

Next, we consider $L \geq N^3$. From time localization, regularity, and derivative loss we find
\begin{equation*}
\big( \frac{N_1}{N} \big)^\alpha N (N_1^2 N)^{-\frac{1}{2}} N^{20s} \lesssim N_1^{21 s}.
\end{equation*}
This is equivalent to
\begin{equation*}
N_1^{\alpha-1} N^{\frac{1}{2}-\alpha + 20s} \lesssim N_1^{21 s}.
\end{equation*}
Under the assumption \eqref{eq:s2Start} this holds true for
\begin{equation}
\label{eq:s2ConditionII}
s_2(\alpha,\varepsilon) \geq \frac{-1/2 + \alpha}{20} + \varepsilon.
\end{equation}

\smallskip

\emph{Resonant case (ii):} $L=L_{\max} = N_1^2 N$. We have found above
\begin{equation*}
L^{-\frac{1}{4}} N^{-\frac{3}{4}} \| f_{1,N_1,L_1} * f_{2,N_2,L_2} \|_{L^2_{\tau,\xi,\eta}} \lesssim N_1^{-\frac{1}{2}} N^{-1} (N_1^{\frac{1}{8}-\frac{\alpha}{8}+\varepsilon})^2 \prod_{i=1}^2 L_i^{\frac{1}{2}} \| f_{i,N_i,L_i} \|_{L^2_{\tau,\xi,\eta}}.
\end{equation*}
Taking into account time localization, derivative loss, and regularity of the functions we find the condition
\begin{equation*}
N^{20s} N \big( \frac{N_1}{N} \big)^\alpha N_1^{-\frac{1}{2}} N^{-1} (N_1^{\frac{1}{8}-\frac{\alpha}{8}+\varepsilon})^2 \lesssim N_1^{21 s}.
\end{equation*}
This gives the new condition
\begin{equation}
\label{eq:s2ConditionIII}
\frac{3 \alpha}{4} - \frac{1}{4} + 2 \varepsilon \leq 21 s \Rightarrow s_2(\alpha,\varepsilon) \geq \frac{3 \alpha -1}{84} +  \varepsilon.
\end{equation}

\smallskip

\emph{Non-resonant case (i):} $L_1 = L_{\max} \gg N_1^2 N$, $L_{\max} \gg L_{\text{med}}$. Firstly, we suppose that $L \leq N^3$. In this case we have proved the estimate
\begin{equation*}
L^{-\frac{1}{2}} \| 1_{D_{N,L}} (f_{1,N_1,L_1} * f_{2,N_2,L_2}) \|_{L^2_{\tau,\xi,\eta}} \lesssim N_1^{-1-\frac{\alpha}{2}} \prod_{i=1}^2 L_i^{\frac{1}{2}} \| f_{i,N_i,L_i} \|_2.
\end{equation*}
This gives the condition:
\begin{equation*}
\big( \frac{N_1}{N} \big)^\alpha N N_1^{-1-\frac{\alpha}{2}} N^{20s} \lesssim N_1^{21 s} \Leftrightarrow N^{20s+1-\alpha} \lesssim N_1^{21 s + 1 - \frac{\alpha}{2}},
\end{equation*}
which was already encountered in \eqref{eq:s2ConditionI}.

\smallskip

If $N^3 \leq L \leq N_1^3$ and $L_1 = L_{\max} =N_1^2 N$, then we find from a bilinear Strichartz estimate after invoking duality (Proposition \ref{prop:SecondOrderBilinearEstimate}):
\begin{equation*}
L^{-\frac{1}{4}} N^{-\frac{3}{4}} \| 1_{D_{N,L}} (f_{1,N_1,L_1} * f_{2,N_2,L_2}) \|_{L^2_{\tau,\xi,\eta}} \lesssim ( N_1^2 N )^{-\frac{1}{2}} \prod_{i=1}^2 L_i^{\frac{1}{2}} \| f_{i,N_i,L_i} \|_{L^2}.
\end{equation*}
This leads to the condition:
\begin{equation*}
N \big( \frac{N_1}{N} \big)^\alpha (N_1^2 N)^{-\frac{1}{2}} N^{20s} \lesssim N_1^{21 s},
\end{equation*}
which is satisfied with \eqref{eq:s2ConditionII} and \eqref{eq:s2Start}.

\smallskip

If $L \geq N_1^3$, we have showed above that
\begin{equation*}
\begin{split}
&\quad L^{-\frac{1}{2}} (1+ L/N^3)^{\frac{1}{4}} \| 1_{D_{N,L}} ( f_{1,N_1,L_1} * f_{2,N_2,L_2} ) \|_{L^2_{\tau,\xi,\eta}} \\ &\lesssim N_1^{-\frac{3}{4}} N^{-\frac{3}{4}} N^{\frac{1}{2}} N_1^{-\frac{\alpha}{2}} \prod_{i=1}^2 L_i^{\frac{1}{2}} \| f_{i,N_i,L_i} \|_{L^2_{\tau,\xi,\eta}}.
\end{split}
\end{equation*}
With time localization, Sobolev regularity, and derivative loss, we find
\begin{equation*}
\big( \frac{N_1}{N} \big)^\alpha N^{20s} N^{\frac{3}{4}} N_1^{-\frac{3}{4}} N_1^{-\frac{\alpha}{2}} \lesssim N_1^{21s}.
\end{equation*}
This is satisfied with \eqref{eq:s2Start} and \eqref{eq:s2ConditionII}.

\emph{Non-resonant case (ii):} $L=L_{\max} \gg N_1^2 N$, $L_{\max} \gg L_{\text{med}}$. Again it suffices to check $N_1^2 N \leq L \leq N_1^3$, and we estimate via a bilinear Strichartz estimate:
\begin{equation*}
\begin{split}
&\quad L^{-\frac{1}{4}} N^{-\frac{3}{4}} \| 1_{D_{N,L}} (f_{1,N_1,L_1} * f_{2,N_2,L_2} ) \|_{L^2_{\tau,\xi,\eta}} \\ &\lesssim N_1^{-\frac{1}{2}} N^{-1} N^{\frac{1}{2}} N_1^{-\frac{\alpha}{2}} \prod_{i=1}^2 L_i^{\frac{1}{2}} \| f_{i,N_i,L_i} \|_{L^2}.
\end{split}
\end{equation*}
This gives the condition
\begin{equation*}
\big( \frac{N_1}{N} \big)^\alpha \cdot N \cdot N^{20s} N_1^{-\frac{1}{2}} N^{-1} N^{\frac{1}{2}} N_1^{-\frac{\alpha}{2}} \lesssim N_1^{21 s},
\end{equation*}
which is satisfied with \eqref{eq:s2ConditionII}.

We remark that as well \eqref{eq:s2ConditionI} as \eqref{eq:s2ConditionII} are weaker conditions than \eqref{eq:s2ConditionIII} for $0<\alpha<\frac{1}{7}$. In summary, we have proved \eqref{eq:ShorttimeBilinearDifferences} for $0<\alpha<\frac{1}{7}$ and
\begin{equation*}
s_2(\alpha,\varepsilon) \geq \max( - \frac{\alpha}{2} + \varepsilon , \frac{3 \alpha - 1}{84} + \varepsilon ).
\end{equation*}
We note that for $0<\alpha<\frac{1}{45}$, we can reduce to the condition
\begin{equation*}
s_2(\alpha,\varepsilon) \geq - \frac{\alpha}{2} + \varepsilon. 
\qedhere
\end{equation*}
\end{proof}

\section{Trilinear estimates}
\label{section:Trilinear}
In preparation of the short-time energy estimate to be proved in the next section, we show a trilinear estimate, which interpolates between the High-Low-High and High-High-High-interaction.

\smallskip

We firstly handle the easier case of comparable frequencies.
\begin{proposition}
\label{prop:TrilinearEstimateHighHighHigh}
Let $\alpha \in (0,1/8)$, $N_i, N \in 2^{\N_0}$, $N_1 \sim N_2 \sim N_3 \sim N$, and $L_i \in 2^{\N_0}$ with $L_i \gtrsim N_i^\alpha$. Let $f_{i,N_i,L_i} \in L^2_{\tau,\xi,\eta}$ with $\operatorname{supp}(f_{i,N_i,L_i}) \subseteq D_{N_i,L_i}$.
 Then the following estimate holds for any $a_1 \in (0,1/8)$:
\begin{equation*}
\int (f_{1,N_1,L_1} * f_{2,N_2,L_2} ) f_{3,N_3,L_3} d\xi d\eta d\tau \lesssim N^{-1} ( N^{-\frac{1}{4}+\frac{a_1}{6}+\varepsilon} \vee N^{-a_1 - \frac{\alpha}{2}}) \prod_{i=1}^3 L_i^{\frac{1}{2}} \| f_{i,N_i,L_i} \|_2.
\end{equation*}
\end{proposition}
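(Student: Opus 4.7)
\medskip

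\noindent\textbf{Proof proposal.} The plan is to follow the two-regime strategy used for the High-High-High bilinear estimate in Section \ref{section:ShorttimeNonlinear}, but without the $\partial_x$ derivative. By the symmetry of the trilinear form (up to logarithmic factors absorbed into $\varepsilon$), assume $L_3 = L_{\max} := \max(L_1,L_2,L_3)$. From $\sum_i (\tau_i - \omega(\xi_i,\eta_i)) = -\Omega$ together with the resonance lower bound \eqref{eq:ResonanceLowerBound}, we have $L_{\max} \gtrsim N^3$, so we can write $L_{\max} = M^2 N$ with $M \gtrsim N$. The threshold $M \sim N^{1+a_1}$ separates a Strichartz-dominated regime from a transversality-dominated one.

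In Case 1, $N \lesssim M \lesssim N^{1+a_1}$, the transversality satisfies $|\eta_1/\xi_1 - \eta_2/\xi_2| \sim M$. After an almost orthogonal decomposition in the $\eta_i/\xi_i$ ratios and Galilean normalization, we localize $|\eta_i| \lesssim MN$ and further decompose $\xi_i$ into intervals of length $\sim N^2/M$, each decomposition contributing $(M/N)^{1/2}$. I then apply Hölder's inequality
\begin{equation*}
\int (f_1 * f_2) f_3 \, d\tau d\xi d\eta \leq \|\mathcal{F}^{-1} f_1\|_{L^4_{t,x,y}} \|\mathcal{F}^{-1} f_2\|_{L^4_{t,x,y}} \|f_3\|_{L^2}
\end{equation*}
and the shifted-phase $L^4$-Strichartz estimate of Proposition \ref{prop:L4StrichartzShifted} with parameter $k \sim M/N$, which gives $\|\mathcal{F}^{-1} f_i\|_{L^4} \lesssim N^{1/8+\varepsilon} (M/N)^{1/12} L_i^{1/2} \|f_i\|_2$ on each piece. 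After reassembling the pieces and multiplying through by the weight $L_3^{-1/2}(1+L_3/N^3)^{-1/4} \sim M^{-1/2} N^{-1}$ coming from Cauchy-Schwarz in the $f_3$ slot, the powers of $M$ collapse to $M^{1/6}$, and the bound $M^{1/6} \leq N^{(1+a_1)/6}$ delivers the first term $N^{-1} \cdot N^{-1/4 + a_1/6 + \varepsilon}$. This is essentially the non-resonant High-High-High bilinear computation of Section \ref{section:ShorttimeNonlinear}, rebalanced to put all of the $L^2$-norm on $f_3$ rather than on the derivative.

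In Case 2, $M \gtrsim N^{1+a_1}$, the large transversality $D \sim M$ makes Proposition \ref{prop:SimplifiedBilinearStrichartz} effective, giving
\begin{equation*}
\|f_1 * f_2\|_{L^2} \lesssim N^{1/2} (L_1 \wedge L_2)^{1/2} \langle (L_1 \vee L_2)/M \rangle^{1/2} \|f_1\|_2 \|f_2\|_2.
\end{equation*}
Pairing with $\|f_3\|_2$ via Cauchy-Schwarz, using the minimum modulation $L_1 \wedge L_2 \gtrsim N^\alpha$, and rewriting the right-hand side in the normalized form $\prod_{i=1}^3 L_i^{1/2} \|f_i\|_2$ produces a prefactor at most $M^{-1} \cdot L_{\max(1,2)}^{-1/2} \lesssim N^{-1-a_1} \cdot N^{-\alpha/2}$, which matches the second term $N^{-1}\cdot N^{-a_1-\alpha/2}$. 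The subcase $L_1 \vee L_2 > M$ turns out to be strictly better, so Case 2 is dominated by the analysis just described.

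The main obstacle I expect is the bookkeeping in Case 1: the extra factor $(M/N)^{1/2}$ from the modulation weight $(1+L_3/N^3)^{1/4}$ (when $M \gg N$) must cancel cleanly against the $L^4$-Strichartz gains and the two $(M/N)^{1/2}$ decomposition losses, leaving exactly $M^{1/6}$; any misalignment here would spoil the exponent. A secondary technical point is that Proposition \ref{prop:L4StrichartzShifted} is stated on the unit time interval, and one must justify (by localizing, as in the proof of Theorem \ref{thm:ImprovedL4Strichartz}) that this does not degrade the estimate on the frequency-dependent interval $[0,N^{-\alpha}]$ implicit in the short-time framework.
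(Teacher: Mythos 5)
Your proposal follows essentially the same two-regime strategy as the paper's proof (Strichartz-dominated for $M\lesssim N^{1+a_1}$, transversality-dominated for $M\gtrsim N^{1+a_1}$), and the Case~2 computation is correct. However, there are a few gaps and slips in Case~1 that need fixing.

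First, the normalization-weight claim ``$L_3^{-1/2}(1+L_3/N^3)^{-1/4}\sim M^{-1/2}N^{-1}$'' is incorrect in two ways: the right-hand side of the Proposition carries only $\prod_i L_i^{1/2}\|f_i\|_2$ and no $(1+L_i/N_i^3)^{1/4}$ factor, so the weight you should be dividing by is just $L_3^{-1/2}=(M^2N)^{-1/2}=M^{-1}N^{-1/2}$; and if one did include the extra factor, $L_3^{-1/2}(1+L_3/N^3)^{-1/4}\sim M^{-3/2}$ when $M\gg N$, not $M^{-1/2}N^{-1}$. The subsequent ``powers of $M$ collapse to $M^{1/6}$'' is only consistent with the correct weight $M^{-1}N^{-1/2}$: with your stated $M^{-1/2}N^{-1}$ the $M$-power would be $7/6-1/2=2/3$, which ruins the exponent. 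Second, asserting $D\sim M$ throughout Case~1 presupposes $L_{\max}\gg L_{\text{med}}$, since only then does the convolution constraint force $|\Omega|\sim L_{\max}$. The strongly non-resonant case $L_{\max}\sim L_{\text{med}}\gtrsim N^3$, where $D$ can be much smaller than $M$, is not covered by your argument and needs a separate (simpler) treatment, as in the paper via Proposition~\ref{prop:SecondOrderBilinearEstimate}, which gives the much stronger bound $N^{-3/2}$. Third, at the resonant endpoint $M\sim N$, the parameter $k\sim M/N\sim 1$ lies outside the range $1\ll k\lesssim N$ of Proposition~\ref{prop:L4StrichartzShifted}; one should instead invoke Theorem~\ref{thm:L4Strichartz} or Theorem~\ref{thm:ImprovedL4Strichartz} (the latter actually yields the better $N^{-5/4-\alpha/12}$). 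All three points are easily repaired, but as written the argument has genuine gaps in the small-$D$ and $M\sim N$ regimes.
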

\begin{proof}
By convolution constraint and the resonance relation we always have $L_{\max} \gtrsim N^3$. The case $L_{\max} \sim L_{\text{med}} \gtrsim N^3$ is readily estimated by Proposition \ref{prop:SecondOrderBilinearEstimate}. Indeed, by symmetry we can suppose that $L_1 \sim L_{\max}$. Then we find
\begin{equation}
\label{eq:StronglyNonResonantCaseHighHighHighA}
\begin{split}
&\quad \| (f_{1,N_1,L_1} * f_{2,N_2,L_2}) f_{3,N_3,L_3} \|_{L^2_{\tau,\xi,\eta}} \\
 &\leq \| f_{1,N_1,L_1} \|_{L^2} \| f_{2,N_2,L_2} * f_{3,N_3,L_3} \|_{L^2} \\
&\lesssim (N^3)^{-\frac{1}{2}} L_1^{\frac{1}{2}} N^{\frac{3}{4}} (L_2 L_3)^{\frac{1}{2}} (N^3)^{-\frac{1}{4}} \prod_{i=1}^3 \| f_{i,N_i,L_i} \|_2 \\
&\lesssim N^{-\frac{3}{2}} \prod_{i=1}^3 L_i^{\frac{1}{2}} \| f_{i,N_i,L_i} \|_2.
\end{split}
\end{equation}

We suppose in the following that $L_{\max} \gg L_{\text{med}}$. 

\smallskip

$\cdot$ Resonant case: $L_{\max} \sim N^3$. Suppose by symmetry $L_3 = L_{\max}$. We can apply H\"older's inequality and two $L^4_{t,x,y}$-Strichartz estimates from Theorem \ref{thm:ImprovedL4Strichartz} to find
\begin{equation}
\label{eq:ResonantTrilinearEstimateHighHighHigh}
\begin{split}
\big| \int (f_{1,N_1,L_1} * f_{2,N_2,L_2} ) f_{3,N_3,L_3} d\xi d\eta d\tau \big| &\lesssim \prod_{i=1}^2 \| \mathcal{F}^{-1}_{t,x,y}[ f_{i,N_i,L_i} ] \|_{L^4_{t,x,y}} \| f_{3,N_3,L_3} \|_{L^2_{\tau,\xi,\eta}} \\
&\lesssim_\varepsilon N^{2 \big( \frac{1}{8} - \frac{\alpha}{8} + \frac{\alpha}{12} + \varepsilon \big)} N^{-\frac{3}{2}} \prod_{i=1}^3 L_i^{\frac{1}{2}} \| f_{i,N_i,L_i} \|_{L^2}.
\end{split}
\end{equation}
Note that this an estimate which cannot be replaced with a short-time bilinear estimate.

$\cdot$ Non-resonant case: $\cdot L_{\max} \sim M^2 N$, $M \gg N$, $L_{\text{med}} \ll L_{\max}$. In this case we have
\begin{equation*}
L_{\max} \sim |\Omega| \sim \frac{| \eta_1 \xi_2 - \eta_2 \xi_1 \big|^2}{\xi_1 \xi_2 (\xi_1+\xi_2)},
\end{equation*}
which implies
\begin{equation*}
\big| \frac{\eta_1}{\xi_1} - \frac{\eta_2}{\xi_2} \big| \sim M.
\end{equation*}
 We let to this end with $J_i$ intervals of $(\eta_i/\xi_i)$ of length $M$:
\begin{equation*}
\begin{split}
&\quad \int (f_{1,N_1,L_1} * f_{2,N_2,L_2}) f_{3,N_3,L_3} d\xi d\eta d\tau \\
 &= \sum_{J_1 \sim J_2} \int (f_{1,N_1,L_1}^{J_1} * f_{2,N_2,L_2}^{J_2} ) f_{3,N_3,L_3} d\xi d\eta d\tau \\
&\leq \sum_{J_1 \sim J_2} \| f_{1,N_1,L_1}^{J_1} * f_{2,N_2,L_2}^{J_2} \|_{L^2_{\tau,\xi,\eta}} \| f_{3,N_3,L_3} \|_{L^2_{\tau,\xi,\eta}}.
\end{split}
\end{equation*}
By Galilean invariance $\eta_i \to \eta_i - A \xi_i$ with $A \in \Z$, we can suppose that the $(\eta_i/\xi_i)$-intervals are of length $M$ around the origin. Hence, we can suppose that $|\eta_i| \lesssim MN$ and $|\eta| \lesssim MN$. In the following the $J$-superscript is omitted to lighten the notation.

\smallskip

We break the $\eta$-frequencies into intervals $[k N^2, (k+1) N^2]$, $k \in \Z$ with $|k| \lesssim M/N$. This yields a decomposition of the $\eta_i$-frequencies into intervals of length $\sim N^2$. There are $\mathcal{O}(M/N)$ intervals, which incurs a factor $\big( \frac{M}{N} \big)^{\frac{1}{2}}$ by Cauchy-Schwarz. Correspondingly, we split the $\xi$-frequencies into intervals of length $N^2 / M$, such that after rescaling the intervals are of length $N/M$, which matches the offset $|k| \lesssim \frac{MN}{N^2} = \frac{M}{N}$. This incurs another factor of $\big( \frac{M}{N} \big)^{\frac{1}{2}}$. We obtain from applying the $L^4_{t,x,y}$-Strichartz estimates Proposition \ref{prop:L4StrichartzShifted} with offset in $\eta$ to the functions $f'$ with additional frequency localization:
\begin{equation*}
\begin{split}
\| f'_{1,N_1,L_1} * f'_{2,N_2,L_2} \|_{L^2_{\tau,\xi,\eta}}
 &\leq \| \mathcal{F}^{-1}_{t,x,y}[f_{1,N_1,L_1}'] \|_{L^4_{t,x,y}} \| \mathcal{F}^{-1}_{t,x,y}[f_{2,N_2,L_2}'] \|_{L^4_{t,x,y}} \\
&\lesssim_\varepsilon N^{\frac{1}{4}+2\varepsilon} \big( \frac{M}{N} \big)^{\frac{1}{6}}  \prod_{i=1}^2 L_i^{\frac{1}{2}} \| f'_{i,N_i,L_i} \|_{L^2}.
\end{split}
\end{equation*}

Taking into account the frequency-localization we find
\begin{equation*}
\big| \int (f_{1,N_1,L_1} * f_{2,N_2,L_2}) f_{3,N_3,L_3} \big| \lesssim_\varepsilon N^{\frac{1}{4}+2 \varepsilon} \big( \frac{M}{N} \big) \big( \frac{M}{N} \big)^{\frac{1}{6}} (M^2 N)^{-\frac{1}{2}} \prod_{i=1}^3 L_i^{\frac{1}{2}} \| f_{i,N_i,L_i} \|_{L^2}.
\end{equation*}

For $M \leq N^{1+a_1}$ we find
\begin{equation}
\label{eq:SmallMHighHighHigh}
\big| \int (f_{1,N_1,L_1} * f_{2,N_2,L_2} ) f_{3,N_3,L_3} \big| \lesssim_\varepsilon N_1^{\frac{1}{4}+2 \varepsilon} N_1^{-\frac{3}{2}} N^{\frac{a_1}{6}} \prod_{i=1}^3 L_i^{\frac{1}{2}} \| f_{i,N_i,L_i} \|_{L^2}.
\end{equation}
For $M \geq N^{1+a_1}$ we use instead a bilinear Strichartz estimate provided by Proposition \ref{prop:SimplifiedBilinearStrichartz}:
\begin{equation}
\label{eq:LargeMHighHighHigh}
\big| \int (f_{1,N_1,L_1} * f_{2,N_2,L_2} ) f_{3,N_3,L_3} \big| \lesssim (M^2 N)^{-\frac{1}{2}} N^{\frac{1}{2}} N^{-\frac{\alpha}{2}} \prod_{i=1}^3 L_i^{\frac{1}{2}} \| f_{i,N_i,L_i} \|_{L^2}.
\end{equation}
Collecting estimates \eqref{eq:StronglyNonResonantCaseHighHighHighA}, \eqref{eq:ResonantTrilinearEstimateHighHighHigh}, and \eqref{eq:SmallMHighHighHigh}, \eqref{eq:LargeMHighHighHigh}, the latter estimates being dominant, we complete the proof.
\end{proof}

Now we turn to the slightly more involved case of a trilinear interaction involving one lower, but not too low frequency.
\begin{proposition}
\label{prop:TrilinearEstimateHighLowHigh}
Let $a_2 \in (0,1/8)$, $\alpha \in (0,1/8)$, $N_i \in 2^{\N_0}$, $N_1 \sim N_3 \gg N_2 $, $N_2 \gtrsim N_1^{1-a_2}$, and $L_i \in 2^{\N_0}$ with $L_i \gtrsim N_i^\alpha$. Let $f_{i,N_i,L_i} \in L^2_{\tau,\xi,\eta}$ with $\operatorname{supp}(f_{i,N_i,L_i}) \subseteq D_{N_i,L_i}$.
\begin{itemize}
\item Resonant case: Suppose that $L_{\max} \sim N_1^2 N_2$.
 Then the following estimate holds:
\begin{equation*} 
\begin{split}
&\quad \int (f_{1,N_1,L_1} * f_{2,N_2,L_2} ) f_{3,N_3,L_3} d\xi d\eta d\tau \\
&\lesssim_\varepsilon N_2^{-1} N_1^{-\frac{1}{4}+\frac{11 a_2}{24}+\varepsilon} \prod_{i=1}^3 L_i^{\frac{1}{2}} (1+L_i/N_i^3)^{\frac{1}{4}} \| f_{i,N_i,L_i} \|_2.
\end{split}
\end{equation*}
\item Non-resonant case: Suppose that $L_{\max} \sim M^2 N_2$ for $M \gg N_1$. Then the following estimate holds:
\begin{equation*}
\begin{split}
&\quad \int (f_{1,N_1,L_1} * f_{2,N_2,L_2} ) f_{3,N_3,L_3} d\xi d\eta d\tau \\
&\lesssim \big( N_2^{-1} \frac{M^{\frac{1}{6}} (N_1 N_2)^{\frac{1}{24}+\varepsilon}}{N_2^{\frac{1}{2}}} \wedge M^{-1} N_2^{-\frac{1}{2}} (1+N_2^{\frac{1}{2}} N_1^{-\frac{\alpha}{2}})) \prod_{i=1}^3 L_i^{\frac{1}{2}} (1+L_i/N_i^3)^{\frac{1}{4}} \| f_{i,N_i,L_i} \|_2.
\end{split}
\end{equation*}
\end{itemize}
\end{proposition}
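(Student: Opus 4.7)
The plan is to follow the template of Proposition \ref{prop:TrilinearEstimateHighHighHigh}, splitting into a resonant subcase ($L_{\max} \sim N_1^2 N_2$) and a non-resonant subcase ($L_{\max} \sim M^2 N_2$ with $M \gg N_1$), but adapted to the asymmetry $N_1 \sim N_3 \gg N_2$ and the assumption $N_2 \gtrsim N_1^{1-a_2}$.

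For the resonant subcase, the identity for $\Omega$ yields the transversality bound $|\eta_1/\xi_1 - \eta_2/\xi_2| \lesssim N_1$. After the standard almost-orthogonal decomposition of $\eta_i/\xi_i$ into slabs of length $N_1$ combined with the Galilean reduction $\eta \mapsto \eta - A\xi$, one has $|\eta_1| \lesssim N_1^2$ and $|\eta_2| \lesssim N_1 N_2$. I would apply Theorem \ref{thm:L4Strichartz} directly to $f_1$. For $f_2$ I would decompose its support into pieces indexed by $k \lesssim N_1/N_2$ with $\eta_2 \in [k N_2^2, (k+1)N_2^2]$ and $\xi_2$ in subintervals of length $N_2/k$, apply Proposition \ref{prop:L4StrichartzShifted} per piece to obtain $N_2^{1/8+\varepsilon} k^{1/12}$, and recombine. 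Accounting for the orthogonality factor $\sim N_1/N_2$ from decomposing $f_2$ in two directions (as in the proof of Proposition \ref{prop:TrilinearEstimateHighHighHigh}, using the convolution constraint), H\"older's inequality $L^4 \cdot L^4 \cdot L^2$ produces
\begin{equation*}
\int (f_1 * f_2) f_3 \, d\tau d\xi d\eta \lesssim_\varepsilon (L_1 L_2)^{1/2} N_1^{1/8+\varepsilon} N_2^{1/8+\varepsilon} (N_1/N_2)^{13/12} \prod_i \|f_i\|_{L^2}.
\end{equation*}
Dividing by $\prod L_i^{1/2}(1+L_i/N_i^3)^{1/4}$ and using $L_3^{-1/2} \lesssim (N_1^2 N_2)^{-1/2}$ when $L_3 = L_{\max}$, or the weight $(1+L_2/N_2^3)^{1/4} \sim (N_1/N_2)^{1/2}$ when $L_2 = L_{\max}$, and substituting $N_2 \gtrsim N_1^{1-a_2}$, produces precisely the exponent $11a_2/24$. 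In the subcase $L_2 = L_{\max}$ I would alternatively dualize the convolution, writing $\int (f_1 * f_2) f_3 = \int f_2 \cdot g$ with $g$ a correlation of $f_1, f_3$, and apply two $L^4$-Strichartz estimates to the two high-frequency factors.

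For the non-resonant subcase the transversality satisfies $D \sim M$, so after the analogous reductions $|\eta_i| \lesssim M N_i$. Two complementary bounds compete and the final estimate is their minimum. (a) Repeat the two-$L^4$-Strichartz scheme via Proposition \ref{prop:L4StrichartzShifted} with $k \sim M/N_i$ for $f_i$, together with the orthogonality factor $\sim M/N_2$ arising from the $f_2$-decomposition; pairing with $\|f_3\|_{L^2}$ and $L_{\max}^{-1/2} \sim (M N_2^{1/2})^{-1}$ yields the first summand $N_2^{-1} M^{1/6}(N_1 N_2)^{1/24+\varepsilon}/N_2^{1/2}$. (b) Apply Proposition \ref{prop:SimplifiedBilinearStrichartz} with large transversality $D \sim M$,
\begin{equation*}
\|f_1 * f_2\|_{L^2} \lesssim N_2^{1/2} L_{\min}^{1/2} \langle L_{\max}/D\rangle^{1/2} \prod_i \|f_i\|_{L^2},
\end{equation*}
pair with $\|f_3\|_{L^2}$ and divide by $\prod L_i^{1/2}(1+L_i/N_i^3)^{1/4}$; using $L_i \gtrsim N_i^\alpha$ to extract $N_1^{-\alpha/2}$ when favorable produces the second summand $M^{-1} N_2^{-1/2}(1 + N_2^{1/2} N_1^{-\alpha/2})$.

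The main obstacle I anticipate is the $L^4$ estimate on $f_2$. Its $\eta$-support spans a range of size $N_1 N_2 \gg N_2^2$ (or $M N_2 \gg N_2^2$ in the non-resonant case), forcing the use of Proposition \ref{prop:L4StrichartzShifted} per slab with the paired restriction on the $\xi$-support. Tracking how the resulting loss $(N_1/N_2)^{13/12}$ interacts with the gain $L_{\max}^{-1/2}$, and with the modulation weight $(1+L_2/N_2^3)^{1/4}$ when $L_2 = L_{\max}$, is precisely what produces the exponent $11a_2/24$: the lower bound $N_2 \gtrsim N_1^{1-a_2}$ is saturated in this bookkeeping, so any suboptimal orthogonality counting would narrow the admissible range of $a_2$.
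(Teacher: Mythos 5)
Your overall strategy matches the paper's: resonant case via two $L^4$-Strichartz estimates on $f_1,f_2$ with the orthogonality and decomposition loss $(N_1/N_2)^{13/12}$, non-resonant case as a minimum of the analogous $L^4$-type bound and a bilinear Strichartz bound exploiting $D\sim M$. The $L_3=L_{\max}$ bookkeeping you describe coincides with the paper's, and the non-resonant $L_3=L_{\max}$ computation likewise reproduces the paper's first term.

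However, there is a gap in your treatment of the resonant subcase where the low frequency carries the high modulation, $L_2=L_{\max}\sim N_1^2 N_2$. You offer two routes: (a) keep the $L^4\cdot L^4\cdot L^2$ splitting on $f_1,f_2,f_3$ and claim the weight $(1+L_2/N_2^3)^{1/4}\sim(N_1/N_2)^{1/2}$ closes the estimate, and (b) dualize so that $f_2$ goes in $L^2$ and the two high-frequency factors $f_1,f_3$ go in $L^4$. Route (a) does not work: with $f_3$ in $L^2$ you only gain $L_3^{-1/2}\gtrsim N_1^{-\alpha/2}$ (since $L_3$ can be as small as $N_1^\alpha$), and the factor $(1+L_2/N_2^3)^{-1/4}\sim(N_1/N_2)^{-1/2}$ is nowhere near enough to offset the $(N_1/N_2)^{13/12}$ loss when $L_3^{-1/2}$ is not $\sim (N_1^2 N_2)^{-1/2}$; a direct exponent count shows the needed inequality fails by a power of $N_1$. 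Route (b) is what the paper does and it is the correct route, not merely an alternative. To make it close you should record that the transversality bound $|\eta_1/\xi_1-\eta_3/\xi_3|\lesssim N_2$ combined with Galilean invariance puts $|\eta_i|\lesssim N_1 N_2\ll N_1^2$ for $i=1,3$, so $\|f_2\|_{L^2}$ contributes the full gain $L_2^{-1/2}(1+L_2/N_2^3)^{-1/4}\sim N_1^{-3/2}$, while the two $L^4$ factors (the paper uses the improved Theorem \ref{thm:ImprovedL4Strichartz}, giving $(N_1^{1/8-\alpha/8+\varepsilon})^2$) are manifestly summable to a bound $N_1^{-5/4-\alpha/4+2\varepsilon}$ that dominates the claimed one. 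With Theorem \ref{thm:L4Strichartz} alone you would get $N_1^{-5/4+2\varepsilon}$, which is borderline and requires choosing $\varepsilon$ small relative to $a_2$; using the improved estimate avoids that delicacy. Finally, you do not discuss the $L_2=L_{\max}$ branch of the non-resonant case, which for $M\gg N_1^2/N_2$ requires a further decomposition before Proposition \ref{prop:L4StrichartzShifted} applies; the paper verifies this yields a strictly more favorable bound, but it does need to be checked, as does the strongly non-resonant case $L_{\max}\sim L_{\text{med}}$ (handled by Proposition \ref{prop:SecondOrderBilinearEstimate} and trivially acceptable).
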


When we distinguish in the non-resonant case between $M \lesssim N_1^{1+a_4}$ and $M \gg N_1^{1+a_4}$ we obtain the following:
\begin{corollary}
\label{cor:NonResonantTrilinearEstimateHighLowHigh}
With the notation as above, in the non-resonant case the following estimate holds for any $a_4 \in (0,1/8)$:
\begin{equation*}
\begin{split}
&\quad \int (f_{1,N_1,L_1} * f_{2,N_2,L_2} ) f_{3,N_3,L_3} d\xi d\eta d\tau \\
&\lesssim_\varepsilon N_2^{-1} ( N_1^{\frac{a_4}{6}-\frac{1}{4}+\frac{11 a_2}{24}+\varepsilon} \vee N_1^{-a_4- \frac{\alpha}{2}+\varepsilon}) \prod_{i=1}^3 L_i^{\frac{1}{2}} (1+L_i/N_i^3)^{\frac{1}{4}} \| f_{i,N_i,L_i} \|_2.
\end{split}
\end{equation*}
\end{corollary}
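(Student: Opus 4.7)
The plan is to apply the non-resonant case of Proposition \ref{prop:TrilinearEstimateHighLowHigh} and to select, in each regime of $M$, the more favorable of its two candidate bounds. I would use the threshold $M_\ast = N_1^{1+a_4}$ to partition the range $M \gg N_1$: below $M_\ast$ the $L^4$-decoupling branch (the first entry of the minimum) is the smaller one, while above $M_\ast$ the bilinear Strichartz branch (the second entry) wins. The factors $L_i^{1/2}(1+L_i/N_i^3)^{1/4}\|f_{i,N_i,L_i}\|_2$ are untouched by this dichotomy, so all the work is on the prefactor in $N_1,N_2,M$.

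For $M \lesssim N_1^{1+a_4}$, I would use the first bound and substitute $M^{1/6}\leq N_1^{(1+a_4)/6}$, producing
\begin{equation*}
N_2^{-1}\cdot \frac{M^{1/6}(N_1N_2)^{1/24+\varepsilon}}{N_2^{1/2}}\;\lesssim\; N_2^{-35/24+\varepsilon}\, N_1^{5/24+a_4/6+\varepsilon}.
\end{equation*}
Then I would factor $N_2^{-35/24}=N_2^{-1}\cdot N_2^{-11/24}$ and apply the hypothesis $N_2\gtrsim N_1^{1-a_2}$ only to the second piece, giving $N_2^{-11/24}\lesssim N_1^{-(11/24)(1-a_2)}$. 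The short arithmetic $\tfrac{5}{24}+\tfrac{a_4}{6}-\tfrac{11}{24}+\tfrac{11a_2}{24}=-\tfrac{1}{4}+\tfrac{a_4}{6}+\tfrac{11a_2}{24}$ produces exactly the first entry of the maximum in the claim.

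For $M \gg N_1^{1+a_4}$ I would use the second bound together with $M^{-1}\leq N_1^{-1-a_4}$ and split the bracket $(1+N_2^{1/2}N_1^{-\alpha/2})$ into its two summands, obtaining contributions $N_1^{-1-a_4}N_2^{-1/2}$ and $N_1^{-1-a_4-\alpha/2}$. I would then dominate both by the target $N_2^{-1}N_1^{-a_4-\alpha/2}$: the first via $N_2\leq N_1$ and $\alpha<1$ (which gives $N_2\leq N_1^{2-\alpha}$), the second via the elementary $N_1^{-1}\leq N_2^{-1}$. This yields the second entry of the maximum.

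The genuine analytic content is already contained in Proposition \ref{prop:TrilinearEstimateHighLowHigh}; the only obstacle here is careful bookkeeping of exponents through the substitution $N_2\mapsto N_1^{1-a_2}$ and checking that the threshold $M_\ast=N_1^{1+a_4}$ leaves $a_4$ as a free parameter, so that the two competing powers $\tfrac{a_4}{6}$ and $-a_4$ can be balanced later when this corollary is summed against the derivative loss in the energy estimate.
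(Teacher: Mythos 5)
Your proof is correct and takes essentially the same approach as the paper: the paper's one-line proof points to the intermediate displays \eqref{eq:NonresonantTrilinearHighLowHighI} and \eqref{eq:NonresonantTrilineraHighLowHighII} inside the proof of Proposition \ref{prop:TrilinearEstimateHighLowHigh}, which split at exactly the same threshold $M_\ast = N_1^{1+a_4}$ and involve the identical exponent bookkeeping you carry out. The only cosmetic difference is that you work from the proposition's stated minimum, which carries the extra summand $M^{-1}N_2^{-1/2}$ in the large-$M$ branch; you dominate it correctly via $N_2 \leq N_1^{2-\alpha}$.
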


\begin{proof}[Proof~of~Proposition~\ref{prop:TrilinearEstimateHighLowHigh}]
By convolution constraint and the resonance relation we always have $L_{\max} \gtrsim N_1^2 N_2$. The case $L_{\max} \sim L_{\text{med}} \gtrsim N_1^2 N_2$ is readily estimated by bilinear Strichartz estimates from Proposition \ref{prop:SecondOrderBilinearEstimate}. Indeed, by symmetry we can suppose that $L_1 \sim L_{\max}$. Then we find
\begin{equation}
\label{eq:StronglyNonResonantCaseHighHighHigh}
\begin{split}
&\quad \| (f_{1,N_1,L_1} * f_{2,N_2,L_2}) f_{3,N_3,L_3} \|_{L^2_{\tau,\xi,\eta}} \\
 &\leq \| f_{1,N_1,L_1} \|_{L^2} \| f_{2,N_2,L_2} * f_{3,N_3,L_3} \|_{L^2} \\
&\lesssim (N_1^2 N_2)^{-\frac{1}{2}} L_1^{\frac{1}{2}} N_2^{\frac{3}{4}} (L_2 L_3)^{\frac{1}{2}} (N_1^2 N_2)^{-\frac{1}{4}} \prod_{i=1}^3 \| f_{i,N_i,L_i} \|_2 \\
&\lesssim N_1^{-1} N_2^{-\frac{1}{2}} \prod_{i=1}^3 L_i^{\frac{1}{2}} \| f_{i,N_i,L_i} \|_2.
\end{split}
\end{equation}

\smallskip

So, in the following we suppose that $L_{\max} \gg L_{\text{med}}$. 

\smallskip

$\bullet$ \emph{Resonant case:} $L_{\max} \sim N_1^2 N_2$. First, we suppose that the high frequency carries the high modulation $L_3 = L_{\max}$. In this case we obtain
\begin{equation*}
D \sim \big| \frac{\eta_1}{\xi_1} - \frac{\eta_2}{\xi_2} \big| \lesssim N_1. 
\end{equation*}
 By the usual almost orthogonality and Galilean invariance argument we can suppose  that $|\eta_i| \lesssim N_1 N_i$. We aim to obtain a bound via two $L^4$-Strichartz estimates. To $f_{1,N_1,L_1}$ we can apply Theorem \ref{thm:L4Strichartz}:
\begin{equation*}
\| \mathcal{F}^{-1}_{t,x,y} [f_{1,N_1,L_1}] \|_{L^4_{t,x,y}} \lesssim_\varepsilon N_1^{\frac{1}{8}+\varepsilon} L_1^{\frac{1}{2}} \| f_{1,N_1,L_1} \|_2.
\end{equation*}
For $f_{2,N_2,L_2}$ we aim to apply Proposition \ref{prop:L4StrichartzShifted}.
To this end, we require an additional decomposition of the support in the $\eta$-frequency into intervals of length $N_2^2$ and of the $\xi$-support into intervals of length $N_2^2 / N_1$, which incurs a factor of $(N_1 / N_2)$ by two applications of the Cauchy-Schwarz inequality. This allows us to apply Proposition \ref{prop:L4StrichartzShifted} with $k \sim N_1 / N_2$. We denote the decomposition by $f_{2,N_2,L_2} = \sum_J f^{J}_{2,N_2,L_2}$. Proposition \ref{prop:L4StrichartzShifted} yields
\begin{equation*}
\| \mathcal{F}^{-1}_{t,x,y}[f^J_{2,N_2,L_2}] \|_{L^4_{t,x,y}} \lesssim L_2^{\frac{1}{2}} N_2^{\frac{1}{8}+\varepsilon} \big( \frac{N_1}{N_2} \big)^{\frac{1}{12}} \| f_{2,N_2,L_2} \|_2.
\end{equation*}
Consequently, by applying H\"older's inequality and Cauchy-Schwarz inequality the above yields
\begin{equation}
\label{eq:TrilinearResonantHighLow}
\begin{split}
&\quad \int (f_{1,N_1,L_1} * f_{2,N_2,L_2}) f_{3,N_3,L_3} d \xi d\eta d\tau \\
&\leq \| f_{1,N_1,L_1} * f_{2,N_2,L_2} \|_{L^2_{\tau,\xi,\eta}} \| f_{3,N_3,L_3} \|_{L^2_{\tau,\xi,\eta}} \\
&\lesssim (N_1^2 N_2)^{-\frac{1}{2}} N_1^{\frac{1}{8}+\varepsilon} \big( \frac{N_1}{N_2} \big) N_2^{\frac{1}{8}+\varepsilon} \big( \frac{N_1}{N_2} \big)^{\frac{1}{12}} \prod_{i=1}^3 L_i^{\frac{1}{2}} \| f_{i,N_i,L_i} \|_2 \\
&\lesssim N_2^{-1} N_1^{-\frac{1}{4}+\frac{11a_2}{24}} \prod_{i=1}^3 L_i^{\frac{1}{2}} \| f_{i,N_i,L_i} \|_2.
\end{split}
\end{equation}

The case $L_2 = L_{\max} \sim N_1^2 N_2$ gives a more favorable bound: In this case we have the transversality bound
\begin{equation*}
\big| \frac{\eta_1}{\xi_1} - \frac{\eta_3}{\xi_3} \big| \lesssim N_2.
\end{equation*}
Hence, we can suppose by almost orthogonality and Galilean invariance that $|\eta_i| \lesssim N_1 N_2 \ll N_1^2$ for $i=1,3$.

We find from two $L^4_{t,x,y}$-Strichartz estimates provided by Theorem \ref{thm:ImprovedL4Strichartz}
\begin{equation*}
\begin{split}
&\quad \int (f_{1,N_1,L_1} * f_{2,N_2,L_2}) f_{3,N_3,L_3} d\xi d\eta d\tau \\
 &\leq \| f_{2,N_2,L_2} \|_{L^2_{\tau,\xi,\eta}} \prod_{i=1,3} \| \mathcal{F}^{-1}_{t,x,y} [f_{i,N_i,L_i}] \|_{L^4_{t,x,y}} \\
&\lesssim (N_1^{\frac{1}{8}-\frac{\alpha}{8}+\varepsilon})^2 N_1^{-\frac{3}{2}} \prod_{i=1}^3 L_i^{\frac{1}{2}} (1+L_i/N_i^3)^{\frac{1}{4}} \| f_{i,N_i,L_i} \|_2.
\end{split}
\end{equation*}
This is clearly more favorable than \eqref{eq:TrilinearResonantHighLow}. The estimate of the resonant case is complete.

\medskip

\emph{Non-resonant case:} $L_{\max} \sim M^2 N_2$ for $M \gg N_1$. 

Firstly, suppose that $L_3 = L_{\max}$ and we can suppose that $D \sim M$ which is a consequence of the resonance relation. By Galilean invariance we can suppose  that $|\eta_i| \lesssim M N_i$. We aim to obtain a bound via two $L^4$-Strichartz estimates.

In this case $\mathcal{F}^{-1}_{t,x,y}[f_{i,N_i,L_i}]$, $i=1,2$ are estimated via Proposition \ref{prop:L4StrichartzShifted}. We decompose the $\xi$-support into intervals of length $N_2^2 / M$ and the $\eta$-support into intervals of length $N_2^2$. This incurs a factor of $(M/N_2)$. Then we can apply H\"older's inequality and Proposition \ref{prop:L4StrichartzShifted} to find
\begin{equation*}
\begin{split}
&\quad \| f_{1,N_1,L_1} * f_{2,N_2,L_2} \|_{L^2_{\tau,\xi,\eta}} \\
 &\leq \| \mathcal{F}^{-1}_{t,x,y}[f_{1,N_1,L_1}] \|_{L^4_{t,x,y}} \| \mathcal{F}^{-1}_{t,x,y} [f_{2,N_2,L_2}] \|_{L^4_{t,x,y}} \\
&\lesssim \big( \frac{M}{N_2} \big) N_1^{\frac{1}{8}+\varepsilon} \big( \frac{M}{N_1} \big)^{\frac{1}{12}} L_1^{\frac{1}{2}} \| f_{1,N_1,L_1} \|_{L^2_{\tau,\xi,\eta}} N_2^{\frac{1}{8}+\varepsilon} \big( \frac{M}{N_2} \big)^{\frac{1}{12}} L_2^{\frac{1}{2}} \| f_{2,N_2,L_2} \|_{L^2}.
\end{split}
\end{equation*}
By an application of H\"older's inequality and taking into account $L_3 \sim M^2 N_2$ we find
\begin{equation*}
\begin{split}
&\quad \int (f_{1,N_1,L_1} * f_{2,N_2,L_2}) f_{3,N_3,L_3} d\xi d\eta d\tau \\
&\lesssim \frac{1}{N_2} \frac{M^{\frac{1}{6}} (N_1 N_2)^{\frac{1}{24}+\varepsilon}}{N_2^{\frac{1}{2}}} \prod_{i=1}^3 L_i^{\frac{1}{2}} \| f_{i,N_i,L_i} \|_{L^2}.
\end{split}
\end{equation*}

We impose a bound $M \lesssim N_1^{1+a_4}$ to find (recalling that $N_2 \gtrsim N_1^{1-a_2}$):
\begin{equation}
\label{eq:NonresonantTrilinearHighLowHighI}
\int (f_{1,N_1,L_1} * f_{2,N_2,L_2}) f_{3,N_3,L_3} d\xi d\eta d\tau \lesssim \frac{1}{N_2} N_1^{\frac{1+a_4}{6}-\frac{10}{24}+\frac{11 a_2}{24}} \prod_{i=1}^3 L_i^{\frac{1}{2}} \| f_{i,N_i,L_i} \|_2.
\end{equation}
If $M \gg N_1^{1+a_4}$, we can apply a bilinear Strichartz estimate from Proposition \ref{prop:SimplifiedBilinearStrichartz}:
\begin{equation}
\label{eq:NonresonantTrilineraHighLowHighII}
\begin{split}
\int (f_{1,N_1,L_1} * f_{2,N_2,L_2}) f_{3,N_3,L_3} d\xi d\eta d\tau &\lesssim (M^2 N_2)^{-\frac{1}{2}} N_2^{\frac{1}{2}} N_1^{-\frac{\alpha}{2}} \prod_{i=1}^3 L_i^{\frac{1}{2}} \| f_{i,N_i,L_i} \|_2 \\
&\lesssim N_2^{-1} N_1^{-a_4- \frac{\alpha}{2}} \prod_{i=1}^3 L_i^{\frac{1}{2}} \| f_{i,N_i,L_i} \|_2.
\end{split}
\end{equation}

Next, we handle the case $L_2 = L_{\max} \sim M^2 N_2$. Here we have the transversality bound
\begin{equation*}
\big| \frac{\eta_1}{\xi_1} - \frac{\eta_3}{\xi_3} \big| \sim (M/N_1) N_2.
\end{equation*}
This implies by the usual Galilean invariance and almost orthogonality $|\eta_i| \lesssim M N_2 $. For $M \lesssim N_1^{2} / N_2$ we can apply Theorem \ref{thm:ImprovedL4Strichartz} directly:
\begin{equation*}
\begin{split}
&\quad \int (f_{1,N_1,L_1} * f_{2,N_2,L_2}) f_{3,N_3,L_3} d\xi d\eta d\tau \\
 &\leq \| f_{2,N_2,L_2} \|_{L^2_{\tau,\xi,\eta}} \| f_{1,N_1,L_1} * f_{3,N_3,L_3} \|_{L^2_{\tau,\xi,\eta}} \\
&\lesssim (M^2 N_2)^{-\frac{1}{2}} (N_1^{\frac{1}{8}-\frac{\alpha}{8}+\frac{\alpha}{12}+\varepsilon})^2 \prod_{i=1}^3 L_i^{\frac{1}{2}} \| f_{i,N_i,L_i} \|_2 \\
&\lesssim M^{-\frac{3}{2}} (N_1^{\frac{1}{8}-\frac{\alpha}{8}+\frac{\alpha}{12}+\varepsilon})^2 \prod_{i=1}^3 L_i^{\frac{1}{2}} (1+L_i/N_i^3)^{\frac{1}{4}} \| f_{i,N_i,L_i} \|_2.
\end{split}
\end{equation*}

For $M \gg N_1^{2}/N_2$ we need to decompose the $\eta$-frequencies into intervals of length $N_1^2$ with the number of sub-intervals given by $k \sim M N_2 /N_1^2$. Correspondingly, we decompose the $\xi$-frequencies into intervals of length $N_1/k \sim \frac{N_1^3}{M N_2} $. Since we can suppose by convolution constraint that the $\xi$-frequencies are localized to an interval of length $N_2$, the number of subintervals is given by $\sim \frac{M N_2^2}{N_1^4}$. After additional localization of the frequency support, we can use Proposition \ref{prop:L4StrichartzShifted} to find
\begin{equation*}
\| f_{1,N_1,L_1}' * f_{3,N_3,L_3}' \|_{L^2_{\tau,\xi,\eta}} \lesssim_\varepsilon (N_1^{\frac{1}{8} +\varepsilon})^2 \big( \frac{M N_2}{N_1} \big)^{\frac{1}{6}} \prod_{i=1,3} \| f'_{i,N_i,L_i} \|_2.
\end{equation*}
Taking into account the additional frequency localization and the modulation size $L_2 \sim M^2 N_2$ we obtain
\begin{equation*}
\begin{split}
&\quad \int (f_{1,N_1,L_1} * f_{2,N_2,L_2}) f_{3,N_3,L_3} d\xi d\eta d\tau \\
&\leq \| f_{2,N_2,L_2} \|_{L^2_{\tau,\xi,\eta}} \| f_{1,N_1,L_1} * f_{3,N_3,L_3} \|_{L^2_{\tau,\xi,\eta}} \\
&\lesssim (M^2 N_2)^{-\frac{1}{2}} \big( \frac{M N_2}{N_1^2} \big)^{\frac{1}{2}} \big( \frac{M N_2^2}{N_1^4} \big)^{\frac{1}{2}} (N_1^{\frac{1}{8} +\varepsilon})^2 \big( \frac{M N_2}{N_1} \big)^{\frac{1}{6}} \prod_{i=1}^3 L_i^{\frac{1}{2}} \| f_{i,N_i,L_i} \|_2 \\
&\lesssim M^{-\frac{3}{2}} \frac{M N_2^{\frac{3}{2}}}{N_1^3} N_1^{\frac{1}{4} +\varepsilon} \big( \frac{M N_2}{N_1} \big)^{\frac{1}{6}} \prod_{i=1}^3 L_i^{\frac{1}{2}} (1+L_i/N_i^3)^{\frac{1}{4}} \| f_{i,N_i,L_i} \|_2 \\
&\lesssim M^{-\frac{1}{3}} N_2^{-1} N_1^{-\frac{1}{4}} \big( \frac{N_2}{N_1} \big)^{\frac{1}{6}} \prod_{i=1}^3 L_i^{\frac{1}{2}} (1+L_i/N_i^3)^{\frac{1}{4}} \| f_{i,N_i,L_i} \|_2.
\end{split}
\end{equation*}
This estimate improves on \eqref{eq:NonresonantTrilinearHighLowHighI}, for which reason we conclude the proof taking \eqref{eq:NonresonantTrilinearHighLowHighI} and \eqref{eq:NonresonantTrilineraHighLowHighII}.
\end{proof}

\begin{proof}[Proof~of~Corollary~\ref{cor:NonResonantTrilinearEstimateHighLowHigh}]
This is a consequence of \eqref{eq:NonresonantTrilinearHighLowHighI} and \eqref{eq:NonresonantTrilineraHighLowHighII}, since the estimates recorded below the aforementioned ones are more favorable.
\end{proof}

Lastly, we record the estimate for the case where the small frequency is quantitatively smaller than the highest frequency. In this case the main estimate is provided by the bilinear Strichartz estimate from Proposition \ref{prop:bilinear-sp}.

\begin{proposition}
\label{prop:HighVeryLowHighTrilinearEstimate}
Let $\alpha \in (0,1/8)$, $N_i \in 2^{\N_0}$, $N_1 \sim N_3 \gg N_2$, and $L_i \in 2^{\N_0}$ with $L_i \gtrsim N_i^\alpha$. Let $f_{i,N_i,L_i} \in L^2_{\tau,\xi,\eta}$ with $\operatorname{supp}(f_{i,N_i,L_i}) \subseteq D_{N_i,L_i}$. Then the following estimate holds:
\begin{equation*}
\begin{split}
\int (f_{1,N_1,L_1} * f_{2,N_2,L_2}) f_{3,N_3,L_3} d\xi d\eta d\tau &\lesssim N_1^{-1} (N_2^{-\frac{1}{4}} N_1^{-\frac{\alpha}{4}} + N_1^{-\frac{\alpha}{2}}) \log(N_1) \\
&\quad \prod_{i=1}^3 L_i^{\frac{1}{2}} (1+L_i/N_i^3)^{\frac{1}{4}} \| f_{i,N_i,L_i} \|_2.
\end{split}
\end{equation*}
\end{proposition}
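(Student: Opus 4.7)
The plan is to mimic the case analysis used for Proposition~\ref{prop:TrilinearEstimateHighLowHigh}, but to replace the $L^4$-Strichartz inputs (which are weak for $N_2$ quantitatively smaller than $N_1$) by the bilinear Strichartz estimate of Proposition~\ref{prop:bilinear-sp}. From the resonance lower bound \eqref{eq:ResonanceLowerBound} together with the convolution constraint we always have $L_{\max}\gtrsim N_1^2N_2$, and under the resonance hypothesis also the transversality bound $D=|\eta_1/\xi_1-\eta_2/\xi_2|\lesssim N_1$. I would split the argument according to which of the three modulations is the largest.

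First, the \emph{strongly non-resonant} case $L_{\max}\sim L_{\mathrm{med}}\gtrsim N_1^2N_2$ is handled by a direct application of Proposition~\ref{prop:SecondOrderBilinearEstimate} after Cauchy--Schwarz: pair the factor carrying the maximal modulation with the convolution of the other two, using the second-order mean value bound in $\eta_1$ (which is sharp since $N_2\ll N_1$) to produce an estimate of size $N_1^{-1}N_2^{-1/2}\prod L_i^{1/2}\|f_i\|_2$. This comfortably beats the claimed bound.

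Next, the main case is $L_{\max}\gg L_{\mathrm{med}}$. When $L_3=L_{\max}$ (or by the obvious symmetry $L_1=L_{\max}$), I would apply Cauchy--Schwarz to pair $f_{3,N_3,L_3}$ with $f_{1,N_1,L_1}\ast f_{2,N_2,L_2}$ and invoke Proposition~\ref{prop:bilinear-sp} with $D^\ast\sim N_1$; the hypothesis $N_1^\alpha\le L_{\max}(L_1,L_2)\lesssim N_1^2N_2$ needed there holds automatically, since if $\max(L_1,L_2)\gtrsim N_1^2N_2$ we are already in the strongly non-resonant regime handled above. This yields
\[
\int (f_1\ast f_2)f_3\lesssim \log(N_1)\,(N_2^{1/2}N_1^{-\alpha/2}+N_2^{1/4}N_1^{-\alpha/4})\,L_1^{1/2}L_2^{1/2}\|f_3\|_2\prod_{i=1}^{2}\|f_i\|_2 .
\]
Combining the factor $L_3^{-1/2}$ coming from modulation localization with the lower bound $L_3^{1/2}\gtrsim N_1 N_2^{1/2}$ gives a free $N_1^{-1}N_2^{-1/2}$, which multiplied by the Proposition~\ref{prop:bilinear-sp} output is exactly $N_1^{-1}(N_1^{-\alpha/2}+N_2^{-1/4}N_1^{-\alpha/4})$, as required. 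When $L_3\gg N_1^2N_2$ there is only extra room and the weight $(1+L_3/N_3^3)^{1/4}$ is the natural place to absorb the excess.

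The main obstacle I expect is the last case $L_2=L_{\max}\gg L_{\mathrm{med}}$, where the \emph{low} frequency carries the high modulation: here the transversality between $f_1$ and $f_3$ is controlled only by $N_2/N_1$ (and not by $N_1$), so Proposition~\ref{prop:bilinear-sp} applied to $f_1\ast f_3$ is less effective. The compensation is supposed to come from the modulation weight on the low frequency: for $L_2\sim N_1^2N_2$, one has $(1+L_2/N_2^3)^{1/4}\sim (N_1/N_2)^{1/2}$, which restores exactly the factor lost. Thus I would pair $f_2$ with $f_1\ast f_3$, apply Proposition~\ref{prop:bilinear-sp} (now with roles of the two high frequencies played by $f_1$ and $f_3$, and the ``small'' frequency being the convolution difference, of size $\sim N_2$), and check that the combined factor $L_2^{-1/2}(1+L_2/N_2^3)^{1/4}$ absorbs the degraded transversality. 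The summation over the Whitney scales of $D$ produces the stated $\log(N_1)$. This completes the case analysis and yields the claimed estimate.
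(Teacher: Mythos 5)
The proposal matches the paper's proof for the strongly non-resonant case ($L_{\max}\sim L_{\mathrm{med}}$, via Proposition~\ref{prop:SecondOrderBilinearEstimate}) and for the case where the high modulation is on a high-frequency factor ($L_3=L_{\max}$ or $L_1=L_{\max}$), where both you and the paper pull out that factor by Cauchy--Schwarz and estimate $\|f_1\ast f_2\|_2$ by Proposition~\ref{prop:bilinear-sp}.

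The gap is in the case $L_2=L_{\max}$. You propose to estimate $\|f_1\ast f_3\|_2$ by Proposition~\ref{prop:bilinear-sp}, noting that the ``small frequency'' is now the output frequency $\sim N_2$. But Proposition~\ref{prop:bilinear-sp} is stated for a convolution of one high-frequency factor ($\sim N_1$) with one low-frequency factor ($\sim N_2\ll N_1$); here $f_1$ and $f_3$ are both at $\sim N_1$, so the hypotheses do not hold. A ``dual'' version in which the output is the low frequency is not stated or proved in the paper, and it would not be a formal relabeling: the Córdoba--Fefferman reduction rests on the term $\frac{(\eta_1\xi_2-\eta_2\xi_1)^2}{\xi_1\xi_2(\xi_1+\xi_2)}$, which has size $D^2\frac{\xi_1\xi_2}{\xi_1+\xi_2}$, and this factor scales as $D^2 N_2$ in the high$\times$low$\to$high configuration but as $D^2 N_1^2/N_2$ in the high$\times$high$\to$low configuration, producing different orthogonality scales that would need to be re-derived. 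The paper avoids this by splitting the $L_2=L_{\max}$ case into a resonant subcase ($L_2\sim N_1^2N_2$, where the transversality between $f_1$ and $f_3$ can be arbitrarily small and any $\langle L_{\mathrm{med}}/D\rangle^{1/2}$ factor would blow up), handled by two linear $L^4$-Strichartz estimates (Theorem~\ref{thm:ImprovedL4Strichartz}, as in the proof of Proposition~\ref{prop:TrilinearEstimateHighLowHigh}), and a non-resonant subcase ($L_2\sim M^2N_2$, $M\gg N_1$), where the transversality $D\sim MN_2/N_1\gg N_2$ is large enough that the elementary Proposition~\ref{prop:SimplifiedBilinearStrichartz} combined with the weight $(1+L_2/N_2^3)^{1/4}$ suffices after summing in $M$. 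Your observation that the modulation weight is what rescues the $L_2=L_{\max}$ case is exactly right, but the tools used to exploit it in the paper are different from the one you propose, and the one you propose is not applicable as stated.
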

\begin{proof}
This is straightforward from the preceding arguments and the bilinear Stri\-chartz estimate from Proposition \ref{prop:bilinear-sp}. Suppose that $L_{\max} \gg L_{\text{med}}$. First, we suppose that $L_3 = L_{\max}$, which is interchangeable with $L_3 = L_{\max}$. In the resonant case, $L_{\max} \sim N_1^2 N_2$ we can apply Proposition \ref{prop:bilinear-sp} with $D \lesssim N_1$ to find:
\begin{equation*}
\begin{split}
&\quad \int (f_{1,N_1,L_1} * f_{2,N_2,L_2} ) f_{3,N_3,L_3} d\xi d\eta d\tau \\ &\leq \| f_{3,N_3,L_3} \|_{L^2_{\tau,\xi,\eta}} \| f_{1,N_1,L_1} * f_{2,N_2,L_2} \|_{L^2_{\tau,\xi,\eta}} \\
&\lesssim (N_1^2 N_2)^{-\frac{1}{2}} (N_2^{\frac{1}{4}} N_1^{-\frac{\alpha}{4}} + N_2^{\frac{1}{2}} N_1^{-\frac{\alpha}{2}} ) \log(N_1) \prod_{i=1}^3 L_i^{\frac{1}{2}} (1+L_i/N_i^3)^{\frac{1}{4}} \| f_{i,N_i,L_i} \|_2.
\end{split}
\end{equation*}
In case $L_2 = L_{\max} \sim N_1^2 N_2$ we can use the analysis from Proposition \ref{prop:TrilinearEstimateHighLowHigh}, which gives a more favorable estimate.

\smallskip

In the non-resonant case $L_{\max} \sim M^2 N_2$, $M \gg N_1$, we again use the bilinear estimate in case $L_3 = L_{\max}$ with $D \sim M$. This yields the same estimate like above.
If $L_2 = L_{\max}$, we note that we have the transversality
\begin{equation*}
\big| \frac{\eta_1}{\xi_1} - \frac{\eta_3}{\xi_3} \big| \sim \frac{M N_2}{N_1} \gg N_2.
\end{equation*}
Then we can use the H\"older's inequality and the bilinear Strichartz estimate from Proposition \ref{prop:SimplifiedBilinearStrichartz} to find
\begin{equation*}
\begin{split}
&\quad \int (f_{1,N_1,L_1} * f_{2,N_2,L_2}) f_{3,N_3,L_3} d\xi d\eta d\tau \\
 &\leq \| f_{2,N_2,L_2} \|_{L^2_{\tau,\xi,\eta}} \| f_{1,N_1,L_1} * f_{3,N_3,L_3} \|_{L^2_{\tau,\xi,\eta}} \\
&\lesssim (M^3)^{-\frac{1}{2}} L_{2}^{\frac{1}{2}} (L_2 / N_2^3)^{\frac{1}{4}} N_2^{\frac{1}{2}} L_{\min}^{\frac{1}{2}} \langle L_{\text{med}} / N_2 \rangle^{\frac{1}{2}} \log(M) \prod_{i=1}^3  \| f_{i,N_i,L_i} \|_2 \\
&\lesssim M^{-\frac{3}{2}} \log(M) (1+N_2^{\frac{1}{2}} N_1^{-\frac{\alpha}{2}}) \prod_{i=1}^3 L_i^{\frac{1}{2}} (1+L_i/N_i^3)^{\frac{1}{4}} \| f_{i,N_i,L_i} \|_2
\end{split}
\end{equation*}
Summing over $M \gtrsim N$ finishes the proof.
\end{proof}

%
%

\section{Short-time energy estimates}
\label{section:ShorttimeEnergy}

\subsection{A priori estimates}

This section is devoted to prove energy estimates for solutions. We can take advantage of the conservative derivative nonlinearity and symmetry of the expression.
\begin{proposition}
\label{prop:APrioriEnergyEstimates}
Let $T \in (0,1]$, $0<\alpha<\frac{1}{10}$, and $s \geq s' \geq s_3(\alpha,\varepsilon) = - \frac{\alpha}{2} + \varepsilon$. Then the following estimate holds:
\begin{equation}
\label{eq:APrioriEnergyEstimates}
\| u \|^2_{E^s(T)} \lesssim \| u_0 \|_{H^{s,0}}^2 + T^\delta \| u \|^2_{F^s(T)} \| u \|_{F^{s'}(T)}.
\end{equation}
\end{proposition}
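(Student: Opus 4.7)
The plan is to reduce the energy estimate to frequency-localized $L^2$-bounds obtained from the fundamental theorem of calculus, symmetrize the resulting trilinear form using the convolution constraint and the conservative structure of the nonlinearity, and then invoke the trilinear estimates of Section 7 on frequency-dependent time intervals.

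First I would observe that since $u$ is real-valued and the linear propagator preserves every $L^2$-norm, the Duhamel formula yields
\[
\|P_N u(t)\|^2_{L^2} - \|P_N u(0)\|^2_{L^2} = \int_0^t \int_{\T^2} P_N u \cdot P_N\partial_x(u^2) \, dx\,dy\,ds,
\]
so, after Littlewood--Paley decomposition of the two inner copies of $u$, the energy increment splits into a sum of trilinear integrals supported on $\xi+\xi_1+\xi_2=0$. On the Fourier side the derivative factor $\xi$ may be replaced by $-\xi_1-\xi_2$; in the High$\times$Low$\to$High regime $N\sim N_1\gg N_2$, using the symmetry between the two comparable frequencies (here real-valuedness of $u$ is essential), the effective derivative loss becomes the low one, $N_2$ rather than $N$. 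This algebraic rearrangement is what makes the energy estimate compatible with the $N_2^{-1}$ factor appearing in Propositions \ref{prop:TrilinearEstimateHighLowHigh} and \ref{prop:HighVeryLowHighTrilinearEstimate}.

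Next I would carry the analysis into the short-time framework by introducing a partition of unity of the form $\sum_k \eta_0(N_{\max}^\alpha(s-s_k))\equiv 1$ on $[0,t]$, so that $\mathcal{O}(N_{\max}^{\alpha})$ subintervals of length $N_{\max}^{-\alpha}$ arise; on each subinterval I extend the factors $P_{N_i}u$ to admissible functions of comparable $F_{N_i}$-norm and pass to modulation-localized form. The trilinear bounds of Propositions \ref{prop:TrilinearEstimateHighHighHigh}, \ref{prop:TrilinearEstimateHighLowHigh}, and \ref{prop:HighVeryLowHighTrilinearEstimate}, applied to the symmetrized trilinear form and combined with the modulation-slack Lemma \ref{lem:ModulationSlack}, then produce the advertised factor $T^\delta$. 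The $N_{\max}^{\alpha}$-many subintervals contribute a power $N_{\max}^{\alpha}$, which is absorbed against the $N_2^{-1}$ gain from the symmetrization in the resonant regime.

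Finally, dyadic summation over $N, N_1, N_2$ with the $N^{2s}$-weight distributed as $N^s\cdot N_1^s\cdot N_2^{s'}$ places the weaker $F^{s'}$-weight on the lowest of the three frequencies -- precisely where the symmetrization has deposited the derivative. Balancing the different interaction regimes (resonant and non-resonant High$\times$Low$\to$High, High$\times$High$\to$High, and High$\times$VeryLow$\to$High) produces the constraint $s\geq -\alpha/2+\varepsilon$, dictated by the resonant High$\times$Low$\to$High case where the low frequency carries the high modulation. The main obstacle I anticipate is the clean execution of the symmetrization within the short-time setup: the trilinear estimates of Section 7 are modulation-localized, so one must carefully keep track of how the Fourier-side identity $\xi=-\xi_1-\xi_2$ interacts with the time cutoffs $\eta_0(N_{\max}^\alpha(\cdot-s_k))$ defining $F_{N}$, and handle the boundary contributions arising from the partition of unity. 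This bookkeeping -- rather than any single decisive estimate -- is the delicate part of the argument.
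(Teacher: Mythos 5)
Your proposal reproduces the paper's proof architecture: fundamental theorem of calculus for the energy increment, integration-by-parts/commutator symmetrization to put the derivative on the low frequency (the paper spells this out via the symbol $m(\xi_1,\xi_2) = [\chi_N(\xi_1+\xi_2)-\chi_N(\xi_1)]\chi_K(\xi_2)$, and handles the non-smooth boundary time cutoffs with the Ionescu--Kenig--Tataru lemma plus modulation interpolation), short-time partition into $\mathcal{O}(N_{\max}^{\alpha})$ intervals, modulation localization, the trilinear estimates of Propositions \ref{prop:TrilinearEstimateHighHighHigh}, \ref{prop:TrilinearEstimateHighLowHigh}, \ref{prop:HighVeryLowHighTrilinearEstimate}, Lemma \ref{lem:ModulationSlack} for $T^\delta$, and dyadic summation. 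One small correction: the binding constraint $s \geq -\frac{\alpha}{2}+\varepsilon$ is not dictated by the resonant High$\times$Low$\to$High case with the low frequency carrying the high modulation (the paper remarks that case is in fact \emph{more} favorable); it arises instead from the bilinear-Strichartz/transversality bounds applied at minimum modulation $N_1^{\alpha}$, which dominate across the High-High-High, High-VeryLow-High, and non-resonant High-Low-High regimes once the parameters are set to $a_1=\alpha$, $a_2=a_4=\frac{3\alpha}{2}$.
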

\begin{proof}
Since the low frequencies $\| P_{\leq 4} u \|_{E^s(T)} = \| P_{\leq 4} u_0 \|_{H^{s,0}}$ are estimated by the definition of the function spaces, it suffices to estimate $\| P_N u \|_{E^s(T)}$ for $N \geq 4$ and carry out the dyadic square summation.
We obtain an estimate for $\| P_N u(t) \|^2_{L^2}$, $t \in [0,T]$ by the fundamental theorem of calculus:
\begin{equation*}
\| P_N u(t) \|^2_{L^2} = \|P_N u(0) \|^2_{L^2} + 2 \int_0^t \int_{\T^2} P_N u(s) \partial_x P_N (u^2)(s) dx dy ds.
\end{equation*}
For technical reasons we suppose that $P_N$ denotes in the following a smooth frequency projection:
\begin{equation*}
\widehat{(P_N f)} (\xi,\eta) = \chi_1(\xi/N) \hat{f}(\xi,\eta)
\end{equation*}
with $\chi_1 \in C^\infty_c$ denoting a suitable bump funciton.
Before decomposing the time integral into intervals of size $T=T(N_{\max}) = N_{\max}^{-\alpha}$, we integrate by parts to let the derivative always act on the lowest frequency.

\medskip

We use the standard paraproduct decomposition:
\begin{equation}
\label{eq:ParaproductEnergyEstimates}
P_N (u^2) = 2 P_N (u P_{\ll N} u) + P_N( P_{\gtrsim N} u P_{\gtrsim N} u).
\end{equation}
Clearly, for the second term it is immediate how the derivative acts on low frequencies. We turn to the first term. Write for $K \ll N$
\begin{equation}
\label{eq:CommutatorArgumentI}
P_N ( u P_{K} u) = P_N u P_K u + [P_N(u P_K u) - P_N u P_K u].
\end{equation}
For the first term in \eqref{eq:CommutatorArgumentI} we obtain
\begin{equation*}
\begin{split}
\int_{\T^2} P_N u \partial_x (P_N u P_K u) dx dy &= \int_{\T^2} (\partial_x P_K u) (P_N u)^2 dx dy + \int_{\T^2} \frac{1}{2} (\partial_x (P_N u)^2) P_K u dx dy \\
&= \frac{1}{2} \int_{\T^2} (\partial_x P_K) u (P_N u)^2 dx dy.
\end{split}
\end{equation*}
For the second term we use Parseval's identity:
\begin{equation*}
\begin{split}
&\quad \int_{\T^2} P_N u \partial_x [P_N (u P_K u) - P_N u P_K u] \\
&= \sum_{\substack{(\xi_1,\eta_1) \in \Z^2, \\ (\xi_2,\eta_2) \in \Z^2}} \widehat{P_N u}(-\xi_1-\xi_2,-\eta_1-\eta_2) i(\xi_1+ \xi_2) m(\xi_1,\xi_2) \hat{u}(\xi_1,\eta_1) \hat{u}(\xi_2,\eta_2).
\end{split}
\end{equation*}
We denote the symbol by
\begin{equation*}
m(\xi_1,\xi_2) = \chi_N(\xi_1+\xi_2) \chi_K(\xi_2) - \chi_N(\xi_1) \chi_K(\xi_2) = [\chi_N(\xi_1+\xi_2) - \chi_N(\xi_1)] \chi_K(\xi_2).
\end{equation*}

As a consequence of the mean-value theorem, we have the size estimate
\begin{equation*}
|m(\xi_1,\xi_2)| \lesssim \frac{K}{N} \tilde{\chi}_N(\xi_1) \chi_K(\xi_2) =: \tilde{m}(\xi_1,\xi_2)
\end{equation*}
with $\tilde{\chi}_N$ denoting a mildly enlarged version of $\chi_N$. With the regularity estimates
\begin{equation*}
|\partial_{\xi_1}^{\alpha_1} \partial_{\xi_2}^{\alpha_2} m(\xi_1,\xi_2) | \lesssim_{\alpha_1,\alpha_2} N^{-\alpha_1} K^{-\alpha_2} \tilde{m}(\xi_1,\xi_2)
\end{equation*}
immediate, we can expand $m(\xi_1,\xi_2)$ as a Fourier series and effectively reduce to
\begin{equation*}
\int_{\T^2} P_N u \partial_x (P_N (u P_K u) - P_N u P_K u) dx dy \sim \int_{\T^2} P_N u \tilde{P}_N u (\partial_x P_K u) dx dy.
\end{equation*}

It remains to obtain estimates for
\begin{equation*}
K \int_0^T \int_{\T^2} \tilde{P}_N u \tilde{P}_N u P_K u \, dx dy ds, \quad K \lesssim N.
\end{equation*}
To this end, we consider a smooth partition of unity of $[0,T]$ into intervals of length $N^{-\alpha}$ to account for the definition of the function spaces:
\begin{equation*}
1_{[0,T]}(s) \sum_{n \in \Z} \gamma^3(N^\alpha s - n ) = 1_{[0,T]}(s).
\end{equation*}
Clearly,
\begin{equation*}
\# \{ n \in \Z : 1_{[0,T]}(s) \gamma^3(N^\alpha s - n ) \not \equiv 0 \} \lesssim T N^\alpha.
\end{equation*}
Let
\begin{equation*}
\mathcal{N} = \{ n \in \Z : 1_{[0,T]}(s) \gamma^3(N^\alpha s- n) \not \equiv 0 \}.
\end{equation*}
Suppose in the following for $n \in \mathcal{N}$ that
\begin{equation}
\label{eq:BulkCases}
1_{[0,T]}(t) \gamma^3(N^\alpha t-n ) = \gamma^3(N^\alpha t -n).
\end{equation}

 With the estimates not depending on the location of the time interval $I$, $|I| \lesssim N^{-\alpha}$, in the following we aim to prove estimates
\begin{equation}
\label{eq:ShorttimeFrequencyLocalizedEnergyEstimate}
\big| \int_{\R \times \T^2} \gamma^3(N^\alpha (t-t_I)) \tilde{P}_N u \tilde{P}_N u P_K u dx dy ds \big| \lesssim C(N,K) \| P_N u \|^2_{F_N} \| P_K u \|_{F_K}
\end{equation}
for $K \lesssim N$.

\smallskip

We introduce the notation
\begin{equation*}
\begin{split}
f_{1,N_1} &= \mathcal{F}_{t,x,y}[\gamma(N^\alpha(t-t_I)) \tilde{P}_N u], \quad f_{2,N_2} = \mathcal{F}_{t,x,y}[\gamma(N^\alpha(t-t_I)) P_K u], \\
f_{3,N_3} &= \mathcal{F}_{t,x,y}[\gamma(N^\alpha(t-t_I)) \tilde{P}_N u].
\end{split}
\end{equation*}
Next, we localize in modulation reflecting the time-localization
\begin{equation}
\label{eq:ModulationLocalization}
\begin{split}
f_{i,N_i,L_i} &= 1_{D_{N_i,L_i}} f_{i,N_i} \; \, \text{ for } L_i > N_1^\alpha, \\
f_{i,N_i,L_i} &= 1_{D_{N_i,\leq L_i}} f_{i,N_i} \text{ for } L_i = N_1^\alpha.
\end{split}
\end{equation}

We shall prove estimates
\begin{equation}
\label{eq:ModulationLocalizedEnergyEstimates}
\begin{split}
&\quad \big| \int (f_{1,N_1,L_1} * f_{2,N_2,L_2}) f_{3,N_3,L_3} d\xi d\eta d\tau \big| \\
&\lesssim C(N_1,N_2,N_3) \prod_{i=1}^3 L_i^{\frac{1}{2}} (1+L_i/N_i^3)^{\frac{1}{4}} \| f_{i,N_i,L_i} \|_{L^2},
\end{split}
\end{equation}
from which \eqref{eq:ShorttimeFrequencyLocalizedEstimate} follows by summation in $L_i$ and properties of the function spaces.

\medskip

Now we comment on the cases, in which the cut-off is not smooth: 
\begin{equation*}
\# \{n \in \N : 1_{[0,T]}(t) \gamma^3(N^\alpha t-n) \not \equiv \gamma^3(N^\alpha t -n) \} \leq 4.
\end{equation*}
For these finitely many cases we can use the following lemma:
\begin{lemma}[{\cite[p.~291]{IonescuKenigTataru2008}}]
If $I \subseteq \R$ is an interval, $k \in \Z$, $f_k \in X_K$, and $f_k^I = \mathcal{F}( 1_I(t) \mathcal{F}^{-1}(f_k))$, then 
\begin{equation*}
\sup_{L \in 2^{\N}} L^{\frac{1}{2}} (1+L/N^3)^{\frac{1}{4}} \| \eta_L(\tau-\omega(\xi,\eta)) f_k^I \|_{L^2} \lesssim \| f_k \|_{X_K}.
\end{equation*}
\end{lemma}
Then, we can interpolate suitable estimates in \eqref{eq:ModulationLocalizedEnergyEstimates} with the bilinear Strichartz estimate provided by Proposition \ref{prop:SecondOrderBilinearEstimate}:
\begin{equation*}
\big| \int (f_{1,N_1,L_1} * f_{2,N_2,L_2} ) f_{3,N_3,L_3} d\xi d\eta d\tau \big| \lesssim L_{\min}^{\frac{1}{2}} N_{\min}^{\frac{3}{4}} L_{\text{max}}^{\frac{1}{4}} \prod_{i=1}^3 \| f_{i,N_i,L_i} \|_2.
\end{equation*}
This gives the following variant of \eqref{eq:ModulationLocalizedEnergyEstimates}:
\begin{equation*}
\begin{split}
&\quad \big| \int (f_{1,N_1,L_1} * f_{2,N_2,L_2}) f_{3,N_3,L_3} d\xi d\eta d\tau \big| \\
&\lesssim C(N_1,N_2,N_3) N_{\max}^{0+} L_{\max}^{0-} \prod_{i=1}^3 L_i^{\frac{1}{2}} (1+L_i/N_i^3)^{\frac{1}{4}} \| f_{i,N_i,L_i} \|_{L^2},
\end{split}
\end{equation*}
which allows us to replace the $\ell^1$-summation over modulation with an $\ell^\infty$-norm. In addition we can combine the estimate with Lemma \ref{lem:ModulationSlack} to squeeze out the factor $T^\delta$ also for the contribution affected by the sharp time cutoff.

\bigskip

$\bullet$ High$-$High$-$High-estimates: $N_1 \sim N_2 \sim N_3$. Let $N = N_1$ for brevity. We can apply Proposition \ref{prop:TrilinearEstimateHighHighHigh} to find for $a_1 \in (0,1/8)$:
\begin{equation*}
\int (f_{1,N_1,L_1} * f_{2,N_2,L_2} ) f_{3,N_3,L_3} d\xi d\eta d\tau \lesssim N^{-1} ( N^{-\frac{1}{4}+\frac{a_1}{6}+\varepsilon} \vee N^{-a_1 - \frac{\alpha}{2}}) \prod_{i=1}^3 L_i^{\frac{1}{2}} \| f_{i,N_i,L_i} \|_2.
\end{equation*}

The first bound in the above estimate yields taking into account powers of $N$ from derivative and time localization, i.e., a factor of $N^{1+\alpha}$:
\begin{equation}
\label{eq:APrioriRegularityThresholdI}
s_3(\alpha,\varepsilon) \geq \alpha + \frac{a_1}{6} - \frac{1}{4} + \varepsilon.
\end{equation}
The second estimate gives the lower bound:
\begin{equation}
\label{eq:APrioriRegularityThresholdII}
s_3(\alpha,\varepsilon) \geq \frac{\alpha}{2} - a_1 + \varepsilon.
\end{equation}

$\bullet$ High$-$Low$-$High-interaction: In this case we estimate
\begin{equation*}
N_2 \iint \gamma^3(N_1^\alpha(t-t_I)) P_{N_1} u P_{N_2} u P_{N_3} u dx dy dt \text{ with } N_1 \sim N_3 \gg N_2.
\end{equation*}
To prove estimates \eqref{eq:APrioriEnergyEstimates}, we need to take into account the input regularity $N_1^{ 2s_3}$ or $N_2^{2s_3}$. In the following we focus on the latter as the first case is better behaved.

\smallskip

Frequency-dependent time localization incurs a factor of $N_1^\alpha$. After introducing notation for the space-time Fourier transform like above, we are again reduced to show trilinear convolution estimates as handled in Proposition \ref{prop:TrilinearEstimateHighLowHigh}:
\begin{equation*}
\int \big( f_{1,N_1,L_1} * f_{2,N_2,L_2} \big) f_{3,N_3,L_3} d\xi d\eta d\tau \lesssim C(N_1,N_2) \prod_{i=1}^3 L_i^{\frac{1}{2}} \| f_{i,N_i,L_i} \|_{L^2}.
\end{equation*}

First, we consider the case $N_2 \lesssim N_1^{1-a_2}$. In this case we apply Proposition \ref{prop:HighVeryLowHighTrilinearEstimate} to find
\begin{equation*}
\begin{split}
\int \big( f_{1,N_1,L_1} * f_{2,N_2,L_2} \big) f_{3,N_3,L_3} d\xi d\eta d\tau &\lesssim N_1^{-1} (N_2^{-\frac{1}{4}} N_1^{-\frac{\alpha}{4}} + N_1^{-\frac{\alpha}{2}}) \log(N_1) \\
&\quad \times \prod_{i=1}^3 L_i^{\frac{1}{2}} (1+L_i/N_i^3)^{\frac{1}{4}} \| f_{i,N_i,L_i} \|_2.
\end{split}
\end{equation*}
Taking into account time localization and derivative loss, we find the summability condition
\begin{equation*}
N_1^{\alpha-1} N_2^{1+s_3} (N_2^{-\frac{1}{4}} N_1^{-\frac{\alpha}{4}} + N_1^{-\frac{\alpha}{2}}) \log(N_1) \lesssim N_1^{2s_3}.
\end{equation*}
This yields
\begin{equation}
\label{eq:APrioriRegularityThresholdIII}
s_3(\alpha,\varepsilon) \geq \frac{\frac{\alpha}{2}-a_2}{1+a_2} + \varepsilon.
\end{equation}

 For $N_2 \gtrsim N_1^{1-a_2}$ we apply Proposition \ref{prop:TrilinearEstimateHighLowHigh}. In the resonant case $L_{\max} \sim N_1^2 N_2$ we find
 \begin{equation*}
 \begin{split}
&\quad \int (f_{1,N_1,L_1} * f_{2,N_2,L_2} ) f_{3,N_3,L_3} d\xi d\eta d\tau \\
&\lesssim_\varepsilon N_2^{-1} N_1^{-\frac{1}{4}+\frac{11 a_2}{24}+\varepsilon} \prod_{i=1}^3 L_i^{\frac{1}{2}} (1+L_i/N_i^3)^{\frac{1}{4}} \| f_{i,N_i,L_i} \|_2.
 \end{split}
 \end{equation*}
Taking into account the regularity of the functions and the factor $N_1^\alpha$ from time localization, we obtain the condition
\begin{equation*}
N_2^{s_3} N_1^\alpha N_1^{-\frac{1}{4}+\frac{11 a_2}{24}} \lesssim N_1^{2s_3}.
\end{equation*}
This gives
\begin{equation}
\label{eq:APrioriRegularityThresholdIV}
s_3(\alpha,\varepsilon) \geq - (\frac{1}{4} + \alpha + \frac{11 a_2}{48})/(1+a_2) + \varepsilon.
\end{equation} 
 
 In the non-resonant case we find
 \begin{equation*}
\begin{split}
&\quad \int (f_{1,N_1,L_1} * f_{2,N_2,L_2} ) f_{3,N_3,L_3} d\xi d\eta d\tau \\
&\lesssim_\varepsilon N_2^{-1} ( N_1^{\frac{a_4}{6}-\frac{1}{4}+\frac{11 a_2}{24}+\varepsilon} \vee N_1^{-a_4- \frac{\alpha}{2}+\varepsilon}) \prod_{i=1}^3 L_i^{\frac{1}{2}} (1+L_i/N_i^3)^{\frac{1}{4}} \| f_{i,N_i,L_i} \|_2.
\end{split}
\end{equation*}
The first estimate gives the condition taking into account time localization and derivative loss:
\begin{equation*}
N_2^{s_3} N_1^{\alpha + \frac{a_4}{6}-\frac{1}{4}+\frac{11 a_2}{24}} \lesssim N_1^{2s_3}.
\end{equation*}
This yields the bound
\begin{equation}
\label{eq:APrioriRegularityThresholdV}
s_3(\alpha,\varepsilon) \geq ( - \frac{1}{4} + \alpha + \frac{a_4}{6} + \frac{11 a_2}{24} ) / (1+a_2) + \varepsilon.
\end{equation}
The second estimate gives the condition
\begin{equation*}
N_2^{s_3} N_1^{\frac{\alpha}{2}-a_4} \lesssim N_1^{2s_3}.
\end{equation*}
This gives the condition
\begin{equation}
\label{eq:APrioriRegularityThresholdVI}
s_3(\alpha,\varepsilon) \geq \frac{\frac{\alpha}{2}-a_4}{1+a_2} + \varepsilon.
\end{equation}
 
We choose $a_2 = 3\alpha/2$ to simplify the conditions \eqref{eq:APrioriRegularityThresholdIII} and \eqref{eq:APrioriRegularityThresholdIV}:
\begin{equation}
\label{eq:APrioriRegularityThresholdIVSumm}
s_3(\alpha,\varepsilon) \geq - \frac{\alpha}{1 + 3\alpha/2} + \varepsilon, \quad s_3(\alpha,\varepsilon) \geq - \frac{3}{8} + \frac{44 \alpha}{24} + \varepsilon.
\end{equation}
\bigskip

For $0<a_1,\ldots,a_4 < \frac{1}{8}$ parameters, which are chosen like above to quantify ratios of frequencies, we collect the regularity thresholds:
\begin{itemize}
\item High$-$High$-$High-interaction  \eqref{eq:APrioriRegularityThresholdI}:
\begin{equation*}
s_3(\alpha,\varepsilon) \geq \alpha + \frac{a_1}{6} - \frac{1}{4} + \varepsilon.
\end{equation*}
\item High$-$High$-$High-interaction \eqref{eq:APrioriRegularityThresholdII}:
\begin{equation*}
s_3(\alpha,\varepsilon) \geq \frac{\alpha}{2} - a_1 + \varepsilon.
\end{equation*}
\item High$-$Low$-$High-interaction, $N_2 \lesssim N_1^{1-a_2}$ \eqref{eq:APrioriRegularityThresholdIII}:
\begin{equation*}
s_3(\alpha,\varepsilon) \geq \frac{\frac{\alpha}{2}-a_2}{1+a_2} + \varepsilon.
\end{equation*}
\item High$-$Low$-$High-interaction, $N_2 \gtrsim N_1^{1-a_2}$, resonant case \eqref{eq:APrioriRegularityThresholdIV}:
\begin{equation*}
s_3(\alpha,\varepsilon) \geq - (\frac{1}{4} + \alpha + \frac{11 a_2}{48})/(1+a_2) + \varepsilon.
\end{equation*}
\item High$-$Low$-$High-interaction, $N_2 \gtrsim N_1^{1-a_2}$, non-resonant case \eqref{eq:APrioriRegularityThresholdV}:
\begin{equation*}
s_3(\alpha,\varepsilon) \geq ( - \frac{1}{4} + \alpha + \frac{a_4}{6} + \frac{11 a_2}{24} ) / (1+a_2) + \varepsilon.
\end{equation*}
\item High$-$Low$-$High-interaction, non-resonant case  \eqref{eq:APrioriRegularityThresholdVI}:
\begin{equation*}
s_3(\alpha,\varepsilon) \geq \frac{\frac{\alpha}{2}-a_4}{1+a_2} + \varepsilon.
\end{equation*}
\end{itemize}
We choose $a_1 = \alpha$, $a_2 = a_4 = \frac{3 \alpha}{2}$ to find by \eqref{eq:APrioriRegularityThresholdII}, \eqref{eq:APrioriRegularityThresholdIII}, \eqref{eq:APrioriRegularityThresholdVI}, for the latter two we use $\alpha < \frac{1}{8}$,
\begin{equation}
\label{eq:FirstConditionEnergy}
s_3(\alpha,\varepsilon) \geq - \frac{\alpha}{2} + \varepsilon,
\end{equation}
and from \eqref{eq:APrioriRegularityThresholdI}, \eqref{eq:APrioriRegularityThresholdIV}, \eqref{eq:APrioriRegularityThresholdV}:
\begin{equation}
\label{eq:SecConditionsEnergy}
s_3(\alpha,\varepsilon) \geq \max(- \frac{1}{4} + \frac{7 \alpha}{6} + \varepsilon , - (\frac{1}{4} + \alpha + \frac{33 \alpha}{96})/(1+\frac{3\alpha}{2}) + \varepsilon , ( - \frac{1}{4} + \alpha + \frac{3 \alpha}{12} + \frac{33 \alpha}{96} ) / (1+\frac{3 \alpha}{2}) + \varepsilon).
\end{equation}

Choosing $0<\alpha<\frac{1}{10}$ we see that \eqref{eq:SecConditionsEnergy} does not give an additional constraint to \eqref{eq:FirstConditionEnergy}. This finishes the proof.

\end{proof}

\subsection{Estimates for differences of solutions}

To prove continuous dependence, it remains to establish energy estimates for differences of solutions $v = u_1 - u_2$:
\begin{equation}
\label{eq:EnergyEstimatesDifferences}
\| v \|^2_{E^{s'}(T)} \lesssim \| v_0 \|_{H^{s',0}(\T^2)}^2 + T^\delta \| v \|^2_{F^{s'}(T)} (\| u_1 \|_{F^s(T)} + \| u_2 \|_{F^s(T)} )
\end{equation}

The estimate is carried out in lower regularity $s' = 20s$. The main difference to the a priori estimates established above is that we can no longer always assume the derivative to act on the lowest frequency. We have to estimate expressions
\begin{equation*}
\iint_{[0,T] \times \T^2} P_{N_1} \partial_x v P_{N_2} v P_{N_3} u_i dx dy dt \text{ with } N_1 \sim N_3 \gg N_2,
\end{equation*}
where due to reduced symmetry we cannot integrate by parts to assign the derivative to $P_{N_2} v$. Here the lowered regularity comes to rescue.

\medskip

We show the following:
\begin{proposition}
\label{prop:EnergyEstimatesDifferences} Let $T \in (0,1]$, $0<\alpha<\frac{1}{36}$, and $s'=20s$, $s \geq s_4(\alpha,\varepsilon) = -\frac{\alpha}{2} + \varepsilon$, and $s \leq - \frac{5 \alpha}{12}$. Then we find \eqref{eq:EnergyEstimatesDifferences} to hold for $v=u_1 - u_2$ with $u_i \in C([0,T],L^2(\T^2))$ solutions to \eqref{eq:KPII}.
\end{proposition}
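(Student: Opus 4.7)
The plan is to follow the strategy of Proposition \ref{prop:APrioriEnergyEstimates} but with careful bookkeeping of the asymmetry between $v$ and $u_i$. Starting from the difference equation $\partial_t v + \partial_x^3 v - \partial_x^{-1}\partial_y^2 v = \partial_x(v(u_1+u_2))$ and the fundamental theorem of calculus, I would write
\begin{equation*}
\|P_N v(t)\|_{L^2}^2 = \|P_N v(0)\|_{L^2}^2 + 2\int_0^t \!\!\int_{\T^2} P_N v \cdot \partial_x P_N(v(u_1+u_2))\,dx\,dy\,ds.
\end{equation*}
After a paraproduct decomposition of $v(u_1+u_2)$ as in \eqref{eq:ParaproductEnergyEstimates}, a smooth partition of $[0,T]$ into intervals of length $N^{-\alpha}$, and the Fourier/modulation localizations of \eqref{eq:ModulationLocalization}, the problem reduces to trilinear convolution estimates of the form \eqref{eq:ModulationLocalizedEnergyEstimates} in which two factors correspond to $v$ (at regularity $s'=20s$) and one factor to $u_i$ (at regularity $s$), and one then invokes Propositions \ref{prop:TrilinearEstimateHighHighHigh}, \ref{prop:TrilinearEstimateHighLowHigh} and \ref{prop:HighVeryLowHighTrilinearEstimate} as in the a priori case.

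The novelty compared with Proposition \ref{prop:APrioriEnergyEstimates} is the lack of symmetry of the trilinear form: when the three frequencies split into $N_1\sim N_3\gg N_2$, one of the two high-frequency factors may be $u_i$ rather than $v$. In the subcase where both high factors are $v$ (so $N_2$ labels a low frequency of $u_i$), the commutator argument of \eqref{eq:CommutatorArgumentI} goes through verbatim and effectively moves the derivative onto the low-frequency $u_i$, reproducing, with $s$ replaced by $s'$, the same conditions as in Proposition \ref{prop:APrioriEnergyEstimates} and yielding $s_4(\alpha,\varepsilon)\ge -\alpha/2+\varepsilon$. In the asymmetric subcase where one high factor is $v$ and the other is $u_i$ while $v$ carries the low frequency $N_2$, the commutator argument still reduces matters to a trilinear term of schematic form $N\iint \gamma^3\, P_N v\cdot\tilde P_N u_i\cdot P_{N_2}v$, in which the derivative necessarily falls on the high-frequency $u_i$; the derivative loss $N$ is then absorbed by the regularity gap $N^{s-s'}=N^{-19s}$ between $u_i$ and $v$, together with the same trilinear estimates of Section \ref{section:ShorttimeNonlinear} and Section 6 above.

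The main obstacle is the sharp tracking of the admissible $(s,\alpha)$ range in this asymmetric subcase. The lower bound $s\ge -\alpha/2+\varepsilon$ is forced again by the bilinear Strichartz estimate of Proposition \ref{prop:bilinear-sp}, exactly as in Propositions \ref{prop:ShorttimeBilinearEstimates} and \ref{prop:APrioriEnergyEstimates}. The upper constraint $s\le -5\alpha/12$ is expected to arise from balancing, in the resonant asymmetric High-Low-High configuration, the derivative loss $N^{1+\alpha}$ (derivative plus short-time localization) against the regularity gain $N^{-19s}$ and the most delicate trilinear bound of Proposition \ref{prop:TrilinearEstimateHighLowHigh}, chosen with $a_2=3\alpha/2$. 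I would perform the same case split $N_2\lesssim N_1^{1-a_2}$ vs.\ $N_2\gtrsim N_1^{1-a_2}$ used in Proposition \ref{prop:APrioriEnergyEstimates} together with the choices $a_1=\alpha$, $a_2=a_4=3\alpha/2$, and expect these, under the restriction $\alpha<1/36$, to deliver precisely $s\in[-\alpha/2+\varepsilon,\,-5\alpha/12]$. Finally, the $T^\delta$ factor is extracted by interpolating each trilinear bound with the estimate of Proposition \ref{prop:SecondOrderBilinearEstimate} to gain an $L_{\max}^{0-}$ and then applying Lemma \ref{lem:ModulationSlack}, and dyadic square summation in $N$ produces \eqref{eq:EnergyEstimatesDifferences}.
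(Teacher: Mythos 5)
Your overall strategy matches the paper's: fundamental theorem of calculus, the paraproduct decomposition \eqref{eq:ParaproductDifferences}, commutator argument for the term where both high frequencies land on $v$, direct estimation for the asymmetric term where the derivative falls on $P_{N_1}u_i$, time partitioning into $N_1^{-\alpha}$-intervals, modulation localization, and the trilinear bounds of Propositions \ref{prop:TrilinearEstimateHighHighHigh}, \ref{prop:TrilinearEstimateHighLowHigh}, \ref{prop:HighVeryLowHighTrilinearEstimate}. You also correctly identify the role of the regularity gap $N^{s-s'}=N^{-19s}$ in absorbing the derivative loss in the asymmetric piece, and the $T^{\delta}$ extraction via interpolation with Proposition \ref{prop:SecondOrderBilinearEstimate} and Lemma \ref{lem:ModulationSlack}.

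However, there is a genuine gap in your choice of the splitting parameters and, relatedly, in your attribution of the source of the upper constraint on $s$. You propose to reuse the a~priori-estimate choices $a_1=\alpha$, $a_2=a_4=3\alpha/2$ from Proposition \ref{prop:APrioriEnergyEstimates}. For the new asymmetric High$\times$Low$\to$High estimate with $N_2\lesssim N_1^{1-a_1}$, the trilinear bound from Proposition \ref{prop:HighVeryLowHighTrilinearEstimate} reduces the summability constraint to
\begin{equation*}
N_1^{\frac{\alpha}{2}+19s}\lesssim N_2^{20s}, \quad 1\le N_2\lesssim N_1^{1-a_1},
\end{equation*}
and since $s<0$ the worst case $N_2\sim N_1^{1-a_1}$ forces $s(1-20a_1)\ge\frac{\alpha}{2}$. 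With $a_1=\alpha<\frac{1}{20}$ (indeed $\alpha<\frac{1}{36}$) the prefactor $1-20a_1$ is positive while $s$ is negative, so the inequality is \emph{impossible}. The paper instead takes $a_1=a_2=a_3=a_4=\frac18$ in this proposition (not the $\alpha$-dependent choices of Proposition \ref{prop:APrioriEnergyEstimates}); then $1-20a_1=-\frac32$ and the constraint becomes $s\le-\frac{\alpha}{3}$. It is precisely this case, treated with Proposition \ref{prop:HighVeryLowHighTrilinearEstimate}, that produces the upper bound and motivates the choice $s'=20s$; the value $s=-\frac{5\alpha}{12}$ is then simply a convenient point of the interval $[-\frac{\alpha}{2}+\varepsilon,-\frac{\alpha}{3}]$. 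Your account places the upper bound in the resonant configuration estimated by Proposition \ref{prop:TrilinearEstimateHighLowHigh}; in the paper that case yields \emph{lower} bounds on $s$ (e.g.\ $s\ge \alpha-\frac{13}{192}+\varepsilon$, $s\ge\alpha-\frac{1}{24}+\varepsilon$), not the upper bound. Finally, you do not address the High$\times$High$\to$Low piece of the paraproduct, which in the paper contributes the extra condition $s\ge-\frac{1}{59}+\varepsilon$ and participates in forcing the restriction $\alpha<\frac{1}{36}$.
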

\begin{proof}
Like in the previous section we obtain an estimate for $\|P_N v(t) \|^2_{L^2}$, $t \in [0,T]$ by the fundamental theorem of calculus:
\begin{equation*}
\| P_N v(t) \|^2_{L^2} = \| P_N v(0) \|^2_{L^2} + \int_0^t \int_{\T^2} P_N v(s) \partial_x P_N (v(u_1+u_2))(s) dx dy ds.
\end{equation*}
The estimates for
\begin{equation*}
\int_0^t \int_{\T^2} P_N v(s) \partial_x P_N (v \, u_i)(s) dx dy ds
\end{equation*}
are carried out separately. In the following we denote $u_i$ with $u$ to lighten the notation. We use a paraproduct decomposition like above:
\begin{equation}
\label{eq:ParaproductDifferences}
P_N (v u)  = P_N( v P_{\ll N} u) + P_N (P_{\ll N} v u) + P_N (P_{\gtrsim N} u P_{\gtrsim N} v).
\end{equation}
For the first term the high frequency is again on $v$, for which reason we can still integrate by parts and reduce to estimates for
\begin{equation*}
\int_{0}^t \int_{\T^2} \tilde{P}_N v(s) \tilde{P}_N v(s) \partial_x P_K u dx dy ds, \quad K \ll N. 
\end{equation*}
$\tilde{P}_N$ denotes frequency projection to frequencies comparable to $N$ with a slightly larger projection.

Here we have to take into account time localization and regularity of the functions, which leads to the same conditions like above. This case gives no additional constraint on the summability properties. Our starting point based on Proposition \ref{prop:APrioriEnergyEstimates} is to assume
\begin{equation*}
s \geq s_4(\alpha,\varepsilon) = - \frac{\alpha}{2} + \varepsilon
\end{equation*}
and $0 < \alpha < \frac{1}{10}$.

\medskip

We turn to the second case, in which we need to establish estimates
\begin{equation*}
\begin{split}
&\quad \big| N_1 \int_0^t \int_{\T^2} P_{N_1} v P_{N_2} v P_{N_3} u \, dx dy ds \big| \\
&\lesssim C(N_1,N_2,N_3) \| P_{N_1} v \|_{F_{N_1}} \| P_{N_2} v \|_{F_{N_2}} \| P_{N_3} u \|_{F_{N_3}}, \quad N_1 \sim N_3 \gtrsim N_2.
\end{split}
\end{equation*}
This had also been essentially carried out in the previous section. What is left is to check the different summation properties now that the derivative acts on the high frequency.

To obtain estimates in short-time function spaces, we decompose $[0,t]$ into intervals of length $N_1^{-\alpha}$. So, we reduce to estimates established above
\begin{equation*}
\begin{split}
&\quad \big| \int_{\R \times \T^2} \gamma^3(N_1^\alpha(t-t_I)) P_{N_1} v P_{N_2} v P_{N_3} u dx dy dt \big| \\
&\lesssim C(N_1,N_2) \| P_{N_1} v \|_{F_{N_1}} \| P_{N_2} v \|_{F_{N_2}} \| P_{N_3} u \|_{F_{N_3}}.
\end{split}
\end{equation*}
The time localization incurs a factor of $N_1^\alpha$. We introduce the notation
\begin{equation*}
\begin{split}
f_{1,N_1} &= \mathcal{F}_{t,x,y}[ \gamma(N_1^\alpha(t-t_I)) P_{N_1} v], \quad f_{2,N_2} = \mathcal{F}_{t,x,y}[ \gamma(N_1^\alpha (t-t_I)) P_{N_2} v], \\
f_{3,N_3} &= \mathcal{F}_{t,x,y}[\gamma(N_1^\alpha(t-t_I)) P_{N_3} u ],
\end{split}
\end{equation*}
and localize in modulation like in \eqref{eq:ModulationLocalization}.

\medskip

With the estimates proved in \eqref{eq:ModulationLocalizedEnergyEstimates}, 
\begin{equation*}
\begin{split}
&\quad \big| \int (f_{1,N_1,L_1} * f_{2,N_2,L_2}) f_{3,N_3,L_3} d\xi d\eta d\tau \big| \\
&\lesssim C(N_1,N_2,N_3) \prod_{i=1}^3 L_i^{\frac{1}{2}} (1+L_i/N_i^3)^{\frac{1}{4}} \| f_{i,N_i,L_i} \|_{L^2},
\end{split}
\end{equation*}
we check the different summability properties. Clearly, nothing changes for the High$-$High$-$High-interaction. 

\medskip

$\bullet$ High$\times$Low $\rightarrow$ High-interaction: $N_2 \ll N_1 \sim N_3$. First, we suppose that $N_2 \lesssim N_1^{1-a_1}$. Applying Proposition \ref{prop:HighVeryLowHighTrilinearEstimate} yields
\begin{equation*}
\begin{split}
\int (f_{1,N_1,L_1} * f_{2,N_2,L_2}) f_{3,N_3,L_3} d\xi d\eta d\tau &\lesssim N_1^{-1} (N_2^{-\frac{1}{4}} N_1^{-\frac{\alpha}{4}} + N_1^{-\frac{\alpha}{2}}) \log(N_1) \\
&\quad \prod_{i=1}^3 L_i^{\frac{1}{2}} (1+L_i/N_i^3)^{\frac{1}{4}} \| f_{i,N_i,L_i} \|_2.
\end{split}
\end{equation*}
Taking into account the regularity and time localization we find the condition
\begin{equation*}
N_1^{1+\alpha} (N_1^2 N_2)^{-\frac{1}{2}} \langle  N_2^{\frac{1}{2}} N_1^{-\frac{\alpha}{2}} \rangle N_1^{40s} \lesssim N_1^{20s} N_2^{20s} N_1^s \Rightarrow N_1^{\frac{\alpha}{2}} N_1^{19s} \lesssim N_2^{20s}.
\end{equation*}
For $a_1 = \frac{1}{8}$ we obtain
\begin{equation*}
N_1^{\frac{\alpha}{2}+\frac{3}{2}s} \lesssim 1.
\end{equation*}
This gives the condition
\begin{equation}
\label{eq:DiffRegConditionI}
s \leq - \frac{\alpha}{3}.
\end{equation}
We have explained our choice of $s'=20s$ in Section \ref{section:Outline}. The compatibility of \eqref{eq:DiffRegConditionI} with $s \geq - \frac{\alpha}{2}+ \varepsilon$ is one of the reasons for our choice.

\medskip

Next, we suppose that $N_2 \gtrsim N_1^{1-a_1}$, $a_1 = 1/8$. First suppose that we are in the resonant case $L_{\max} \sim N_1^2 N_2$. In this case we apply Proposition \ref{prop:TrilinearEstimateHighLowHigh} to find
\begin{equation*}
\begin{split}
\big| \int (f_{1,N_1,L_1} * f_{2,N_2,L_2} ) f_{3,N_3,L_3} d\xi d\eta d\tau \big| &\lesssim N_2^{-1} N_1^{-\frac{1}{4}+\frac{11}{192}} \prod_{i=1}^3 L_i^{\frac{1}{2}} (1+L_i/N_i^3)^{\frac{1}{4}} \| f_{i,N_i,L_i} \|_{L^2} \\
&\lesssim N_1^{-1-\frac{13}{192}} \prod_{i=1}^3 L_i^{\frac{1}{2}} (1+L_i/N_i^3)^{\frac{1}{4}} \| f_{i,N_i,L_i} \|_{L^2}.
\end{split}
\end{equation*}

This gives the condition
\begin{equation*}
N_1^{1+\alpha} N_1^{40s} N_1^{-1-\frac{13}{192}} \lesssim N_2^{20s} N_1^{20s} N_1^s \Leftarrow N_1^{\alpha - \frac{13}{192}} \lesssim N_1^s.
\end{equation*}
This gives the condition on the regularity:
\begin{equation}
\label{eq:DiffRegConditionII}
s_4(\alpha,\varepsilon) \geq \alpha -\frac{13}{192} + \varepsilon.
\end{equation}

$\cdot$ Non-resonant case: $L_{\max} \sim M^2 N_2$ for some $M \gg N_1$. We apply Proposition \ref{prop:TrilinearEstimateHighLowHigh} in the non-resonant case
\begin{equation*}
\big| \int (f_{1,N_1,L_1} * f_{2,N_2,L_2} ) f_{3,N_3,L_3} d\xi d\eta d\tau \big| \lesssim \frac{M^{\frac{1}{6}} (N_1 N_2)^{\frac{1}{8}+\varepsilon}}{N_2 (N_1 N_2)^{\frac{1}{12}}} \frac{1}{N_2^{\frac{1}{2}}} \prod_{i=1}^3 L_i^{\frac{1}{2}} \| f_{i,N_i,L_i} \|_{L^2}.
\end{equation*}
We choose $M \leq N_1^{1+a_4}$ and let $a_4 = \frac{1}{8}$. Then we compute for the above constant remembering $N_2 \gtrsim N_1^{1-a_3}$, $a_3 = \frac{1}{8}$ taking into account derivative loss and time localization
\begin{equation*}
M^{\frac{1}{6}} (N_1 N_2)^{\frac{1}{24}} \frac{1}{N_2^{\frac{3}{2}}} N_1^{1+\alpha} \leq N_1^{\frac{9}{48}} N_1^{\frac{1}{12}} N_1^{- \frac{21}{16}} N_1^{1+\alpha} = N_1^{\frac{13}{48}+\alpha} N_1^{-\frac{15}{48}} = N_1^{\alpha - \frac{1}{24}}.
\end{equation*}
Taking into account the regularity we find the condition
\begin{equation*}
N_1^{40s} N_1^{\alpha - \frac{1}{24}} \lesssim N_1^{20s} N_2^{20s} N_1^s,
\end{equation*}
which leads to the condition on the regularity
\begin{equation}
\label{eq:DiffRegConditionIII}
s_4(\alpha,\varepsilon) \geq \alpha - \frac{1}{24} + \varepsilon.
\end{equation}

$\cdot$ $M \geq N_1^{1+a_4}$ can be estimated by the second bound provided by Proposition \ref{prop:TrilinearEstimateHighLowHigh} in the non-resonant case:
\begin{equation*}
\big| \int (f_{1,N_1,L_1} * f_{2,N_2,L_2} ) f_{3,N_3,L_3} d\xi d\eta d\tau \big| \lesssim (M^2 N_2)^{-\frac{1}{2}} \langle N_2^{\frac{1}{2}} N_1^{-\frac{\alpha}{2}} \rangle \prod_{i=1}^3 L_i^{\frac{1}{2}} \| f_{i,N_i,L_i} \|_2.
\end{equation*}
For $a_4 = \frac{1}{8}$ we find taking into account derivative loss and time localization $N_1^{1+\alpha}$ the summability condition:
\begin{equation*}
N_1^{40 s} N_1^{-\frac{1}{8}+\frac{\alpha}{2}} \lesssim N_1^{20s} N_2^{20s} N_1^s.
\end{equation*}
This yields the regularity condition
\begin{equation}
\label{eq:DiffRegConditionIV}
s_4(\alpha,\varepsilon) \geq - \frac{1}{8}+\frac{\alpha}{2} + \varepsilon.
\end{equation}

\medskip

$\bullet$ High$\times$High $\rightarrow$ Low-interaction: In this case we need to establish estimates
\begin{equation*}
\big| \int_0^t \int_{\T^2} P_{N_2} v P_{N_1} u P_{N_3} v dx dy dt \big| \lesssim C(N_1,N_2,N_3) \| P_{N_2} v \|_{F_{N_2}} \| P_{N_1} u \|_{F_{N_1}} \| P_{N_3} v \|_{F_{N_3}}
\end{equation*}
for $N_2 \ll N_1 \sim N_3$, which corresponds to the third case in the paraproduct decomposition \eqref{eq:ParaproductDifferences}. The summability condition taking into account regularity of the functions, time localization and derivative loss is given by
\begin{equation}
\label{eq:SumConditionDiffII}
N_2^{40s} N_2 N_1^\alpha C(N_1,N_2,N_3) \lesssim N_1^{40s} N_2^s.
\end{equation} 
In the High$\times$Low $\rightarrow$ High-case we have checked the summability condition
\begin{equation*}
N_1^{40s} N_1^{1+\alpha} C(N_1,N_2,N_3) \lesssim N_1^{21s} N_2^{20s}.
\end{equation*}
We use this to validate \eqref{eq:SumConditionDiffII} writing
\begin{equation*}
\frac{N_2^{40s} N_2}{N_1^{40s} N_1} N_1^{40s} N_1^{1+\alpha} C(N_1,N_2,N_3) \lesssim \frac{N_2^{40s} N_2}{N_1^{40s} N_1} N_1^{21s} N_2^{20s}.
\end{equation*}
To conclude, we need to check that
\begin{equation*}
\frac{N_2^{40s} N_2}{N_1^{40s} N_1} N_1^{21s} N_2^{20s} \lesssim N_1^{40s} N_2^s.
\end{equation*}
This is the case if
\begin{equation*}
N_2^{1+59s} \lesssim N_1^{1+59s}
\end{equation*}
and gives the requirement
\begin{equation}
\label{eq:DiffRegConditionVII}
 s_4(\alpha,\varepsilon) \geq - \frac{1}{59} + \varepsilon.
\end{equation}

We collect the conditions on the regularities obtained above:
\begin{itemize}
\item High$\times$High $\rightarrow$ High-interaction: The analysis from Proposition \ref{prop:APrioriEnergyEstimates} applies and for $0<\alpha<\frac{1}{10}$ we have the only requirement:
\begin{equation*}
s_4(\alpha,\varepsilon) \geq - \frac{\alpha}{2} + \varepsilon.
\end{equation*}
\item High$\times$Low $\rightarrow$ High-interaction, $N_2 \lesssim N_1^{1-a_1}$ \eqref{eq:DiffRegConditionI}:
\begin{equation*}
s \leq - \frac{\alpha}{3}.
\end{equation*}
\item High$\times$Low $\rightarrow$ High-interaction, $N_2 \gtrsim N_1^{\frac{7}{8}}$, Resonant case \eqref{eq:DiffRegConditionII}:
\begin{equation*}
s_4(\alpha,\varepsilon) \geq \alpha -\frac{13}{192} + \varepsilon.
\end{equation*}
\item High$\times$Low $\rightarrow$ High-interaction, $N_2 \gtrsim N_1^{\frac{7}{8}}$, Non-resonant case \eqref{eq:DiffRegConditionIII}, \eqref{eq:DiffRegConditionIV}:
\begin{equation*}
s_4(\alpha,\varepsilon) \geq \alpha - \frac{1}{24} + \varepsilon \wedge s_4(\alpha,\varepsilon) \geq - \frac{1}{8}+\frac{\alpha}{2} + \varepsilon.
\end{equation*}
\item High$\times$High $\rightarrow$ Low-interaction: To ensure summability in this case with the estimates from the dual High$\times$Low $\rightarrow$ High-interaction at hand, we need to choose
\begin{equation*}
s_4(\alpha,\varepsilon) \geq - \frac{1}{59} + \varepsilon.
\end{equation*}

\end{itemize}

To fit these conditions on the regularities under the umbrella
\begin{equation*}
s_4(\alpha,\varepsilon) \geq - \frac{\alpha}{2} + \varepsilon \wedge s \leq - \frac{\alpha}{3},
\end{equation*}
we impose a lower bound on the time localization. 
 The conditions obtained in the non-resonant case with high modulation on the high frequency lead us to the upper bound $0 < \alpha < \frac{1}{36}$. We see that choosing $s=-\frac{5 \alpha}{12}$ satisfies all conditions on the regularity for this range of $\alpha$. The proof is complete.
\end{proof}

\section{Improved local well-posedness}
\label{section:ImprovedLWP}

In this section we conclude the local well-posedness below $L^2$ based on the short-time analysis from previous sections. Starting point of our analysis is Bourgain's global $L^2$-well-posedness result for real-valued initial data with persistence of regularity:
\begin{theorem}[{Global well-posedness of KP-II, \cite[Theorem~1.4]{Bourgain1993}}]
\label{thm:GWPKPIIL2}
Let $u_0 \in L^2(\T^2)$ be a real-valued function with $\int_{\T} u_0(x,y) dx = 0$. Then \eqref{eq:KPII} is globally well-posed.
\end{theorem}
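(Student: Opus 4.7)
The plan is to prove this in two steps: local well-posedness in $L^2(\T^2)$ by Picard iteration in Bourgain's Fourier restriction space $\bar X^{0,\frac{1}{2}}$, followed by globalization through conservation of the $L^2$-norm for real-valued data. This is essentially the strategy of \cite{Bourgain1993} and is markedly simpler than the short-time analysis used in the bulk of the present paper, since at $L^2$-regularity one does not need frequency-dependent time localization or the refined decoupling-based Strichartz estimates developed in Section \ref{section:StrichartzEstimates}.

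For the local step I would apply the contraction mapping principle to the truncated Duhamel operator
\[
\Phi(u) = \psi(t) S_{KP}(t) u_0 - \psi(t) \int_0^t S_{KP}(t-s) \partial_x(u^2)(s)\, ds
\]
in a small ball of $\bar X^{0,\frac{1}{2}}$, where $\psi \in C^\infty_c(\R)$ is a fixed time cutoff. The linear estimate is routine; the crux is the bilinear estimate
\[
\|\partial_x(uv)\|_{\bar X^{0,-\frac{1}{2}}} \lesssim \|u\|_{\bar X^{0,\frac{1}{2}}} \|v\|_{\bar X^{0,\frac{1}{2}}}.
\]
After Littlewood--Paley and modulation localization this dualizes to a trilinear convolution estimate against $f_{1,N_1,L_1} * f_{2,N_2,L_2}$, where the derivative loss $N_{\max}$ must be absorbed by gains from the modulation weights $L_i^{-1/2}$. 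The key structural input is the pointwise lower bound $|\Omega| \gtrsim N_{\max}^2 N_{\min}$, which combined with the convolution constraint $\sum(\tau_i - \omega(\xi_i,\eta_i)) = -\Omega$ forces $L_{\max} \gtrsim N_{\max}^2 N_{\min}$, providing exactly enough smoothing to absorb the derivative after invoking Proposition \ref{prop:BilinearStrichartzBourgain}.

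The hardest case I expect is the resonant High$\times$High$\to$High interaction $N_1 \sim N_2 \sim N_3$, where $L_{\max}$ saturates at $N^3$ and transversality in the $\eta$-variable can degenerate; here one must use the refined form of the bilinear constant in \eqref{eq:BConstant} together with the careful counting of integer lattice points in \cite[Section~4]{Bourgain1993}. Once local existence on $[0,T]$ with $T = T(\|u_0\|_{L^2})$ is secured, globalization follows from conservation of mass: for smooth real-valued solutions,
\[
\frac{d}{dt}\int_{\T^2} u^2 \, dx\, dy = -2\int_{\T^2} u(\partial_x^3 u + \partial_x^{-1}\partial_y^2 u)\, dx\, dy + 2\int_{\T^2} u^2 \partial_x u\, dx\, dy = 0,
\]
the linear terms vanishing by integration by parts (the second using the vanishing-mean condition to make $\partial_x^{-1}$ well-defined) and the nonlinear term equalling $\tfrac{2}{3}\int \partial_x(u^3)\, dx\, dy = 0$. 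Real-valuedness is crucial here: it ensures $\int u^2 = \|u\|_{L^2}^2$ is coercive, so the identity gives a uniform a priori bound; since the local existence time depends only on $\|u_0\|_{L^2}$, iterating the local theory on successive intervals of length $T(\|u_0\|_{L^2})$ then produces a global solution.
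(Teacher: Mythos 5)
Your proposal is correct and matches the argument in \cite{Bourgain1993} that the paper is citing verbatim; the paper itself does not re-prove this theorem but invokes it as a black box, and your sketch (Picard iteration in the modified endpoint Bourgain space, bilinear estimate powered by the resonance lower bound $|\Omega| \gtrsim N_{\max}^2 N_{\min}$ and the bilinear constant \eqref{eq:BConstant}, then iteration on uniform time intervals via mass conservation) is exactly the structure of that reference. One small point worth making explicit: the formal computation $\frac{d}{dt}\|u(t)\|_{L^2}^2 = 0$ is for smooth solutions, so to conclude conservation for $L^2$ data one should invoke persistence of regularity plus the continuity of the flow established in the local step, approximating $u_0$ by smooth mean-zero data; this is routine but is the step that ties the a priori identity to the rough solution.
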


\medskip

We denote the data-to-solution mapping by $S_*^0 : L^2(\T^2) \rightarrow C(\R;L^2(\T^2))$, which assigns  real-valued initial data $u_0$ with $\int_{\T} u_0(x,y) dx = 0$ to global solutions $u$ to \eqref{eq:KPII}.

We show the following improved local well-posedness result, which has been summarized in the Introduction:
\begin{theorem}[Improved~local~well-posedness~for~KP-II]
Let $0>s>- \frac{1}{90}$, and $R>0$. Then $S^0_*$ admits a unique continuous extension
\begin{equation*}
S_T^s: H^{s,0}(\T^2) \to C([0,T],H^{s,0}(\T^2))
\end{equation*}
with $T=T(s,R)$ provided that $\| u_0 \|_{H^{s,0}(\T^2)} \leq R$.
\end{theorem}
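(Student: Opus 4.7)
The plan is to extend Bourgain's global $L^2$-flow $S^0_*$ (Theorem~\ref{thm:GWPKPIIL2}) to data in $H^{s,0}(\T^2)$ by density, using the short-time estimates from Sections~\ref{section:ShorttimeNonlinear}--\ref{section:ShorttimeEnergy}. Fix $s\in(-1/90,0)$ and choose $\alpha\in(0,1/36)$ together with $\varepsilon>0$ small enough that
\begin{equation*}
-\alpha/2+\varepsilon \;\leq\; s \;\leq\; -5\alpha/12 ;
\end{equation*}
such a pair exists as soon as $|s|+\varepsilon<1/72$, which is amply ensured by $|s|<1/90$. With $s':=20s$, the bilinear estimates of Proposition~\ref{prop:ShorttimeBilinearEstimates} and the energy estimates of Propositions~\ref{prop:APrioriEnergyEstimates} and~\ref{prop:EnergyEstimatesDifferences} all become simultaneously available.

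For the a priori bound, I would take real mean-zero $u_0\in L^2$ with $\|u_0\|_{H^{s,0}}\leq R$ and set $u=S^0_*(u_0)$. Lemma~\ref{lem:LinearEnergyEstimate} combined with Proposition~\ref{prop:ShorttimeBilinearEstimates}(i) and Proposition~\ref{prop:APrioriEnergyEstimates} (applied with $s'=s$) yields
\begin{equation*}
\|u\|_{F^s(T)}^2 \;\lesssim\; \|u_0\|_{H^{s,0}}^2 + T^\delta\|u\|_{F^s(T)}^3 + T^\delta\|u\|_{F^s(T)}^4 .
\end{equation*}
A standard bootstrap on the continuous map $T\mapsto\|u\|_{F^s(T)}$ then produces $T=T(s,R)>0$ with $\|u\|_{F^s(T)}\leq C(s,R)$, which via Lemma~\ref{lem:ShorttimeEmbedding} upgrades to the corresponding $C([0,T],H^{s,0})$-bound.

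Next, the Lipschitz-type estimate at the shifted regularity is established by running the parallel bootstrap on the difference equation
\begin{equation*}
\partial_t v + \partial_x^3 v - \partial_x^{-1}\partial_y^2 v = \partial_x\bigl(v(u_1+u_2)\bigr)
\end{equation*}
satisfied by $v=u_1-u_2$ with $u_i=S^0_*(u_{i,0})$. Combining Lemma~\ref{lem:LinearEnergyEstimate} with Proposition~\ref{prop:ShorttimeBilinearEstimates}(ii) and Proposition~\ref{prop:EnergyEstimatesDifferences} --- in which the upper bound $s\leq -5\alpha/12$ is essential, since the difference equation lacks the symmetry that would force the $x$-derivative onto the low frequency --- and inserting the a priori bound on $u_1,u_2$, I obtain $\|v\|_{F^{s'}(T)}\lesssim \|v(0)\|_{H^{s',0}(\T^2)}$ on the same time $T=T(s,R)$. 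For arbitrary $u_0\in H^{s,0}$ with $\|u_0\|_{H^{s,0}}\leq R$, approximate by real mean-zero $u_0^{(n)}\in L^2$ with $u_0^{(n)}\to u_0$ in $H^{s,0}$; the estimate renders $\{S^0_*(u_0^{(n)})\}$ Cauchy in $C([0,T],H^{s',0})$, producing a candidate $u=\lim u^{(n)}\in C([0,T],H^{s',0})$ for $S^s_T(u_0)$. Uniqueness within the $F^s(T)$-class is immediate from the same estimate.

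The main obstacle is to upgrade this $H^{s',0}$-convergence of the approximating sequence to $H^{s,0}$-convergence, thereby securing continuity (but only continuity, not uniform continuity) of $S^s_T$ in the target topology. For this I would apply Tao's frequency envelope argument as streamlined by Ifrim--Tataru~\cite{IfrimTataru2023}: given $u_0\in H^{s,0}$, fix an admissible envelope $\{c_N\}_{N\in 2^{\N_0}}$ --- slowly varying, $\ell^2$-summable, satisfying $N^s\|P_N u_0\|_{L^2}\leq c_N$ and with tails $\sum_{N\geq M}c_N^2\to 0$ as $M\to\infty$. The crucial point is that Proposition~\ref{prop:APrioriEnergyEstimates} is stated in the mixed form $T^\delta\|u\|_{F^s}^2\|u\|_{F^{s'}}$ precisely to support a dyadic, envelope-tracking variant of the bootstrap, which yields frequency-localized bounds $N^s\|P_N u^{(n)}\|_{F_N(T)}\lesssim c_N$ uniformly in $n$. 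Tail control from the envelope, combined with the $H^{s',0}$-Cauchy property from the previous step, then promotes the convergence to $C([0,T],H^{s,0})$, completing the construction of the continuous extension $S^s_T$ claimed in the theorem.
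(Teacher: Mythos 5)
Your proposal follows the same three-step architecture as the paper: (1) a priori bound via a bootstrap on $\|u\|_{F^s(T)}$ from Lemma~\ref{lem:LinearEnergyEstimate}, Proposition~\ref{prop:ShorttimeBilinearEstimates}(i), and Proposition~\ref{prop:APrioriEnergyEstimates}; (2) Lipschitz dependence in the weaker $H^{20s,0}$ topology by the same bootstrap on the difference equation; (3) a frequency envelope argument to upgrade to $H^{s,0}$-continuity. This matches the paper's Theorems~\ref{thm:APrioriEstimates}, \ref{thm:DifferenceEstimates}, and the concluding envelope argument.

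Two points are worth flagging. First, your parameter constraint should be $\alpha<1/45$ rather than $\alpha<1/36$: Proposition~\ref{prop:ShorttimeBilinearEstimates}(ii) only reduces to $s_2(\alpha,\varepsilon)=-\alpha/2+\varepsilon$ in that smaller range (see the remark after that proposition), and for $1/45<\alpha<1/36$ the constraint $s\geq s_2$ is strictly stronger than $s\geq -\alpha/2+\varepsilon$, so your two inequalities $-\alpha/2+\varepsilon\leq s\leq -5\alpha/12$ would not make all the estimates simultaneously available. (The conclusion for $|s|<1/90$ is unaffected; one simply takes $\alpha$ a bit smaller.) Second, your description of the envelope step is imprecise about the mechanism: the paper does not run an ``envelope-tracking bootstrap'' but rather uses the persistence-of-regularity part of Theorem~\ref{thm:APrioriEstimates} (the statement with general $\bar{s}\in[s,0)$) applied to the frequency truncations $u_0^H=P_{\leq H}u_0$; this gives the high-frequency decay $\|P_K u^H\|_{C_TH^{s,0}}\lesssim_N c_K(K/H\vee 1)^{-N}$ in \eqref{eq:env} that is essential for summing the tails. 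You also omit the final ingredient: for fixed $H$, the convergence $\|u_j^H-u^H\|_{C_TH^{s,0}}\to 0$ as $u_{j,0}\to u_0$ in $H^{s,0}$ is supplied by Bourgain's $L^2$ continuity (Theorem~\ref{thm:GWPKPIIL2}); without it the triangle inequality closing the continuity argument would not terminate. Neither omission invalidates the approach, but both would have to be made explicit to complete the proof.
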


The proof is carried out in three steps:
\begin{itemize}
\item[(i)] We establish a priori estimates for solutions to \eqref{eq:KPII} at negative Sobolev regularities on times $T=T(s,\|u_0\|_{H^{s,0}(\T^2)})$.
\item[(ii)] We prove a priori estimates for differences of solutions at lower regularities $s'=20s$ on times $T=T(s,\| u_{01} \|_{H^{s,0}(\T^2)}, \| u_{02} \|_{H^{s,0}(\T^2)})$.
\item[(iii)] The proof of continuous dependence in the $H^{s,0}(\T^2)$-topology is concluded using frequency envelopes.
\end{itemize}

We begin with a priori estimates:
\begin{theorem}[A~priori~estimates~at~Sobolev~regularity~of~negative~order]
\label{thm:APrioriEstimates}
Let $s>-\frac{1}{20}$ and $u_0 \in L^2(\T^2)$ a  real-valued function with vanishing mean $\int_{\T} u_0(x,y) dx = 0$. Then there is $T=T(s,\| u_0 \|_{H^{s,0}})$ such that we have the following a priori estimates:
\begin{equation}
\label{eq:APriori}
\| u \|_{F^{\bar{s}}(T)} \lesssim \| u_0 \|_{H^{\bar{s},0}(\T^2)}
\end{equation}
for any $\bar{s} \in [s,0)$. $T=T(s,\| u_0 \|_{H^{s,0}})$ can be chosen to be decreasing in $\| u_0 \|_{H^{s,0}}$.
\end{theorem}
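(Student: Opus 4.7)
The plan is to combine the three short-time ingredients---Lemma \ref{lem:LinearEnergyEstimate}, Proposition \ref{prop:ShorttimeBilinearEstimates}(i), and Proposition \ref{prop:APrioriEnergyEstimates}---via a continuity argument. First I fix $\alpha$ with $0<\alpha<1/10$ and $\varepsilon>0$ small enough that $-\alpha/2+\varepsilon < s$, which is feasible precisely because $s>-1/20$. For these parameters both Proposition \ref{prop:ShorttimeBilinearEstimates}(i) (applicable thanks to $\alpha<1/7$, which gives $s_1(\alpha,\varepsilon)=-\alpha/2+\varepsilon$) and Proposition \ref{prop:APrioriEnergyEstimates} hold at every regularity $\bar{s}\in[s,0)$, taking $s'=\bar{s}$ in the latter. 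To make all short-time norms a priori finite, I will carry out the bootstrap on smooth solutions from Theorem \ref{thm:GWPKPIIL2} for data in $H^{\sigma,0}$ with $\sigma$ large, and recover the case $u_0\in L^2$ by approximation at the end.

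Setting $X_{\bar{s}}(T):=\|u\|_{F^{\bar{s}}(T)}$, chaining the three estimates yields
\[
X_{\bar{s}}(T)^2 \leq C\|u_0\|_{H^{\bar{s},0}}^2 + CT^\delta X_{\bar{s}}(T)^3 + CT^{2\delta} X_{\bar{s}}(T)^4,\qquad T\in(0,1].
\]
For smooth data the function $T\mapsto X_{\bar{s}}(T)$ is continuous with $\limsup_{T\to 0^+} X_{\bar{s}}(T) \lesssim \|u_0\|_{H^{\bar{s},0}}$, because on time scales $T \lesssim N^{-\alpha}$ the $F_N$-norm of $P_N u$ reduces to the $X_N$-norm of a single frequency-dependent window, which is controlled by $\|P_N u_0\|_{L^2}$ via the free evolution. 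A standard continuity argument applied at $\bar{s}=s$ then produces a threshold $T_0=T_0(s,\|u_0\|_{H^{s,0}})>0$, decreasing in its second argument, with $\|u\|_{F^s(T_0)} \leq C_\ast\|u_0\|_{H^{s,0}}$.

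To upgrade this to the stated bound at each $\bar{s}\in[s,0)$ on the \emph{same} interval $[0,T_0]$, I will use mixed versions of the nonlinear estimates,
\[
\|\partial_x(u^2)\|_{\mathcal{N}^{\bar{s}}(T)} \lesssim T^\delta \|u\|_{F^{\bar{s}}(T)}\|u\|_{F^s(T)},\quad \|u\|_{E^{\bar{s}}(T)}^2 \lesssim \|u_0\|_{H^{\bar{s},0}}^2 + T^\delta \|u\|_{F^{\bar{s}}(T)}^2 \|u\|_{F^s(T)}.
\]
These are obtained by reweighting the dyadic summations in the proofs of Propositions \ref{prop:ShorttimeBilinearEstimates}(i) and \ref{prop:APrioriEnergyEstimates}: in each dyadic interaction the summation weight for the $(\bar{s},s)$ configuration differs from the symmetric $(s,s)$ one only by a factor $(N_{\min}/N_{\max})^{\bar{s}-s}\leq 1$, so summability is inherited. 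Substituting $\|u\|_{F^s(T)}\leq C_\ast\|u_0\|_{H^{s,0}}$ into the resulting inequality for $X_{\bar{s}}(T)$ and shrinking $T$ depending only on $\|u_0\|_{H^{s,0}}$ absorbs the nonlinear term and yields \eqref{eq:APriori}.

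The main obstacle is the passage from smooth data back to $u_0\in L^2(\T^2)$ while keeping $T$ controlled by $\|u_0\|_{H^{s,0}}$ alone. My plan is to approximate $u_0$ by smooth data $u_0^{(k)}\to u_0$ in $L^2$, use Theorem \ref{thm:GWPKPIIL2} to produce global solutions $u^{(k)}\to u$ in $C_tL^2$ on each compact time interval, and then to exploit lower semicontinuity of the short-time $F^{\bar{s}}$-norm under $C_tL^2$-convergence to transfer the uniform bound obtained for $u^{(k)}$ to $u$. The delicate point here is that the $F^{\bar{s}}$-norm is not automatically lower semicontinuous; I would verify it at the level of the time-localized $X_N$-norms using closedness of $X_N$-balls together with the strong $L^2$-convergence of each dyadic piece on $[0,T_0]$.
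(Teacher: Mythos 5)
Your proposal follows the paper's strategy in its essentials: fix $\alpha<1/10$ and $\varepsilon>0$ so that $-\alpha/2+\varepsilon\le s$, chain Lemma~\ref{lem:LinearEnergyEstimate}, Proposition~\ref{prop:ShorttimeBilinearEstimates}(i), and Proposition~\ref{prop:APrioriEnergyEstimates} into the cubic bootstrap inequality, close it by a continuity argument at $\bar{s}=s$, and then obtain persistence of regularity at $\bar{s}\in[s,0)$ via the mixed estimates with one factor in $F^s(T)$ frozen by the a priori bound. Your observation that the mixed estimate $\|\partial_x(u^2)\|_{\mathcal{N}^{\bar{s}}(T)}\lesssim T^\delta\|u\|_{F^{\bar{s}}(T)}\|u\|_{F^s(T)}$ follows from the frequency-localized estimates by a reweighting of the dyadic summation is a legitimate filling-in of a step the paper leaves implicit; Proposition~\ref{prop:APrioriEnergyEstimates} is already stated in the mixed form $s\geq s'$, and the bilinear case behaves the same way.

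Where you diverge from the paper is in the treatment of $u_0\in L^2(\T^2)$. The paper works directly with the global $L^2$ solution supplied by Theorem~\ref{thm:GWPKPIIL2} and invokes the limiting properties of the short-time spaces from \cite{IonescuKenigTataru2008} to obtain $\limsup_{T\downarrow 0}\|u\|_{E^s(T)}\lesssim\|u_0\|_{H^{s,0}}$ and $\lim_{T\downarrow 0}\|\partial_x(u^2)\|_{\mathcal{N}^s(T)}=0$, and hence the limit for $F^s$. You instead run the bootstrap for $H^\sigma$ data ($\sigma$ large) and propose to pass to $L^2$ data by approximation, which buys you unambiguous finiteness of all short-time norms in the bootstrap but at a cost.

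That cost is the genuine gap in your argument: lower semicontinuity of $\|\cdot\|_{F^{\bar{s}}(T)}$ under $C_T L^2$-convergence. You flag this and sketch an approach ``at the level of the time-localized $X_N$-norms,'' but this is precisely where the difficulty sits and the sketch does not resolve it. The $F_N(T)$-norm is an infimum over extensions of a supremum over time translates of an $\ell^1_L$-weighted sum of $L^2_{\tau,\xi,\eta}$-norms. Lower semicontinuity requires extracting, from near-optimal extensions $\tilde{u}^{(k)}$ of the approximating solutions, a weakly convergent subsequence whose limit is an admissible extension of $u$, and then propagating lower semicontinuity through the $\sup_{t_k}$ and the $\ell^1_L$-sum; none of these operations is a standard norm of a reflexive Banach space, so ``closedness of $X_N$-balls'' alone does not settle it. The paper sidesteps this entirely by not introducing smooth approximants at this stage: since $u_0\in L^2$ is part of the hypothesis, the continuity argument is run on the $L^2$ solution itself, with the finiteness and $T\downarrow 0$ behaviour of the short-time norms taken from the Ionescu--Kenig--Tataru framework. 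If you want to keep your approximation-based route, you should either prove the lower semicontinuity lemma carefully (working on the space of extensions and using Banach--Alaoglu in the ambient $L^2_{\tau,\xi,\eta}$ weighted by the $\ell^1_L$ structure, plus Fatou in $L$), or downgrade the conclusion you transport through the limit to the $C_T H^{s,0}$ bound via Lemma~\ref{lem:ShorttimeEmbedding} and then reconstruct the $F^s(T)$ bound for $u$ directly from the bootstrap inequality applied to $u$, as the paper does.
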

\begin{proof}
We choose $0<\alpha<\frac{1}{10}$ and $\varepsilon>0$ such that $s=-\frac{\alpha}{2}+\varepsilon$.
Note that by Theorem \ref{thm:GWPKPIIL2} we have global solutions emanating from $u_0 \in L^2(\T^2)$. 

We invoke Lemma \ref{lem:LinearEnergyEstimate}, Proposition \ref{prop:ShorttimeBilinearEstimates}, and Proposition \ref{prop:APrioriEnergyEstimates} to find $\delta>0$ such that the following set of estimates holds with frequency-dependent time localization given by $T=T(N)=N^{-\alpha}$:
\begin{equation*}
\left\{ \begin{array}{cl}
\| u \|_{F^{s}(T)} &\lesssim \| u \|_{E^{s}(T)} + \| \partial_x(u^2) \|_{\mathcal{N}^{s}(T)}, \\
\| \partial_x(u^2) \|_{\mathcal{N}^{s}(T)} &\lesssim T^\delta \| u \|^2_{F^{s}(T)}, \\
\| u \|^2_{E^{s}(T)} &\lesssim \| u_0 \|_{H^{s,0}(\T^2)}^2 + T^\delta \| u \|^3_{F^{s}(T)}.
\end{array} \right.
\end{equation*} 
Taking the estimates together gives
\begin{equation}
\label{eq:PropagationSolution}
\| u \|_{F^{s}(T)} \lesssim \| u_0 \|_{H^{s,0}(\T^2)} + T^\delta \| u \|_{F^{s}(T)}^2 + T^\delta \| u \|_{F^{s}(T)}^{3/2}.
\end{equation}
By the limiting properties (see \cite{IonescuKenigTataru2008}) of the short-time function spaces it holds
\begin{equation*}
\limsup_{T \downarrow 0} \| u \|_{E^{s}(T)} \lesssim \| u_0 \|_{H^{s,0}(\T^2)} \text{ and } \lim_{T \downarrow 0} \| \partial_x (u^2) \|_{\mathcal{N}^{s}(T)} = 0,
\end{equation*}
therefore we find
 \begin{equation*}
 \limsup_{T \downarrow 0} \| u \|_{F^s(T)} \lesssim \| u_0 \|_{H^{s,0}(\T^2)}.
 \end{equation*}
With this limiting property at hand, it is standard to argue by a continuity argument that \eqref{eq:PropagationSolution} implies \eqref{eq:APriori} for $\bar{s} =s$. To obtain the persistence of regularity, we consider the same time localization and invoke Lemma \ref{lem:LinearEnergyEstimate}, Proposition \ref{prop:ShorttimeBilinearEstimates}, and Proposition \ref{prop:APrioriEnergyEstimates} again to find the set of estimates:
\begin{equation*}
\left\{ \begin{array}{cl}
\| u \|_{F^{\bar{s}}(T)} &\lesssim \| u \|_{E^{\bar{s}}(T)} + \| \partial_x(u^2) \|_{\mathcal{N}^{\bar{s}}(T)}, \\
\| \partial_x(u^2) \|_{\mathcal{N}^{\bar{s}}(T)} &\lesssim T^\delta \| u \|_{F^{s}(T)} \| u \|_{F^{\bar{s}}(T)}, \\
\| u \|^2_{E^{\bar{s}}(T)} &\lesssim \| u_0 \|_{H^{\bar{s},0}(\T^2)}^2 + T^\delta \| u \|^2_{F^{\bar{s}}(T)} \| u \|_{F^s(T)}.
\end{array} \right.
\end{equation*}
This can be subsumed as
\begin{equation*}
\| u \|_{F^{\bar{s}}(T)} \lesssim \| u_0 \|_{H^{\bar{s},0}(\T^2)} + T^\delta \| u \|_{F^{\bar{s}}(T)} (\| u \|_{F^s(T)} + \| u \|_{F^s(T)}^{\frac{1}{2}}).
\end{equation*}
By the a priori estimates
\begin{equation*}
\| u \|_{F^s(T)} \lesssim \| u_0 \|_{H^{s,0}(\T^2)}
\end{equation*}
already established, we find upon choosing $T=T(s,\| u_0 \|_{H^{s,0}(\T^2)})$ smaller if necessary
\begin{equation*}
\| u \|_{F^{\bar{s}(T)}} \lesssim \| u_0 \|_{H^{\bar{s},0}(\T^2)}.
\end{equation*}
The proof is complete.
\end{proof}

In the next step we show Lipschitz continuous dependence in a low regularity $s'=20s$ for some $s<0$, which depends only on the $H^{s,0}(\T^2)$-norm of the initial data.

\begin{theorem}[Lipschitz continuous dependence in rough topology] 
\label{thm:DifferenceEstimates}
Let $s>-\frac{1}{90}$ and $u_{01}, u_{02} \in L^2(\T^2)$ real-valued initial data with vanishing mean $\int_{\T} u_{0i}(x,y) dx = 0$. Then there is $T=T(s,\| u_{0i} \|_{H^{s,0}})$ such that we have the following estimate on the difference of solutions $v=u_1-u_2$ with $u_i$ denoting the global solution emanating from $u_{0i}$:
\begin{equation}
\label{eq:LipDependence}
\| v \|_{F^{20s}(T)} \lesssim \| v(0) \|_{H^{20s,0}(\T^2)}.
\end{equation}
$T=T(s, \| u_{0i} \|_{H^{s,0}})$ can be chosen to be decreasing in $\| u_{0i} \|_{H^{s,0}}$, $i=1,2$.
\end{theorem}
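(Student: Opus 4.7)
The plan is to combine the a priori bounds for $u_1$ and $u_2$ at regularity $s$ provided by Theorem~\ref{thm:APrioriEstimates} with the short-time estimates for the difference equation supplied by Proposition~\ref{prop:ShorttimeBilinearEstimates}(ii) and Proposition~\ref{prop:EnergyEstimatesDifferences}, and then to close the resulting inequality by a continuity argument in $T$.

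First I would fix the short-time localization exponent $\alpha$. Given $s\in(-1/90,0)$, the condition $|s|<1/90$ forces $2|s|<1/45$, so I may pick $\alpha$ in the nonempty window $[\,2|s|,\,\min(12|s|/5,\,1/45)\,)$ and $\varepsilon>0$ small enough that $s=-\alpha/2+\varepsilon$ while simultaneously $\alpha<1/45<1/36<1/10$ and $s\le-5\alpha/12$. With this choice all the thresholds $s_1,s_2,s_3,s_4$ appearing in Propositions~\ref{prop:ShorttimeBilinearEstimates}, \ref{prop:APrioriEnergyEstimates}, and \ref{prop:EnergyEstimatesDifferences} collapse to $-\alpha/2+\varepsilon$ and are satisfied by $s$. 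Set $s'=20s$. Theorem~\ref{thm:APrioriEstimates} then produces a time $T_0=T_0(s,\|u_{01}\|_{H^{s,0}},\|u_{02}\|_{H^{s,0}})$ with $\|u_i\|_{F^s(T_0)}\lesssim\|u_{0i}\|_{H^{s,0}}$ for $i=1,2$. For $s\ge 0$, the conclusion is already contained in Bourgain's analysis of Theorem~\ref{thm:GWPKPIIL2}.

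Next I would propagate $v=u_1-u_2$ in the weaker space $F^{s'}(T)$. Subtracting the two equations yields
\begin{equation*}
\partial_t v+\partial_x^3 v+\partial_x^{-1}\partial_y^2 v=\tfrac12\partial_x\bigl(v(u_1+u_2)\bigr),
\end{equation*}
so that Lemma~\ref{lem:LinearEnergyEstimate} at regularity $s'$, together with Proposition~\ref{prop:ShorttimeBilinearEstimates}(ii) applied separately to $vu_1$ and $vu_2$ and Proposition~\ref{prop:EnergyEstimatesDifferences}, gives
\begin{equation*}
\|v\|_{F^{s'}(T)}\lesssim\|v\|_{E^{s'}(T)}+T^\delta\|v\|_{F^{s'}(T)}\bigl(\|u_1\|_{F^s(T)}+\|u_2\|_{F^s(T)}\bigr),
\end{equation*}
\begin{equation*}
\|v\|_{E^{s'}(T)}^2\lesssim\|v(0)\|_{H^{s',0}}^2+T^\delta\|v\|_{F^{s'}(T)}^2\bigl(\|u_1\|_{F^s(T)}+\|u_2\|_{F^s(T)}\bigr).
\end{equation*}
Inserting the a priori bound $\|u_i\|_{F^s(T)}\lesssim R:=\|u_{01}\|_{H^{s,0}}+\|u_{02}\|_{H^{s,0}}$ and consolidating these inequalities (taking the square root of the second and substituting into the first) produces the closed estimate
\begin{equation*}
\|v\|_{F^{s'}(T)}\lesssim\|v(0)\|_{H^{s',0}}+T^{\delta/2}R^{1/2}\,\|v\|_{F^{s'}(T)}
\end{equation*}
valid for every $T\le T_0$.

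To finish, I would use the limiting property $\limsup_{T\downarrow 0}\|v\|_{F^{s'}(T)}\lesssim\|v(0)\|_{H^{s',0}}$ and continuity of $T\mapsto\|v\|_{F^{s'}(T)}$ on $[0,T_0]$ to run a standard bootstrap: shrink $T=T(s,R)\le T_0$ until $T^{\delta/2}R^{1/2}\ll 1$, and then absorb the second right-hand term into the left. The hard part, and the reason one must descend from $s$ to the strictly lower regularity $s'=20s$, is the loss of the integration-by-parts symmetry used in Proposition~\ref{prop:APrioriEnergyEstimates}: for the difference equation, the derivative in $\partial_x(v(u_1+u_2))$ can land on a high-frequency factor, so the High$\times$High$\to$Low energy interaction exhibits a genuine derivative loss which is recovered only by the additional smoothing one gains at $s'=20s$. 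It is precisely the interplay between the bilinear constraint $\alpha<1/45$ (required for $s_2(\alpha,\varepsilon)=-\alpha/2+\varepsilon$) and the demand $s\ge-\alpha/2$ that pins down the final threshold $s>-1/90$.
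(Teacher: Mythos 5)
Your proposal is correct and follows essentially the same route as the paper: fix $\alpha$ in the window that is nonempty precisely when $s>-1/90$ (so that Propositions~\ref{prop:ShorttimeBilinearEstimates}(ii) and \ref{prop:EnergyEstimatesDifferences} apply with $s_2=s_4=-\alpha/2+\varepsilon$ and $s\le-5\alpha/12$), feed the a priori bounds of Theorem~\ref{thm:APrioriEstimates} for $u_1,u_2$ into the linear, bilinear, and difference-energy estimates at the lowered regularity $s'=20s$, and absorb by shrinking $T$. The only cosmetic differences are that you combine the $T^\delta R$ and $T^{\delta/2}R^{1/2}$ absorption terms into one, and you make explicit the continuity-in-$T$ argument that the paper leaves implicit.
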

\begin{proof}
Choose $0<\alpha<\frac{1}{45}$ and $\varepsilon>0$ such that there $s \geq - \frac{\alpha}{2} + \varepsilon$ and $s \leq - \frac{5 \alpha}{12}$.

We invoke Lemma \ref{lem:LinearEnergyEstimate}, Proposition \ref{prop:ShorttimeBilinearEstimates}, and Proposition \ref{prop:EnergyEstimatesDifferences} to find the following set of estimates for some $\delta > 0$ with the time localization $T=T(N)=N^{-\alpha}$:
\small
\begin{equation*}
\left\{ \begin{array}{cl}
\| v \|_{F^{20s}(T)} &\lesssim \| v \|_{E^{20s}(T)} + \| \partial_x(v (u_1+u_2)) \|_{\mathcal{N}^{20s}(T)}, \\
\| \partial_x(v (u_1+u_2)) \|_{\mathcal{N}^{20s}(T)} &\lesssim T^\delta \| v \|_{F^{20s}(T)} (\| u_1 \|_{F^s(T)} + \| u_2 \|_{F^s(T)}), \\
\| v \|^2_{E^{20s}(T)} &\lesssim \| v(0) \|_{H^{20s,0}}^2 + T^\delta \| v \|^2_{F^{20s}(T)} (\| u_1 \|_{F^s(T)} + \| u_2 \|_{F^s(T)}).
\end{array} \right.
\end{equation*}
\normalsize

\medskip

Taking the estimates together we find
\begin{equation*}
\begin{split}
\| v \|_{F^{20s}(T)} &\lesssim \| v_0 \|_{H^{20s,0}(\T^2)} + T^\delta \| v \|_{F^{20s}(T)} ( \| u_1 \|_{F^s(T)} + \| u_2 \|_{F^s(T)}) \\
&\quad + T^\delta \| v \|_{F^{20s}(T)} (\|u_1 \|_{F^s(T)} + \| u_2 \|_{F^s(T)})^{\frac{1}{2}}.
\end{split}
\end{equation*}
Choosing $T=T(s,\|u_{0i}\|_{H^{s,0}(\T^2)})$ we can invoke Theorem \ref{thm:APrioriEstimates} to obtain
\begin{equation*}
\| u_i \|_{F^s(T)} \lesssim \| u_{i0} \|_{H^{s,0}(\T^2)}.
\end{equation*}
Hence, the previous estimate reduces to
\begin{equation*}
\begin{split}
\| v \|_{F^{20s}(T)} &\lesssim \| v(0) \|_{H^{20s,0}(\T^2)} + T^\delta \| v \|_{F^{20s}(T)} ( \| u_{10} \|_{H^{s,0}(\T^2)} + \| u_{20} \|_{H^{s,0}(\T^2)} ) \\
&\quad + T^\delta \| v \|_{F^{20s}(T)} ( \| u_{10} \|_{H^{s,0}(\T^2)} + \| u_{20} \|_{H^{s,0}(\T^2)} )^{\frac{1}{2}}.
\end{split}
\end{equation*}
Choosing $T=T(\|u_{i0} \|_{H^{s,0}(\T^2)})$ smaller if necessary, we obtain the estimate \eqref{eq:LipDependence}.
\end{proof}

\medskip

We turn to the final part in the argument: the conclusion of local well-posedness via frequency envelopes. We follow the presentation of Ifrim--Tataru \cite{IfrimTataru2023} and shall be brief to avoid repetition.

We introduce the notation
\begin{equation*}
\big[ \frac{N}{M} \big] = \max\big( \frac{N}{M}, \frac{M}{N} \big).
\end{equation*}

\begin{definition}[Frequency~envelopes~in~$H^{s,0}(\T^2)$]
Let $u_0 \in H^{s,0}(\T^2)$. A sequence $(c_N)_{N \in 2^{\N}}$ is referred to as (sharp) frequency envelope for $u_0$, if it satisfies, for some $\delta>0$
\begin{itemize}
\item[(a)] energy estimates $\| P_N u_0 \|_{H^{s,0}(\T^2)} \leq c_N$,
\item[(b)] the slowly varying property: $\frac{c_N}{c_M} \lesssim \big[ \frac{N}{M} \big]^{\delta}$, 
\item[(c)] $\| u_0 \|^2_{H^{s,0}(\T^2)} \approx \sum_{N \geq 1} c_N^2$.
\end{itemize}
\end{definition}
Before we construct sharp frequency envelopes for given initial data, we derive a frequency envelope bound for solutions to truncated initial data. Suppose that $u_0^H=P_{\leq H} u_0$ and that $(c_N)$ is a sharp frequency envelope of $u_0^H$. Then, the following bounds hold:
\begin{itemize}
\item[(i)] Uniform bounds: $\| P_N u_0^H \|_{H^{s,0}(\T^2)} \lesssim c_N$,
\item[(ii)] High frequency bounds: $\| u_0^H \|_{H^{s+j,0}(\T^2)} \lesssim H^j c_H$,
\item[(iii)] Difference bounds: $\| u_0^{2H} - u_0^H \|_{H^{20s,0}(\T^2)} \lesssim H^{19s} c_H$.
\end{itemize}

As a consequence of the a priori estimates from Theorem \ref{thm:APrioriEstimates} and the estimates for solutions to the difference equation Theorem \ref{thm:DifferenceEstimates}, we have the following estimates for the  global solutions $u^H$ emanating from $u_0^H$ on the time scale $T=T(s,\| u_0 \|_{H^{s,0}(\T^2)})$:
\begin{itemize}
\item[(i)] High frequency bounds:
\begin{equation}
\label{eq:FrequencyEnvelopeSolutions}
\| u^H \|_{C_TH^{s+j,0}} \lesssim_j H^j c_H, \quad j > 0,
\end{equation}
\item[(ii)] Difference bounds:
\begin{equation}
\label{eq:FrequencyEnvelopeDifferences}
\| u^{2H} - u^H \|_{C_TH^{20s,0}} \lesssim H^{19s} c_H.
\end{equation}
\end{itemize}

We obtain as a consequence
\begin{equation}\label{eq:env}
\| P_K u^H \|_{C_TH^{s,0}} \lesssim_N c_K (K/H \vee 1)^{-N}.
\end{equation}
By using the difference bounds \eqref{eq:FrequencyEnvelopeDifferences} we show first that $u^H$ is a Cauchy sequence in $C([0,T],H^{20s,0}(\T^2))$. Indeed, for $H'>H$, we have
\begin{equation*}
\| u^{H'} - u^H \|_{C_TH^{20s,0}} \lesssim \sum_{H\leq M <H'} \| u^{2M} - u^M \|_{C_TH^{20s,0}}\lesssim H^{19s}\big(\sum_{M\geq H} c_M^2\big)^{\frac12},
\end{equation*}
which implies convergence to a limiting solution $u \in C([0,T], H^{20s,0}(\T^2))$ satisfying
\begin{equation*}
\| u - u^H \|_{C_TH^{20s,0}} \lesssim  H^{19s}\big(\sum_{M\geq H} c_M^2\big)^{\frac12}.
\end{equation*}
Next, we upgrade this convergence to  $C([0,T], H^{s,0}(\T^2))$:
\begin{align*}
& \| u - u^H \|_{C_TH^{s,0}} \leq \| P_{\leq H}(u - u^H) \|_{C_T H^{s,0}}+\|P_{> H}( u - u^H)\|_{C_TH^{s,0}}
\end{align*}
Concerning the first term, we simply use the above bound to obtain
\[\| P_{\leq H}(u - u^H) \|_{C_TH^{s,0}}\lesssim H^{-19s}\|u - u^H \|_{C_T H^{20s,0}}\lesssim \big(\sum_{M\geq H} c_M^2\big)^{\frac12}.\]
We decompose the second term
\begin{align*}
    \|P_{> H}( u - u^H)\|_{C_TH^{s,0}}\lesssim \Big( \sum_{K>H}   \|P_{K}( u - u^H)\|_{C_T H^{s,0}}^2\Big)^{\frac12}.
\end{align*}
Then, $\|P_{K}( u - u^H)\|_{C_T H^{s,0}}\leq I_{\leq K}+I_{>K}$ for
\begin{align*}
   I_{\leq K} =&\sum_{M \leq K }\|P_K(u^{2M}-u^M)\|_{C_TH^{s,0}},\\
I_{> K} =&\sum_{M >K}\|P_K(u^{2M}-u^M)\|_{C_TH^{s,0}}
\end{align*}
From \eqref{eq:env} (with $N=1$) and the slowly varying property we obtain
\[
I_{\leq K} \lesssim  \sum_{M\leq K } \|P_K(u^M)\|_{C_TH^{s,0}}\lesssim \sum_{M\leq K}
\frac{M}{K} c_M\lesssim c_K.
\]
From \eqref{eq:FrequencyEnvelopeDifferences}  and the slowly varying property  we infer that
\begin{align*}
I_{>K} \lesssim & \sum_{ M>K } K^{-19s}\|P_K(u^{2M}-u^M)\|_{C_TH^{20s,0}}\\
\lesssim & \sum_{M>K} \Big(\frac{K}{M}\Big)^{-19s} c_M\lesssim c_K.
\end{align*}
In summary, we obtain
\begin{equation}\label{eq:H-conv}
 \| u - u^H \|_{C_T H^{s,0}} \lesssim  \big(\sum_{M \geq H} c_M^2\big)^{\frac12}, 
\end{equation}
which tends to $0$ as $H \to \infty$.
\medskip

Consequently, we obtain an approximation of $u$ with  global solutions $u^H$. This is the key ingredient in the proof of continuous dependence, for which we consider next a sequence $u_{j,0} \to u_0$ in $H^{s,0}(\T^2)$. The emanating solutions are denoted by $u_j$, $u$, respectively. The frequency truncations are denoted by $u_{j,0}^H$, $u_0^H$, respectively, and the emanating solutions of the frequency truncations by $u^H_j$ and $u^H$.

We sketch the construction of a sharp 
  frequency envelope $(c_N)$ for $u_0$.  As initial guess we consider $d_N = \| P_N u_0 \|_{H^{s,0}(\T^2)}$. A frequency envelope is obtained from mollifying: $c_N = \sum_{M \in 2^{\N_0}} d_M \big[ \frac{M}{N} \big]^{-\delta}$, for some $0<\delta<19|s|$. By Young's inequality it holds
\begin{equation*}
    \| (c_N) \|_{\ell^2_N} \lesssim \| (d_N) \|_{\ell^2} \sum_{M \in 2^{\N_0}} M^{-\delta} \lesssim \| u_0 \|_{H^{s,0}},
\end{equation*}
therefore property c) is satisfied.
To check the slowly varying property, we note that
\begin{equation*}
    c_N = \sum_{J \in 2^{\N_0}} d_{J} \Big[ \frac{J}{N}\Big]^{-\delta} \leq \sum_{J \in 2^{\N_0}} d_{J} \Big[ \frac{J}{M}\Big]^{-\delta}\Big[ \frac{M}{N}\Big]^{\delta}\lesssim c_M \big[\frac{M}{N}]^\delta.
\end{equation*}
Similarly, we construct a sharp frequency envelope $(c^{(j)}_N)$ for $u_{0,j}$ such that we have $c^{(j)}\to c$ in $\ell^2$.

We use the triangle inequality and \eqref{eq:H-conv} to write
\begin{equation*}
\begin{split}
\| u_j - u \|_{C_T H^{s,0}} &\leq \| u_j - u_j^H \|_{C_T H^{s,0}} + \| u_j^H - u^H \|_{C_T H^{s,0}} \\
&\quad + \| u^H - u \|_{C_T H^{s,0}} \\
&\lesssim \| c^{(j)}_{\geq H} \|_{\ell^2}+ \| u_j^H - u^H \|_{C_T H^{s,0}} + \| c_{\geq H} \|_{\ell^2} .
\end{split}
\end{equation*}
By Theorem \ref{thm:GWPKPIIL2}, we have as $j \to \infty$
\begin{equation*}
\| u_j^H - u^H \|_{C_T H^{s,0}} \leq \| u_j^H - u^H \|_{C_T L^2} \lesssim \| u_{j,0}^H - u_0^H \|_{L^2} \to 0.
\end{equation*}
Consequently,
\begin{equation*}
\limsup_{j \to \infty} \| u_j -u \|_{C_T H^{s,0}} \lesssim \| c_{\geq H} \|_{\ell^2}.
\end{equation*}
Finally, letting $H \to \infty$, we infer
\begin{equation*}
\lim_{j \to \infty} \| u_j -u \|_{C_T H^{s,0}} = 0
\end{equation*}
and the proof is complete. $\hfill \Box$

\section{Sharpness of Bourgain's bilinear Strichartz estimate}\label{sect:ex}
In this section, we prove that Bourgain's estimate provided in Proposition \ref{prop:BilinearStrichartzBourgain} is sharp in general. To this end, we construct $f_{i,N_i,L_i}$ such that
\begin{equation*}
\| f_{1,N_1,L_1} * f_{2,N_2,L_2} \|_{L^2_{\tau,\xi,\eta}} \sim N_2^{\frac{1}{4}} \min( L_1 , L_2)^{\frac{1}{2}} \max(L_1, L_2)^{\frac{1}{4}} \prod_{i=1}^2 \| f_{i,N_i,L_i} \|_{L^2_{\tau,\xi,\eta}}.
\end{equation*}
We use the same example as in \cite{Gruen09} (which in turn goes back an unpublished manuscript of the third author).
Let $f_{i,N_i,L_i}$ be a characteristic function,  supported on a single $\xi$-node with $\xi_i = N_i$, $1 \leq N_2 \leq N_1$, and the $\eta$-support be intervals of length $N_2^{\frac{1}{2}}$. Then we can arrange that for $(\tau_i,\xi_i,\eta_i) \in \operatorname{supp}(f_{i,N_i,L_i})$:
\begin{equation*}
\big| \frac{\eta_1}{\xi_1} - \frac{\eta_2}{\xi_2} \big| \lesssim N_2^{-\frac{1}{2}}.
\end{equation*}
Moreover, we let $\tau_i$ range in intervals of length $\sim 1$, such that
\begin{equation*}
\big| \tau_i - (\xi_i^3 - \frac{\eta_i^2}{\xi_i}) \big| \lesssim 1.
\end{equation*}
Clearly,
\begin{equation*}
\| f_{i,N_i,L_i} \|_{L^2} \sim N_2^{\frac{1}{4}}.
\end{equation*}
And moreover, on a $\tau$-interval of length $1$, for $\xi = \xi_1 + \xi_2$ and for $\eta$ in an interval of length $\sim N_2^{\frac{1}{2}}$ we have
\begin{equation*}
f_{1,N_1,L_1} * f_{2,N_2,L_2}(\tau,\xi,\eta) \sim N_2^{\frac{1}{2}},
\end{equation*}
and the support contains an  $\eta$-interval of length $\sim N_2^{\frac{1}{2}}$.
Consequently,
\begin{equation*}
\| f_{1,N_1,L_1} * f_{2,N_2,L_2} \|_{L^2} \gtrsim N_2^{\frac{1}{2}} N_2^{\frac{1}{4}}.
\end{equation*}
We obtain
\begin{equation*}
\| f_{1,N_1,L_1} * f_{2,N_2,L_2} \|_{L^2} \gtrsim N_2^{\frac{1}{4}} \min(L_1, L_2)^{\frac{1}{2}} \max(L_1,  L_2)^{\frac{1}{4}} \prod_{i=1}^2 \| f_{i,N_i,L_i} \|_{L^2}.
\end{equation*}

Based on this example, the same computation as in \cite{Gruen09} for the cylinder $\T\times\R$ can be done in our setting, which shows $C^3$-ill-posedness of the Cauchy problem in $H^{s,0}(\T^2)$ for $s<- \frac{1}{4}$. Semilinear ill-posedness for $s \in (-\frac{1}{4},0)$ remains open. As mentioned in Remark \ref{rem:quasi}, semilinear local well-posedness hinges on an improvement of Bourgain's bilinear $L^2$-estimate.

\section*{Acknowledgements}
\small
\begin{enumerate}
  \item
Funded by the Deutsche Forschungsgemeinschaft (DFG, German Research Foundation) -- Project-ID 317210226 -- SFB 1283.
\item
S.H.\ thanks the \emph{Laboratoire de Math\'ematique d'Orsay -- Universit\'e Paris-Saclay} for its hospitality and funding via \emph{CNRS Poste Rouge} is gratefully acknowledged.
\item R.S.\ gratefully acknowledges financial support from the Humboldt foundation (Feodor-Lynen fellowship) and partial support by the NSF grant DMS-2054975. 
\item N.T. was partially supported by the ANR project Smooth ANR-22-CE40-0017 and by the European research Council (ERC) under the European Union Horizon 2020 research and innovation programme (Grant agreement 101097172 - GEOEDP).
\end{enumerate}
We thank the referees for helpful remarks, which significantly improved the presentation.

\normalsize

\bibliographystyle{plain}

\begin{thebibliography}{10}

\bibitem{AblowitzSegur1979}
Mark~Jay Ablowitz and Harvey Segur.
\newblock On the evolution of packets of water waves.
\newblock {\em J. Fluid Mech.}, 92(4):691--715, 1979.

\bibitem{Bourgain1993A}
Jean Bourgain.
\newblock Fourier transform restriction phenomena for certain lattice subsets
  and applications to nonlinear evolution equations. {I}. {S}chr\"{o}dinger
  equations.
\newblock {\em Geom. Funct. Anal.}, 3(2):107--156, 1993.

\bibitem{Bourgain1993B}
Jean Bourgain.
\newblock Fourier transform restriction phenomena for certain lattice subsets
  and applications to nonlinear evolution equations. {II}. {T}he
  {K}d{V}-equation.
\newblock {\em Geom. Funct. Anal.}, 3(3):209--262, 1993.

\bibitem{Bourgain1993}
Jean Bourgain.
\newblock On the {C}auchy problem for the {K}adomtsev-{P}etviashvili equation.
\newblock {\em Geom. Funct. Anal.}, 3(4):315--341, 1993.

\bibitem{BurqGerardTzvetkov2004}
Nicolas Burq, Patrick G\'{e}rard, and Nikolay Tzvetkov.
\newblock Strichartz inequalities and the nonlinear {S}chr\"{o}dinger equation
  on compact manifolds.
\newblock {\em Amer. J. Math.}, 126(3):569--605, 2004.

\bibitem{Cordoba1979}
Antonio C\'{o}rdoba.
\newblock A note on {B}ochner-{R}iesz operators.
\newblock {\em Duke Math. J.}, 46(3):505--511, 1979.

\bibitem{Cordoba1982}
Antonio C\'{o}rdoba.
\newblock Geometric {F}ourier analysis.
\newblock {\em Ann. Inst. Fourier (Grenoble)}, 32(3):vii, 215--226, 1982.

\bibitem{Demeter2020}
Ciprian Demeter.
\newblock {\em Fourier restriction, decoupling, and applications}, volume 184
  of {\em Cambridge Studies in Advanced Mathematics}.
\newblock Cambridge University Press, Cambridge, 2020.

\bibitem{Dryuma1974}
Valerii~Semenovich {Dryuma}.
\newblock {Analytic solution of the two-dimensional Korteweg-de Vries (KdV)
  equation}.
\newblock {\em Soviet Journal of Experimental and Theoretical Physics Letters},
  19:387, June 1974.

\bibitem{Fefferman1973}
Charles Fefferman.
\newblock A note on spherical summation multipliers.
\newblock {\em Israel J. Math.}, 15:44--52, 1973.

\bibitem{Gruen09}
Axel Gr\"{u}nrock, Mahendra Panthee, and Jorge Drumond~Silva.
\newblock On {KP}-{II} type equations on cylinders.
\newblock {\em Ann. Inst. H. Poincar\'{e} C Anal. Non Lin\'{e}aire},
  26(6):2335--2358, 2009.

\bibitem{GuoOh2018}
Zihua Guo and Tadahiro Oh.
\newblock Non-existence of solutions for the periodic cubic {NLS} below
  {$L^2$}.
\newblock {\em Int. Math. Res. Not. IMRN}, (6):1656--1729, 2018.

\bibitem{GuthMaldagueOh2024}
Larry {Guth}, Dominique {Maldague}, and Changkeun {Oh}.
\newblock {$l^2$ decoupling theorem for surfaces in $\mathbb{R}^3$}.
\newblock {\em Anal. PDE}, 19(1):167--202, 2026.

\bibitem{HadacHerrKoch2009}
Martin Hadac, Sebastian Herr, and Herbert Koch.
\newblock Well-posedness and scattering for the {KP}-{II} equation in a
  critical space.
\newblock {\em Ann. Inst. H. Poincar\'{e} C Anal. Non Lin\'{e}aire},
  26(3):917--941, 2009.

\bibitem{HarropGriffithsKillipVisan2024}
Benjamin Harrop-Griffiths, Rowan Killip, and Monica Vi\c{s}an.
\newblock Sharp well-posedness for the cubic {NLS} and m{K}d{V} in {$H^s(\Bbb
  R)$}.
\newblock {\em Forum Math. Pi}, 12:Paper No. e6, 2024.

\bibitem{HerrSchippaTzvetkov2025}
Sebastian {Herr}, Robert {Schippa}, and Nikolay {Tzvetkov}.
\newblock {Global results for weakly dispersive KP-II equations on the
  cylinder}.
\newblock {\em arXiv e-prints 2504.09333}, April 2025.

\bibitem{IfrimTataru2023}
Mihaela Ifrim and Daniel Tataru.
\newblock Local well-posedness for quasi-linear problems: a primer.
\newblock {\em Bull. Amer. Math. Soc. (N.S.)}, 60(2):167--194, 2023.

\bibitem{IonescuKenigTataru2008}
Alexandru~Dan Ionescu, Carlos~Eduardo Kenig, and Daniel Tataru.
\newblock Global well-posedness of the {KP}-{I} initial-value problem in the
  energy space.
\newblock {\em Invent. Math.}, 173(2):265--304, 2008.

\bibitem{Kadomtsev1970}
Boris~Borisovich Kadomtsev and Vladimir~Iosifovich Petviashvili.
\newblock On the stability of solitary waves in weakly dispersing media.
\newblock {\em Sov. Phys., Dokl.}, 15:539--541, 1970.

\bibitem{KillipVisan2019}
Rowan Killip and Monica Vi\c{s}an.
\newblock Kd{V} is well-posed in {$H^{-1}$}.
\newblock {\em Ann. of Math. (2)}, 190(1):249--305, 2019.

\bibitem{KillipVisanZhang2018}
Rowan Killip, Monica Vi\c{s}an, and Xiaoyi Zhang.
\newblock Low regularity conservation laws for integrable {PDE}.
\newblock {\em Geom. Funct. Anal.}, 28(4):1062--1090, 2018.

\bibitem{KochTataru2018}
Herbert Koch and Daniel Tataru.
\newblock Conserved energies for the cubic nonlinear {S}chr\"{o}dinger equation
  in one dimension.
\newblock {\em Duke Math. J.}, 167(17):3207--3313, 2018.

\bibitem{KochTzvetkov2003}
Herbert Koch and Nikolay Tzvetkov.
\newblock On the local well-posedness of the {B}enjamin-{O}no equation in
  {$H^s({\Bbb R})$}.
\newblock {\em Int. Math. Res. Not.}, (26):1449--1464, 2003.

\bibitem{MolinetSautTzvetkov2002Illposedness}
Luc Molinet, Jean-Claude Saut, and Nikolay Tzvetkov.
\newblock Well-posedness and ill-posedness results for the
  {K}adomtsev-{P}etviashvili-{I} equation.
\newblock {\em Duke Math. J.}, 115(2):353--384, 2002.

\bibitem{MolinetSautTzvetkov2011}
Luc Molinet, Jean-Claude Saut, and Nikolay Tzvetkov.
\newblock Global well-posedness for the {KP}-{II} equation on the background of
  a non-localized solution.
\newblock {\em Ann. Inst. Henri Poincar{\'e}, Anal. Non Lin{\'e}aire},
  28(5):653--676, 2011.

\bibitem{Saut93}
Jean-Claude Saut.
\newblock Remarks on the generalized {K}adomtsev-{P}etviashvili equations.
\newblock {\em Indiana Univ. Math. J.}, 42(3):1011--1026, 1993.

\bibitem{Schippa2023Refinements}
Robert Schippa.
\newblock Refinements of {Strichartz} estimates on tori and applications.
\newblock {\em Math. Ann.}, 391(3):3245--3294, 2025.

\bibitem{StaffilaniTataru2002}
Gigliola Staffilani and Daniel Tataru.
\newblock Strichartz estimates for a {S}chr\"{o}dinger operator with nonsmooth
  coefficients.
\newblock {\em Comm. Partial Differential Equations}, 27(7-8):1337--1372, 2002.

\bibitem{Tao2001}
Terence Tao.
\newblock Global regularity of wave maps. {II}. {S}mall energy in two
  dimensions.
\newblock {\em Comm. Math. Phys.}, 224(2):443--544, 2001.

\end{thebibliography}

\end{document}